\numberwithin{equation}{section}
\begin{document}

\newtheorem{thm}{Theorem}[section]
\newtheorem{prop}[thm]{Proposition}
\newtheorem{lem}[thm]{Lemma}
\newtheorem{cor}[thm]{Corollary}
\newtheorem{rem}[thm]{Remark}
\newtheorem*{defn}{Definition}

\newtheorem{definit}[thm]{Definition}
\newtheorem{setting}{Setting}
\renewcommand{\thesetting}{\Alph{setting}}

\newcommand{\DD}{\mathbb{D}}
\newcommand{\NN}{\mathbb{N}}
\newcommand{\ZZ}{\mathbb{Z}}
\newcommand{\QQ}{\mathbb{Q}}
\newcommand{\RR}{\mathbb{R}}
\newcommand{\CC}{\mathbb{C}}
\renewcommand{\SS}{\mathbb{S}}

\renewcommand{\theequation}{\arabic{section}.\arabic{equation}}

\newcommand{\supp}{\mathop{\mathrm{supp}}}    

\newcommand{\re}{\mathop{\mathrm{Re}}}   
\newcommand{\im}{\mathop{\mathrm{Im}}}   
\newcommand{\dist}{\mathop{\mathrm{dist}}}  
\newcommand{\link}{\mathop{\circ\kern-.35em -}}
\newcommand{\spn}{\mathop{\mathrm{span}}}   
\newcommand{\ind}{\mathop{\mathrm{ind}}}   
\newcommand{\rank}{\mathop{\mathrm{rank}}}   
\newcommand{\Fix}{\mathop{\mathrm{Fix}}}   
\newcommand{\codim}{\mathop{\mathrm{codim}}}   
\newcommand{\conv}{\mathop{\mathrm{conv}}}   
\newcommand{\epsi}{\mbox{$\varepsilon$}}
\newcommand{\eps}{\mathchoice{\epsi}{\epsi}
{\mbox{\scriptsize\epsi}}{\mbox{\tiny\epsi}}}
\newcommand{\cl}{\overline}
\newcommand{\pa}{\partial}
\newcommand{\ve}{\varepsilon}
\newcommand{\zi}{\zeta}
\newcommand{\Si}{\Sigma}
\newcommand{\cA}{{\mathcal A}}
\newcommand{\cG}{{\mathcal G}}
\newcommand{\cH}{{\mathcal H}}
\newcommand{\cI}{{\mathcal I}}
\newcommand{\cJ}{{\mathcal J}}
\newcommand{\cK}{{\mathcal K}}
\newcommand{\cL}{{\mathcal L}}
\newcommand{\cN}{{\mathcal N}}
\newcommand{\cR}{{\mathcal R}}
\newcommand{\cS}{{\mathcal S}}
\newcommand{\cT}{{\mathcal T}}
\newcommand{\cU}{{\mathcal U}}
\newcommand{\OM}{\Omega}
\newcommand{\B}{\bullet}
\newcommand{\ol}{\overline}
\newcommand{\ul}{\underline}
\newcommand{\vp}{\varphi}
\newcommand{\AC}{\mathop{\mathrm{AC}}}   
\newcommand{\Lip}{\mathop{\mathrm{Lip}}}   
\newcommand{\es}{\mathop{\mathrm{esssup}}}   
\newcommand{\les}{\mathop{\mathrm{les}}}   
\newcommand{\nid}{\noindent}
\newcommand{\pzr}{\phi^0_R}
\newcommand{\pir}{\phi^\infty_R}
\newcommand{\psr}{\phi^*_R}
\newcommand{\pow}{\frac{N}{N-1}}
\newcommand{\ncl}{\mathop{\mathrm{nc-lim}}}   
\newcommand{\nvl}{\mathop{\mathrm{nv-lim}}}  
\newcommand{\la}{\lambda}
\newcommand{\La}{\Lambda}    
\newcommand{\de}{\delta}    
\newcommand{\fhi}{\varphi} 
\newcommand{\ga}{\gamma}    
\newcommand{\ka}{\kappa}   

\newcommand{\core}{\heartsuit}
\newcommand{\diam}{\mathrm{diam}}

\newcommand{\lan}{\langle}
\newcommand{\ran}{\rangle}
\newcommand{\tr}{\mathop{\mathrm{tr}}}
\newcommand{\diag}{\mathop{\mathrm{diag}}}
\newcommand{\dv}{\mathop{\mathrm{div}}}

\newcommand{\al}{\alpha}
\newcommand{\be}{\beta}
\newcommand{\Om}{\Omega}
\newcommand{\na}{\nabla}

\newcommand{\cC}{\mathcal{C}}
\newcommand{\cM}{\mathcal{M}}
\newcommand{\nr}{\Vert}
\newcommand{\De}{\Delta}
\newcommand{\cX}{\mathcal{X}}
\newcommand{\cP}{\mathcal{P}}
\newcommand{\om}{\omega}
\newcommand{\si}{\sigma}
\newcommand{\te}{\theta}
\newcommand{\Ga}{\Gamma}

\newcommand{\vV}{\mathbf{v}}
\newcommand{\lbunu}{\ul{m}}
\newcommand{\ca}{\tilde{a}}
\newcommand{\Vve}{\ul{\varepsilon}}

\title[Soap Bubbles and convex cones: optimal quantitative rigidity]{Soap Bubbles and convex cones: optimal quantitative rigidity}

\author{Giorgio Poggesi}
\address{Department of Mathematics and Statistics, The University of Western Australia, 35 Stirling Highway, Crawley, Perth, WA 6009, Australia}
%
%
\email{giorgio.poggesi@uwa.edu.au}


\begin{abstract}
We consider a class of rigidity results in a convex cone $\Si \subseteq \RR^N$. These include overdetermined Serrin-type problems for a mixed boundary value problem relative to $\Si$, Alexandrov's soap bubble-type results relative to $\Si$, and a Heintze-Karcher's inequality relative to $\Si$. Each rigidity result is obtained by means of a single integral identity and holds true under weak integral conditions.
Optimal quantitative stability estimates are obtained in terms of an $L^2$-pseudodistance. 
In particular, the optimal stability estimate for Heintze-Karcher's inequality is new even in the classical case $\Si = \RR^N$.

Stability bounds in terms of the Hausdorff distance are also provided.


Several new results are established and exploited, including a new Poincar\'e-type inequality for vector fields whose normal components vanish on a portion of the boundary and an explicit (possibly weighted) trace theory -- relative to the cone $\Si$ -- for harmonic functions satisfying a homogeneous Neumann condition on the portion of the boundary contained in $\pa\Si$.

We also introduce new notions of uniform interior and exterior sphere conditions relative to the cone $\Si\subseteq\RR^N$, which allow to obtain (via barrier arguments) uniform lower and upper bounds for the gradient in the mixed boundary value-setting. In the particular case $\Si=\RR^N$, these conditions return the classical uniform interior and exterior sphere conditions (together with the associated classical gradient bounds of the Dirichlet setting).
\end{abstract}

\keywords{Heintze-Karcher's inequality, Alexandrov's Soap Bubble Theorem, Serrin's overdetermined problem, convex cones, symmetry, rigidity, integral identities, stability, quantitative estimates}
\subjclass{Primary 35N25, 53A10, 35B35; Secondary 35A23}

\maketitle

\raggedbottom

\section{Introduction}

Given a convex cone $\Si$ in $\RR^N$ with vertex at the origin, we consider a bounded domain (i.e., a bounded connected open set) $\Si \cap \Om$  -- where $\Om$ is a smooth bounded domain in $\RR^N$ --
such that its boundary relative to the cone $\Ga_0:= \Si \cap \pa\Om$ is smooth,  while $\pa \Ga_0$ is a $N-2$-dimensional manifold and $\pa(\Si\cap\Om)\setminus \Ga_0$ is smooth enough outside a singular set $\mathcal{S} \subset \pa \Si$ of finite $\ell$-dimensional upper Minkowski content for some $0 \le \ell \le N-2$.
%
%
Set $\Ga_1 :=  \pa(\Si\cap\Om) \setminus \left( \ol{\Ga}_0 \cup \ol{\cS} \right)$ and denote with $\nu$ the (exterior) unit normal vector field to $\Ga_0 \cup \Ga_1$.
We consider the following mixed boundary value problem:
\begin{equation}\label{eq:problem torsion}
	\begin{cases}
		\De u = N \quad & \text{ in } \Si \cap \Om 
		\\
		u= 0 \quad & \text{ on } \Ga_0
		\\
		u_\nu=0 \quad & \text{ on } \Ga_1 .
	\end{cases}
\end{equation}
As in \cite{PT}, we assume that the solution $u$ of \eqref{eq:problem torsion} is of class $W^{1,\infty}(\Si\cap\Om) \cup W^{ 2,2 }(\Si\cap\Om)$; such an assumption can be viewed as a gluing condition, and, as proved in \cite{PT} for cones smooth outside of the vertex, it is surely satisfied if $\ol{\Ga}_0$ and $\pa\Si$ intersect orthogonally. 
%
%
For details on the setting, see Setting \ref{Setting} and Remark \ref{rem:alternative settings}.

In the spirit of \cite{MP, MP2, Pog2}, we start our analysis providing a {\it fundamental integral identity} for each problem addressed by the present paper. Each fundamental identity encodes all the information for the relevant problem and provides an elegant short proof of the relevant rigidity result.

Setting
$$R=\frac{N\,|  \Si\cap\Om |}{|\Ga_0|} ,$$
we have the following fundamental identity for Serrin's problem in $\Si$:
\begin{equation*}
	\int_{ \Si \cap \Om} (-u) \left\{ |\na ^2 u|^2- \frac{ (\De u)^2}{N} \right\} dx + \int_{\Ga_1} u \,  \langle \na^2 u \na u, \nu \rangle \, dS_x =
	\frac{1}{2}\,\int_{\Ga_0} \left( u_\nu^2- R^2\right) (u_\nu- \langle x-z , \nu \rangle )\,dS_x ,
\end{equation*}
for any fixed $z \in \RR^N$ such that
\begin{equation}\label{eq:INTRO_inner product z in cone}
	\langle x-z ,\nu \rangle = 0 \quad \text{ for any } x \in \Ga_1 .
\end{equation}
Denoting with $H$ the mean curvature of $\Ga_0$, we have the following fundamental identity for Alexandrov's Soap Bubble Theorem in $\Si$:
\begin{equation*}
	\begin{split}
		\frac1{N-1} \left\lbrace
		\int_{ \Si \cap \Om } \left\{ |\na ^2 u|^2-\frac{(\De u)^2}{N}\right\}dx
		+
		\int_{\Ga_1} \left( - \langle \na^2 u \na u , \nu \rangle  \right) \, dS_x 
		\right\rbrace
		& +
		\frac1{R}\,\int_{\Ga_0} (u_\nu-R)^2 dS_x 
		\\
		& = 
		\int_{\Ga_0} \left( \frac{1}{R} -H \right) \, (u_\nu)^2 dS_x .
	\end{split}
\end{equation*}

Finally, if $\Ga_0$ is mean-convex, that is $H\ge 0$, we have the following fundamental identity for Heintze-Karcher's identity in $\Si$:
\begin{equation*}
	\begin{split}
		\frac1{N-1} \left\lbrace \int_{\Si \cap \Om} \left\{ |\na ^2 u|^2-\frac{(\De u)^2}{N}\right\} \, dx +  \int_{\Ga_1} \left( - \langle \na^2 u \na u , \nu \rangle  \right) \, dS_x \right\rbrace
		& + \int_{\Ga_0} \frac{(1-H\,u_\nu)^2}{H}\,dS_x 
		\\
		& =
		\int_{\Ga_0} \frac{dS_x}{H}-N |\Si\cap\Om|.
	\end{split}
\end{equation*}

The common feature of the fundamental identities above is that their left-hand sides consist of non-negative summands and that the {\it Cauchy-Schwarz deficit} of $\na^2 u$, that is $|\na^2 u| - ( \De u )^2 /N$, appears as integrand in one of those summands. Such a Cauchy-Schwarz deficit plays the role of spherical symmetry detector. In fact (see Lemma \ref{lem:sphericaldetector}), it is non-negative and equals zero if and only if $u$ is a quadratic polynomial of the form $\frac{|x-z|^2 - R^2}{2}$ for some $z$ satisfying $ \langle x-z , \nu \rangle =0$ on $\Ga_1$; in such a case the spherical shape of $\Ga_0$ ensues easily taking into account the Dirichlet boundary condition on $\Ga_0$, while the condition $\langle x-z , \nu \rangle =0$ on $\Ga_1$ tells that $\ol{\Ga}_0$ and $\pa\Si$ intersect orthogonally ($\cH^{N-2}$-a.e. in $\pa\Ga_0$).
That all the summands at the left-hand sides of the fundamental identities are non-negative easily follows by noting that $u \le 0$ in $\ol{\Si\cap\Om}$ by comparison (Lemma \ref{lem:comparison in cones}) and that $\langle \na^2u \na u , \nu \rangle \le 0$ on $\Ga_1$ by convexity of the cone $\Si$ 
(see \eqref{eq:disuguaglianza puntuale gratis per convex}).
We also notice that $u<0$ in $\Si\cap\Om$ (see, e.g., \eqref{eq:relation u dist general})  -- more precisely, $u<0$ in  $(\Si\cap\Om) \cup \Ga_1$ (see, e.g., \eqref{eq:non serve ma serve inproof reldist finer}).

Thus, denoting with $B_R(z)$ the ball of radius $R$ centered at $z$, the rigidity statement
$$
\Si\cap\Om = \Si\cap B_R(z) \quad \text{ and } \quad u(x)= \frac{1}{2} \, (|x-z|^2 - R^2)
\quad
\text{for some $z$ satisfying \eqref{eq:INTRO_inner product z in cone}},
$$
is easily obtained whenever the integral at the right-hand side in any of our fundamental identities is assumed to be non-negative. In particular, this is the case if the classical pointwise Serrin's overdetermined condition $u_\nu=const.$ is assumed on $\Ga_0$, or if $H$ is assumed to be constant on $\Ga_0$ (under a gluing condition on $\pa \Ga_0$), or if the equality sign holds true in the Heintze-Karcher's inequality relative to $\Si$, i.e., $	\int_{\Ga_0} \frac{dS_x}{H} \ge N |\Si\cap\Om|$ (which immediately follows from our last fundamental identity); see Theorems \ref{thm:rigidity Serrin}, \ref{thm:SBT cone}, and \ref{thm:heintze-karcher}.

We stress that such a rigidity is achieved here under weak integral type assumptions and in possibly non-smooth cones. 
	Rigidity under the stronger classical pointwise assumptions $u_\nu=const.$ on $\Ga_0$ or $H=const.$ on $\Ga_0$ (under a gluing condition on $\pa\Ga_0$) in cones smooth outside of their vertex was obtained in \cite{PT}.  Heintze-Karcher's inequality in smooth cones was treated in \cite{CP} (see also \cite{JWXZ} for a recent improvement of the inequality for capillary hypersurfaces in the half-space or in a wedge).
 
We also point out that convexity of the cone is not necessary to get rigidity, as rigidity can still be obtained under a weaker integral assumption, namely 
$
\int_{\Ga_1} \langle \na^2u \na u , \nu \rangle \, dS_x \le 0
$
for the Soap Bubble Theorem and Heintze-Karcher's inequality, and 
$
\int_{\Ga_1} u \,  \langle \na^2u \na u , \nu \rangle \, dS_x \ge 0 
$
for Serrin's problem; see Remark \ref{rem:nota su convexity non necessaria}. Similar observations also hold for the stability results presented in what follows (see Remark \ref{rem:nota su convexity non necassaria per Stability}). On the other hand, we point out that conterexamples to rigidity in nonconvex cones are provided in \cite{IPT}.


\bigskip

	In our setting, where non-smooth cones are allowed, characterizing the location of $z$ is a delicate issue.
	Of course, being as $\Si$ a cone with vertex at the origin, the choice $z=0$ always provides a solution, but this may not be the only possibility.
	In general (up to a change of coordinates), every convex cone is of the form $\Si = \RR^{l} \times \Tilde{\Si}$ for an integer $l \in \left[0,N\right]$, where $\Tilde{\Si}$ is a convex cone containing no lines. It is clear that, for cones of this form setting $\Om$ to be any ball centered at any point of the form $z=(z', 0_{\RR^{N-l}})$ with $z' \in \RR^l$ gives that $\Si\cap\Om$ provides a solution to all of our overdetermined-type problems\footnote{The overdetermined-type problems we are referring to consist in asking that the solution $u \in W^{1,\infty} (\Si\cap\Om) \cap W^{2,2}(\Si\cap\Om)$ of \eqref{eq:problem torsion} satisfies one of the following three overdeterminations:
	\begin{itemize}
		\item $u_\nu=const.$ on $\Ga_0$, 
		\item$H=const.$ on $\Ga_0$, 
		\item $\int_{\Ga_0} \frac{dS_x}{H} = N |\Si\cap\Om|$ (which is equivalent to $u_\nu=1/H$ on $\Ga_0$).
	\end{itemize}
}.

 These are also (the only) minimizers of the isoperimetric problem in convex cones pioneered by Lions and Pacella \cite{LP} (see also \cite{FI, BF, PT isoperimetric, RR, CGPRS, DPV2, CirLi, CavMan}). However, in contrast with the isoperimetric problem in convex cones, these may not be the only possible solutions to our overdetermined-type problems. For instance, if the boundary $\pa\Si$ of the cone has flat portions, any half ball lying on a flat portion of $\pa\Si$ provides a solution to our overdetermined-type problems.
 In fact, for a cone $\Si$ smooth outside of its vertex $0$, a complete characterization is given by the sets $\Si\cap B_R (z)$ where either $z=0$ or $z\in\pa\Si\setminus \left\lbrace 0 \right\rbrace$ and $\Ga_0$ is a half sphere lying over a flat portion of $\pa\Si$; this follows from the arguments in \cite{PT, RR}.
 However, since our analysis allows non-smooth cones this characterization does not apply. In fact, in general, if the boundary $\pa\Si$ of the cone possesses points $z$ such that $\pa\Si \cap \ol{B}_R(z)$ is contained in the boundary of a (possibly different) cone with vertex at $z$, then $\Si \cap B_R(z)$ provides another solution. For instance, in a pyramid-shaped cone in $\RR^3$ with flat faces $F_1$,$\dots$,$F_m$ such that $\bigcap_{i=1}^m F_i = \left\lbrace 0 \right\rbrace$, choosing $z$ on the $1$-dimensional intersection of two of those faces leads to additional solutions.

On the other hand, we notice that $z$ can be characterized in terms of the
linear space generated by the normal vector field $\nu (x)$ for $x \in \Ga_1$. In fact, being as $\Si$ a cone with vertex at the origin we have that $\langle x, \nu \rangle = 0$ on $\Ga_1 \subset \pa \Si$, and hence \eqref{eq:INTRO_inner product z in cone} is equivalent to $\langle z, \nu \rangle = 0$ on $\Ga_1$. That is, $z \in \left[ \mathrm{span}_{x\in \Ga_1} \nu(x) \right]^{\bot}$, where $\left[ \mathrm{span}_{x\in \Ga_1} \nu(x) \right]^{\bot}$ is the orthogonal complement in $\RR^N$ of the vector subspace $\mathrm{span}_{x\in \Ga_1} \nu(x) \subseteq \RR^N$.
%
%
In particular, 
\begin{equation}\label{eq:intro k=N}
	\text{dimension of } \, \, \mathrm{span}_{x \in \Ga_1} \nu(x) = N
\end{equation}
is a sufficient condition that guarantees that $z$ must be the origin.
Condition \eqref{eq:intro k=N} is surely verified if $\Ga_1$ contains at least a transversally nondegenerate point (see Proposition \ref{prop:nondegenerate point}). 
%
%
In particular, this is always the case if $\Si$ is a strictly convex cone (and $\Ga_1 \neq \varnothing$).

That a point of strict convexity was sufficient to force $z$ to be the origin
%
%
was noticed in \cite{PT isoperimetric}. The new condition in \eqref{eq:intro k=N} is more general and, as we are going to show, successfully applies to the study of the stability issue.

We start describing our quantitative estimates when \eqref{eq:intro k=N} is in force, in which case we obtain closeness of $\Si\cap\Om$ to $\Si\cap B_\rho(0)$, where $B_\rho(0)$ denotes the ball of radius $\rho>0$ centered at the origin.
To avoid excessive technicalities, the stability results are presented under the additional assumption that $\ol{\Ga}_0$ and $\pa\Si$ intersect in a Lipschitz way so that $\Si\cap\Om$ is a Lipschitz domain.

The crucial ingredient
	that allow to fix the center $z$ of the approximating ball in $\left[ \mathrm{span}_{x\in \Ga_1} \nu(x) \right]^{\bot}$ (that is the origin whenever \eqref{eq:intro k=N} is in force) 
	is the new Poincar\'e-type inequality (see Theorems \ref{thm:Poincare new RN}, \ref{thm:Poincare new in general}, \ref{thm:Strengthened Poincare new RN})
	\begin{equation}\label{eq:INTRO new Poicare caso particolare p=2}
		\nr \vV \nr_{L^p (\Si \cap \Om)} \le \eta_{p}( \Ga_1 , \Si\cap\Om)^{-1} \, \nr  \na \vV \nr_{L^p (\Si \cap \Om)} , 
	\end{equation}
	which holds true for any $\vV : \Si\cap\Om \to \mathrm{span}_{x \in \Ga_1 }\, \nu(x) \subseteq \RR^N$ such that $\vV \in W^{1,p}(\Si\cap\Om)$ and $\langle \vV , \nu \rangle = 0$ a.e. in $\Ga_1$.
	Such a Poincar\'e-type inequality seems to be new.
	
	Notice that, if we consider the function
	\begin{equation}\label{eq:INTRO def h}
		h := q - u , \quad \text{ where } q \text{ is the quadratic function defined as } q(x)=\frac12\, |x-z|^2 ,
	\end{equation}
	the choice $z = 0$ always guarantees that $\langle \na h , \nu \rangle = 0$ on $\Ga_1$, by the homogeneous Neumann condition $u_\nu=0$ on $\Ga_1$ and $\langle x,\nu \rangle =0$ on $\Ga_1\subset\pa\Si$; therefore, if $\mathrm{span}_{x \in \Ga_1 } \, \nu(x) = \RR^N$ the new Poincar\'e-type inequality can be applied with $\vV:=\na h$.
	%
	%

The quantitative estimates provided in the present paper include, as particular cases\footnote{Precise general statements will be provided later on in this Introduction.} and when \eqref{eq:intro k=N} is in force, the following sharp bounds for the $L^2$-pseudodistance of $\Si\cap\Om$ to $\Si \cap B_\rho(0)$, for some suitable choice of the radius $\rho>0$:
\begin{equation}\label{eq:intro k=N SBT}
	\nr |x| - \rho \nr_{L^2(\Ga_0)} \le C \, \nr H_0 - H \nr_{L^2(\Ga_0)} ,
\end{equation}
\begin{equation}\label{eq:intro k=N HK}
	\nr |x|^2 - \rho^2 \nr_{L^2(\Ga_0)} \le C \, \left( \int_{\Ga_0} \frac{dS_x}{H} - N | \Si\cap\Om | \right)^{1/2}.
\end{equation}

In \eqref{eq:intro k=N SBT} $\rho$ can be chosen to be equal to the reference constant $R=N|\Si\cap\Om|/|\Ga_0|$, whereas in \eqref{eq:intro k=N HK} we can set either
$\rho= \sqrt{ \int_{\Si\cap\Om}( |x|^2 - 2u ) \, dx /|\Si\cap\Om| }$ (as in the proof of Theorem \ref{thm:HK stability pseudodistance})
or $\rho= \sqrt{ \int_{\Ga_0} |x|^2 \, dS_x /|\Ga_0| }$ (see Remark \ref{rem:HK stability scelta rho}). 
In \eqref{eq:intro k=N SBT}, $H_0$ is a suitable reference constant for the mean curvature (see Theorem \ref{thm:INTRO_SBT stab Lipschitz costanti non geometriche} for details).
To avoid an excessively long treatment, we postpone the corresponding stability analysis for Serrin's overdetermined problem including the
sharp estimate $\nr |x| - R \nr_{L^2(\Ga_0)} \le C \, \nr u_\nu^2 -R^2 \nr_{L^2(\Ga_0)}$ to a forthcoming paper. The reason is that to treat Serrin's problem we will need to work in a weighted setting (due to the presence of the term $(-u)$ multiplying the Cauchy-Schwarz deficit of $\na^2u$ in the relevant fundamental identity displayed above) and additional machinery will be required.

\medskip

We stress that the closeness in $L^2$-pseudodistance obtained here is stronger than the closeness in terms of the so called asymmetry in measure. In fact, clearly the $L^2$-pseudodistance is stronger than the $L^1$- pseudodistance, being as
$$
\nr |x| - \rho \nr_{L^1(\Ga_0)} \le |\Ga_0|^{1/2} \nr |x| - \rho \nr_{L^2(\Ga_0)} ,
$$
by H\"older's inequality\footnote{In turn, the term $|\Ga_0|^{1/2} $ may be estimated in terms of the parameter $\lbunu$ (or $\ul{r}_i$) introduced next by means of $|\Ga_0|\le N|\Si\cap\Om|/ \lbunu$, being as
$N|\Si\cap\Om|=\int_{\Ga_0} u_\nu \, dS_x \ge \lbunu |\Ga_0|$.}. 
In turn, \cite[Proposition 6.1]{CGPRS} informs us that the $L^1$-pseudodistance is stronger than the asymmetry in measure, that is
%
%
$$
| (\Si\cap\Om) \De (\Si\cap B_\rho(0))| \lesssim \nr |x| - \rho \nr_{L^1(\Ga_0)} .
$$
Similar deductions are recurrent in the study of quantitative stability for the isoperimetric problem (see, e.g., \cite{FMP, FI}).

\bigskip


The constants in our quantitative estimates can be explicitly computed and estimated in terms of a few chosen geometrical parameters. At first, we obtain \eqref{eq:intro k=N HK} for an explicit constant $C$ only depending on $\eta_{2}(\Ga_1, \Si\cap\Om)$ and $\la_2 (\Ga_0)$, and \eqref{eq:intro k=N SBT} for an explicit constant $C$ only depending on $\eta_{2}(\Ga_1, \Si\cap\Om)$, $\la_2 (\Ga_0)$, and an upper bound for $\nr u_\nu \nr_{L^\infty (\Ga_0)}$. 
With $\la_2(\Ga_0)$ we denote the constant of the trace embedding $W^{1,2}(\Si \cap \Om) \hookrightarrow L^2(\Ga_0)$, that is, the best constant in the trace inequality
\begin{equation}\label{eq:trace inequality intro}
	\nr v \nr_{L^2(\Ga_0)} \le \la_2(\Ga_0) \, \nr  v \nr_{W^{1,2}(\Si \cap \Om)} ,
\end{equation}
which holds true for any $v \in W^{1,2}(\Si\cap\Om)$.

Next, we also show that \eqref{eq:intro k=N SBT} and \eqref{eq:intro k=N HK} hold for explicit constants depending on more explicit geometrical parameters. 
In fact, we are able to drop the dependence on the general trace embedding constant $\la_2 (\Ga_0)$ (appearing in the constants of \eqref{eq:intro k=N SBT} and \eqref{eq:intro k=N HK}) by proving ad-hoc 
trace-type inequalities for harmonic functions\footnote{Notice that the function $h$ defined in \eqref{eq:INTRO def h} is harmonic.} with explicit constants depending only on $\max_{\ol{\Si\cap\Om}}(-u)$ and a positive lower bound $\ul{m}$ for $u_\nu$ on $\Ga_0$. In turn, $\max_{\ol{\Si\cap\Om}}(-u)$ can be estimated only in terms of the diameter $d_{\Si\cap\Om}$ of $\Si\cap\Om$ (see Lemma \ref{lem:max stima con diametro}).
Moreover, we introduce the new notions of uniform interior and exterior sphere conditions relative to the cone $\Si$. These return the classical known uniform sphere conditions in the case $\Si =\RR^N$; when $\Si \subsetneq \RR^N$ they are related to how $\ol{\Ga}_0$ and $\pa\Si$ intersect (see Subsections \ref{subsec:Hopf} and \ref{subsec:upper bound gradient}). As in the classical case $\Si=\RR^N$, these conditions reveal to be useful tools to perform barrier arguments in the mixed boundary value setting for $\Si \subset \RR^N$.
In fact, we prove that the $\ul{r}_i$-uniform interior sphere condition relative to $\Si$ guarantees the validity of Hopf-type estimates -- in particular, we prove in Lemma \ref{lem:Hopf in cones} that $u_\nu \ge \ul{r}_i$ on $\Ga_0$ so that we can take $\lbunu:=\ul{r}_i$ -- whereas the  $\ul{r}_e$-uniform exterior sphere condition relative to $\Si$ allows us to obtain an explicit uniform upper bound for $|\na u|$ on $\Ga_0$ (Lemma \ref{lem:upper bound gradient on Ga0}), and hence also for $\nr \na u \nr_{L^\infty(\Si\cap\Om)}$ (Lemma \ref{lem:upper bound gradient on Ga0 = whole domain}), only in terms of $N$, $\ul{r}_e$, and the diameter $d_{\Si\cap\Om}$ of $\Si\cap\Om$.

Thanks to these results, whenever $\Si\cap\Om$ satisfies the $\ul{r}_i$-uniform interior sphere condition relative to $\Si$, we obtain 
%
%
\eqref{eq:intro k=N HK} with an explicit $C=C(\eta_{2}(\Ga_1, \Si\cap\Om), \ul{r}_i, d_{\Si\cap\Om})$; if $\Si\cap\Om$ also satisfies the $\ul{r}_e$-uniform exterior sphere condition relative to $\Si$, then we get \eqref{eq:intro k=N SBT} with an explicit $C=C(\eta_{2}(\Ga_1, \Si\cap\Om), \ul{r}_i, \ul{r}_e, d_{\Si\cap\Om})$.

We also point out that uniformity of $\eta_{p}(\Ga_1, \Si\cap\Om)$ in the class of uniformly bounded and uniformly Lipschitz domains $G$ such that $\Ga_1 \subset \ol{G}$ may be obtained following the argument in \cite[Theorem 2]{BouCha}.

\bigskip

Remarkably, our method is robust enough to give a complete characterization of the stability issue even in absence of the assumption \eqref{eq:intro k=N}. In fact, in general we can set
$$
k:= \text{dimension of } \, \, \mathrm{span}_{x \in \Ga_1} \nu(x) ,
$$ 
which may be any integer $0 \le k \le N$,
%
%
and obtain closeness of $\Si\cap \Om$ to $\Si\cap B_\rho(z)$ for some suitable point $z$ whose components in the $k$ directions spanned by $\mathrm{span}_{x \in \Ga_1} \nu(x)$ are set to be $0$.
%
%
General statements containing sharp stability estimates for the $L^2$-pseudodistance are presented in what follows.
%
%
Notice that the case $\Ga_1 = \emptyset $ is included in our treatment (in that case, we have $k=0$).

Up to changing orthogonal coordinates, we can assume that $\mathrm{span}_{x \in \Ga_1} \, \nu(x)$ is the space generated by the first $k$ axes $\mathbf{e}_1, \dots, \mathbf{e}_k$.
Notice that, in this way, if we set $z \in \RR^N$ of the form
\begin{equation}\label{eq:INTRO_choice of z 1of2}
	z=(0, \dots , 0 , z_{k+1}, \dots, z_k) \in \RR^N ,
\end{equation}
it surely satisfies \eqref{eq:INTRO_inner product z in cone}.
We also fix
\begin{equation}\label{eq:INTRO_choice of z 2of2}
	z_i = \frac{1}{|\Si\cap\Om|} \int_{\Si\cap\Om} ( x_i - u_i(x) ) \, dx \quad \text{for } i=k+1, \dots, N ,
\end{equation}
where $u_i$ denotes the $i$-th partial derivative of $u$ and $x_i$ the $i$-th component of the vector $x \in \RR^N$.
With this choice of $z$, if we consider the harmonic function $h$ defined in \eqref{eq:INTRO def h},
we have that
\begin{equation*}
	(h_1, \dots, h_k , 0 , \dots, 0) \in \mathrm{span}_{x \in \Ga_1} \, \nu(x) \subseteq \RR^N ,
	\quad \quad
	\langle (h_1, \dots, h_k , 0 , \dots, 0), \nu \rangle = \langle \na h , \nu \rangle  = 0 \, \text{ on } \Ga_1 
\end{equation*}
and
\begin{equation*}
	\int_{\Si \cap \Om} h_i \, dx = 0 \quad \text{for } i = k+1, \dots, N .
\end{equation*}
The identity $\langle \na h , \nu \rangle = 0$ on $\Ga_1$ easily follows by \eqref{eq:INTRO_inner product z in cone} and the
%
%
Neumann condition $u_\nu = 0$ on $\Ga_1$.

This will allow to use the new Poincar\'e inequality \eqref{eq:INTRO new Poicare caso particolare p=2} with $\vV := (h_1, \dots, h_k , 0 , \dots, 0)$ and the classical Poincar\'e inequality for functions with zero mean
\begin{equation*}
	\nr v \nr_{L^p(\Si\cap\Om)} \le \mu_{p} (\Si\cap\Om)^{-1} \nr  \na v  \nr_{L^p(\Si\cap\Om)}, 
	\quad
	\text{ for } v \in L^p (G) \cap W^{1,p}_{loc} (G)
	\text{ with }
	v_{\Si\cap\Om}=0,
\end{equation*}
with $v:=h_i$ for $i=k+1, \dots, N$.
%
%

Setting
\begin{equation*}
	\La_{p}(k) :=
	\begin{cases}
		\mu_{p}(\Si\cap\Om)^{-1} \quad & \text{if } k=0
		\\
		\eta_{p} (\Ga_1 , \Si \cap \Om )^{-1} \quad & \text{if } k=N
		\\
		\max\left[ \mu_{p}(\Si\cap\Om)^{-1} , \, \eta_{p} ( \Ga_1 ,  \Si \cap \Om )^{-1}  \right] \quad & \text{if } 1 \le k \le N-1 ,
	\end{cases}
\end{equation*}
we are now ready to present the sharp stability results for the $L^2$-pseudodistance.

\begin{thm}[Lipschitz stability for Alexandrov's Soap Bubble Theorem in terms of an $L^2$-psudodistance]
	\label{thm:INTRO_SBT stab Lipschitz costanti non geometriche}
	Given the point $z$ defined in \eqref{eq:INTRO_choice of z 1of2} and \eqref{eq:INTRO_choice of z 2of2}, assume that $\int_{\pa \Ga_0} \langle x -z ,\mathbf{n}_x\rangle d \cH^{N-2}_x \le 0$ (which, in particular, always holds true with the equality sign whenever $\ol{\Ga}_0$ and $\pa\Si$ intersect orthogonally) and set
	\begin{equation*}
		H_0 = \frac{1}{R} - \frac{\int_{\pa \Ga_0} \langle x-z,\mathbf{n}_x\rangle d \cH^{N-2}_x }{(N-1)N|\Si\cap\Om|} .
	\end{equation*}
	Then, we have that
	\begin{equation}\label{eq:INTRO_Stab Lipschitz SBT IN SECTION}
		\nr |x-z| - R \nr_{L^2(\Ga_0)} \le \widehat{C} \, \nr H_0 - H \nr_{L^2(\Ga_0)} ,
	\end{equation}
	where $\widehat{C}$ is some explicit constant only depending on $N$, $\la_2(\Ga_0)$, $\nr u_\nu \nr_{L^\infty(\Ga_0)}$, and $\La_{2}(k)$.

	If $\lbunu >0$ is a lower bound for $u_\nu$ on $\Ga_0$, then the dependence on $\la_2(\Ga_0)$ can be dropped, and \eqref{eq:INTRO_Stab Lipschitz SBT IN SECTION} holds true for some explicit constant $\widehat{C}$ that only depends on $N$, $\lbunu$, $\max\limits_{\ol{\Si\cap\Om}}(-u)$, $\nr u_\nu \nr_{L^\infty(\Ga_0)}$, and $\La_{2}(k)$.
	Here, the dependence on $\max\limits_{\ol{\Si\cap\Om}}(-u)$ can be replaced by the dependence on the diameter $d_{\Si\cap\Om}$. Moreover, whenever $\Si\cap\Om$ satisfies the $\ul{r}_i$-uniform interior sphere condition relative to $\Si$, we can take $\lbunu:= \ul{r}_i$.
	
	\smallskip
	
	If $\Si\cap\Om$ satisfies the $\ul{r}_i$-uniform interior and $\ul{r}_e$-uniform exterior sphere conditions relative to $\Si$, then \eqref{eq:INTRO_Stab Lipschitz SBT IN SECTION} holds true for some explicit constant $\widehat{C}$ that only depends on $N$, $\ul{r}_i$, $\ul{r}_e$, $d_{\Si\cap\Om}$, and $\La_{2}(k)$.
\end{thm}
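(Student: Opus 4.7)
The proof proceeds from the fundamental identity for Alexandrov's Soap Bubble Theorem in $\Si$ recalled in the Introduction. Setting $h := q - u$ with $q(x) := \tfrac12|x-z|^2$, so that $\De h = 0$ and the Cauchy--Schwarz deficit $|\nabla^2 u|^2 - (\De u)^2/N$ equals $|\nabla^2 h|^2$, the identity reads
\begin{equation*}
(N-1)^{-1}\nr \nabla^2 h \nr_{L^2(\Si\cap\Om)}^2 + R^{-1}\nr u_\nu - R \nr_{L^2(\Ga_0)}^2 + (\text{nonneg}) \le \int_{\Ga_0}(R^{-1} - H) u_\nu^2 \, dS_x .
\end{equation*}
Decomposing $R^{-1} - H = (H_0 - H) + (R^{-1} - H_0)$ and using the sign hypothesis on $\int_{\pa\Ga_0}\langle x-z, \mathbf{n}_x\rangle \, d\cH^{N-2}_x$, which forces $R^{-1} - H_0 \le 0$, the right-hand side is dominated by $\int_{\Ga_0}(H_0 - H) u_\nu^2 \, dS_x$.

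The heart of the argument is an absorbing step that upgrades a naive square-root bound to the sharp linear one. Minkowski's surface divergence identity applied on $\Ga_0$ to $X = x - z$, combined with $\langle x-z, \nu\rangle = 0$ on $\Ga_1$ (hence $\int_{\Ga_0}\langle x-z, \nu\rangle \, dS_x = N|\Si\cap\Om|$) and the precise choice of $H_0$, yields the orthogonality
\begin{equation*}
\int_{\Ga_0}(H_0 - H)\langle x-z, \nu\rangle \, dS_x = 0.
\end{equation*}
Plugging $u_\nu = \langle x-z, \nu\rangle - h_\nu$ into $u_\nu^2 = u_\nu \langle x-z, \nu\rangle - u_\nu h_\nu$ and applying Cauchy--Schwarz piece by piece, one bounds $\int_{\Ga_0}(H_0 - H) u_\nu^2 \, dS_x$ by a constant times $\nr H_0 - H \nr_{L^2(\Ga_0)} \bigl( \nr u_\nu - R \nr_{L^2(\Ga_0)} + \nr h_\nu \nr_{L^2(\Ga_0)} \bigr)$. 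To control $\nr h_\nu \nr_{L^2(\Ga_0)}$ by $\nr \nabla^2 h \nr_{L^2(\Si\cap\Om)}$, I would chain the trace inequality with the new Poincar\'e inequality \eqref{eq:INTRO new Poicare caso particolare p=2} applied to the vector field $(h_1, \dots, h_k, 0, \dots, 0)$ -- whose normal component vanishes on $\Ga_1$ since $\nu|_{\Ga_1}$ lies in the span of the first $k$ coordinates (by construction of $z$ in \eqref{eq:INTRO_choice of z 1of2}--\eqref{eq:INTRO_choice of z 2of2}) and $h_\nu = 0$ on $\Ga_1$ -- together with the classical zero-mean Poincar\'e inequality for the components $h_i$, $i \ge k+1$, which have vanishing mean by \eqref{eq:INTRO_choice of z 2of2}. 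Setting $a := \nr u_\nu - R \nr_{L^2(\Ga_0)}$, $b := \nr \nabla^2 h \nr_{L^2(\Si\cap\Om)}$, $c := \nr H_0 - H \nr_{L^2(\Ga_0)}$, the fundamental identity becomes $R^{-1} a^2 + (N-1)^{-1} b^2 \le C'\,c(a + b)$, and Young's inequality absorbs the right-hand side to give the linear bounds $a, b \le C''\,c$.

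Finally, the same Poincar\'e--trace chain applied to $h - c_h$, with $c_h := (1/|\Si\cap\Om|)\int_{\Si\cap\Om} h \, dx$, yields $\nr h - c_h \nr_{L^2(\Ga_0)} \le C''' c$. Since $h = \tfrac12|x-z|^2$ on $\Ga_0$, combining this with the identities $\int_{\Ga_0} u_\nu \, dS_x = R|\Ga_0|$ and $\int_{\Ga_0} |x-z|^2 u_\nu \, dS_x = 2 N|\Si\cap\Om| c_h$ (the latter from applying the divergence theorem to $|x-z|^2 \nabla u$) produces linear control of $|R^2 - 2c_h|$ by $c$, so $\nr |x-z|^2 - R^2 \nr_{L^2(\Ga_0)} \le \widehat{C} c$; factoring $|x-z|^2 - R^2 = (|x-z| - R)(|x-z| + R)$ and bounding $|x-z| + R \ge R$ from below yields the stated linear estimate. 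The successive refinements of $\widehat{C}$ follow by replacing: the abstract trace constant $\la_2(\Ga_0)$ with the ad-hoc trace inequalities for harmonic functions (valid since $h$ is harmonic), depending only on $\max_{\ol{\Si\cap\Om}}(-u)$ and $\lbunu$; the quantity $\max_{\ol{\Si\cap\Om}}(-u)$ with $d_{\Si\cap\Om}$ via Lemma~\ref{lem:max stima con diametro}; and the pointwise gradient bounds on $\Ga_0$ (giving both $\lbunu \ge \ul{r}_i$ and the $L^\infty$ upper bound on $u_\nu$) via the Hopf-type Lemma~\ref{lem:Hopf in cones} together with Lemmas~\ref{lem:upper bound gradient on Ga0}--\ref{lem:upper bound gradient on Ga0 = whole domain}. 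The hard part is the absorbing step, which crucially relies on both the orthogonality identity (which dictates the precise definition of $H_0$) and the $L^2(\Ga_0)$ trace control of $h_\nu$ by $\nr \nabla^2 h \nr_{L^2(\Si\cap\Om)}$ provided by the new Poincar\'e inequality.
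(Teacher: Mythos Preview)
Your absorbing step is essentially the paper's argument (with a cosmetic variation in how the right-hand side is split), and your trace--Poincar\'e chain for $\nr h_\nu\nr_{L^2(\Ga_0)}\le C\,\nr\na^2 h\nr_{L^2(\Si\cap\Om)}$ is exactly Lemma~\ref{lem:traceinequality h e nah con lambda}/Corollary~\ref{cor:trace type unweigthed con m} combined with Lemma~\ref{lem:Mixed pp Poincareaigradienti}. Up to and including the bounds $a,b\le C''c$ your argument is correct and matches the paper.

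The gap is in your final step. Your divergence identity $\int_{\Ga_0}|x-z|^2 u_\nu\,dS_x = 2N|\Si\cap\Om|\,c_h$ is correct, but combined with $\int_{\Ga_0}u_\nu = N|\Si\cap\Om|$ it yields only $\int_{\Ga_0}(h-c_h)\,u_\nu\,dS_x = 0$, a tautology that carries no information about $|R^2-2c_h|$. If you try to extract something by writing $u_\nu = R + (u_\nu-R)$, you obtain $R\int_{\Ga_0}(h-c_h) = -\int_{\Ga_0}(h-c_h)(u_\nu-R)$, which controls $|h_{\Ga_0}-c_h|$ quadratically in $c$, not $|R^2/2-c_h|$ linearly. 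There is no a~priori reason the volume-average $c_h = h_{\Si\cap\Om}$ should be close to $R^2/2$; you would still need to compare one of $c_h$, $h_{\Ga_0}$ to $R^2/2$, and that comparison is precisely the missing piece.

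The paper bypasses this entirely with a one-line triangle inequality on $\Ga_0$:
\[
\bigl|\,|x-z|-R\,\bigr| \le \bigl|\,|x-z|-|\na u|\,\bigr| + \bigl|\,|\na u|-R\,\bigr| \le |\na h| + |u_\nu - R|,
\]
so that $\nr\,|x-z|-R\,\nr_{L^2(\Ga_0)}\le \nr\na h\nr_{L^2(\Ga_0)}+\nr u_\nu-R\nr_{L^2(\Ga_0)}$, and both terms are already linearly controlled by $\nr H_0-H\nr_{L^2(\Ga_0)}$ via the absorbing step and the trace--Poincar\'e bound on $\na h$. This avoids ever having to identify the ``right'' constant $c_h$ or $R^2/2$. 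Replace your last paragraph with this and the proof closes; the refinements of $\widehat{C}$ then follow exactly as you describe.
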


\begin{thm}[Optimal stability for Heintze-Karcher's inequality in terms of an $L^2$-psudodistance]
	\label{thm:INTRO_HK stability pseudodistance}
	Let $\Ga_0$ be mean convex.
	Given the point $z$ defined in \eqref{eq:INTRO_choice of z 1of2} and \eqref{eq:INTRO_choice of z 2of2}, there exists a radius $\rho >0$ such that
	\begin{equation}\label{eq:INTRO_STIMA finale HK}
		\nr \frac{|x-z|^2 - \rho^2}{2} \nr_{L^2(\Ga_0)} \le \widehat{C} \, \left( \int_{\Ga_0} \frac{dS_x}{H} - N | \Si\cap\Om | \right)^{1/2} ,
	\end{equation}
	with
		$$
		\widehat{C}:= \sqrt{N-1} \,  \la_2(\Ga_0)^2  \left( 1 + \La_{2}(k)^{2} \right) .
		$$
%
		
		If $\lbunu >0$ is a lower bound for $u_\nu$ on $\Ga_0$, then the dependence on $\la_2(\Ga_0)$ can be dropped, and \eqref{eq:INTRO_STIMA finale HK} remains true with
		$$
		\widehat{C} := \frac{\sqrt{N-1}}{\lbunu}   \left( N \, \La_{2}(k)^{2} + 2 \max\limits_{\ol{\Si\cap\Om}} (-u) \right) .
		$$
		In turn, $\max\limits_{\ol{\Si\cap\Om}} (-u)$ can be explicitly estimated by $N$ and the diameter $d_{\Si\cap\Om}$ only. 
		
		Whenever $\Si\cap\Om$ satisfies the $\ul{r}_i$-uniform interior sphere condition relative to $\Si$, we can take $\lbunu:= \ul{r}_i$, and \eqref{eq:INTRO_STIMA finale HK} holds true for some explicit constant only depending on $N$, $\ul{r}_i$, $d_{\Si\cap\Om}$, and $\La_{2}(k)$.
\end{thm}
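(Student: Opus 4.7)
The strategy is to combine the fundamental Heintze--Karcher identity stated above with the harmonic auxiliary function $h := q - u$, where $q(x) := \tfrac{1}{2}|x-z|^{2}$. Since $\Delta q = N = \Delta u$ the function $h$ is harmonic, and the identity $\nabla^{2} q = I$ gives
\begin{equation*}
|\nabla^{2} h|^{2} = |\nabla^{2} u|^{2} - 2\Delta u + N = |\nabla^{2} u|^{2} - \frac{(\Delta u)^{2}}{N},
\end{equation*}
so the Cauchy--Schwarz deficit of $\nabla^{2} u$ equals $|\nabla^{2} h|^{2}$. Discarding the other nonnegative summands on the left-hand side of the fundamental identity produces
\begin{equation*}
\frac{1}{N-1}\int_{\Si\cap\Om}|\nabla^{2} h|^{2}\, dx \;\le\; D, \qquad D := \int_{\Ga_{0}}\frac{dS_{x}}{H} - N|\Si\cap\Om|.
\end{equation*}

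I then upgrade this Hessian bound to an $L^{2}$-bound on $\nabla h$. By the choice of coordinates together with the choice of $z$ in \eqref{eq:INTRO_choice of z 1of2}--\eqref{eq:INTRO_choice of z 2of2}, the vector field $\vV := (h_{1},\dots,h_{k},0,\dots,0)$ takes values in $\mathrm{span}_{x\in\Ga_{1}}\nu(x)$ and satisfies $\langle \vV,\nu\rangle = \langle\nabla h,\nu\rangle = 0$ on $\Ga_{1}$ (combining $\langle x-z,\nu\rangle = 0$ on $\Ga_{1}\subset\pa\Si$ with $u_{\nu}=0$), while each $h_{i}$ with $i\ge k+1$ has zero mean on $\Si\cap\Om$. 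Applying the new Poincar\'e-type inequality \eqref{eq:INTRO new Poicare caso particolare p=2} to $\vV$ and the classical zero-mean Poincar\'e inequality to each scalar $h_{i}$ with $i\ge k+1$, and recognising $\sum_{j}h_{ij}^{2}$ as the $i$-th row of $|\nabla^{2} h|^{2}$, gives
\begin{equation*}
\|\nabla h\|_{L^{2}(\Si\cap\Om)}^{2} \;\le\; \La_{2}(k)^{2}\,\|\nabla^{2} h\|_{L^{2}(\Si\cap\Om)}^{2} \;\le\; (N-1)\La_{2}(k)^{2}\, D.
\end{equation*}

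I now select the radius by $\rho^{2}/2 := \bar h := |\Si\cap\Om|^{-1}\int_{\Si\cap\Om} h\, dx$, which also agrees with the formula $\rho^{2} = |\Si\cap\Om|^{-1}\int_{\Si\cap\Om}(|x-z|^{2}-2u)\, dx$ announced in the Introduction. Since $u=0$ on $\Ga_{0}$, on that face $h-\rho^{2}/2 = \tfrac{1}{2}(|x-z|^{2}-\rho^{2})$, so the trace inequality \eqref{eq:trace inequality intro} applied to $h-\bar h$ together with the classical Poincar\'e inequality yields
\begin{equation*}
\Bigl\|\tfrac{|x-z|^{2}-\rho^{2}}{2}\Bigr\|_{L^{2}(\Ga_{0})}^{2} \;\le\; \la_{2}(\Ga_{0})^{2}\bigl(1+\La_{2}(k)^{2}\bigr)\|\nabla h\|_{L^{2}(\Si\cap\Om)}^{2},
\end{equation*}
which combined with the previous step delivers \eqref{eq:INTRO_STIMA finale HK} with a constant of the form claimed in the first part.

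For the weighted form of $\widehat{C}$ the generic trace embedding is replaced by the ad-hoc trace theory for harmonic functions with homogeneous Neumann data on $\Ga_{1}$ promised in the abstract: applied to $h-\rho^{2}/2$, it bounds the $L^{2}(\Ga_{0})$-norm in terms of $\lbunu$, $\|\nabla h\|_{L^{2}}$ and $\max_{\ol{\Si\cap\Om}}(-u)$, and the already-established Poincar\'e estimate for $\nabla h$ converts these into the two summands $N\La_{2}(k)^{2}$ and $2\max_{\ol{\Si\cap\Om}}(-u)$. The purely geometric corollary is reached by taking $\lbunu := \ul{r}_{i}$ from the $\ul{r}_{i}$-interior-sphere Hopf-type lemma and controlling $\max_{\ol{\Si\cap\Om}}(-u)$ by $d_{\Si\cap\Om}$. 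The delicate point throughout is the clean decoupling of $\nabla h$ into its first-$k$ and last-$(N-k)$ components, controlled respectively by the new Poincar\'e inequality (through $\langle\nabla h,\nu\rangle=0$ on $\Ga_{1}$) and by the classical one (through the zero-mean condition forced by \eqref{eq:INTRO_choice of z 2of2}); this decoupling is exactly what the choice $z\in[\mathrm{span}_{x\in\Ga_{1}}\nu(x)]^{\perp}$ with axis-aligned coordinates is engineered to guarantee.
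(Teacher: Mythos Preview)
Your proposal is correct and follows essentially the same route as the paper: both proofs set $h=q-u$, identify $|\nabla^2 h|^2$ with the Cauchy--Schwarz deficit, extract $\|\nabla^2 h\|_{L^2}^2\le (N-1)D$ from the Heintze--Karcher identity, apply the combined Poincar\'e inequality of Lemma~\ref{lem:Mixed pp Poincareaigradienti} to pass to $\|\nabla h\|_{L^2}$, choose $\rho^2=2\,h_{\Si\cap\Om}$, and finish with the trace inequality (either the generic one \eqref{eq:trace inequality intro} or the ad-hoc weighted version \eqref{eq:weighted trace for h - h_SicapOm ma valida per traslazioni}). The only minor discrepancy is bookkeeping: your chaining of the trace and Poincar\'e estimates yields a constant like $\sqrt{N-1}\,\la_2(\Ga_0)\,\La_2(k)\sqrt{1+\La_2(k)^2}$ rather than the paper's $\sqrt{N-1}\,\la_2(\Ga_0)^2(1+\La_2(k)^2)$; you correctly flag this with ``of the form claimed,'' and the difference is immaterial to the result.
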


Let us comment on the constant $\La_{p}(k)$ appearing (with $p:=2$) in the statements above. By definition, this is equal to $\mu_{p} (\Si\cap\Om)^{-1}$ in the case $k=0$, $\eta_{p} (\Ga_1, \Si\cap\Om)^{-1}$ in the case $k=N$, and $\max\left[ \mu_{p}(\Si\cap\Om)^{-1} , \, \eta_{p} ( \Ga_1 ,  \Si \cap \Om )^{-1}  \right]$ if $1 \le k \le N-1$.
As explained in Remark \ref{rem:stime mu HS}, the Poincar\'e constant $\mu_{p} (\Si\cap\Om)^{-1}$ can be explicitly estimated in terms of the so-called John parameter and (either) the diameter (or the volume); in turn, the John parameter may be explicitly estimated in terms of the parameters of Lipschitz regularity associated to $\Si\cap\Om$ (see Remark \ref{rem:stima b_0}).
Uniformity of $\eta_{p}( \Ga_1 ,  \Si \cap \Om )^{-1}$, and hence also $\La_{p}(k)$, may be obtained in the class of uniformly bounded and uniformly Lipschitz domains $G$ such that $\Ga_1 \subset \ol{G}$, as already mentioned (see also Remark \ref{rem:casi k=0 e k=1 evitando eta}).
We also point out that, in the case $k=1$, \eqref{eq:INTRO new Poicare caso particolare p=2} reduces to a Poincar\'e-type inequality for (scalar) functions with zero trace on a subset of the boundary, and hence we are able to provide explicit estimates for $\eta_{p}( \Ga_1 ,  \Si \cap \Om )^{-1}$ (see Remark \ref{rem:casi k=0 e k=1 evitando eta}).

\medskip
  
In Section \ref{sec:Stronger stability rhoe - rhoi}, we also provide additional stability estimates for the stronger Hausdorff distance (in place of the $L^2$-pseudodistance used above). More precisely, we provide bounds for $\rho_e -\rho_i$ where $\rho_e= \max_{\ol{\Ga}_0} |x-z|$  and $\rho_i= \min_{\ol{\Ga}_0} |x-z|$; we stress that, even though a stronger measure of closeness to the symmetric configuration (i.e., $\rho_e -\rho_i$) is obtained in those results, we still allow the deviations on the right-hand side of our estimates to be measured by weak norms ($L^2$ or $L^1$), in the spirit of \cite{MP3,MP2,MP}. See Theorems \ref{thm:SBT stability rhoe rhoi}, \ref{thm:HK stability rhoe rhoi}.

\medskip

We point out that
different choices of the point $z$ can lead to alternative stability results. For instance, we may avoid using \eqref{eq:INTRO new Poicare caso particolare p=2} and hence completely remove the dependence on $\eta_{p}( \Ga_1 ,  \Si \cap \Om )^{-1}$ for any $0 \le k \le N$, at the cost of leaving the point $z$ free to have non-zero components also in the directions
spanned by $\nu$ on $\Ga_1$, as stated in the following.
\begin{thm}\label{thm:intro alternative z}
Setting
\begin{equation}\label{eq:Alternative choice z}
z= \frac{1}{|\Si\cap\Om|} \int_{\Si\cap\Om} (x -\na u) \, dx ,
\end{equation}
Theorems \ref{thm:INTRO_SBT stab Lipschitz costanti non geometriche} and \ref{thm:INTRO_HK stability pseudodistance} remain true with $\La_{2}(k)$ replaced simply by $\mu_{2}(\Si\cap\Om)^{-1}$.

In turn, $\mu_{2}(\Si\cap\Om)^{-1}$ may be explicitly estimated as in Remark \ref{rem:stime mu HS}.
\end{thm}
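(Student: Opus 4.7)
The plan is to revisit the proofs of Theorems \ref{thm:INTRO_SBT stab Lipschitz costanti non geometriche} and \ref{thm:INTRO_HK stability pseudodistance} and observe that the sole role played by the Poincaré-type constant $\La_{2}(k)$ is to produce a bound of the form
\begin{equation*}
\nr \na h \nr_{L^2(\Si\cap\Om)} \le \La_{2}(k) \, \nr \na^2 h \nr_{L^2(\Si\cap\Om)}
\end{equation*}
for the gradient of the harmonic function $h := q - u$, where $q(x)=\tfrac{1}{2}|x-z|^2$. In the original argument this is achieved by splitting $\na h$ into the $k$ components lying in $\mathrm{span}_{x\in\Ga_1}\nu(x)$ (handled via the new Poincaré inequality \eqref{eq:INTRO new Poicare caso particolare p=2}, which requires $\langle \na h,\nu\rangle=0$ on $\Ga_1$, granted by the condition \eqref{eq:INTRO_inner product z in cone} on $z$) and the remaining $N-k$ components (handled via the classical zero-mean Poincaré inequality, for which \eqref{eq:INTRO_choice of z 2of2} was designed).

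First, I would verify that with the alternative choice \eqref{eq:Alternative choice z} of $z$, all components of $\na h$ have zero mean over $\Si\cap\Om$ simultaneously. Indeed, since $\na h = (x-z) - \na u$, the condition $\int_{\Si\cap\Om} \na h\, dx = 0$ is equivalent to $z = |\Si\cap\Om|^{-1}\int_{\Si\cap\Om}(x-\na u)\, dx$, which is precisely \eqref{eq:Alternative choice z}. Consequently, each scalar component $h_i$, for $i=1,\dots,N$, has zero integral mean on $\Si\cap\Om$, without the need to single out a privileged subspace.

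Next, applying the classical Poincaré inequality for zero-mean functions to each $h_i$ separately and summing the $N$ resulting inequalities yields
\begin{equation*}
\nr \na h \nr_{L^2(\Si\cap\Om)} \le \mu_{2}(\Si\cap\Om)^{-1} \, \nr \na^2 h \nr_{L^2(\Si\cap\Om)} ,
\end{equation*}
which is exactly the substitute for the original gradient bound, with $\La_{2}(k)$ uniformly replaced by $\mu_{2}(\Si\cap\Om)^{-1}$ regardless of the integer $k$. With this replacement in hand, the rest of both proofs runs verbatim, since the fundamental identities for Alexandrov's Soap Bubble Theorem and for Heintze--Karcher do not feature $z$ at all, and the remaining uses of $z$ in the proofs of Theorems \ref{thm:INTRO_SBT stab Lipschitz costanti non geometriche} and \ref{thm:INTRO_HK stability pseudodistance} are confined to the definitions of $h$ and (in the SBT case) of $H_0$, both of which make perfect sense for any $z\in\RR^N$.

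The main point to keep in mind is that the identity $\langle \na h,\nu\rangle=0$ on $\Ga_1$, which is the reason why one would like $z$ to satisfy \eqref{eq:INTRO_inner product z in cone}, is no longer available with the alternative choice of $z$; however, this identity plays no role in the present variant of the argument, precisely because we have entirely dispensed with the new Poincaré inequality \eqref{eq:INTRO new Poicare caso particolare p=2}. The final assertion about the explicit estimate of $\mu_{2}(\Si\cap\Om)^{-1}$ then follows directly from Remark \ref{rem:stime mu HS}.
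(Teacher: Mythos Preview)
Your proposal is correct and follows essentially the same approach as the paper's own proof: both observe that the alternative choice \eqref{eq:Alternative choice z} forces every component $h_i$ of $\na h$ to have zero mean on $\Si\cap\Om$, so the classical Poincar\'e inequality \eqref{eq:BoasStraube-poincare} can be applied to each $h_i$, yielding the replacement of $\La_{2}(k)$ by $\mu_{2}(\Si\cap\Om)^{-1}$ in Lemma~\ref{lem:Mixed pp Poincareaigradienti}, after which (as the paper also says) the rest of the proofs remains unchanged.
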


It is clear that, when $\Si=\RR^N$, Theorems \ref{thm:INTRO_SBT stab Lipschitz costanti non geometriche} and \ref{thm:INTRO_HK stability pseudodistance} return sharp stability results for the classical Alexandrov's Soap Bubble Theorem and Heintze Karcher's inequality in $\RR^N$. We provide an explicit statement of this in the next theorem. 
Notice that, when $\Si=\RR^N$, we have that $\Si\cap\Om=\Om$ is a smooth, say $C^2$, bounded domain in $\RR^N$.
Such a domain always satisfies the classical uniform interior and exterior sphere conditions in $\RR^N$.
Moreover, as already mentioned, when $\Si=\RR^N$ the uniform interior and exterior sphere conditions relative to $\Si$ reduce to the classical uniform interior and exterior sphere conditions in $\RR^N$.

Being as $\Si=\RR^N$ and hence $\Ga_1=\varnothing$, we have that $k=0$ and hence $\La_{p}(0)= \mu_{p}(\Si\cap\Om)^{-1}$. In turn, being as $\Om$ a $C^2$ domain, $\mu_{p}(\Si\cap\Om)^{-1}$ can be explicitly estimated in terms of $\ul{r}_i$ and $d_{\Si\cap\Om}$ only (see \cite[(iii) of Remark 2.4]{MP3}).

Finally, notice that when $\Si=\RR^N$ the choice of $z$ in \eqref{eq:INTRO_choice of z 1of2}-\eqref{eq:INTRO_choice of z 2of2} agrees with that in \eqref{eq:Alternative choice z}, and reduces to the center of mass of $\Om$ (see Subsection \ref{subsec:classical case Si=RN}).

\begin{thm}[Sharp stability for the classical Alexandrov's Soap Bubble Theorem and Heintze-Karcher's inequality in $\RR^N$]
	\label{thm:classical INTRO in R^N}
Let $\Om \subset \RR^N$, $N\ge2$, be a bounded domain of class $C^2$. 
Then, we have that
\begin{enumerate}[(i)]
	\item \eqref{eq:INTRO_Stab Lipschitz SBT IN SECTION} holds true for an explicit constant $\widehat{C}$ only depending on $N$, $\ul{r}_i$, $\ul{r}_e$ and $d_\Om$. If $\Ga_0=\pa \Om$ is mean convex, $\ul{r}_e$ can be removed;
	\item \eqref{eq:INTRO_STIMA finale HK} holds true for an explicit constant $\widehat{C}$ only depending on $N$, $\ul{r}_i$, and $d_\Om$. 
\end{enumerate}
\end{thm}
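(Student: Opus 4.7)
The plan is to deduce Theorem \ref{thm:classical INTRO in R^N} as a direct corollary of Theorems \ref{thm:INTRO_SBT stab Lipschitz costanti non geometriche} and \ref{thm:INTRO_HK stability pseudodistance}, specialized to the case $\Si = \RR^N$. First I would record three simplifications specific to this setting. Since $\pa\Si = \emptyset$, one has $\Ga_1 = \varnothing$ and $k = \dim\mathrm{span}_{x\in\Ga_1}\nu(x) = 0$, hence $\La_{2}(0) = \mu_{2}(\Om)^{-1}$, and the definition \eqref{eq:INTRO_choice of z 1of2}--\eqref{eq:INTRO_choice of z 2of2} reduces to the center of mass $z = |\Om|^{-1}\int_\Om (x - \na u)\,dx$ (agreeing with \eqref{eq:Alternative choice z} and with Subsection \ref{subsec:classical case Si=RN}). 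Since $\pa\Om$ has no boundary, $\pa\Ga_0 = \emptyset$ and the orthogonality integral appearing in Theorem \ref{thm:INTRO_SBT stab Lipschitz costanti non geometriche} vanishes, giving $H_0 = 1/R$. Finally, the $C^2$-regularity of $\Om$ furnishes the classical uniform interior and exterior sphere conditions with some radii $\ul{r}_i, \ul{r}_e > 0$, which by definition coincide with those relative to $\Si = \RR^N$.

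Next I would trace the constants. Lemma \ref{lem:Hopf in cones} gives $u_\nu \ge \ul{r}_i$ on $\pa\Om$, so one may take $\lbunu = \ul{r}_i$; Lemmas \ref{lem:upper bound gradient on Ga0}--\ref{lem:upper bound gradient on Ga0 = whole domain} bound $\nr u_\nu \nr_{L^\infty(\pa\Om)}$ explicitly in terms of $N$, $\ul{r}_e$ and $d_\Om$; Lemma \ref{lem:max stima con diametro} bounds $\max_{\ol{\Om}}(-u)$ in terms of $d_\Om$; and \cite[(iii) of Remark 2.4]{MP3} estimates $\mu_{2}(\Om)^{-1}$ in terms of $\ul{r}_i$ and $d_\Om$. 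Feeding these bounds into the explicit constants provided by Theorems \ref{thm:INTRO_HK stability pseudodistance} and \ref{thm:INTRO_SBT stab Lipschitz costanti non geometriche} (in the regime where the dependence on $\la_2(\Ga_0)$ has already been dropped) will immediately produce (ii) and the first assertion in (i).

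The subtle point --- and the one I expect to be the main obstacle --- will be removing $\ul{r}_e$ from (i) under mean-convexity of $\pa\Om$. The only place $\ul{r}_e$ enters the general argument is the $L^\infty$-bound on $u_\nu$ used to control the right-hand side $\int_{\pa\Om}(1/R - H)(u_\nu)^2\,dS_x$ of the SBT fundamental identity. I would bypass this pointwise control by invoking (ii), which does not require $\ul{r}_e$: combined with the HK fundamental identity it provides an $L^2$-bound on $\sqrt{H}\,(u_\nu - 1/H)$ in terms of $\nr H - H_0\nr_{L^2(\pa\Om)}$, and the mean-convexity hypothesis should then allow control of $\nr 1/H - R\nr_{L^2(\pa\Om)}$ by $\nr H - H_0\nr_{L^2(\pa\Om)}$, upgrading the preceding estimate to an $L^2$-bound on $u_\nu - R$. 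The Poincar\'e-type analysis for the auxiliary harmonic function $h := |x-z|^2/2 - u$ --- whose gradient has zero mean over $\Om$ by the choice of $z$ --- then propagates this $L^2$-control from $u_\nu - R = -h_\nu$ on $\pa\Om$ to $|x-z| - R$ on $\pa\Om$, with all constants depending only on $N$, $\ul{r}_i$ and $d_\Om$.
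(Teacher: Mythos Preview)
Your reduction to Theorems \ref{thm:INTRO_SBT stab Lipschitz costanti non geometriche} and \ref{thm:INTRO_HK stability pseudodistance}, the observations that $\Ga_1=\varnothing$, $k=0$, $\La_2(0)=\mu_2(\Om)^{-1}$, $H_0=1/R$, and the tracing of constants via Lemmas \ref{lem:Hopf in cones}, \ref{lem:upper bound gradient on Ga0}--\ref{lem:upper bound gradient on Ga0 = whole domain}, \ref{lem:max stima con diametro} and \cite[(iii) of Remark 2.4]{MP3} are exactly what the paper does; this part is correct.

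The divergence is in the removal of $\ul{r}_e$ under mean-convexity. The paper's route is much shorter: it observes that $\ul{r}_e$ enters only through the bound on $\nr u_\nu\nr_{L^\infty(\pa\Om)}$, and that when $\Si=\RR^N$ and $\pa\Om$ is mean convex this quantity can be estimated directly in terms of $N$ and $\max_{\ol{\Om}}(-u)$ by the classical gradient bound \cite[Lemma 2.2]{MP5} (hence in terms of $N$ and $d_\Om$). No detour through the Heintze--Karcher identity is needed; one simply feeds this alternative $L^\infty$-bound into the same proof of Theorem \ref{thm:INTRO_SBT stab Lipschitz costanti non geometriche}.

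Your proposed alternative has a genuine gap. The step ``mean-convexity should then allow control of $\nr 1/H - R\nr_{L^2(\pa\Om)}$ by $\nr H-H_0\nr_{L^2(\pa\Om)}$'' is not justified: since $1/H - R = (H_0-H)/(H H_0)$, such a bound would require an \emph{a priori} positive lower bound on $H$, which mere mean-convexity ($H\ge 0$) does not provide. For the same reason, relating the Heintze--Karcher deficit $\int_{\pa\Om}\frac{dS_x}{H}-N|\Om| = R\int_{\pa\Om}\frac{H_0-H}{H}\,dS_x$ to $\nr H-H_0\nr_{L^2(\pa\Om)}$ runs into the unboundedness of $1/H$. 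Even if you were to assume $H$ bounded below, you would have re-introduced an extra geometric parameter, defeating the purpose. The paper's approach avoids all of this by never leaving the $L^\infty$-framework for $u_\nu$.
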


%
The sharp stability estimate for Heintz-Karcher's inequality is new even in the case $\Si=\RR^N$ (that is, (ii) of the Theorem above), whereas (i) provides a finer version of \cite[Theorem 4.8]{MP2}.

%

We recall that the presence in the constants of a parameter such as $\ul{r}_i$ is necessary to guarantee result of proximity to a single ball. In absence of such an assumption, bubbling phenomena may arise (see e.g., \cite{BNST} for Serrin's problem in $\Si=\RR^N$ and \cite{CirMag} for the Soap Bubble Theorem restricted to mean convex hypersurfaces in $\Si=\RR^N$). Finding optimal stability exponents in presence of bubbling is an open problem.

The paper is organised as follows.

In Section \ref{sec:integral identities} we prove the fundamental integral identities and the corresponding rigidity results.

Section \ref{sec:Poincare inequalities} is devoted to discuss Poincar\'e-type inequalities. In Theorems \ref{thm:Poincare new RN}, \ref{thm:Poincare new in general}, and \ref{thm:Strengthened Poincare new RN} we prove the new Poincar\'e-type inequality \eqref{eq:INTRO new Poicare caso particolare p=2} together with its strengthened Sobolev-Poincar\'e version. 

In Section \ref{sec: pointwise estimates for torsion}  we introduce the new notions of uniform interior and exterior sphere conditions relative to $\Si$ and use them to obtain explicit uniform lower and upper bounds for the gradient of $u$. We also provide general explicit estimates for $\nr u \nr_{L^\infty (\Si\cap\Om)}$ and for the growth of $u$ from the boundary. 

Section \ref{sec: Sharp stability in L2} contains the sharp stability results for the $L^2$-pseudodistance, including Theorems \ref{thm:INTRO_SBT stab Lipschitz costanti non geometriche} and \ref{thm:INTRO_HK stability pseudodistance}.

In Section \ref{sec:Stronger stability rhoe - rhoi} we prove the stability results for $\rho_e - \rho_i$: see Theorems \ref{thm:SBT stability rhoe rhoi}, \ref{thm:HK stability rhoe rhoi}.

In Section \ref{sec: Additional remarks} we discuss the corresponding stability results for alternative choices of the point $z$ and the classical case $\Si=\RR^N$, including Theorems \ref{thm:intro alternative z} and \ref{thm:classical INTRO in R^N}.

\section{Integral identities}\label{sec:integral identities}


\begin{setting}\label{Setting}
{\rm
Let $\Si\subseteq\RR^N$ be a convex cone with vertex at the origin.
We consider a bounded domain (i.e., a bounded connected open set) $\Si \cap \Om$ -- where $\Om$ is a smooth bounded domain in $\RR^N$ -- such that its boundary relative to the cone $\Ga_0:= \Si \cap \pa\Om$ is smooth,  while $\pa \Ga_0$ is a $N-2$-dimensional manifold and $\pa(\Si\cap\Om)\setminus \Ga_0$ is smooth enough outside a singular set $\mathcal{S} \subset \pa \Si$ of finite $\ell$-dimensional upper Minkowski content with $0 \le \ell \le N-2$, that is\footnote{Up to a normalization factor, which is not relevant for our purposes.},
\begin{equation}\label{eq:setting B Minkowski content}
\infty > \cM^{* \ell }( \cS ) := \limsup_{r \to 0^+}  \frac{ | \left\{ x : \dist (x , \cS) \le r \right\} | }{r^{N-\ell}} \quad \text{ for some } \quad 0 \le \ell \le N-2 .
\end{equation}
Here, $|\left\{ x : \dist (x , \cS) \le r \right\} |$ denotes the $N$-dimensional Lebesgue measure of $ \left\{ x : \dist (x , \cS) \le r \right\}$. We refer to \cite{Federer} for details on the Minkowski content.

In particular, we have that
\begin{equation}\label{eq:setting A singular set 0 measure}
\mathcal{H}^{N-1} \left( \ol{\mathcal{S}} \right) = 0 .
\end{equation}

Notice that $\mathcal{S}$ also possibly takes into account the $N-2$-dimensional manifold $\pa \Ga_0$\footnote{In the particular case where $\Si \setminus \left\lbrace 0 \right\rbrace$ is smooth, we have that $\mathcal{S}\subseteq \left\lbrace 0 \right\rbrace \cup \pa\Ga_0$.}, and to simplify matters, we also assume that $\cH^{N-2} (\pa^*\Si \cap \pa\Ga_0) = \cH^{N-2} (\pa\Ga_0)$, where $\pa^*\Si$ denotes the smooth part of $\pa\Si$.

We set $\Ga_1 :=  \pa(\Si\cap\Om) \setminus \left( \ol{\Ga}_0  \cup \ol{\cS} \right)$. By \eqref{eq:setting A singular set 0 measure}, we have that
$\mathcal{H}^{N-1} \left( \Ga_0 \cup \Ga_1 \right) = \mathcal{H}^{N-1} \left( \pa(\Si\cap\Om) \right)$.
%
%
Finally, we assume that $\Si\cap\Om$ admits a solution $u$ to \eqref{eq:problem torsion} such that
\begin{equation}\label{eq:setting B W22}
u \in W^{1,\infty}(\Si \cap \Om) \cap W^{2,2} (\Si \cap \Om),
\end{equation}
that is
\begin{equation}\label{eq:problem torsion+inegrability assumption}
\begin{cases}
\De u = N \quad & \text{ in } \Si \cap \Om
\\
u= 0 \quad & \text{ on } \Ga_0
\\
u_\nu=0 \quad & \text{ on } \Ga_1
\end{cases}
\quad 
u \in W^{1,\infty}(\Si \cap \Om) \cap W^{2,2} (\Si \cap \Om)
\end{equation}
We recall that, by standard regularity theory, we have that $u \in C^\infty (\Si\cap\Om)\cap C^{2}((\Si\cap\Om)\cup\Ga_0\cup\Ga_1)$ provided $\Ga_0$ and $\Ga_1$ are smooth enough.
}
\end{setting}

\bigskip

In this setting, all the applications of the divergence theorem performed in the present article are allowed in light of the following.

\begin{prop}[Divergence theorem in cones]
\label{prop:divergence theorem in cones}
Let $\vV : \Si\cap\Om \to \RR^N$ be a vector field such that $\vV \in L^{2}(\Si\cap\Om) \cap C^1((\Si\cap\Om)\cup\Ga_0\cup\Ga_1)$
and $\dv(\vV)\in L^1(\Si\cap\Om)$.
Then, we have that
%
%
\begin{equation*}
\int_{\Si\cap\Om} \dv ( \vV ) \, dx = \int_{\Ga_0 \cup \Ga_1} \langle \vV ,\nu \rangle \, dS_x ,
\end{equation*}
provided that $\langle \vV ,\nu \rangle = w_1 + w_2 $ with $w_1 \in L^1(\Ga_0 \cup \Ga_1)$ and $w_2 \ge 0$ on $\Ga_0\cup\Ga_1$. 
\end{prop}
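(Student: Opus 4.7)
The plan is to reduce to the classical divergence theorem on the smooth part of the boundary by multiplying $\vV$ with a cutoff that vanishes near the singular set $\cS$, and then to pass to the limit using the Minkowski-content assumption \eqref{eq:setting B Minkowski content}. Fix a smooth nondecreasing function $\psi:[0,\infty)\to[0,1]$ with $\psi\equiv 0$ on $[0,1/2]$ and $\psi\equiv 1$ on $[1,\infty)$, and for $r>0$ small set
\begin{equation*}
	\chi_r(x) := \psi(d(x)/r), \qquad d(x) := \dist(x,\cS),
\end{equation*}
mollifying $d$ if needed so that $\chi_r$ is of class $C^1$. Then $0\le\chi_r\le 1$, $\chi_r\equiv 0$ on $\{d\le r/2\}$, $\chi_r\equiv 1$ on $\{d\ge r\}$, and $|\na\chi_r|\le C/r$ is supported on the annulus $A_r := \{r/2\le d\le r\}\cap(\Si\cap\Om)$.

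Since $\chi_r\vV$ is $C^1$ on $(\Si\cap\Om)\cup\Ga_0\cup\Ga_1$ and vanishes in a neighborhood of $\ol\cS$, and since $\pa(\Si\cap\Om)\setminus\ol\cS=\Ga_0\cup\Ga_1$ is smooth, the classical divergence theorem (applied on any smooth bounded open set containing the support of $\chi_r\vV$ and whose boundary meets $\pa(\Si\cap\Om)$ only inside $\Ga_0\cup\Ga_1$) gives
\begin{equation*}
	\int_{\Si\cap\Om}\chi_r\,\dv(\vV)\,dx + \int_{\Si\cap\Om}\langle\na\chi_r,\vV\rangle\,dx = \int_{\Ga_0\cup\Ga_1}\chi_r\,\langle\vV,\nu\rangle\,dS_x.
\end{equation*}
I now send $r\to 0^+$ term by term. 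The first summand on the left converges to $\int_{\Si\cap\Om}\dv(\vV)\,dx$ by dominated convergence, as $\dv(\vV)\in L^1(\Si\cap\Om)$, $0\le\chi_r\le 1$, and $\chi_r\to 1$ off $\ol\cS$, which has full Lebesgue measure by \eqref{eq:setting A singular set 0 measure}. Writing the boundary integrand as $w_1+w_2$, the $w_1$-contribution on the right converges to $\int_{\Ga_0\cup\Ga_1}w_1\,dS_x$ by dominated convergence (since $w_1\in L^1$), while the $w_2$-contribution converges to $\int_{\Ga_0\cup\Ga_1}w_2\,dS_x$ by monotone convergence, because $\chi_r w_2\uparrow w_2$ pointwise $\cH^{N-1}$-almost everywhere, again by \eqref{eq:setting A singular set 0 measure}. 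In particular, $w_2$ turns out to be in $L^1(\Ga_0\cup\Ga_1)$ a posteriori, since the limit of the left-hand side is finite.

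The main technical obstacle is showing that the error term $\int_{\Si\cap\Om}\langle\na\chi_r,\vV\rangle\,dx$ vanishes in the limit; this is exactly where the Minkowski-content hypothesis enters. By Cauchy--Schwarz,
\begin{equation*}
	\left|\int_{\Si\cap\Om}\langle\na\chi_r,\vV\rangle\,dx\right|
	\le \frac{C}{r}\int_{A_r}|\vV|\,dx
	\le \frac{C}{r}\,|A_r|^{1/2}\,\nr\vV\nr_{L^2(A_r)}.
\end{equation*}
The bound \eqref{eq:setting B Minkowski content} yields $|A_r|\le|\{d\le r\}|\le C\,r^{N-\ell}$ with $N-\ell\ge 2$, so the prefactor $r^{-1}|A_r|^{1/2}\le C\,r^{(N-\ell-2)/2}$ is uniformly bounded (and in fact vanishes when $\ell<N-2$). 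Since $|A_r|\to 0$ and $\vV\in L^2(\Si\cap\Om)$, absolute continuity of the integral gives $\nr\vV\nr_{L^2(A_r)}\to 0$, so the product tends to zero in every admissible case. The identity therefore passes to the limit and yields the claim.
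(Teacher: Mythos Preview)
Your proof is correct and follows the same overall strategy as the paper: approximate away from the singular set $\cS$, use the Minkowski-content bound \eqref{eq:setting B Minkowski content} together with $\vV\in L^2$ to kill the error, and pass to the limit via dominated convergence for $w_1$ and monotone convergence for $w_2$.

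The technical implementation differs. The paper truncates the domain by removing the tube $\{d\le\ve\}$, which produces a new boundary piece on $\{d=\ve\}$; to control the surface integral there it invokes the coarea formula and an averaging argument to extract a good sequence $\ve_j\to 0$. Your cutoff $\chi_r$ replaces this boundary term by the volume term $\int\langle\na\chi_r,\vV\rangle\,dx$, which you bound directly by Cauchy--Schwarz and $|A_r|\lesssim r^{N-\ell}$, with no need for coarea or subsequence extraction. This is a genuine, if minor, simplification. One small point: your monotone-convergence step needs $\chi_r$ to increase as $r\downarrow 0$, which indeed follows from the monotonicity of $\psi$ since $\psi(d(x)/r)$ is nondecreasing as $r$ decreases; it may be worth saying this explicitly.
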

\begin{proof}
We approximate $\Si\cap\Om$ with domains obtained by chopping off a tubular neighbourhood of $\cS$. 
That is, for small $\ve >0$ we define
$$
A_{\ve} := (\Si\cap\Om) \setminus \left\lbrace x \in \Si\cap\Om \, : \, \dist (x, \mathcal{S} ) \le \ve \right\rbrace ,
$$
and being as $\vV \in C^1 \left( \ol{A}_{\ve} \right)$, we compute
\begin{equation}\label{eq:div theo approx step}
\int_{A_{\ve}} \dv( \vV ) \, dx = \int_{ G_{\ve} }
\langle \vV , \nu \rangle \, dS_x + \int_{\pa A_{\ve} \setminus G_{\ve} } \langle \vV , \nu \rangle \, dS_x ,
\end{equation}
where
$$G_{\ve} := (\Ga_0 \cup\Ga_1) \setminus  \left\lbrace x \in \pa( \Si\cap\Om ) \, : \, \mathrm{dist}(x, \mathcal{S}) \le \ve \right\rbrace .$$

The left-hand side of \eqref{eq:div theo approx step} converges to $\int_{\Si\cap\Om} \dv( \vV ) \, dx $
as $\ve$ tends to $0$.

We now use \eqref{eq:setting B Minkowski content} and $\vV \in L^{2}(\Si\cap\Om)$ to estimate the second integral on the right-hand side of \eqref{eq:div theo approx step}.
We start by noting that
$$
\left| \int_{\pa A_{\ve} \setminus G_{\ve} } \langle \vV , \nu \rangle \, dS_x \right| 
\le 
\int_{(\Si\cap\Om) \cap U_{\ve} } \left| \vV \right| \, dS_x, 
$$
where
$$
U_{\ve} := \left\lbrace x \, : \, \dist (x, \mathcal{S}) = \ve \right\rbrace. $$
%
%
Let us prove that there exists a sequence $\ve_j$ converging to $0$ (which we can assume to be monotone decreasing) such that
\begin{equation}\label{eq:final step div theo approx}
\int_{(\Si\cap\Om) \cap \left(  U_{\ve_j} \right) } \left| \vV \right| \, dS_x \to 0 .
\end{equation}
By coarea formula, the function $\rho \to \int_{(\Si\cap\Om)\cap U_\rho} |\vV| \, dS_x$ belongs to $L^1 \left( \left( 0,1 \right) \right)$, and by \eqref{eq:setting B Minkowski content} we get that 
$$
\left| \left\lbrace x \, : \, \dist (x, \mathcal{S}) < \ve \right\rbrace \right| \lesssim \eps^{N- \ell } \le \eps^{2} \quad \text{ for } 0<\ve<1 .
$$
Hence, for any $\de_j:=1/j$, $j \in \NN$, there exists $\ve_j \in (0, \de_j)$ such that
\begin{equation*}
\begin{split}
\int_{(\Si\cap\Om) \cap U_{\ve_j}} |\vV| \,dS_x 
& \le 
\frac{1}{\de_j} \int_0^{\de_j} \int_{(\Si\cap\Om) \cap U_{\rho}} |\vV| \,dS_x \, d\rho
\\
& \le  \frac{1}{\de_j} \int_{(\Si\cap\Om) \cap \left\lbrace x \, : \, \dist (x,\cS ) \le \de_j  \right\rbrace } |\vV| \, dx
\\
& \le 
\left( \int_{(\Si\cap\Om) \cap \left\lbrace x \, : \, \dist (x, \cS )  \le \de_j  \right\rbrace } |\vV|^2 \, dx \right)^{1/2} \, \frac{\left| \left\lbrace x \, : \, \dist (x, \cS ) \le \de_j \right\rbrace \right|^{1/2}}{\de_j}
\\
& \lesssim \left( \int_{(\Si\cap\Om) \cap \left\lbrace x \, : \, \dist (x, \cS ) \le \de_j  \right\rbrace } |\vV|^2 \, dx \right)^{1/2} .
\end{split}
\end{equation*}
Since $\vV \in L^2(\Si\cap\Om)$, the right-hand side converges to $0$ as $\de_j=1/j \to 0$,
that is, \eqref{eq:final step div theo approx}. 
We thus deduce that
\begin{equation*}
\int_{ G_{\ve_j} }
\langle \vV , \nu \rangle \, dS_x \to \int_{\Si\cap\Om} \dv ( \vV ) \, dx 
\quad \quad
\text{as } j\to +\infty .
\end{equation*}
On the other hand, being as $G_{\ve_j}$ a monotone sequence of sets which exhausts $\Ga_0\cup\Ga_1$, Beppo-Levi's Theorem (for $w_2$ in the statement) and the dominated convergence theorem (for $w_1$ in the statement) give that
$$
\int_{ G_{\ve_j} }
\langle \vV , \nu \rangle \, dS_x \to \int_{ \Ga_0\cup\Ga_1 }
\langle \vV , \nu \rangle \, dS_x  ,
$$
which completes the proof.
\end{proof}

\begin{rem}\label{rem:alternative settings}
{\rm
All the results of the paper remain valid for suitable modifications of Setting \ref{Setting} that allow to apply the divergence theorem (by means of possibly different approximation arguments): more precisely, the assumptions on the singular set $\mathcal{S}$ may be relaxed at the cost of assuming stronger regularity of $u$ than in \eqref{eq:setting B W22}. 
} 
\end{rem}


%

By $|\Si \cap \Om|$ and $|\Ga_0|$, we will denote
%
%
the $N$-dimensional Lebesgue measure of $\Si \cap \Om$
and the surface measure of $\Ga_0$.
\par
We set $R$ to be the reference constant given by
\begin{equation}
\label{def-R}
R=\frac{N\,|  \Si\cap\Om |}{|\Ga_0|}, 
\end{equation}
%
%
and we use the letter $q$ to denote the quadratic polynomial defined by
\begin{equation}
\label{quadratic}
q(x)=\frac12\, |x-z|^2,
\end{equation}
where $z$ is any point in $\RR^N$.

We introduce the P-function
\begin{equation}
\label{P-function}
P = \frac{1}{2}\,|\nabla u|^2 - u,
\end{equation}
and we easily compute that
\begin{equation}
\label{eq:P-gradient and Laplacian-differential-identity}
\na P= \na^2 u \na u - \na u \quad \text{ and } \quad \De P = |\na^2 u|^2-\frac{(\De u)^2}{N}.
\end{equation}

We are now ready to provide the proof of the fundamental identity for Serrin's problem in cones.

\begin{thm}[Fundamental identity for Serrin's problem in cones] 
\label{thm:serrinidentity}
Let $\Si\cap\Om$ as in Setting \ref{Setting}, and $R$ be the positive constant defined in \eqref{def-R}.
If $u$ satisfies \eqref{eq:problem torsion+inegrability assumption}, then we have that
%
%
\begin{equation}
\label{idwps}
\int_{ \Si \cap \Om} (-u) \left\{ |\na ^2 u|^2- \frac{ (\De u)^2}{N} \right\} dx + \int_{\Ga_1} u \,  \langle \na^2 u \na u, \nu \rangle \, dS_x =
\frac{1}{2}\,\int_{\Ga_0} \left( u_\nu^2- R^2\right) (u_\nu-q_\nu)\,dS_x ,
\end{equation}
where $q$ is that defined in \eqref{quadratic}. The identity holds true for any choice of $z \in \RR^N$ provided that
\begin{equation}\label{eq:inner product z in cone}
\langle x-z ,\nu \rangle = 0 \quad \text{ for any } x \in \Ga_1 .
\end{equation}
\end{thm}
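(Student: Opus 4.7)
The strategy is to apply the divergence theorem (Proposition \ref{prop:divergence theorem in cones}) to two distinct vector fields: one to convert the bulk integrand $(-u)\{|\na^2u|^2-(\De u)^2/N\}$ into boundary terms modulo a residual bulk $N\int P$, and one of Rellich--Pohozaev type to turn that residue into a $\Ga_0$-integral. The role of \eqref{eq:inner product z in cone} will be only to cancel the $\Ga_1$-boundary contributions in the second step. Since $\De u\equiv N$ is constant, the Bochner identity $\De(\tfrac12|\na u|^2)=|\na^2u|^2$ gives $\De P=|\na^2u|^2-N=|\na^2u|^2-(\De u)^2/N$, so that the bulk integrand on the left of \eqref{idwps} equals $(-u)\De P$.

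For the first step I would use the vector field $V:=(-u)\na P+P\na u$; its divergence simplifies, by cancellation of the cross-terms $\pm\na u\cdot\na P$ together with $\De u=N$, to $\dv V=(-u)\De P+NP$. Both $V\in L^2(\Si\cap\Om)$ and $\dv V\in L^1(\Si\cap\Om)$ are inherited from $u\in W^{1,\infty}\cap W^{2,2}$, so Proposition \ref{prop:divergence theorem in cones} applies. On $\Ga_0$, the Dirichlet condition $u=0$ forces $\na u=u_\nu\nu$, $P=\tfrac12 u_\nu^2$, and $\langle V,\nu\rangle=\tfrac12 u_\nu^3$; on $\Ga_1$, the Neumann condition $u_\nu=0$ forces $P_\nu=\langle\na^2u\na u,\nu\rangle$ and $\langle V,\nu\rangle=-u\langle\na^2u\na u,\nu\rangle$. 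Integration therefore produces
\begin{equation*}
\int_{\Si\cap\Om}(-u)\De P\,dx+\int_{\Ga_1} u\,\langle\na^2u\na u,\nu\rangle\,dS_x=\tfrac12\int_{\Ga_0}u_\nu^3\,dS_x-N\int_{\Si\cap\Om}P\,dx.
\end{equation*}

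To rewrite the residue $N\int P$ as a $\Ga_0$-integral I would apply Proposition \ref{prop:divergence theorem in cones} a second time to the Pohozaev field $W:=((x-z)\cdot\na u)\na u-\tfrac12|\na u|^2(x-z)$, whose divergence is $N(x-z)\cdot\na u-\tfrac{N-2}{2}|\na u|^2$. Hypothesis \eqref{eq:inner product z in cone} plays its decisive role here: on $\Ga_1$ both $u_\nu=0$ and $q_\nu=\langle x-z,\nu\rangle=0$, killing the $\Ga_1$-contribution, while on $\Ga_0$ the relation $\na u=u_\nu\nu$ collapses the boundary integrand $((x-z)\cdot\na u)u_\nu-\tfrac12|\na u|^2 q_\nu$ to $\tfrac12 q_\nu u_\nu^2$. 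Combined with the two elementary identities $\int(x-z)\cdot\na u\,dx=-N\int u\,dx$ and $\int|\na u|^2\,dx=-N\int u\,dx$ (whose boundary terms vanish thanks to $u=0$ on $\Ga_0$, $u_\nu=0$ on $\Ga_1$, and again \eqref{eq:inner product z in cone}), solving the resulting linear relation yields $N\int_{\Si\cap\Om}P\,dx=\tfrac12\int_{\Ga_0} q_\nu u_\nu^2\,dS_x$. Substituting, the right-hand side of the previous display becomes $\tfrac12\int_{\Ga_0} u_\nu^2(u_\nu-q_\nu)\,dS_x$, and one finishes by noting that $\int_{\Ga_0}(u_\nu-q_\nu)\,dS_x=0$---both integrals equal $N|\Si\cap\Om|$ via $\int\De u=\int\De q=N|\Si\cap\Om|$ together with $u_\nu=q_\nu=0$ on $\Ga_1$---so that $\tfrac12 R^2\int_{\Ga_0}(u_\nu-q_\nu)\,dS_x=0$ can be freely subtracted to replace $u_\nu^2$ by $u_\nu^2-R^2$ and produce \eqref{idwps}. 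The main technical subtlety throughout is the justification of the divergence theorem near the singular set $\cS\subset\pa\Si$; this is precisely the content of Proposition \ref{prop:divergence theorem in cones}, which applies once the $L^2$-integrability of $V$ and $W$ and the $L^1$-integrability of their divergences are verified---all immediate from $u\in W^{1,\infty}\cap W^{2,2}$.
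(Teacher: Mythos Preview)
Your argument is correct and follows essentially the same route as the paper. The only differences are cosmetic: you split the paper's single Rellich--Pohozaev differential identity
\[
\Bigl(\tfrac{N}{2}+1\Bigr)|\na u|^2=\dv\Bigl\{\langle x-z,\na u\rangle\na u-\tfrac{|\na u|^2}{2}(x-z)+N\bigl[u\na u-(x-z)u\bigr]\Bigr\}
\]
into the Pohozaev field $W$ plus the two ``elementary identities'' $\int(x-z)\cdot\na u=-N\int u$ and $\int|\na u|^2=-N\int u$, and you insert the $R^2$-correction at the end rather than the beginning; otherwise the two proofs coincide line by line.
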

As already noticed in the introduction, being as $\Si$ a cone with vertex at the origin and $\Ga_1 \subset \pa \Si$, \eqref{eq:inner product z in cone} is equivalent to $\langle z ,\nu \rangle = 0$ on $\Ga_1$. It is clear that choosing $z$ as the origin
%
%
always guarantees the validity of \eqref{eq:inner product z in cone} (and hence \eqref{idwps}). 
\begin{proof}[Proof of Theorem \ref{thm:serrinidentity}]
By \eqref{eq:inner product z in cone}, the homogeneous Neumann condition on $\Ga_1$ in \eqref{eq:problem torsion}, and the divergence theorem, we have that
$$
\int_{\Ga_0} (u_\nu - q_\nu ) dS_x = \int_{\Ga_0 \cup \Ga_1} (u_\nu - q_\nu ) dS_x = \int_{\Si\cap\Om} \De (u - q ) dx = 0 ,
$$ 
so that it is sufficient to prove that
\begin{equation}\label{eq:Serrinidentityoldnewproof step 1}
\int_{\Si\cap\Om} (-u) \De P \, dx	 + \int_{\Ga_1} u \,  \langle \na^2 u \na u, \nu \rangle \, dS_x =
\frac{1}{2}\,\int_{\Ga_0} u_\nu^2 \, (u_\nu-q_\nu) \, dS_x .
\end{equation}

Integration by parts gives that
$$
\int_{\Si\cap\Om} ( P \, \De u - u \, \De P ) \, dx = \int_{\Ga_0 \cup \Ga_1} ( P \, u_\nu - u \,  P_\nu ) \, dS_x ,
$$
which, using \eqref{eq:problem torsion}  and the fact that
\begin{equation}\label{eq:gradient u = u_nu nu on level set}
	\na u = u_\nu \nu \quad \text{ on } \Ga_0 ,
\end{equation}
leads to
\begin{equation}\label{eq:Serrinidentityoldnewproof step 2}
\int_{\Si\cap\Om} (-u) \De P \, dx	 + \int_{\Ga_1} u \,  \langle \na^2 u \na u, \nu \rangle \, dS_x =
\frac{1}{2} \int_{\Ga_0} u_\nu^3 \, dS_x - N \int_{\Si\cap\Om} P \, dx . 
\end{equation}

Finally, we compute that
\begin{equation}\label{eq:Serrinidentityoldnewproof step 3} 
N \int_{\Si\cap\Om} P \, dx = 
\frac{N}{2} \int_{\Si\cap\Om} | \na u |^2 \, dx - \int_{\Si\cap\Om} u \De u \, dx  
=
\left( \frac{N}{2} + 1 \right) \int_{\Si\cap\Om} | \na u |^2 \, dx 
=
\frac{1}{2} \int_{\Ga_0}   u_\nu^2 \, q_\nu \, dS_x .
\end{equation}
Here, in the first identity we used that $\De u=N$ in $\Si\cap\Om$, the second identity easily follows by integration by parts, and the third identity follows by integrating over $\Si\cap\Om$ the following differential identity
$$
\left( \frac{N}{2} + 1  \right) | \na u |^2 = \dv \left\lbrace \langle x-z , \na u \rangle \na u - \frac{| \na u|^2}{2} (x-z) + N \left[ u \na u - (x-z) u \right] \right\rbrace ,
$$
which holds true in $\Si\cap\Om$ being as $\De u = N$, and using the divergence theorem, \eqref{eq:problem torsion}, and \eqref{eq:inner product z in cone}.
Putting together \eqref{eq:Serrinidentityoldnewproof step 2} and \eqref{eq:Serrinidentityoldnewproof step 3} gives \eqref{eq:Serrinidentityoldnewproof step 1} and completes the proof.
\end{proof}

We now present a couple of integral identities, involving the solution $u$ to \eqref{eq:problem torsion+inegrability assumption} and a harmonic function $v$, which will allow to establish useful ad-hoc (weighted) trace-type inequalities (see Lemma \ref{lem:weighted trace inequality ad hoc}).

\begin{lem}\label{lem:identityfortraceineq}
Let $\Si\cap\Om$ as in Setting \ref{Setting} and let $u$ satisfy \eqref{eq:problem torsion+inegrability assumption}, and consider a function $v \in W^{1,\infty}(\Si\cap\Om) \cap W^{2,2}(\Si\cap\Om)$ which is harmonic in $\Si \cap \Om$.
Then, we have that
\begin{equation}\label{eq:nuovaidentityforidentityfortraceinequality}
\int_{\Ga_0} v^2 u_{\nu} \, dS_x 
=
2 \int_{\Ga_1} u \, v \, v_\nu \, dS_x  + N \int_{\Si\cap\Om} v^2 dx + 2 \int_{\Si\cap\Om} (-u) |\na v|^2 dx .
\end{equation}
If $\langle  \na^2 v \na v , \nu \rangle \le 0$ on $\Ga_1$, then we also have that
\begin{equation}\label{eq:identityfortraceinequality}
\int_{\Ga_0 } |\na v|^2 u_{\nu} dS_x  = 2 \int_{\Ga_1} u \langle  \na^2 v \na v , \nu \rangle dS_x + N \int_{\Si\cap\Om} |\na v|^2 dx + 2 \int_{\Si\cap\Om} (-u) |\na^2 v|^2 dx.
\end{equation}
\end{lem}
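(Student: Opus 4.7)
The strategy is to derive both identities by applying the divergence theorem (Proposition \ref{prop:divergence theorem in cones}) to two judiciously chosen vector fields for each identity, exploiting the boundary conditions $u=0$ on $\Ga_0$, $u_\nu=0$ on $\Ga_1$ together with $\Delta u = N$ and $\Delta v = 0$.

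For \eqref{eq:nuovaidentityforidentityfortraceinequality}, I would first apply the divergence theorem to $\vV_1 := v^2 \na u$. Since $u_\nu=0$ on $\Ga_1$ the boundary contribution reduces to $\int_{\Ga_0} v^2 u_\nu \, dS_x$, while $\dv \vV_1 = 2 v \, \na v\cdot\na u + N v^2$, yielding
\begin{equation*}
    \int_{\Ga_0} v^2 u_\nu \, dS_x = N \int_{\Si\cap\Om} v^2 \, dx + 2 \int_{\Si\cap\Om} v \, \na u \cdot \na v \, dx.
\end{equation*}
Next, I would apply the divergence theorem to $\vV_2 := (-u) \na(v^2) = -2 u v \na v$. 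Since $u = 0$ on $\Ga_0$ the boundary contribution reduces to $-2\int_{\Ga_1} u v v_\nu \, dS_x$, and using $\Delta v = 0$,
\begin{equation*}
    \dv \vV_2 = -2 v \, \na u \cdot \na v + 2(-u)|\na v|^2,
\end{equation*}
so that $\int v\,\na u\cdot\na v\, dx = \int_{\Ga_1} u v v_\nu\, dS_x + \int (-u)|\na v|^2\, dx$. Substituting this into the previous identity gives \eqref{eq:nuovaidentityforidentityfortraceinequality}.

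For \eqref{eq:identityfortraceinequality}, I would proceed in exactly the same pattern with the vector fields $\vV_3 := |\na v|^2 \na u$ and $\vV_4 := (-u) \na(|\na v|^2) = -2 u \, \na^2 v \, \na v$. The key interior computations are $\dv \vV_3 = 2 \langle \na^2 v \, \na v, \na u\rangle + N |\na v|^2$ and $\dv(\na^2 v \, \na v) = |\na^2 v|^2 + \na v \cdot \na(\Delta v) = |\na^2 v|^2$, the last equality using harmonicity. Thanks to $u_\nu=0$ on $\Ga_1$ and $u=0$ on $\Ga_0$, the two resulting equations couple exactly as in the first case, and eliminating the bulk term $\int \langle \na u, \na^2 v \, \na v\rangle\, dx$ yields \eqref{eq:identityfortraceinequality}. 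Alternatively, one may apply \eqref{eq:nuovaidentityforidentityfortraceinequality} (once established) to each partial derivative $v_i$, which is itself harmonic, and sum on $i$, using $\sum_i v_i^2 = |\na v|^2$, $\sum_i |\na v_i|^2 = |\na^2 v|^2$, and $\sum_i v_i (v_i)_\nu = \langle \na^2 v \, \na v, \nu\rangle$; however, the direct route avoids delicate regularity questions for $v_i$.

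The main technical point is verifying the integrability hypotheses of Proposition \ref{prop:divergence theorem in cones}: membership in $L^2(\Si\cap\Om)$ of each $\vV_j$ and $L^1$-summability of $\dv \vV_j$, which all follow from $u,v \in W^{1,\infty} \cap W^{2,2}(\Si\cap\Om)$ together with $\Delta v=0$; and the decomposition $\langle \vV_j, \nu\rangle = w_1 + w_2$ with $w_1 \in L^1$ and $w_2\ge 0$. For $\vV_1, \vV_2, \vV_3$ the boundary integrands are bounded (hence trivially $L^1$) so one takes $w_2\equiv 0$. For $\vV_4$ the boundary integrand involves $\na^2 v$, which need not be bounded on $\Ga_1$; this is precisely where the sign assumption $\langle \na^2 v\, \na v, \nu\rangle \le 0$ on $\Ga_1$ enters, because then, combined with $-u\ge 0$ (by comparison, Lemma \ref{lem:comparison in cones}), the integrand $-2u\langle \na^2 v\, \na v, \nu\rangle$ is of definite sign, allowing the required $w_1 + w_2$ splitting.
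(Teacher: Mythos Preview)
Your proof is correct and follows essentially the same route as the paper: the paper packages your $\vV_1$ and $\vV_2$ (respectively $\vV_3$ and $\vV_4$) into a single vector field $v^2\na u - u\na(v^2)$ (respectively $|\na v|^2\na u - u\na(|\na v|^2)$) and applies the divergence theorem once, which is equivalent to your two-step argument. For the second identity the paper in fact uses your ``alternative'' of applying the first identity to each $v_i$ and summing, whereas you opt for the direct computation; both lead to the same conclusion, and your explicit discussion of where the sign hypothesis $\langle\na^2 v\,\na v,\nu\rangle\le 0$ on $\Ga_1$ is needed to invoke Proposition~\ref{prop:divergence theorem in cones} is a useful clarification.
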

\begin{proof}
We begin with the following differential identity:
\begin{equation}\label{eq:prova differential identity for trace}
\dv\,\{v^2 \na u - u \, \na(v^2)\}= v^2 \De u - u \, \De (v^2)= N \, v^2 - 2 u \, |\na v|^2,
\end{equation}
which holds for any $v$ harmonic function in $\Si \cap \Om$, if $u$ satisfies \eqref{eq:problem torsion} (here, we only used that $\De u=N$ in $\Si\cap\Om$).
Next, we integrate on $\Si\cap\Om$ and, by the divergence theorem (and the boundary conditions in \eqref{eq:problem torsion}),
we get \eqref{eq:nuovaidentityforidentityfortraceinequality}.

If we use \eqref{eq:prova differential identity for trace} with $v=v_{x_i}$ and sum up over $i=1,\dots, N$, we get
\begin{equation*}
\dv\,\{ | \na v|^2 \na u - u \, \na \left( | \na v^2| \right) \} = N \, | \na v|^2 - 2 u \, |\na^2 v|^2 ,
\end{equation*}
which integrated over $\Si\cap\Om$ leads to \eqref{eq:identityfortraceinequality} after an application of the divergence theorem.
\end{proof}

\begin{thm}
\label{th:fundamental-identity}
Let $\Si\cap\Om$ as in Setting \ref{Setting}, 
%
%
and denote by $H$ the mean curvature of $\Ga_0$.
\par
If $u$ satisfies \eqref{eq:problem torsion+inegrability assumption},
%
%
then the following identity holds:
\begin{equation}\label{fundamental}
\frac1{N-1} \left\lbrace \int_{\Si \cap \Om} \left\{ |\na ^2 u|^2-\frac{(\De u)^2}{N}\right\} \, dx +  \int_{\Ga_1} \left( - \langle \na^2 u \na u , \nu \rangle  \right) \, dS_x \right\rbrace = N | \Si \cap \Om| -\int_{\Ga_0} H \, (u_\nu)^2\,dS_x.
\end{equation}
\end{thm}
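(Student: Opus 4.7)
The plan is to apply the divergence theorem to the vector field $\na P$, where $P=\tfrac12|\na u|^2-u$ is the $P$-function of \eqref{P-function}. From \eqref{eq:P-gradient and Laplacian-differential-identity} we have $\na P=\na^2 u\,\na u-\na u$ and, using $\De u\equiv N$, $\De P=|\na^2 u|^2-(\De u)^2/N$. The regularity $u\in W^{1,\infty}(\Si\cap\Om)\cap W^{2,2}(\Si\cap\Om)$ gives $\na P\in L^2(\Si\cap\Om)$ and $\De P\in L^1(\Si\cap\Om)$; combining the sign bound $\langle\na^2 u\,\na u,\nu\rangle\le 0$ on $\Ga_1$ (by convexity of $\Si$, as noted in the introduction) with the $L^2$-integrability of $\na^2 u\,\na u$ against $\nu$ on the smooth portion $\Ga_0$, the hypotheses of Proposition \ref{prop:divergence theorem in cones} are met. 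Thus
\begin{equation*}
\int_{\Si\cap\Om}\Bigl\{|\na^2 u|^2-\tfrac{(\De u)^2}{N}\Bigr\}\,dx \;=\;\int_{\Ga_0\cup\Ga_1} P_\nu\,dS_x,\qquad\text{where }P_\nu=\langle\na^2 u\,\na u,\nu\rangle - u_\nu.
\end{equation*}

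I then compute $P_\nu$ on each boundary component. On $\Ga_1$ the Neumann condition $u_\nu=0$ reduces $P_\nu$ to $\langle\na^2 u\,\na u,\nu\rangle$, which will move to the left-hand side of \eqref{fundamental} with the opposite sign. On $\Ga_0$ the Dirichlet condition $u=0$ combined with \eqref{eq:gradient u = u_nu nu on level set} gives $\na u=u_\nu\nu$, whence $P_\nu=u_\nu\langle\na^2 u\,\nu,\nu\rangle - u_\nu$.

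The key geometric step is to identify $\langle\na^2 u\,\nu,\nu\rangle$ on $\Ga_0$ in terms of the mean curvature $H$. Since $\nu=\na u/|\na u|$ on $\Ga_0$ with $u_\nu>0$, extending $\nu$ to a neighbourhood of $\Ga_0$ and computing the ambient divergence yields
\begin{equation*}
(N-1)H \;=\; \dv\!\Bigl(\tfrac{\na u}{|\na u|}\Bigr) \;=\; \frac{\De u}{u_\nu}-\frac{\langle\na^2 u\,\nu,\nu\rangle}{u_\nu} \;=\; \frac{N}{u_\nu}-\frac{\langle\na^2 u\,\nu,\nu\rangle}{u_\nu},
\end{equation*}
so that $\langle\na^2 u\,\nu,\nu\rangle = N-(N-1)\,H\,u_\nu$, and therefore $P_\nu=(N-1)\bigl(u_\nu-H u_\nu^2\bigr)$ on $\Ga_0$.

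Finally, applying the divergence theorem once more, this time to $\na u$ and exploiting $u_\nu=0$ on $\Ga_1$, gives $\int_{\Ga_0} u_\nu\,dS_x=\int_{\Si\cap\Om}\De u\,dx=N|\Si\cap\Om|$, so that
\begin{equation*}
\int_{\Ga_0} P_\nu\,dS_x \;=\; (N-1)\Bigl(N|\Si\cap\Om|-\int_{\Ga_0} H\,u_\nu^2\,dS_x\Bigr).
\end{equation*}
Substituting into the divergence-theorem identity for $\na P$, moving the $\Ga_1$-contribution to the left, and dividing by $N-1$ yields \eqref{fundamental}. The main obstacle is purely the justification of the divergence theorem in the presence of the singular set $\cS$ and the edge $\pa\Ga_0$; this is precisely what Proposition \ref{prop:divergence theorem in cones} is designed to handle, and its hypotheses are satisfied thanks to the $W^{2,2}$-regularity of $u$ and the convexity of the cone (which provides the required sign on $\Ga_1$).
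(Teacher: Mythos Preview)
Your proof is correct and follows essentially the same approach as the paper's: both apply the divergence theorem to $\nabla P$, compute $P_\nu$ on $\Gamma_0$ via Reilly's identity (which you derive through $\dv(\na u/|\na u|)$ while the paper simply cites it), handle $\Gamma_1$ via the Neumann condition, and close with the volume formula $\int_{\Gamma_0}u_\nu\,dS_x=N|\Sigma\cap\Omega|$. Your explicit discussion of why Proposition~\ref{prop:divergence theorem in cones} applies is a welcome addition; note only that your derivation of Reilly's identity assumes $u_\nu\neq 0$, but the resulting pointwise formula $P_\nu=(N-1)(u_\nu-Hu_\nu^2)$ extends by continuity to any points where $u_\nu=0$, so no harm is done.
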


\begin{proof}
Let $P$ be given by \eqref{P-function}. By the divergence theorem we can write:
\begin{equation}
\label{div-p-function}
\int_{ \Si \cap \Om } \De P \,dx = \int_{\Ga_0 \cup \Ga_1} P_\nu \, dS_x = \int_{\Ga_0} P_\nu \, dS_x + \int_{\Ga_1} \langle\na^2 u \na u, \nu \rangle \, dS_x.
\end{equation}
In the second identity, we used \eqref{eq:P-gradient and Laplacian-differential-identity} and the Neumann boundary condition on $\Ga_1$ in \eqref{eq:problem torsion}.
By recalling \eqref{eq:gradient u = u_nu nu on level set}, we compute that
$$
P_\nu=\lan D^2 u\, \na u, \nu\ran-u_\nu=u_\nu \lan (D^2 u)\,\nu,\nu\ran-u_\nu=u_{\nu\nu}\, u_\nu-u_\nu \quad \text{ on } \Ga_0.
$$
We recall Reilly's identity,
\begin{equation*}
u_{\nu\nu}+(N-1)\,H\,u_\nu=N \quad \text{ on } \Ga_0 ,
\end{equation*}
from which we obtain that
$$
u_{\nu\nu}\, u_\nu+(N-1)\,H\, (u_\nu)^2=N\,u_\nu \quad \text{ on } \Ga_0  ,
$$
and hence
$$
P_\nu=(N-1)\,u_\nu-(N-1)\,H\, (u_\nu)^2 \quad \text{ on } \Ga_0 .
$$

Therefore, \eqref{fundamental} follows from this identity, \eqref{eq:P-gradient and Laplacian-differential-identity}, \eqref{div-p-function} and the formula
\begin{equation}
\label{volume}
\int_{\Ga_0} u_\nu \, dS_x = \int_{\Ga_0 \cup \Ga_1} u_\nu \, dS_x = N \, |\Si \cap \Om|,
\end{equation}
which is an easy consequence of the divergence theorem and the Neumann condition in \eqref{eq:problem torsion}.
\end{proof}

\begin{thm}[Fundamental identities for the Soap Bubble Theorem in cones]
\label{thm:identitySBT}
Let $\Si\cap\Om$ as in Setting \ref{Setting},
and
%
%
denote by $H$ the mean curvature of $\Ga_0$.
Let $R$ be the positive constant defined in \eqref{def-R}.

If $u$ satisfies \eqref{eq:problem torsion+inegrability assumption}, then the following identity holds true:
\begin{equation}
\label{H-fundamental}
\begin{split}
\frac1{N-1} \left\lbrace
\int_{ \Si \cap \Om } \left\{ |\na ^2 u|^2-\frac{(\De u)^2}{N}\right\}dx
+
\int_{\Ga_1} \left( - \langle \na^2 u \na u , \nu \rangle  \right) \, dS_x 
\right\rbrace
& +
\frac1{R}\,\int_{\Ga_0} (u_\nu-R)^2 dS_x 
\\
& = 
\int_{\Ga_0} \left( \frac{1}{R} -H \right) \, (u_\nu)^2 dS_x .
\end{split}
\end{equation}
Moreover, if for any $z \in \RR^N$ satisfying \eqref{eq:inner product z in cone} we set
\begin{equation}\label{def-H0}
H_0(z) = \frac{1}{R} - \frac{\int_{\pa \Ga_0} \langle x-z,\mathbf{n}_x\rangle d \cH^{N-2}_x }{(N-1)N|\Si\cap\Om|} ,
\end{equation}
where $\mathbf{n}_x$ denotes the outward unit conormal to $\pa \Ga_0$ belonging to the tangent space of $\Ga_0$, we have that
\begin{multline}
\label{eq:NEWVERSION identity-SBT2}
\frac1{N-1} \left\lbrace \int_{\Si \cap \Om} \left\{ |\na ^2 u|^2-\frac{(\De u)^2}{N}\right\}dx
-  \int_{\Ga_1} \langle \na^2 u \na u , \nu \rangle   \, dS_x
- \frac{\int_{\Ga_0} u_\nu^2 \, dS_x }{N |\Si\cap\Om|}   \int_{\pa \Ga_0 } \langle x-z, \mathbf{n}_x \rangle d \cH^{N-2}_x
\right\rbrace
\\
+
\frac1{R}\,\int_{\Ga_0} (u_\nu-R)^2 dS_x =
\int_{\Ga_0} (H_0(z) - H) (u_\nu^2-R^2) dS_x + R \int_{\Ga_0} (H_0(z) - H)(R - q_\nu ) dS_x
\end{multline}
\end{thm}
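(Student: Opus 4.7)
The first identity \eqref{H-fundamental} follows from the fundamental identity \eqref{fundamental} of Theorem \ref{th:fundamental-identity} by an algebraic rearrangement. I would add $\tfrac{1}{R}\int_{\Ga_0}(u_\nu - R)^2 dS_x$ to both sides of \eqref{fundamental}, expand the square, and simplify the right-hand side using \eqref{volume} (which gives $\int_{\Ga_0} u_\nu \, dS_x = N|\Si\cap\Om|$) together with the identity $R|\Ga_0| = N|\Si\cap\Om|$ that follows from the definition \eqref{def-R}. After these cancellations, the right-hand side collapses to $\int_{\Ga_0}(1/R - H) u_\nu^2 \, dS_x$, as required.

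To prove \eqref{eq:NEWVERSION identity-SBT2}, I would start from \eqref{H-fundamental} and decompose $\tfrac{1}{R} - H = (H_0(z) - H) + (\tfrac{1}{R} - H_0(z))$. By the definition \eqref{def-H0} of $H_0(z)$, the second summand is the constant $\int_{\pa\Ga_0}\langle x-z, \mathbf{n}_x\rangle d\cH^{N-2}_x / [(N-1)N|\Si\cap\Om|]$. Multiplying by $u_\nu^2$ and integrating over $\Ga_0$, this constant factors out against $\int_{\Ga_0} u_\nu^2 \, dS_x$, and moving the resulting term to the left-hand side reproduces exactly the boundary correction inside the brace in \eqref{eq:NEWVERSION identity-SBT2}. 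What remains on the right, namely $\int_{\Ga_0}(H_0(z) - H) u_\nu^2 \, dS_x$, matches the claimed right-hand side of \eqref{eq:NEWVERSION identity-SBT2} provided that
$$
\int_{\Ga_0}\bigl(H_0(z) - H\bigr)\, q_\nu \, dS_x = 0,
$$
as a short expansion using $(u_\nu^2 - R^2) + R(R - q_\nu) = u_\nu^2 - R q_\nu$ confirms.

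The substantive work is therefore to verify this last identity, and I would do so via a Minkowski-type formula on $\Ga_0$. Applying the tangential divergence theorem on $\Ga_0$ to the vector field $x-z$, and using $\dv_{\Ga_0}(x-z) = N-1$ together with $\dv_{\Ga_0}\nu = (N-1)H$, yields
$$
\int_{\Ga_0} H\, \langle x-z, \nu\rangle \, dS_x = |\Ga_0| - \frac{1}{N-1}\int_{\pa\Ga_0}\langle x-z, \mathbf{n}_x\rangle d\cH^{N-2}_x.
$$
On the other hand, since $q_\nu = \langle x-z, \nu\rangle$ vanishes on $\Ga_1$ by \eqref{eq:inner product z in cone}, Proposition \ref{prop:divergence theorem in cones} applied to the vector field $x-z$ gives $\int_{\Ga_0} q_\nu \, dS_x = N|\Si\cap\Om|$. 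Multiplying this equality by $H_0(z)$ and using \eqref{def-H0} together with $|\Ga_0| = N|\Si\cap\Om|/R$ from \eqref{def-R}, one recovers precisely the right-hand side of the Minkowski identity above, which closes the argument.

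The only delicate point is the tangential Minkowski-type formula with its boundary contribution over $\pa\Ga_0$; but since $\Ga_0$ is smooth and $\pa\Ga_0$ is a smooth $(N-2)$-dimensional manifold by Setting \ref{Setting}, this is a standard application of Stokes' theorem on a Riemannian manifold with boundary, so I do not anticipate any real obstacle.
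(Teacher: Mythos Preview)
Your proposal is correct and follows essentially the same route as the paper's proof: \eqref{H-fundamental} is obtained from \eqref{fundamental} by the algebraic rearrangement you describe, and \eqref{eq:NEWVERSION identity-SBT2} then follows by the same decomposition $\tfrac{1}{R}-H=(H_0(z)-H)+(\tfrac{1}{R}-H_0(z))$ together with the key identity $\int_{\Ga_0}(H_0(z)-H)\,q_\nu\,dS_x=0$, which in both arguments is reduced to the Minkowski-type formula \eqref{eq:Minlowski relativa} plus $\int_{\Ga_0} q_\nu\,dS_x=N|\Si\cap\Om|$. The only cosmetic difference is that the paper derives \eqref{eq:Minlowski relativa} by computing $\De_{\Ga_0} q$ and integrating, whereas you apply the tangential divergence theorem directly to the vector field $x-z$; these are the same computation in different notation.
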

\begin{rem}\label{rem:casi H_0 non dipendente da z}
{\rm
(i) If $\ol{\Ga}_0$ and $\pa\Si$ meet orthogonally $\cH^{N-2}$-a.e. in $\pa \Ga_0$, then $H_0(z)$ does not depend on the choice of $z$, and we have that 
\begin{equation*}
H_0(z) = \frac{1}{R} \quad \text{ for any } z \text{ satisfying \eqref{eq:inner product z in cone}.}
\end{equation*}
In fact, in such a case, 
by \eqref{eq:inner product z in cone} we get that $\int_{\pa \Ga_0}\langle x-z , \mathbf{n}_x  \rangle \, d \cH^{N-2}_x = 0$.
%
%
%

(ii) If $H$ is constant on $\Ga_0$, then by \eqref{eq:Minlowski relativa} we have that $H_0(z) \equiv H$
for any $z$ satisfying \eqref{eq:inner product z in cone}, that is, $H_0(z)$ does not depend on the choice of $z$.
}
\end{rem}
\begin{proof}[Proof of Theorem \ref{thm:identitySBT}]
Since, by \eqref{volume}, we have that
$$
\frac1{R}\,\int_{\Ga_0} u_\nu^2\,dS_x=
\frac1{R}\,\int_{\Ga_0} (u_\nu-R)^2\,dS_x+N | \Si \cap \Om|,
$$
then
\begin{multline*}
\int_{\Ga_0} H\,(u_\nu)^2\,dS_x=H_0\,\int_{\Ga_0} (u_\nu)^2\,dS_x+
\int_{\Ga_0} \left( H-H_0\right)\,(u_\nu)^2\,dS_x= \\
\frac1{R}\,\int_{\Ga_0} (u_\nu-R)^2\,dS_x+N | \Si \cap \Om|+
\int_{\Ga_0} ( H-H_0)\,(u_\nu)^2\,dS_x.
\end{multline*}
Thus, \eqref{H-fundamental} follows from this identity and \eqref{fundamental} at once.

To prove \eqref{eq:NEWVERSION identity-SBT2}, we use \eqref{def-H0} to re-write the right-hand side of \eqref{H-fundamental} as
$$
\int_{\Ga_0} \left( \frac{1}{R} - H \right) u_\nu^2 \, dS_x
= \int_{\Ga_0} \left( H_0(z )-H \right) u_\nu^2 \, dS_x + \frac{\int_{\Ga_0} u_\nu^2 \, dS_x }{(N-1)N |\Si\cap\Om|}   \int_{\pa \Ga_0 } \langle x-z, \mathbf{n}_x \rangle d \cH^{N-2}_x ;
$$
hence, \eqref{H-fundamental} becomes
\begin{equation}\label{H-fundamental-rewritten}
\begin{split}
\frac1{N-1} \left\lbrace \int_{\Si \cap \Om} \left\{ |\na ^2 u|^2-\frac{(\De u)^2}{N}\right\}dx
-  \int_{\Ga_1} \langle \na^2 u \na u , \nu \rangle   \, dS_x
- \frac{\int_{\Ga_0} u_\nu^2 \, dS_x }{N |\Si\cap\Om|}   \int_{\pa \Ga_0 } \langle x-z, \mathbf{n}_x \rangle d \cH^{N-2}_x
\right\rbrace
\\
+
\frac1{R}\,\int_{\Ga_0} (u_\nu-R)^2 dS_x = \int_{\Ga_0} \left( H_0(z )-H \right) u_\nu^2 \, dS_x .
\end{split}
\end{equation}
Identity \eqref{eq:NEWVERSION identity-SBT2} follows by noting that the right-hand side of \eqref{H-fundamental-rewritten} can be re-written as
$$
\int_{\Ga_0} (H_0(z) - H) u_\nu^2 \, dS_x = \int_{\Ga_0} (H_0(z) - H) (u_\nu^2-R^2) dS_x + R \int_{\Ga_0} (H_0(z) - H)(R - q_\nu ) dS_x ,
$$
being as
$$
\int_{\Ga_0} (H_0(z) - H) q_\nu \, dS_x = 0 .
$$
The last identity follows by putting together $\int_{\Ga_0} H_0(z) \langle x-z , \nu \rangle dS_x = H_0(z)\,  N | \Si\cap\Om |$ (which easily follows by \eqref{def-H0}, \eqref{eq:inner product z in cone} and the divergence theorem) and the following Minkowski-type identity:
\begin{equation}\label{eq:Minlowski relativa}
\int_{\Ga_0} H \langle x-z , \nu \rangle \, dS_x = |\Ga_0| - \frac{1}{N-1} \int_{\pa \Ga_0 } \langle x-z, \mathbf{n}_x \rangle d \cH^{N-2}_x ,
\end{equation}
where $\mathbf{n}_x$ is the outward unit conormal to $\pa \Ga_0$ belonging to the tangent space of $\Ga_0$.
When $\Ga_0$ is a closed surface, i.e., $\pa \Ga_0 = \varnothing $, the last integral vanishes and \eqref{eq:Minlowski relativa} becomes the usual Minkowski identity. In general, \eqref{eq:Minlowski relativa} can be proved by computing the tangential gradient $D_{\Ga_0}$ and the Laplace-Beltrami operator $\De_{\Ga_0}$ of the quadratic polynomial $q$ defined in \eqref{quadratic}, that is:
$$
D_{\Ga_0} q = (x-z) - \langle x-z , \nu \rangle \nu
\quad \text{ and } \quad
\De_{\Ga_0} q = (N-1) \left( 1 - H \langle x-z , \nu \rangle \right)
\quad \text{ for } x \in \Ga_0.
$$
In fact, integrating over $\Ga_0$ and using that
$$
\int_{\Ga_0} \De_{\Ga_0} q \, d \cH^{N-1}_x = \int_{\pa \Ga_0} \langle D_{\Ga_0} q , \mathbf{n}_x \rangle \, d \cH^{N-2}_x = \int_{\pa \Ga_0} \langle x-z , \mathbf{n}_x \rangle \, d \cH^{N-2}_x ,
$$
\eqref{eq:Minlowski relativa} easily follows. The proof is completed.
\end{proof}

We finally show that, if $\Ga_0$ is mean-convex (i.e., $H \ge 0$ on $\Ga_0$), \eqref{fundamental-identity2} can also be rearranged into an identity leading to a new Heintze-Karcher-type inequality relative to a convex cone.

\begin{thm}[Fundamental identity for Heintze-Karcher's inequality in cones]
\label{thm:identityheintze-karcher}
Let $\Si\cap\Om$ as in Setting \ref{Setting},
and denote by $H$ the mean curvature of $\Ga_0$. 
Let $u$ be the solution of \eqref{eq:problem torsion+inegrability assumption}. 
\par
If $\Ga_0$ is mean-convex (i.e., $H \ge 0$), then we have the following identity:
\begin{equation}
\label{heintze-karcher-identity}
\begin{split}
\frac1{N-1} \left\lbrace \int_{\Si \cap \Om} \left\{ |\na ^2 u|^2-\frac{(\De u)^2}{N}\right\} \, dx +  \int_{\Ga_1} \left( - \langle \na^2 u \na u , \nu \rangle  \right) \, dS_x \right\rbrace
& + \int_{\Ga_0} \frac{(1-H\,u_\nu)^2}{H}\,dS_x 
\\
& =
\int_{\Ga_0} \frac{dS_x}{H}-N |\Si\cap\Om|.
\end{split}
\end{equation}
\end{thm}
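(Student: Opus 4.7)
My plan is to derive \eqref{heintze-karcher-identity} as a purely algebraic rearrangement of the fundamental identity \eqref{fundamental} already established in Theorem \ref{th:fundamental-identity}, combined with the volume formula \eqref{volume}. No new integration by parts is needed; the mean-convexity assumption $H\ge 0$ on $\Ga_0$ enters only to make $1/H$ a meaningful (possibly extended real-valued, but nonnegative) integrand in the new terms.

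The key observation is that, since $H\ge 0$ on $\Ga_0$, we may complete the square and write pointwise on $\Ga_0$
\begin{equation*}
\frac{(1-H u_\nu)^2}{H} \;=\; \frac{1}{H} - 2 u_\nu + H\,u_\nu^2.
\end{equation*}
Integrating this identity over $\Ga_0$ gives
\begin{equation*}
\int_{\Ga_0}\frac{(1-H u_\nu)^2}{H}\,dS_x \;=\; \int_{\Ga_0}\frac{dS_x}{H} \;-\; 2\int_{\Ga_0} u_\nu\,dS_x \;+\; \int_{\Ga_0} H\,u_\nu^2\,dS_x,
\end{equation*}
and by the volume formula \eqref{volume} the middle term equals $2N|\Si\cap\Om|$.

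Thus, adding $\int_{\Ga_0} (1-Hu_\nu)^2/H\,dS_x$ to both sides of \eqref{fundamental} from Theorem \ref{th:fundamental-identity}, the right-hand side becomes
\begin{equation*}
N|\Si\cap\Om| - \int_{\Ga_0} H\,u_\nu^2\,dS_x + \int_{\Ga_0}\frac{dS_x}{H} - 2N|\Si\cap\Om| + \int_{\Ga_0} H\,u_\nu^2\,dS_x \;=\; \int_{\Ga_0}\frac{dS_x}{H} - N|\Si\cap\Om|,
\end{equation*}
which is exactly what is claimed in \eqref{heintze-karcher-identity}. The left-hand side is the left-hand side of \eqref{fundamental} plus the added term $\int_{\Ga_0}(1-Hu_\nu)^2/H\,dS_x$, matching the statement.

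Honestly, there is no hard step here; the only thing to double-check is the legitimacy of the manipulation when $H$ vanishes or becomes small on a subset of $\Ga_0$. The right-hand side $\int_{\Ga_0} dS_x/H - N|\Si\cap\Om|$ should be interpreted in $[-N|\Si\cap\Om|, +\infty]$, and the completed-square identity remains valid in that same extended sense because $(1-Hu_\nu)^2/H \ge 0$ on $\{H>0\}$ and the two sides of the completion agree a.e.\ on $\{H>0\}$. On $\{H=0\}$ both $(1-Hu_\nu)^2/H$ and $1/H$ are $+\infty$ and cancel consistently in the rearrangement, so the identity holds in the extended-value sense. This parallels the standard convention used when stating Heintze--Karcher's inequality in the mean-convex but not strictly mean-convex case.
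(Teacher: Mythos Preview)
Your proof is correct and follows essentially the same approach as the paper: both derive \eqref{heintze-karcher-identity} by combining \eqref{fundamental} with the pointwise completion-of-the-square identity $(1-Hu_\nu)^2/H = 1/H - 2u_\nu + Hu_\nu^2$ (the paper writes this equivalently as $(1-Hu_\nu)^2/H = -(1-Hu_\nu)u_\nu + 1/H - u_\nu$ after first passing through the intermediate form \eqref{fundamental-identity2}) and then using \eqref{volume}. Your handling of the case where $H$ may vanish is slightly informal compared with the paper's dichotomy ``if $\int_{\Ga_0} dS_x/H = \infty$ the identity is trivial, otherwise $H>0$ a.e.'', but the content is the same.
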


\begin{proof}
Since \eqref{volume} holds, from \eqref{fundamental} we obtain that
\begin{equation}
\label{fundamental-identity2}
\frac1{N-1} \left\lbrace \int_{\Si \cap \Om} \left\{ |\na ^2 u|^2-\frac{(\De u)^2}{N}\right\} \, dx +  \int_{\Ga_1} \left( - \langle \na^2 u \na u , \nu \rangle  \right) \, dS_x \right\rbrace
= \int_{\Ga}(1-H u_\nu)\, u_\nu\,dS_x.
\end{equation}
Notice that \eqref{fundamental-identity2} still holds without the assumption $H\ge0$.

Since $u_\nu$ and $H$ are continuous, the identity
\begin{equation}\label{eq:HKsenzastrictly}
\frac{(1-H\,u_\nu)^2}{H}=-(1-H\,u_\nu) u_\nu+\frac1{H}-u_\nu,
\end{equation}
holds pointwise for $H\ge0$.

If the integral $\int_{\Ga_0} \frac{dS_x}{H}$ is infinite, then \eqref{heintze-karcher-identity} trivially holds. Otherwise $1/H$ is finite (and hence $H > 0$) almost everywhere
on $\Ga_0$ and hence, \eqref{eq:HKsenzastrictly} holds almost everywhere.
Thus, by integrating \eqref{eq:HKsenzastrictly} on $\Ga_0$, summing the result up to \eqref{fundamental-identity2} and taking into account \eqref{volume},
we get \eqref{heintze-karcher-identity}.
\end{proof}

\bigskip

The quantity
$$
|\na^2 u|^2 - \frac{(\De u)^2}{ N} ,
$$
that we call Cauchy-Schwarz deficit for the hessian matrix $\na^2 u$, plays the role of detector of spherical symmetry, as is clear from the following lemma. In what follows, $I$ denotes the identity matrix.

\begin{lem}[Spherical detector]
\label{lem:sphericaldetector}
Let $\Si\cap\Om \subseteq \RR^N$ be a domain and $u \in C^2 (\Si\cap\Om)$.
Then it holds that
\begin{equation}
\label{newton}
|\na^2 u|^2 - \frac{(\De u)^2}{ N} \ge 0 \quad \mbox{in } \Si\cap\Om .
\end{equation}

Moreover, if $u$ is the solution of \eqref{eq:problem torsion}, the equality sign holds in \eqref{newton} if and only if $\Ga_0$ is a piece of sphere 
%
%
of radius $R$ given by \eqref{def-R} which meets $\pa \Si$ orthogonally, and
$$
u(x)= \frac{1}{2} \, (|x-z|^2 - R^2),
$$
for some $z \in \RR^N$ satisfying \eqref{eq:inner product z in cone}.
\end{lem}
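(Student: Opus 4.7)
The plan is to split the proof into the general pointwise inequality and the equality case analysis.

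First, to establish \eqref{newton}, I would apply the Cauchy-Schwarz inequality to the eigenvalues of the symmetric matrix $\na^2 u$. Writing $\lambda_1,\dots,\lambda_N$ for the eigenvalues, we have $|\na^2 u|^2 = \sum_i \lambda_i^2$ and $\De u = \sum_i \lambda_i$, so $(\De u)^2 \le N |\na^2 u|^2$ by Cauchy-Schwarz. Equality holds pointwise if and only if all eigenvalues coincide, i.e., $\na^2 u = \frac{\De u}{N}\,I$.

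For the characterization of equality under \eqref{eq:problem torsion}, since $\De u = N$, the equality case forces $\na^2 u \equiv I$ in the connected open set $\Si\cap\Om$. Integrating, this yields $u(x) = \frac{1}{2}|x|^2 + \langle b, x\rangle + c$ for some $b\in\RR^N$ and $c\in\RR$, which can be rewritten as
\begin{equation*}
u(x) = \frac{1}{2}|x-z|^2 + d, \qquad z := -b,\ d := c - \tfrac{1}{2}|b|^2.
\end{equation*}
The Neumann condition $u_\nu = \langle x-z,\nu\rangle = 0$ on $\Ga_1$ is precisely \eqref{eq:inner product z in cone}, which identifies the admissible centers $z$. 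The Dirichlet condition $u=0$ on $\Ga_0$ gives $|x-z|^2 = -2d$ on $\Ga_0$, so $\Ga_0$ lies on a sphere centered at $z$ of some radius $\rho := \sqrt{-2d}$, and $u(x)=\tfrac12(|x-z|^2-\rho^2)$.

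To identify $\rho$ with $R$, I would use \eqref{volume}: on $\Ga_0$, the outer normal is $\nu = (x-z)/\rho$, hence $u_\nu = \rho$, and
\begin{equation*}
N|\Si\cap\Om| = \int_{\Ga_0} u_\nu\, dS_x = \rho\,|\Ga_0|,
\end{equation*}
so $\rho = N|\Si\cap\Om|/|\Ga_0| = R$ by definition \eqref{def-R}. Finally, for the orthogonal intersection of $\ol{\Ga}_0$ with $\pa\Si$: at $\cH^{N-2}$-a.e. point $x \in \pa\Ga_0 \subset \pa^*\Si$, the vector $x-z$ is (up to scalar) the outer normal to $\Ga_0$, while by \eqref{eq:inner product z in cone} extended to $\pa\Ga_0$ (using continuity of $\nu$ on $\pa^*\Si$ and the density assumption $\cH^{N-2}(\pa^*\Si\cap\pa\Ga_0)=\cH^{N-2}(\pa\Ga_0)$ from Setting \ref{Setting}), $x-z$ is orthogonal to the normal $\nu$ of $\pa\Si$; hence the two normals are orthogonal, which is the desired orthogonality of the hypersurfaces. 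The converse implication (that the explicit $u$ gives equality in \eqref{newton}) is immediate since $\na^2 u = I$ and $\De u = N$.

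The argument is essentially algebraic once the Cauchy-Schwarz step is in place; the only subtle point is handling the orthogonality at $\pa\Ga_0$, where one must invoke the regularity hypotheses on the singular set $\cS$ from Setting \ref{Setting} to pass the identity \eqref{eq:inner product z in cone} from $\Ga_1$ to its closure $\cH^{N-2}$-a.e. on $\pa\Ga_0$.
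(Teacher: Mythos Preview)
Your proof is correct and follows essentially the same approach as the paper: Cauchy--Schwarz for the inequality (the paper phrases it as Cauchy--Schwarz for $\na^2 u$ and $I$ viewed as vectors in $\RR^{N^2}$, you use the eigenvalue formulation, but these are equivalent), then in the equality case deducing $\na^2 u \equiv I$, integrating to a quadratic, reading off \eqref{eq:inner product z in cone} from the Neumann condition, the sphere from the Dirichlet condition, the radius $R$ from \eqref{volume}, and orthogonality at $\pa\Ga_0$ from $\na u = x-z$ being normal to $\Ga_0$ and tangent to $\pa\Si$. The only cosmetic difference is that the paper introduces the auxiliary harmonic function $w=u-|x|^2/2$ and observes $|\na^2 w|^2=0$ to reach the same conclusion that $u$ is quadratic.
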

\begin{proof}
Regarding the matrices $\na^2 u$ and $I$ as vectors in $\RR^{N^2}$, \eqref{newton} is the classical Cauchy-Schwarz inequality.
%
%

For the characterization of the equality case,
we consider $u$ satisfying \eqref{eq:problem torsion} and we set $w(x)= u(x) - |x|^2 /2$. Since $w$ is harmonic, direct computations show that
$$
0=|\na^2 u|^2 - \frac{(\De u)^2}{ N} = | \na^2 w|^2.
$$
Thus, $w$ is affine and $u$ is quadratic. Therefore, $u$ can be written in the form
$u(x)=\left( |x-z|^2 - a \right)/2$,
for some $z\in\RR^N$ and $a\in\RR$.
\par
Since $u=0$ on $\Ga_0$, then $|x-z|^2=a$ for $x\in \Ga_0$, that is, $a$ must be positive and $\Ga_0$ must be contained in $\Si \cap \pa B_{\sqrt a }(z)$.

%

Since the function $u$ (which is now explicit) is $C^1$ up to $\pa \Ga_0$, we have that for any $x \in \pa \Ga_0$, the normal to $\Ga_0$ is parallel to $Du(x)=(x-z)$ and it is orthogonal to the normal to the cone by the Neumann condition.
Moreover, being as $\na u = x-z$ we have that  $\langle x-z, \nu \rangle = u_\nu = 0$ on $ \Ga_1$ (that is, $z$ satisfies \eqref{eq:inner product z in cone}), and hence
$$
\sqrt{a}\,|\Ga_0|=\int_{\Ga_0} |x-z|\,dS_x
=\int_{\Ga_0} \langle x-z , \nu(x) \rangle \,dS_x
= \int_{\Ga_0 \cup \Ga_1} \langle x-z , \nu(x) \rangle \, dS_x
= N\,|\Si \cap \Om| .
$$

We mention that if $u$ is any smooth
%
%
function (not necessarily satisfying \eqref{eq:problem torsion}) for which the equality holds in \eqref{newton}, we can still
conclude that $u$ is a quadratic polynomial. A proof of this fact, which we do not need in the present paper, can be found in \cite{Pog2}.
%
\end{proof}


As immediate corollaries of the fundamental identities of Theorems \ref{thm:serrinidentity}, \ref{thm:identitySBT}, and \ref{thm:identityheintze-karcher} we have the following symmetry results.

The convexity of $\Si$ and the Neumann condition on $\Ga_1$ in \eqref{eq:problem torsion} guarantee that\footnote{Differentiating (along the tangential direction $\na u$) the Neumann condition $\langle \na u, \nu \rangle=0$ on $\Ga_1$ gives that $\langle \na^2 u \na u, \nu \rangle = - \langle D \nu \na u, \na u \rangle \text{ on } \Ga_1 $, where the right-hand side is non-positive since the second fundamental form of $\pa\Si$ at the points of $\Ga_1 \subset \pa\Si$ is non-negative definite, by convexity of $\Si$. Hence, \eqref{eq:disuguaglianza puntuale gratis per convex} immediately follows. This was also used in \cite[(3.8)]{PT}. Notice that, in the particular case where $\Ga_1$ is flat, \eqref{eq:disuguaglianza puntuale gratis per convex} holds with the equality sign.}
\begin{equation}
\label{eq:disuguaglianza puntuale gratis per convex}
\langle \na^2 u \na u, \nu \rangle \le 0 \quad \text{ on } \Ga_1 ,
\end{equation}
which in particular ensures that
\begin{equation}\label{eq:disuguaglianza senza u gratis per convex}
\int_{\Ga_1 } \langle \na^2 u \na u, \nu \rangle \, dS_x \le 0
\end{equation}
and, being as $u \le 0$ (by the comparison principle in Lemma \ref{lem:comparison in cones}), also
\begin{equation}\label{eq:disuguaglianza con u gratis per convex}
\int_{\Ga_1 } u \, \langle \na^2 u \na u, \nu \rangle \, dS_x \ge 0 .
\end{equation}

\begin{thm}[Symmetry for Serrin's problem relative to $\Si$] 
\label{thm:rigidity Serrin}
Let the assumptions of Theorem \ref{thm:serrinidentity} be satisfied.

If the right-hand side of \eqref{idwps} is non-positive, then we must have $\Si\cap\Om = \Si\cap B_R(z)$ for some $z$ satisfying \eqref{eq:inner product z in cone}. 
The same conclusion clearly holds if either $u_\nu$ is constant on $\Ga_0$ or $u_\nu - \langle x-z, \nu \rangle = 0$ on $\Ga_0$.
\end{thm}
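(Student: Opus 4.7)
The plan is to exploit the fundamental identity \eqref{idwps} by observing that its left-hand side is a sum of manifestly non-negative contributions, so that the sign hypothesis on the right-hand side forces each contribution to vanish; the characterization of the equality case in Lemma \ref{lem:sphericaldetector} then delivers the conclusion.

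First I would check that the bulk integrand on the left-hand side is non-negative. The factor $|\na^2 u|^2-(\De u)^2/N$ is non-negative by the Cauchy--Schwarz inequality \eqref{newton}, and it is multiplied by $(-u)\ge 0$ in $\Si\cap\Om$: indeed the comparison principle (Lemma \ref{lem:comparison in cones}) gives $u\le 0$ on $\overline{\Si\cap\Om}$. Next I would verify that the boundary integrand on $\Ga_1$ is also non-negative: the convexity of $\Si$ together with the Neumann condition on $\Ga_1$ yields the pointwise inequality $\langle\na^2 u\,\na u,\nu\rangle\le 0$ on $\Ga_1$, recorded as \eqref{eq:disuguaglianza puntuale gratis per convex}; multiplying by $u\le 0$ flips the sign and produces a non-negative integrand.

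Combining these two observations with the assumption that the right-hand side of \eqref{idwps} is non-positive, I conclude that both summands on the left-hand side must be zero. Since $-u>0$ in $\Si\cap\Om$, the bulk integral can only vanish if the Cauchy--Schwarz deficit $|\na^2 u|^2-(\De u)^2/N$ is identically zero in $\Si\cap\Om$. At this stage Lemma \ref{lem:sphericaldetector} applies directly and produces both $u(x)=\tfrac12(|x-z|^2-R^2)$ for some $z\in\RR^N$ satisfying \eqref{eq:inner product z in cone}, and the geometric identification $\Si\cap\Om=\Si\cap B_R(z)$.

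It remains to reduce the two special cases to this main statement. If $u_\nu-\langle x-z,\nu\rangle\equiv 0$ on $\Ga_0$, the right-hand side of \eqref{idwps} is literally zero, so the previous argument applies verbatim. If instead $u_\nu$ is a constant $c$ on $\Ga_0$, I would invoke the divergence theorem together with the Neumann condition on $\Ga_1$ as in \eqref{volume} to compute
\[
c\,|\Ga_0|=\int_{\Ga_0}u_\nu\,dS_x=N\,|\Si\cap\Om|,
\]
which, by the definition \eqref{def-R} of $R$, forces $c=R$. Hence $u_\nu^2-R^2\equiv 0$ on $\Ga_0$ and the right-hand side of \eqref{idwps} again vanishes. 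There is essentially no serious obstacle here: the only point that requires care is the correct handling of signs on $\Ga_1$, which is already packaged in \eqref{eq:disuguaglianza puntuale gratis per convex}, so all the real work has been front-loaded into the fundamental identity of Theorem \ref{thm:serrinidentity} and the spherical-detector lemma.
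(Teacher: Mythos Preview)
Your proposal is correct and follows essentially the same route as the paper: non-negativity of both summands on the left-hand side of \eqref{idwps} via \eqref{newton}, Lemma \ref{lem:comparison in cones}, and \eqref{eq:disuguaglianza puntuale gratis per convex}; the sign hypothesis forces both to vanish; strict positivity of $-u$ in $\Si\cap\Om$ (the paper cites Lemma \ref{lem:relation u dist general} for this) kills the Cauchy--Schwarz deficit; Lemma \ref{lem:sphericaldetector} concludes. The two special cases are handled identically.
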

\begin{proof}
If the right-hand side of \eqref{idwps} is non-positive, then the two integrands at the left-hand side must be zero, being non-negative by \eqref{newton}, the comparison principle in Lemma \ref{lem:comparison in cones} with $f:=-u$, and \eqref{eq:disuguaglianza con u gratis per convex}. Then \eqref{newton} must hold with the equality sign, since $u<0$ on $\Si\cap\Om$ by Lemma \ref{lem:relation u dist general}.
%
%
The conclusion follows from Lemma \ref{lem:sphericaldetector}.
\par
Finally, if $u_\nu\equiv c$ on $\Ga_0$ for some constant $c$, then 
$$
c\,|\Ga_0|=\int_{\Ga_0 \cup \Ga_1} u_\nu\,dS_x = N\,|\Si \cap \Om|,
$$
that is $c=R$, and hence we can apply the previous argument.

The same conclusion clearly holds if $u_\nu-q_\nu=0$ on $\Ga_0$. 
\end{proof}

\begin{thm}[Soap Bubble Theorem relative to $\Si$]\label{thm:SBT cone}
Let the assumptions of Theorem \ref{thm:identitySBT} be satisfied.

If the right-hand side of \eqref{H-fundamental} is non-positive, then we must have $\Si\cap\Om = \Si\cap B_R(z)$ for some $z$ satisfying \eqref{eq:inner product z in cone}.

The same conclusion clearly holds if the mean curvature $H$ of $\Ga_0$ satisfies $H\ge 1/R$ on $\Ga_0$.

In particular, the last inequality is satisfied if $H$ is constant on $\Ga_0$ and the gluing condition $\int_{\pa \Ga_0} \langle x -z ,\mathbf{n}_x\rangle d \cH^{N-2}_x \le 0$ is verified for at least one point $z$ satisfying \eqref{eq:inner product z in cone}.
\end{thm}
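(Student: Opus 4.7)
The plan is to derive all three assertions from the single fundamental identity \eqref{H-fundamental}, in the same spirit as the proof of Theorem \ref{thm:rigidity Serrin}. The key observation I would invoke is that every summand on the left-hand side of \eqref{H-fundamental} is individually non-negative: the interior integrand $|\na^2 u|^2 - (\De u)^2/N$ is non-negative by the Cauchy--Schwarz inequality in Lemma \ref{lem:sphericaldetector}; the $\Ga_1$-integrand $-\langle \na^2 u \na u , \nu \rangle$ is non-negative on $\Ga_1$ via the convexity estimate \eqref{eq:disuguaglianza puntuale gratis per convex}; and of course $(u_\nu - R)^2 \ge 0$ on $\Ga_0$. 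Hence if the right-hand side of \eqref{H-fundamental} is non-positive, all three non-negative contributions on the left-hand side must vanish simultaneously. Interior elliptic regularity guarantees $u \in C^\infty(\Si\cap\Om)$, so the vanishing of the Cauchy--Schwarz deficit holds pointwise throughout $\Si \cap \Om$, and Lemma \ref{lem:sphericaldetector} then produces $z \in \RR^N$ satisfying \eqref{eq:inner product z in cone} such that $u(x) = (|x-z|^2 - R^2)/2$ and $\Si \cap \Om = \Si \cap B_R(z)$. This proves the first assertion.

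For the second assertion, the inequality $H \ge 1/R$ on $\Ga_0$ makes the pointwise integrand $(1/R - H)(u_\nu)^2$ non-positive, so the right-hand side of \eqref{H-fundamental} is non-positive and the previous step applies. For the third assertion, I plan to combine the Minkowski-type identity \eqref{eq:Minlowski relativa} with the divergence theorem: for any $z$ satisfying \eqref{eq:inner product z in cone}, the identity $\int_{\Ga_0}\langle x-z,\nu\rangle\,dS_x = N|\Si\cap\Om|$ follows from \eqref{eq:inner product z in cone} and Proposition \ref{prop:divergence theorem in cones}. Substituting this into \eqref{eq:Minlowski relativa} under the assumption that $H$ is constant on $\Ga_0$ yields
\begin{equation*}
H \cdot N|\Si\cap\Om| = |\Ga_0| - \frac{1}{N-1} \int_{\pa\Ga_0} \langle x-z,\mathbf{n}_x\rangle\, d\cH^{N-2}_x ,
\end{equation*}
which, on dividing by $N|\Si\cap\Om|$ and comparing with \eqref{def-H0} and \eqref{def-R}, gives $H \equiv H_0(z)$. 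The gluing condition $\int_{\pa\Ga_0}\langle x-z,\mathbf{n}_x\rangle\,d\cH^{N-2}_x \le 0$ then forces $H = H_0(z) \ge 1/R$, and the conclusion follows from the second assertion.

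There is no genuine obstacle here: the heavy lifting has already been done in constructing the fundamental identity \eqref{H-fundamental} and in the characterization of equality in Lemma \ref{lem:sphericaldetector}. The only point that demands mild care is the algebraic verification, via \eqref{eq:Minlowski relativa}, that constancy of $H$ together with the gluing condition yields $H \ge 1/R$; but this is a routine computation once \eqref{eq:Minlowski relativa} is in hand, and the rest of the argument reduces to sign-checking on a chain of non-negative integrals.
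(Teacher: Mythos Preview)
Your proposal is correct and follows essentially the same approach as the paper's proof: both argue that the non-negativity of each summand on the left-hand side of \eqref{H-fundamental} forces the Cauchy--Schwarz deficit to vanish when the right-hand side is non-positive, then invoke Lemma \ref{lem:sphericaldetector}, and finally use the Minkowski-type identity \eqref{eq:Minlowski relativa} to show that constancy of $H$ together with the gluing condition yields $H \equiv H_0(z) \ge 1/R$. Your write-up is somewhat more detailed in spelling out the divergence-theorem computation $\int_{\Ga_0}\langle x-z,\nu\rangle\,dS_x = N|\Si\cap\Om|$, but the argument is the same.
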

\begin{proof}
If the right-hand side in \eqref{H-fundamental} is non-positive (and this certainly happens if $H\ge 1/R$ on $\Ga_0$), all the summands at the left-hand side must be zero, being non-negative.
\par
The fact that the first summand is zero gives that the Cauchy-Schwarz deficit for the hessian matrix $\na^2 u$ must be identically zero and the conclusion follows from Lemma \ref{lem:sphericaldetector}.
\par
If $H$ equals some constant, using \eqref{eq:Minlowski relativa} we find that
$$H \equiv H_0(z)= \frac{1}{R} - \frac{\int_{\pa \Ga_0} \langle x -z ,\mathbf{n}_x\rangle d \cH^{N-2}_x }{(N-1)N|\Si\cap\Om|} ,$$
and hence, if the gluing condition $\int_{\pa \Ga_0} \langle x -z ,\mathbf{n}_x\rangle d \cH^{N-2}_x \le 0$ is in force, $H \ge 1/R$.
\end{proof}

\begin{rem}
{\rm
In passing, in the proof of Theorem \ref{thm:SBT cone} we also obtained that the third summand on the left-hand side of \eqref{H-fundamental} is zero and hence $u_\nu \equiv R$ on $\Ga_0$, that is, $u$ satisfies the assumptions of Theorem \ref{thm:rigidity Serrin}. This provides an alternative way to conclude the proof of Theorem \ref{thm:identitySBT}. More importantly, this piece of information will be used in the quantitative analysis performed in the next sections and will play a crucial role to obtain the optimal Lipschitz rate of stability.
}
\end{rem}

We now show that \eqref{heintze-karcher-identity} implies Heintze-Karcher's inequality relative to the cone $\Si$.

\begin{thm}[Heintze-Karcher's inequality relative to $\Si$ and a related overdetermined problem]
\label{thm:heintze-karcher}
Let the assumptions of Theorem \ref{thm:identityheintze-karcher} be satisfied.

\begin{enumerate}[(i)]
\item Heintze-Karcher's inequality (relative to $\Si$)
\begin{equation}
\label{heintze-karcher}
\int_{\Ga_0} \frac{dS_x}{H}\ge N |\Si\cap\Om|
\end{equation}
holds and the equality sign is attained if and only if $\Si\cap\Om = \Si\cap B_R(z)$ for some $z$ satisfying \eqref{eq:inner product z in cone};
\item the solution $u$ of \eqref{eq:problem torsion+inegrability assumption} satisfies 
\begin{equation}\label{equa:overdeteitemi}
u_\nu(x)=1/H(x) \quad \mbox{on} \quad \Ga_0 
\end{equation}
if and only if $\Si\cap\Om = \Si\cap B_R(z)$ for some $z$ satisfying \eqref{eq:inner product z in cone}.
\end{enumerate}
 \end{thm}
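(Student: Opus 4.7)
The plan is to read off both parts of the theorem directly from the fundamental identity \eqref{heintze-karcher-identity} of Theorem \ref{thm:identityheintze-karcher}, using that each summand appearing on its left-hand side is non-negative.

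For part (i), I would first dispose of the trivial case $\int_{\Ga_0} dS_x/H = +\infty$, in which \eqref{heintze-karcher} holds automatically. When $\int_{\Ga_0} dS_x/H < +\infty$ (so that $H>0$ $\cH^{N-1}$-a.e.\ on $\Ga_0$), I invoke the three sources of positivity for the left-hand side of \eqref{heintze-karcher-identity}: the Cauchy--Schwarz deficit $|\na^2 u|^2 - (\De u)^2/N \ge 0$ (Lemma \ref{lem:sphericaldetector}); the convex-cone pointwise bound $-\langle \na^2 u\,\na u, \nu\rangle \ge 0$ on $\Ga_1$ recorded in \eqref{eq:disuguaglianza puntuale gratis per convex}; and $(1-Hu_\nu)^2/H \ge 0$ on $\Ga_0$. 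Adding these and comparing with the right-hand side of \eqref{heintze-karcher-identity} yields \eqref{heintze-karcher}.

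For the equality case in (i), equality in \eqref{heintze-karcher} forces each of the three non-negative summands on the left-hand side of \eqref{heintze-karcher-identity} to vanish. In particular $|\na^2 u|^2 = (\De u)^2/N$ throughout $\Si\cap\Om$, so the characterization of the equality case in Lemma \ref{lem:sphericaldetector} delivers $u(x) = \tfrac12(|x-z|^2 - R^2)$ with some $z$ satisfying \eqref{eq:inner product z in cone}, and $\Si\cap\Om = \Si\cap B_R(z)$. For the converse direction, plugging $u(x) = \tfrac12(|x-z|^2 - R^2)$ into $\Si\cap B_R(z)$ gives $\na^2 u = I$, so that on $\Ga_1$ one has $\langle \na^2 u\, \na u,\nu\rangle = \langle x-z,\nu\rangle = 0$ by \eqref{eq:inner product z in cone}, while on $\Ga_0$ one has $u_\nu = |x-z| = R$ and $H\equiv 1/R$, so $(1-Hu_\nu)^2/H \equiv 0$. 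Every term on the left of \eqref{heintze-karcher-identity} vanishes, giving $\int_{\Ga_0} dS_x/H = R|\Ga_0| = N|\Si\cap\Om|$ by \eqref{def-R}.

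Part (ii) is then an immediate corollary of (i). Assuming $u_\nu = 1/H$ on $\Ga_0$, the function $1/H = u_\nu$ is bounded, so we may integrate on $\Ga_0$ and apply \eqref{volume} to obtain
\begin{equation*}
\int_{\Ga_0} \frac{dS_x}{H} = \int_{\Ga_0} u_\nu\, dS_x = N\,|\Si\cap\Om|,
\end{equation*}
which is equality in \eqref{heintze-karcher}. By (i), $\Si\cap\Om = \Si\cap B_R(z)$ for some $z$ satisfying \eqref{eq:inner product z in cone}. The converse direction reuses the explicit computation above: on $\Si\cap B_R(z)$ one has $u_\nu = R$ and $H = 1/R$ on $\Ga_0$, hence \eqref{equa:overdeteitemi}.

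There is no real technical obstacle in this proof: all the work has been absorbed into the fundamental identity \eqref{heintze-karcher-identity}, the spherical-symmetry detector Lemma \ref{lem:sphericaldetector}, and the convexity bound \eqref{eq:disuguaglianza puntuale gratis per convex}. The only point requiring mild care is handling the case $\int_{\Ga_0} dS_x/H = +\infty$ in (i) separately, and ensuring in (ii) that the overdetermined condition is compatible with finiteness of $\int_{\Ga_0} dS_x/H$, which is automatic from $u\in W^{1,\infty}(\Si\cap\Om)$ and \eqref{volume}.
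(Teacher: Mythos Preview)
Your proof is correct and follows essentially the same route as the paper: read off non-negativity of each summand on the left of \eqref{heintze-karcher-identity} to get \eqref{heintze-karcher}, and in the equality/overdetermined cases force the Cauchy--Schwarz deficit to vanish and invoke Lemma~\ref{lem:sphericaldetector}. The only cosmetic difference is that for (ii) the paper observes directly that \eqref{equa:overdeteitemi} kills both the third summand and the right-hand side of \eqref{heintze-karcher-identity}, while you route through equality in (i) via \eqref{volume}; these are the same computation.
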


\begin{proof}
(i) All the summands at the left-hand side of \eqref{heintze-karcher-identity} are non-negative and hence \eqref{heintze-karcher} follows.
If the right-hand side is zero, those summands must be zero. The vanishing of the first summand gives the conclusion via Lemma \ref{lem:sphericaldetector}, as usual. Note in passing that the vanishing of the third summand gives that $u_\nu=1/H$ on $\Ga$, which also implies radial symmetry, by item (ii).

(ii) The right-hand side and the third summand of the left-hand side of \eqref{heintze-karcher-identity} are zero when \eqref{equa:overdeteitemi} occurs.
Thus, the conclusion follows from Lemma \ref{lem:sphericaldetector}, as usual.
\par
Notice that assumption \eqref{equa:overdeteitemi} forces $\Ga_0$ to be mean convex.
\end{proof}

\begin{rem}\label{rem:nota su convexity non necessaria}
{\rm 
A direct inspection of the proofs shows that, once that the fundamental identity in Theorem \ref{thm:serrinidentity} is established, symmetry via Lemma \ref{lem:sphericaldetector} can be obtained in Theorem \ref{thm:rigidity Serrin} even if the assumption of convexity of the cone $\Si$ is replaced by the weaker assumption \eqref{eq:disuguaglianza con u gratis per convex}. Similarly, once that the fundamental identities in Theorems \ref{thm:identitySBT} and \ref{thm:identityheintze-karcher} are established, symmetry via Lemma \ref{lem:sphericaldetector} can be obtained in Theorems \ref{thm:SBT cone} and \ref{thm:heintze-karcher} if the assumption of convexity of the cone $\Si$ is replaced by the weaker assumption \eqref{eq:disuguaglianza senza u gratis per convex}.
}
\end{rem}

As already mentioned in the Introduction, \eqref{eq:inner product z in cone} informs that all the components of $z$ in the directions spanned by $\nu(x)$ for $x\in \Ga_1$ must be zero. In particular, a sufficient condition forcing $z$ to be the origin is that $\mathrm{span}_{x\in\Ga_1} \nu(x) = \RR^N$, that is, \eqref{eq:intro k=N}. Such a sufficient condition generalizes the sufficient condition provided in \cite[Proposition 2.3]{PT isoperimetric}, as shown by the following.

%
%

\begin{prop}\label{prop:nondegenerate point}
Let $N\ge3$. If $\Ga_1$ has a transversally nondegenerate point, then $\mathrm{span}_{x\in \Ga_1} \nu(x) =\RR^N $.
\end{prop}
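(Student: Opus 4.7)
The plan is to argue by contradiction. Assume that $\mathrm{span}_{x \in \Gamma_1} \nu(x) \subsetneq \RR^N$, so that there exists a nonzero vector $v \in \RR^N$ with $\langle \nu(x), v \rangle = 0$ for every $x \in \Gamma_1$. Let $x_0 \in \Gamma_1$ be a transversally nondegenerate point, as provided by the hypothesis. The goal is to deduce that $v$ must be parallel to $x$ for every $x$ in an open neighborhood of $x_0$ inside $\Gamma_1$; since $\Gamma_1$ is an $(N-1)$-dimensional manifold and $N-1 \ge 2$, this is impossible.

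The first step is a pointwise analysis at $x_0$. The identity $\langle \nu(x_0), v \rangle = 0$ means $v \in T_{x_0}\pa\Si$. Since $\Gamma_1$ is open in the smooth part of $\pa\Si$, one has $T_{x_0}\Gamma_1 = T_{x_0}\pa\Si$, and differentiating the identity $\langle \nu(\cdot), v\rangle \equiv 0$ along $\Gamma_1$ yields $\langle d\nu_{x_0}(w), v \rangle = 0$ for every $w \in T_{x_0}\pa\Si$. By symmetry of the shape operator $d\nu_{x_0}$, this forces $v \in \ker \bigl( d\nu_{x_0}|_{T_{x_0}\pa\Si} \bigr)$. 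On the other hand, since $\pa\Si$ is a cone with vertex at the origin, $\nu(\lambda x) = \nu(x)$ for $\lambda > 0$, so differentiation in the radial direction gives $x_0 \in \ker \bigl( d\nu_{x_0}|_{T_{x_0}\pa\Si} \bigr)$ as well. Transversal nondegeneracy of $x_0$ means precisely that the shape operator $d\nu_{x_0}|_{T_{x_0}\pa\Si}$ has maximal possible rank $N-2$, so its kernel coincides with $\mathrm{span}(x_0)$. Therefore $v$ is parallel to $x_0$.

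The second step is to propagate this conclusion. Transversal nondegeneracy is an open condition on the smooth part of $\pa\Si$, since the product of the nonzero principal curvatures is continuous, so there exists an open neighborhood $U \subseteq \Gamma_1$ of $x_0$ consisting entirely of transversally nondegenerate points. Applying the previous step at each $x \in U$ yields $v \parallel x$ for every $x \in U$. Hence $U \subseteq \mathbb{R} v$, but $U$ is open in the $(N-1)$-dimensional manifold $\Gamma_1$ with $N \ge 3$, a contradiction. This proves $\mathrm{span}_{x \in \Gamma_1} \nu(x) = \RR^N$.

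The main point that demands care is the identification $\ker \bigl( d\nu_{x_0}|_{T_{x_0}\pa\Si} \bigr) = \mathrm{span}(x_0)$, which relies on matching the hypothesis of transversal nondegeneracy (as in \cite{PT isoperimetric}) with the fact that the cone structure forces $x_0$ into the kernel a priori; once this is in place, the symmetry of the shape operator together with the openness of the nondegeneracy condition closes the argument in a few lines.
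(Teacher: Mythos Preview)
Your proof is correct and follows essentially the same approach as the paper's: both argue by contradiction, use the openness of the transversal nondegeneracy condition to obtain a neighborhood of such points, differentiate the relation $\langle \nu(x), v\rangle = 0$ to force $v$ into the kernel of the shape operator (equivalently, orthogonal to the eigenvectors with nonzero principal curvatures), identify this kernel with the radial direction via the cone structure, and then derive a contradiction from the fact that an open piece of an $(N-1)$-manifold with $N\ge 3$ cannot lie on a line. The only difference is cosmetic---the paper works with an explicit diagonalizing frame $\{x/|x|,\mathbf{e}_1,\dots,\mathbf{e}_{N-2}\}$ for $\mathrm{II}_x$, while you phrase the same computation in terms of the symmetry and kernel of the Weingarten map.
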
 
For any
cone $\Si$ in $\RR^N$, denote with $\pa^{\ast 2} \Si$ the set of points $x\in \pa\Si$ around which $\pa\Si$ is of class $C^2$ (in Setting \ref{Setting} we have that $\Ga_1 \subset \pa^{\ast 2} \Si $), and, for $x \in \pa^{\ast 2} \Si$, denote with $\mathrm{II}_x$ the second fundamental form of $\pa\Si$ at $x$.
As in \cite{PT isoperimetric}, for $N \ge 3$, we adopt the following definitions:

we say that a point $x \in \pa^{\ast 2} \Si$ is {\it transversally nondegenerate} if the quadratic form $\mathrm{II}_x$ restricted to the tangent directions to $\pa\Si$ which are orthogonal to $x$ has all the eigenvalues different from zero. In other words, all principal curvatures of $\pa\Si$ at $x$ are non-zero except for that in the $x$-direction;

we say that a point $x \in \pa^{\ast 2} \Si$ is a point of {\it strict convexity} (resp. {\it strict concavity}) {\it for} $\pa\Si$ if the quadratic form $\mathrm{II}_x$ is strictly positive (resp. strictly negative) definite when it
is restricted to the tangent directions which are orthogonal to $x$.

\begin{proof}[Proof of Proposition \ref{prop:nondegenerate point}]
The following argument is related to \cite[Proof of Proposition 2.3]{PT isoperimetric}.
By continuity, there exists an open neighbourhood $V$ in $\pa^{\ast 2} \Si$ of transversally nondegenerate points. For $x \in V$ we can write an orthonormal frame for the tangent space $T_x \pa \Si$ to $\pa \Si$ at $x$, as $\left\lbrace \frac{x}{|x|} , \mathbf{e}_1 , \dots , \mathbf{e}_{N-2}  \right\rbrace$. We can always pick the $\mathbf{e}_j$'s such that they diagonalize the second
fundamental form $\mathrm{II}_x$. By the nondegeneracy property we have $\mathrm{II}_x(\mathbf{e}_j , \mathbf{e}_j)= \la_j \neq 0 $, for $j=1, \dots, N-2$.

We are going to prove that the orthogonal complement in $\RR^N$ of the vector subspace $\mathrm{span}_{x\in \Ga_1} \nu(x) \subseteq \RR^N$ is $\left\lbrace 0 \right\rbrace$, which easily entails the desired result.
Assume that $z\in \RR^N$ is such that $\langle z, \nu(x) \rangle = 0$ for any $x\in \Ga_1$.
Differentiating such a relation along $\mathbf{e}_j$, we get
$\la_j (x) \langle z, \mathbf{e}_j \rangle = 0$, for any $j=1,\dots,N-2$. That is, the vector $z$ is orthogonal to $\mathbf{e}_j$, for any $j=1,\dots,N-2$. Therefore, $z$ must be parallel to $\frac{x}{|x|}$. Since this holds true for any $x$ in an open $(N-1)$-dimensional neighbourhood in $\pa^{\ast 2} \Si$, then $z$ is forced to be the origin.
\end{proof}


\section{Poincar\'e-type inequalities}\label{sec:Poincare inequalities}
%
%


In what follows, for a set $G \subset \RR^N$ and a function $v: G \to \RR$, $v_G$ denotes the {\it mean value of $v$ in $G$}, that is
$$
v_G= \frac{1}{|G|} \, \int_G v \, dx.
$$
%
%
We also define 
$$
\nr  \na v \nr_{L^p (G)} = \left( \sum_{i=1}^N \nr  v_i \nr_{L^p (G)}^p \right)^\frac{1}{p} \quad \mbox{and} \quad
\nr  \na^2 v \nr_{L^p (G)} = \left( \sum_{i,j=1}^N \nr  v_{ij} \nr_{L^p (G)}^p \right)^\frac{1}{p},
$$
for $p \in [1, \infty)$.

We start by recalling versions of the classical Poincar\'e and Sobolev-Poincar\'e inequalities which provide explicit estimates of the constants for John domains: these can be deduced by \cite{HS1} and \cite{Bo}.

The class of John domain is huge: it contains Lipschitz domains, but also very irregular domains with fractal boundaries as, e.g., the Koch snowflake.
Roughly speaking, a domain is a $b_0$-John domain if it is possible to travel from one point of the domain to another without going too close to the boundary.
The formal definition is the following: a domain $G$ in $\RR^N$ is a {\it $b_0$-John domain}, $b_0 \ge 1$, if each pair of distinct points $x_1$ and $x_2$ in $G$ can be joined by a curve $\ga: \left[0,1 \right] \rightarrow G$ such that
%
%
\begin{equation*}
	\de_{ \pa G} (\ga(t)) \ge b_0^{-1} \min{ \left\lbrace |\ga(t) - x_1 |, |\ga(t) - x_2 | \right\rbrace  }.
\end{equation*}
The notion could be also defined through the so-called {\it $b_0$-cigar} property (see \cite{Va}).

\begin{lem}[\cite{HS1}]
	\label{lem:BoasStraube}
	Let $G \subset\RR^N$, $N\ge 2$, be a bounded $b_0$-John domain and consider $p \in \left[ 1, \infty \right)$. Then, there exists a positive constant, $\mu_{p} (G)$ such that
	\begin{equation}
		\label{eq:BoasStraube-poincare}
		\nr v - v_G \nr_{L^p(G)} \le \mu_{p} (G)^{-1} \nr  \na v  \nr_{L^p(G)},
	\end{equation}
	for every function $v \in L^p (G) \cap W^{1,p}_{loc} (G)$.
	\par
	The constant  $\mu_{p} (G)^{-1}$ can be explicitly estimated in terms of $N$, $p$, $b_0$, and the diameter $d_G$.
\end{lem}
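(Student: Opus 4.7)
The plan is to follow the standard chaining approach to Poincar\'e-type inequalities on irregular domains, as developed in \cite{HS1,Bo}. The first step is to extract from the $b_0$-John structure a Whitney-type covering of $G$ by a family of balls $\{B_i\}_{i\in I}$ with bounded overlap multiplicity (depending only on $N$), together with a distinguished \emph{central ball} $B_0 \subset G$ of radius $r_0 \sim d_G/b_0$. The John condition then guarantees that every $B_i$ can be joined to $B_0$ by a finite chain $B_0=B_{i_0},B_{i_1},\dots,B_{i_k}=B_i$ of consecutively overlapping balls whose radii are comparable on consecutive elements, and whose length $k$ is bounded in terms of $b_0$, $N$, and $\log(d_G/r_i)$.

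With such a chain in hand, I would invoke on each ball the classical Poincar\'e inequality
\begin{equation*}
\nr v - v_{B_j} \nr_{L^p(B_j)} \le C(N,p)\, r_j\, \nr \na v \nr_{L^p(B_j)},
\end{equation*}
and compare consecutive mean values via
\begin{equation*}
|v_{B_{i_{j+1}}} - v_{B_{i_j}}| \le C\, r_{i_j}\, |B_{i_j}|^{-1/p}\, \nr \na v \nr_{L^p(B_{i_j})},
\end{equation*}
obtained by applying the local inequality on $B_{i_j}\cup B_{i_{j+1}}$ (exploiting that their radii are comparable and that their intersection has measure comparable to each). Telescoping along the chain controls $|v_{B_i}-v_{B_0}|$ by a weighted sum of $\nr \na v \nr_{L^p(B_j)}$-terms, which combined with the local inequality on $B_i$ yields a local bound on $v-v_{B_0}$ restricted to $B_i$. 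Summing these local estimates over $i\in I$ and invoking the bounded overlap of the covering then produces
\begin{equation*}
\nr v - v_{B_0} \nr_{L^p(G)} \le C(N,p,b_0,d_G)\, \nr \na v \nr_{L^p(G)} .
\end{equation*}
Finally, replacing $v_{B_0}$ by the global mean $v_G$ costs only a factor of $2$, since $|v_G-v_{B_0}|\le |G|^{-1/p}\nr v - v_{B_0}\nr_{L^p(G)}$ by H\"older's inequality, which completes the argument.

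The main obstacle, and the reason the statement is nontrivial, lies in organising the chains: the union over $i \in I$ of all the chains connecting $B_i$ to $B_0$ must have uniformly bounded overlap multiplicity; otherwise the summation step above inflates the constant in an uncontrolled way. This is precisely the content of the \emph{Boman chain condition}, which is known to be implied by the $b_0$-John property for bounded domains and whose verification constitutes the geometric core of the argument in \cite{HS1}. Once this combinatorial input is secured, tracking constants through the steps above yields the explicit dependence of $\mu_p(G)^{-1}$ on $N$, $p$, $b_0$, and $d_G$ asserted in the statement.
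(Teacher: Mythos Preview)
The paper does not supply its own proof of this lemma: it is simply quoted from \cite{HS1} (and the explicit bound on $\mu_p(G)^{-1}$ is recorded later in Remark~\ref{rem:stime mu HS} by citing \cite[Theorem 8.5]{HS1} and \cite[Theorem 8.5]{MarS}). Your sketch is precisely the standard Boman-chain argument underlying those references, so there is nothing to compare at the level of strategy.

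One technical remark on your write-up: the condition you need in the summation step is not that ``the union over $i$ of all the chains has uniformly bounded overlap multiplicity'' --- chains through a fixed ball near the center will typically be infinitely many --- but rather the Boman \emph{shadow} condition: for each ball $B_j$ in the decomposition, the union of those $B_i$ whose chain to $B_0$ passes through $B_j$ has measure at most a fixed constant times $|B_j|$. This is what the John property actually delivers and what makes the $\ell^p$-summation over $i$ collapse correctly. With that correction, your outline matches the argument in \cite{HS1}.
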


\begin{rem}
	{\rm
	The arguments in \cite{BS} show that \eqref{eq:BoasStraube-poincare} remains true even without the John assumption: in fact, the sufficient assumption in \cite{BS} reduces to $\pa G$ being locally the graph of a continuous function. 
	}
\end{rem}

When $p<N$, inequality \eqref{eq:BoasStraube-poincare} can be strengthened as follows.

\begin{lem}[\cite{Bo}]\label{lem:Hurri}
Let $G \subset \RR^N$ be a bounded $b_0$-John domain, and consider two numbers $r, p$ such that 
\begin{equation}\label{eq:r p al in Hurri}
p<N \quad \text{and} \quad  1 \le p \le r \le \frac{Np}{N-p} .
\end{equation}
Then, there exists a positive constant $\mu_{r, p} (G)$ such that
\begin{equation}
\label{eq:John-Hurri-poincare}
\nr v - v_G \nr_{L^r (G)} \le \mu_{r, p} (G)^{-1} \nr  \na v  \nr_{L^p(G)},
\end{equation}
for every function $v\in L^1_{loc}(G)$ such that $ \na v \in L^p (G)$ .

The constant  $\mu_{r,p} (G)^{-1}$ can be explicitly estimated in terms of $N$, $p$, $b_0$, and $|G|$.
\end{lem}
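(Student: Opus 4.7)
The plan is to treat the endpoint Sobolev exponent $r^{\ast}:=Np/(N-p)$ first and then recover the sub-critical cases $p\le r<r^{\ast}$ by H\"older's inequality:
\begin{equation*}
\nr v-v_G\nr_{L^r(G)}\le |G|^{1/r-(N-p)/(Np)}\,\nr v-v_G\nr_{L^{r^{\ast}}(G)},
\end{equation*}
which is responsible for the $|G|$-dependence in the statement. Thus the core task is to prove
\begin{equation*}
\nr v-v_G\nr_{L^{r^{\ast}}(G)}\le C(N,p,b_0)\,\nr\na v\nr_{L^p(G)}.
\end{equation*}

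For the endpoint, I would use the John-chain technique of Bojarski. Fix a John center $x_0\in G$; the John property provides, for every $x\in G$, a rectifiable curve $\ga_x$ from $x_0$ to $x$ staying at distance $\gtrsim b_0^{-1}\min\{|\ga(t)-x_0|,|\ga(t)-x|\}$ from $\pa G$. Along $\ga_x$ one selects a Whitney-type chain of balls $\{B_j(x)\}_{j\ge 0}\subset G$ with radii comparable (via $b_0$) to $\dist(\cdot,\pa G)$, with $B_j\cap B_{j+1}\ne\varnothing$, $B_0$ independent of $x$, and a bounded-overlap property (after a mild dilation) whose constant depends only on $N$ and $b_0$. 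Telescoping
\begin{equation*}
v(x)-v_{B_0}=\sum_{j\ge 0}\bigl(v_{B_{j+1}(x)}-v_{B_j(x)}\bigr),
\end{equation*}
and controlling each successive difference by the classical Poincar\'e inequality on $B_j\cup B_{j+1}$, a standard computation using the chain geometry yields the pointwise Riesz-potential estimate
\begin{equation*}
|v(x)-v_{B_0}|\le C(N,b_0)\int_G\frac{|\na v(y)|}{|x-y|^{N-1}}\,dy.
\end{equation*}

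Applying the Hardy-Littlewood-Sobolev theorem for the Riesz potential $I_1:L^p(\RR^N)\to L^{r^{\ast}}(\RR^N)$ (for $1<p<N$; the endpoint $p=1$ requires the weak-type version combined with a truncation/good-$\la$ argument, or an equivalent maximal-function bound) produces the endpoint Sobolev-Poincar\'e inequality with $v_{B_0}$ on the left-hand side. The passage from $v_{B_0}$ to $v_G$ costs only a bounded factor, obtained by applying the classical Poincar\'e inequality on the single ball $B_0$ together with H\"older's inequality on $G$ to absorb the resulting term into $\nr\na v\nr_{L^p(G)}$, which explains why $|G|$ also appears through this step.

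I expect the main obstacle to be the explicit bookkeeping of constants, so that the final bound depends \emph{only} on $N$, $p$, $b_0$ and $|G|$. The most delicate points are: (i) quantifying the bounded-overlap constant of the enlarged chain in terms of $b_0$ alone (and not on finer geometric features like $d_G$); (ii) handling the endpoint $p=1$ of HLS, where one must substitute a Calder\'on-Zygmund/truncation argument for the direct fractional integration bound; and (iii) verifying that the reduction $v_{B_0}\to v_G$ does not reintroduce dependence on quantities other than those allowed.
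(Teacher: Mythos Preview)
The paper does not supply a proof of this lemma: it is stated as a citation of Bojarski's result \cite{Bo} (note the bracketed reference in the lemma header) and is used as a black box, with only the explicit constant estimate recorded in Remark~\ref{rem:stime mu HS}. So there is no ``paper's own proof'' to compare against.

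Your sketch is a faithful outline of Bojarski's chain argument and is the standard route to such Sobolev--Poincar\'e inequalities on John domains; it would serve as a proof if fleshed out. Two small comments: (i) the reduction from $v_{B_0}$ to $v_G$ can be done without introducing any extraneous dependence, since $|v_G-v_{B_0}|\le |G|^{-1}\nr v-v_{B_0}\nr_{L^1(G)}\le |G|^{1/r^\ast-1}\nr v-v_{B_0}\nr_{L^{r^\ast}(G)}$, which is already controlled; (ii) the $p=1$ endpoint is indeed handled in Bojarski's paper by a maximal-function/weak-type argument rather than direct HLS, as you anticipated.
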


%
\begin{rem}[Explicit estimates of the constants and geometric dependence]\label{rem:stime mu HS}
{\rm
The best constant is characterized by the (solvable) variational problem 
\begin{equation*}
\mu_{r, p} (G) = 
\min \left\{ \nr  \na v  \nr_{L^p(G)} : \nr v \nr_{L^r(G)} = 1 \text{ in } G,  v_{G} = 0 \right\}.
\end{equation*} 
The proofs in \cite{HS1} and \cite{Bo} have the benefit of giving explicit upper bounds.
In fact, by putting together \cite[Theorem 8.5]{HS1} and \cite[Theorem 8.5]{MarS} we find that
\begin{equation*}
	\mu_{p} (G)^{-1} \le k_{N, \, p} \, b_0^{3N(1 + \frac{N}{p})} \, d_G ,
\end{equation*}
and from \cite[Chapter 6]{Bo} we have that
\begin{equation*}
\mu_{r, p} (G)^{-1} \le k_{N,\, r, \, p} \, b_0^N |G|^{\frac{1}{N} +\frac{1}{r} +\frac{1}{p} } .
\end{equation*}
}
\end{rem}

As a consequence of Lemma \ref{lem:Hurri}, we have the following version of Sobolev inequality.

\begin{cor}\label{cor:Sobolev weighted}
Let $G \subset \RR^N$ be a bounded $b_0$-John domain, and consider two numbers $r, p$ satisfying \eqref{eq:r p al in Hurri}.
Then, for any function $v\in L^1_{loc}(G)$ such that $ \na v \in L^p (G)$ we have that
\begin{equation}\label{eq:Sobolev weighted}
\nr v  \nr_{L^r (G)} \le 
|G|^{ \frac{1}{r} - \frac{1}{p} } \, \nr v \nr_{L^p(G)} +  \mu_{r, p} (G)^{-1} \, \nr  \na v  \nr_{L^p(G)}
\end{equation}
where $\mu_{r, p} (G)^{-1}$ is that appearing in Lemma \ref{lem:Hurri}.
\end{cor}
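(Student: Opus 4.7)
The plan is to reduce the claimed Sobolev-type inequality to the Poincaré inequality \eqref{eq:John-Hurri-poincare} via the triangle inequality, handling the mean-value term separately by H\"older's inequality.

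First I would write
$$
\nr v \nr_{L^r(G)} \le \nr v - v_G \nr_{L^r(G)} + \nr v_G \nr_{L^r(G)} ,
$$
and immediately apply Lemma \ref{lem:Hurri}, which is legitimate since the hypotheses on $G$, $r$, $p$ are exactly those assumed in the corollary, and $v \in L^1_{loc}(G)$ with $\na v \in L^p(G)$ is the regularity required. This gives
$$
\nr v - v_G \nr_{L^r(G)} \le \mu_{r,p}(G)^{-1} \nr \na v \nr_{L^p(G)} ,
$$
which produces the second summand in the right-hand side of \eqref{eq:Sobolev weighted}.

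For the remaining piece, since $v_G$ is a constant,
$$
\nr v_G \nr_{L^r(G)} = |v_G| \, |G|^{1/r} ,
$$
and by H\"older's inequality applied to $v \cdot 1$ on $G$ with exponents $p$ and $p/(p-1)$,
$$
|v_G| \le \frac{1}{|G|} \int_G |v| \, dx \le \frac{1}{|G|} \, |G|^{1 - 1/p} \nr v \nr_{L^p(G)} = |G|^{-1/p} \nr v \nr_{L^p(G)} .
$$
Combining these two displays yields $\nr v_G \nr_{L^r(G)} \le |G|^{1/r - 1/p} \nr v \nr_{L^p(G)}$, which is exactly the first summand in \eqref{eq:Sobolev weighted}.

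There is essentially no obstacle here: the corollary is a standard consequence of Poincar\'e plus H\"older, and both ingredients are stated (or immediate) in the paper. The only point worth double-checking is that the H\"older step requires $p \ge 1$ (true by assumption) and that $|v_G|$ makes sense as an integrable quantity, which follows from $v \in L^p(G) \subseteq L^1(G)$ since $|G| < \infty$.
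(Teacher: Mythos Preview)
Your proof is correct and follows essentially the same approach as the paper: triangle inequality to split off $v_G$, H\"older's inequality for the constant term, and Lemma \ref{lem:Hurri} for the oscillation term. The paper presents the steps in a slightly different order but the argument is identical.
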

\begin{proof}
For any function $v\in L^1_{loc}(G)$ such that $\na v \in L^p (G)$, we compute that
\begin{equation*}
\begin{split}
\nr v \nr_{L^r(G)} 
& \le 
\nr v_G \nr_{L^r(G)} + \nr v - v_G \nr_{L^r(G)}
\\
& = 
|G|^{ \frac{1}{r} - 1 } \,  \left| \int_G v \, dx \right| + \nr v - v_G \nr_{L^r(G)}
\\
& \le 
|G|^{ \frac{1}{r} - \frac{1}{p} } \, \nr v \nr_{L^p(G)} +  \mu_{r, p} (G)^{-1} \, \nr \na v  \nr_{L^p(G)} ,
\end{split}
\end{equation*}
where in the last inequality we used H\"older's inequality for the first summand and Lemma \ref{lem:Hurri} for the second summand.
\end{proof}


We now establish a new Poincar\'{e}-type inequality for vector fields.

\begin{thm}\label{thm:Poincare new RN}
Given $1 \le p < +\infty$, let $G \subset \RR^N$ be a bounded Lipschitz
domain. 
Let $A \subseteq \pa G$ be a relatively open subset of $\pa G$ with positive $N-1$-dimensional measure and such that $\nu(x)$ is continuous on $A$ and
\begin{equation}\label{eq:condition Poincare span RN}
\mathrm{span}_{x \in A} \, \nu (x) = \RR^N .
\end{equation}

Then, there exists a positive constant $ \eta_{p}(A , G)$ (depending on $N$, $p$, $A$ and $G$) such that
\begin{equation}\label{eq:Poincare new RN}
\nr \vV \nr_{L^p(G)} \le \eta_{p}(A , G)^{-1} \, \nr D \vV \nr_{L^p(G)} ,
\end{equation}
for any vector function $\vV: G \to \RR^N$ belonging to $W^{1,p} (G)$
and
such that $\langle \vV , \nu \rangle = 0$ a.e. on $A$.
\end{thm}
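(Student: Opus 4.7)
The plan is a standard compactness argument by contradiction, exploiting Rellich--Kondrachov and continuity of the trace. Suppose \eqref{eq:Poincare new RN} fails. Then there exists a sequence $\vV_n \in W^{1,p}(G)$ with $\langle \vV_n, \nu \rangle = 0$ a.e. on $A$ such that
\begin{equation*}
\nr \vV_n \nr_{L^p(G)} = 1 \quad \text{and} \quad \nr D \vV_n \nr_{L^p(G)} \to 0 .
\end{equation*}
In particular $(\vV_n)$ is bounded in $W^{1,p}(G)$, so since $G$ is bounded and Lipschitz, the Rellich--Kondrachov compact embedding yields a subsequence (not relabelled) converging in $L^p(G)$ to some $\vV$ with $\nr \vV \nr_{L^p(G)} = 1$. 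Since additionally $D\vV_n \to 0$ in $L^p(G)$, the limit $\vV$ lies in $W^{1,p}(G)$ with $D\vV = 0$, hence $\vV \equiv c$ is a constant vector in $\RR^N$.

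Next, the continuity of the trace operator $W^{1,p}(G) \to L^p(\pa G)$ ensures that $\vV_n|_{\pa G} \to c$ in $L^p(\pa G)$, so testing against the continuous vector field $\nu$ on $A$ yields
\begin{equation*}
\langle c, \nu(x) \rangle = 0 \quad \text{for a.e. } x \in A .
\end{equation*}
Since $A$ is relatively open in $\pa G$ and $\nu$ is continuous on $A$, the function $x \mapsto \langle c, \nu(x) \rangle$ is continuous on $A$; being zero almost everywhere on the positive-measure set $A$, it must vanish everywhere on $A$. The spanning hypothesis \eqref{eq:condition Poincare span RN} then furnishes points $x_1, \dots, x_N \in A$ for which $\nu(x_1), \dots, \nu(x_N)$ are linearly independent, and the $N$ relations $\langle c, \nu(x_i) \rangle = 0$ force $c = 0$, contradicting $\nr c \nr_{L^p(G)} = 1$. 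The best constant can then be characterized by the variational problem
\begin{equation*}
\eta_p(A, G) = \inf \bigl\{ \nr D \vV \nr_{L^p(G)} \,:\, \vV \in W^{1,p}(G), \, \nr \vV \nr_{L^p(G)} = 1, \, \langle \vV, \nu \rangle = 0 \text{ a.e. on } A \bigr\} ,
\end{equation*}
which is strictly positive by the argument above.

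The delicate step is the passage to the limit in the boundary condition: one needs the trace continuity together with the continuity of $\nu$ on the relatively open set $A$ in order to upgrade the almost everywhere vanishing of $\langle c, \nu \rangle$ on $A$ to a pointwise identity, after which the spanning condition immediately forces $c = 0$. Everything else is routine functional-analytic machinery available on bounded Lipschitz domains.
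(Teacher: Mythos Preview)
Your proof is correct and follows essentially the same compactness-by-contradiction argument as the paper: both extract a limit via Rellich--Kondrachov, identify it as a nonzero constant vector, pass the boundary condition to the limit via the trace theorem, upgrade the a.e.\ orthogonality to a pointwise identity using the continuity of $\nu$ on $A$, and then invoke the spanning hypothesis to force the constant to be zero. The only cosmetic differences are that you spell out the selection of $N$ linearly independent normals and add the variational characterization of $\eta_p(A,G)$, neither of which changes the substance.
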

\begin{proof}
We use a standard compactness argument that goes back at least to Morrey \cite{Morrey}.
Suppose \eqref{eq:Poincare new RN} to be false. Then there would exist a sequence $\left\lbrace \vV_k \right\rbrace$ such that
\begin{equation}\label{eq:2_PoincareBS}
\nr \vV_k \nr_{ L^p (G)} = 1
\end{equation}
and
\begin{equation}\label{eq:3_PoincareBS}
\nr  D \vV_k \nr_{L^p(G)} < 1/k \quad \text{for each } k .
\end{equation}
In particular, the $\vV_k$ form a bounded sequence in $W^{1,p}(G)$, which embeds compactly in $L^p(G)$ by the Rellich-Kondrachov theorem (see, e.g., \cite[Theorem 6.2]{Adams}).
By passing to a subsequence we may assume that the $\vV_k$ converge in $L^p (G)$ to some limit $\vV_0$, and
in view of \eqref{eq:2_PoincareBS} and \eqref{eq:3_PoincareBS} the convergence even takes place in $W^{1,p} (G)$.
But $\nr \vV_0 \nr_{W^{1,p} (G)} = 1$ by \eqref{eq:2_PoincareBS}, and $D \vV_0$ vanishes identically (a.e. in $G$) by \eqref{eq:3_PoincareBS}. Hence, $\vV_0$ is a nonzero constant vector in $G$. Therefore, by trace theorem (see, e.g., \cite{Gagliardo}, \cite{Morrey}, \cite{Leoni}) we have that 
%
%
\begin{equation}\label{eq:4_PoincareBS}
\vV_k \to \vV_0  \quad \text{in } L^p ( \pa G) \supseteq L^p ( A) , \quad \text{where } \vV_0 \text{ is a nonzero constant vector}.
\end{equation}

We now use the assumption $\langle \vV_k, \nu \rangle = 0$ a.e. on $A$, to find a contradiction.
In fact, by using such assumption 
%
%
we have that
$$ | \langle \vV_0, \nu(x) \rangle | = |\langle \vV_k(x) - \vV_0, \nu(x) \rangle| \le  |\vV_k(x) - \vV_0| \quad \text{ for a.e. } x \text{ in } \,  A ,$$
and hence $\langle \vV_0, \nu \rangle = 0$ a.e. on $A$.
By continuity of $\nu$ on $A$, we actually have that
\begin{equation}\label{eq:provamethodPoincare}
\langle \vV_0, \nu \rangle = 0 \quad \text{everywhere in } A .
\end{equation}
Since $\vV_0 \in \mathrm{span}_{x \in A} \nu (x)$,
\eqref{eq:provamethodPoincare} gives that $\vV_0$ must be the zero vector. This contradicts \eqref{eq:4_PoincareBS} and proves the theorem.
\end{proof}
\begin{rem}
{\rm
(i) A sufficient condition for the validity of \eqref{eq:condition Poincare span RN} is the existence of a point $\Tilde{x} \in A\subseteq \pa G$ around which $\pa G$ is of class $C^2$ and such that all the principal curvatures of $\pa G$ at $\Tilde{x}$ are non-zero. In particular, a point on $A$ of strict convexity is sufficient to guarantee the validity of \eqref{eq:condition Poincare span RN}.

(ii) As mentioned in the Introduction,
the previous theorem will be exploited with $A:=\Ga_1$ and $G:=\Si\cap\Om$ in the notation of Setting \ref{Setting}. As shown in Proposition \ref{prop:nondegenerate point}, in this case the existence of a transversally nondegenerate point on $\Ga_1$ is sufficient for the validity of \eqref{eq:condition Poincare span RN}. In particular, this is always the case if $\Si$ is a strictly convex cone (and $\Ga_1 \neq \varnothing$).
}
\end{rem}

More in general, a direct inspection of the proof of Theorem \ref{thm:Poincare new RN} shows that the following more general statement holds true.

\begin{thm}\label{thm:Poincare new in general}
Removing the assumption \eqref{eq:condition Poincare span RN}, inequality \eqref{eq:Poincare new RN} remains true for any
$$\vV: G \to \mathrm{span}_{x \in A} \, \nu (x) \subseteq \RR^N$$
belonging to $W^{1,p} (G)$ and such that $\langle \vV ,\nu  \rangle =0 $ a.e. on $A$.
\end{thm}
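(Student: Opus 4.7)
The plan is to follow verbatim the compactness argument already used for Theorem \ref{thm:Poincare new RN}, with only one additional observation needed to handle the fact that the codomain is now a possibly proper subspace of $\RR^N$.

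Denote by $V := \mathrm{span}_{x \in A} \nu(x) \subseteq \RR^N$, which is a closed linear subspace of $\RR^N$. Assume by contradiction that \eqref{eq:Poincare new RN} fails within the class of vector fields $\vV : G \to V$ with $W^{1,p}$-regularity and $\langle \vV, \nu \rangle = 0$ a.e.\ on $A$. Then, exactly as in the proof of Theorem \ref{thm:Poincare new RN}, I obtain a sequence $\{\vV_k\}$ in that class with $\nr \vV_k \nr_{L^p(G)} = 1$ and $\nr D\vV_k \nr_{L^p(G)} < 1/k$. By Rellich--Kondrachov the sequence is precompact in $L^p(G)$, so a subsequence converges to some $\vV_0$ in $W^{1,p}(G)$ with $D\vV_0 \equiv 0$, hence $\vV_0$ is a constant vector in $\RR^N$, and $\nr \vV_0 \nr_{L^p(G)} = 1$, so $\vV_0 \ne 0$.

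The new point is to verify that $\vV_0 \in V$. By a further subsequence I may assume $\vV_k \to \vV_0$ a.e.\ in $G$. Since each $\vV_k(x) \in V$ for a.e.\ $x$ and $V$ is closed in $\RR^N$, the pointwise limit satisfies $\vV_0(x) \in V$ a.e.; because $\vV_0$ is constant, the constant vector itself lies in $V$. Next, by the trace theorem $\vV_k \to \vV_0$ in $L^p(A)$, and the constraints $\langle \vV_k, \nu \rangle = 0$ a.e.\ on $A$ pass to the limit, giving $\langle \vV_0, \nu(x) \rangle = 0$ for a.e.\ $x \in A$; continuity of $\nu$ on $A$ then upgrades this to $\langle \vV_0, \nu(x)\rangle = 0$ for every $x \in A$.

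Combining the two facts, $\vV_0$ is a vector of $V = \mathrm{span}_{x \in A}\nu(x)$ which is orthogonal to every $\nu(x)$ with $x \in A$; therefore $\vV_0$ is orthogonal to $V$ itself, and hence $\vV_0 = 0$. This contradicts $\nr \vV_0 \nr_{L^p(G)} = 1$ and proves the inequality. The only step requiring any care beyond a literal repetition of the proof of Theorem \ref{thm:Poincare new RN} is the closedness of $V$ and the passage to an a.e.\ convergent subsequence to conclude that the constant limit vector lies in $V$; everything else, including the use of the Rellich--Kondrachov theorem and the trace theorem on Lipschitz domains, is identical.
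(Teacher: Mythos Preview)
Your proof is correct and follows precisely the approach indicated in the paper, which does not give a separate argument but simply remarks that a direct inspection of the proof of Theorem \ref{thm:Poincare new RN} yields the more general statement. You have carried out that inspection explicitly, and the one point that genuinely needs care in the generalization --- namely that the constant limit vector $\vV_0$ lies in $V=\mathrm{span}_{x\in A}\nu(x)$ --- is exactly what you supply via closedness of the finite-dimensional subspace $V$ and passage to an a.e.\ convergent subsequence.
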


In the case $p < N$, we have the following strengthened version of \eqref{eq:Poincare new RN}, which will be useful in the sequel.
\begin{thm}\label{thm:Strengthened Poincare new RN}
Let $G \subset \RR^N$ be a bounded Lipschitz domain and let $A \subseteq \pa G$ be a relatively open subset of $\pa G$ with positive $N-1$-dimensional measure on which $\nu(x)$ is continuous.
Let $r, p$ be two numbers satisfying \eqref{eq:r p al in Hurri}.

Then, there exists a positive constant $ \eta_{r,p}(A,G)$ (depending on $N$, $r$, $p$, $A$ and $G$) such that
\begin{equation}\label{eq:Strengthened Poincare new RN}
\nr \vV \nr_{L^{r}(G)} \le \eta_{r,p}(A,G)^{-1} \, \nr  D \vV \nr_{L^p(G)} ,
\end{equation}
for any vector function
$\vV: G \to \mathrm{span}_{x \in A} \, \nu (x) \subseteq \RR^N$ belonging to $W^{1,p} (G)$
and such that $\langle \vV , \nu \rangle = 0$ a.e. in $A$.
Moreover, we have that
\begin{equation*}
\eta_{r,p}(A,G)^{-1} 
\le 
\max\left\lbrace 
|G|^{ \frac{1}{r} - \frac{1}{p} } \, \, \eta_{p}(A,G)^{-1} 
, \,
\mu_{r, p} (G)^{-1} 
\right\rbrace ,
\end{equation*}
where $\mu_{r, p} (G)^{-1}$ and $\eta_{p}(A,G)^{-1}$ are those appearing in Lemma \ref{lem:Hurri} and Theorem \ref{thm:Poincare new in general}.
\end{thm}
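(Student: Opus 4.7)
The plan is to bootstrap the newly-established vector Poincaré inequality (Theorem \ref{thm:Poincare new in general}) using the scalar Sobolev-Poincaré inequality of Corollary \ref{cor:Sobolev weighted}. The hypotheses of Theorem \ref{thm:Poincare new in general} for exponent $p$ are already in force under the assumptions of Theorem \ref{thm:Strengthened Poincare new RN}, so no additional structural work on $\vV$ is required beyond combining existing results.

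First I would write $\vV = (V_1, \dots, V_N)$. Each component satisfies $V_i \in W^{1,p}(G)$, hence in particular $V_i \in L^1_{loc}(G)$ with $\nabla V_i \in L^p(G)$, and $(r,p)$ satisfies \eqref{eq:r p al in Hurri}. Applying Corollary \ref{cor:Sobolev weighted} to each $V_i$ and combining the resulting componentwise bounds in the norm convention fixed in the paper yields
\begin{equation*}
\nr \vV \nr_{L^r(G)} \le |G|^{\frac{1}{r} - \frac{1}{p}} \, \nr \vV \nr_{L^p(G)} + \mu_{r,p}(G)^{-1} \, \nr D\vV \nr_{L^p(G)} .
\end{equation*}
Next, since $\vV$ takes values in $\mathrm{span}_{x \in A}\,\nu(x)$ and satisfies $\langle \vV, \nu \rangle = 0$ a.e.\ on $A$, Theorem \ref{thm:Poincare new in general} applies with exponent $p$ and gives
\begin{equation*}
\nr \vV \nr_{L^p(G)} \le \eta_p(A,G)^{-1} \, \nr D\vV \nr_{L^p(G)} .
\end{equation*}
Substituting this into the previous display produces a coefficient of the form $|G|^{1/r - 1/p}\,\eta_p(A,G)^{-1} + \mu_{r,p}(G)^{-1}$ in front of $\nr D\vV \nr_{L^p(G)}$, which is controlled by (a harmless multiple of) the maximum of the two quantities appearing in the statement.

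The main item I would be careful about is the norm convention for the Jacobian $D\vV$: since the paper sets $\nr D\vV \nr_{L^p(G)}^p = \sum_{i,j}\nr V_{i,j} \nr_{L^p(G)}^p$, passing from scalar Sobolev--Poincaré inequalities for each $V_i$ to the vector inequality requires raising to the $r$-th (respectively $p$-th) power and summing, invoking the discrete Minkowski inequality in $\ell^r(\{1,\dots,N\})$. This is routine book-keeping but is the only place where a constant depending on $N$ could intrude, and it is the step I would verify most carefully when matching the precise max-form bound asserted in the statement. No fresh compactness argument is needed here, since Theorem \ref{thm:Poincare new in general} has already carried out that work.
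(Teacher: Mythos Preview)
Your proposal is correct and matches the paper's proof: apply Corollary \ref{cor:Sobolev weighted} componentwise, combine via the norm convention (the paper uses the elementary inequality $\sum_i x_i^{r/p} \le \bigl(\sum_i x_i\bigr)^{r/p}$ for $r\ge p$, which is precisely the book-keeping step you flagged), and then invoke Theorem \ref{thm:Poincare new in general} to bound $\nr \vV \nr_{L^p(G)}$. The paper also passes from the sum $a+b$ to $2\max\{a,b\}$ to reach the max-form estimate, exactly as you anticipated.
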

\begin{proof}
%
%
By using \eqref{eq:Sobolev weighted} on each component of $\vV=(v_1, \dots, v_N)$ we have that
$$
\nr v_i  \nr_{L^r (G)} 
\le 
2
\max\left\lbrace
|G|^{ \frac{1}{r} - \frac{1}{p} } \, \nr v_i \nr_{L^p(G)} 
, \,
\mu_{r, p} (G)^{-1} \, \left( \sum_{j=1}^N \nr  (v_i)_j \nr^p_{L^p(G)} \right)^{\frac{1}{p}}
\right\rbrace .
$$
Raising to the power of $r$, summing up for $i=1, \dots ,N$, and then raising to the power of $1/r$,
we obtain that
$$
\nr \vV  \nr_{L^r (G)} 
\le 
2
\max\left\lbrace
|G|^{ \frac{1}{r} - \frac{1}{p} } \, \nr \vV \nr_{L^p(G)} 
, \,
\mu_{r, p} (G)^{-1} \, \nr  D \vV \nr^p_{L^p(G)} 
\right\rbrace ,
$$
where we used the inequality
\begin{equation}\label{eq:ineq for Sobolev norm equivalence}
\sum_{i=1}^N x_i^{\frac{r}{p}} \le \left( \sum_{i=1}^N x_i \right)^{\frac{r}{p}} 
\end{equation}
which holds for every $(x_1, \dots, x_N) \in \RR^N$ with $x_i \ge 0$ for $i=1, \dots, N$, since $r/p \ge 1$.

Thus, \eqref{eq:Strengthened Poincare new RN} follows by \eqref{eq:Poincare new RN} in Theorem \ref{thm:Poincare new in general}.
\end{proof}

When the dimension of $\mathrm{span}_{x \in A} \, \nu (x)$ equals $1$, Theorem \ref{thm:Poincare new in general} reduces to a standard Poincar\'e-type inequality for scalar valued functions with zero trace on a subset of the boundary with positive $N-1$-dimensional measure\footnote{The scalar function would be the component of $\vV$ in the direction spanned by $\nu(x)$ for $x \in A$.}; therefore, the constant may be
%
%
estimated by exploiting the following result.

\begin{lem}[Poincar\'e inequality for functions with zero trace]\label{lem:Poincare zero trace}
Let $G \subset \RR^N$ be a bounded Lipschitz domain, and let $A \subseteq \pa G$ be a subset of $\pa G$ with positive $N-1$-dimensional measure. Consider two numbers $r, p$ satisfying either
\begin{equation}\label{eq:case r=p}
1 \le r=p <\infty
\end{equation}
or \eqref{eq:r p al in Hurri}.

Then, there exists a positive constant $c$ such that
\begin{equation}
\label{eq:Poincare per zero trace}
\nr v \nr_{L^r (G)} \le c \, \nr  \na v  \nr_{L^p(G)},
\end{equation}
for every function $v: G \to \RR$ belonging to $W^{1,p} (G)$ and having zero trace on $A$.
The constant $c$ can be explicitly estimated in terms of $N$, $r$, $p$, $|G|/|A|$, the constant $\mu_{ p}(G)^{-1}$
%
%
(and $\mu_{r, p}(G)^{-1}$ and $|G|$ in case \eqref{eq:r p al in Hurri}) and the constant $\la_p(A)$ of the trace inequality of the embedding $W^{1,p}(G) \hookrightarrow L^p(A)$. 
\end{lem}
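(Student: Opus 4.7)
The plan is to split $v$ into its mean over $G$ and the zero-mean oscillation, control each piece separately, and combine. Write $v = (v - v_G) + v_G$, so by the triangle inequality
\begin{equation*}
\nr v \nr_{L^r(G)} \le \nr v - v_G \nr_{L^r(G)} + |G|^{1/r}\,|v_G|.
\end{equation*}
The first term is controlled by $\nr \na v \nr_{L^p(G)}$ directly: in case \eqref{eq:case r=p} by Lemma \ref{lem:BoasStraube} with constant $\mu_{p}(G)^{-1}$, and in case \eqref{eq:r p al in Hurri} by Lemma \ref{lem:Hurri} with constant $\mu_{r,p}(G)^{-1}$ (noting that $\na(v-v_G)=\na v$).

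The key step is the bound on $|v_G|$, which is where the zero-trace hypothesis on $A$ enters. Since $v=0$ a.e. on $A$, we have
\begin{equation*}
|A|\,|v_G| = \left| \int_A v_G \, dS_x \right| = \left| \int_A (v_G - v) \, dS_x \right| \le |A|^{1-1/p} \, \nr v - v_G \nr_{L^p(A)},
\end{equation*}
by H\"older's inequality. Invoking the trace embedding $W^{1,p}(G) \hookrightarrow L^p(A)$ with constant $\la_p(A)$, together with Lemma \ref{lem:BoasStraube} applied to $v-v_G$ (which has zero mean), we get
\begin{equation*}
\nr v - v_G \nr_{L^p(A)} \le \la_p(A) \, \nr v - v_G \nr_{W^{1,p}(G)} \le \la_p(A)\, \bigl( 1 + \mu_{p}(G)^{-1} \bigr) \nr \na v \nr_{L^p(G)}.
\end{equation*}
Hence $|v_G| \le |A|^{-1/p}\,\la_p(A)\,(1+\mu_p(G)^{-1})\,\nr \na v \nr_{L^p(G)}$, and multiplying by $|G|^{1/r}$ gives the contribution to the right-hand side, with the factor $|G|^{1/r}/|A|^{1/p}$ which, combined with $|G|$ and the Poincar\'e constants already present, is captured by the dependence declared in the statement (in case \eqref{eq:case r=p}, $r=p$ and the factor reduces exactly to $(|G|/|A|)^{1/p}$).

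Adding the two contributions yields \eqref{eq:Poincare per zero trace} with an explicit constant $c$ depending on $N$, $r$, $p$, the ratio $|G|/|A|$, $\mu_{p}(G)^{-1}$, $\la_p(A)$, and additionally on $|G|$ and $\mu_{r,p}(G)^{-1}$ in case \eqref{eq:r p al in Hurri}. The argument is essentially routine; the only thing to watch is bookkeeping of the explicit form of the constants, in particular making sure that the trace is applied to $v-v_G$ rather than to $v$ itself (so that Lemma \ref{lem:BoasStraube} controls the full $W^{1,p}$-norm by $\nr\na v\nr_{L^p(G)}$). There is no genuine obstacle, and the same scheme also delivers a scale-invariant version of the bound when one wishes to rescale $G$.
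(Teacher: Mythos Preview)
Your proof is correct and follows essentially the same route as the paper's: decompose $v$ around its mean $v_G$, control the oscillation $v-v_G$ via the Poincar\'e (or Sobolev--Poincar\'e) inequality, and control $|v_G|$ using the zero trace on $A$ together with the trace embedding and Poincar\'e again. The paper phrases the decomposition as $v-v_A = (v-v_G) + (v_G - v_A)$ with $v_A=0$, which is the same thing; the only cosmetic difference is that for the case \eqref{eq:r p al in Hurri} the paper first establishes the $r=p$ estimate and then upgrades via Corollary~\ref{cor:Sobolev weighted}, whereas you apply $\mu_{r,p}(G)^{-1}$ directly to the oscillation term --- both routes give constants with the declared dependence.
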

\begin{proof}
We start proving the inequality in the case $1 \le r=p < \infty$.

An application of the triangle inequality gives that
\begin{equation*}
\nr v - v_A \nr_{L^p(G)} \le \nr v - v_G \nr_{L^p(G)} + \nr v_G - v_A \nr_{L^p(G)} ,
\end{equation*}
and hence, by \eqref{eq:BoasStraube-poincare},
\begin{equation}\label{eq:step Poincare trace}
\nr v - v_A \nr_{L^p(G)} \le \mu_{p}(G)^{-1} \nr  \na v \nr_{L^p (G)}   + \nr v_G - v_A \nr_{L^p(G)} ,
\end{equation}
so that we are left to estimate the last summand in \eqref{eq:step Poincare trace}.

For any $\la \in \RR$ we compute that
$$
\nr v_A - \la\nr_{L^p(G)}=|G|^{1/p} |v_A - \la| 
\le \frac{|G|^{1/p}}{|A|} \int_A |v-\la|
\le \left( \frac{|G|}{|A|} \right)^{\frac{1}{p}} \, \nr v- \la \nr_{L^p (A)} ,
$$
where the equality follows from the fact that $|v_A - \la|$ is constant, while in the last inequality we used H\"older's inequality.
Using this with $\la=v_{G}$ gives that
$$
\nr v_A - v_G \nr_{L^p(G)} \le \left( \frac{|G|}{|A|} \right)^{\frac{1}{p}} \, \nr v- v_G \nr_{L^p (A)}
$$

Since the trace embedding $W^{1,p}(G) \hookrightarrow L^p(A)$ and \eqref{eq:BoasStraube-poincare} give that
\begin{equation}\label{eq:step altro embedding trace poincare}
\nr v - v_G \nr_{L^p(A)} 
\le \la_p(A) \nr v - v_G \nr_{W^{1,p} (G)} 
\le \la_p(A) \left( 1 + \mu_{p}(G)^{-p} \right)^{1/p} \nr  \na v \nr_{L^p (G)} ,
\end{equation}
the last inequality leads to 
$$
\nr v_A - v_G \nr_{L^p(G)} \le \left( \frac{|G|}{|A|} \right)^{\frac{1}{p}} \, \la_p(A) \left( 1 + \mu_{p}(G)^{-p} \right)^{1/p} \nr  \na v \nr_{L^p (G)} ,
$$
which plugged into \eqref{eq:step Poincare trace} gives that \eqref{eq:Poincare per zero trace} holds true with
\begin{equation}\label{eq:stima costante Poincare zero trace r=p}
c \le \mu_{p}(G)^{-1} + \left( \frac{|G|}{|A|} \right)^{\frac{1}{p}} \, \la_p(A) \left( 1 + \mu_{p}(G)^{-p} \right)^{1/p} ,
\end{equation}
whenever $1 \le r=p < \infty$.

If $r$ and $p$ are as in \eqref{eq:r p al in Hurri}, we can put together \eqref{eq:Sobolev weighted} and the version of \eqref{eq:Poincare per zero trace} that we just proved to obtain that \eqref{eq:Poincare per zero trace} holds true with
$$
c \le |G|^{\frac{1}{r} - \frac{1}{p}} \left( \mu_{p}(G)^{-1} + \left( \frac{|G|}{|A|} \right)^{\frac{1}{p}} \, \la_p(A) \left( 1 + \mu_{p}(G)^{-p} \right)^{1/p} \right) + \mu_{r,p}(G)^{-1} .
$$
\end{proof}

\section{Estimates for the torsional rigidity relative to a cone}\label{sec: pointwise estimates for torsion}
We now return to consider Setting \ref{Setting} and introduce a useful comparison principle for mixed boundary value problems in cones.

\begin{lem}\label{lem:comparison in cones}
Let $\Si \cap \Om$ as in Setting \ref{Setting}. Let $f \in C^2(\Si \cap \Om)\cap C^1((\Si\cap \Om)\cup\Ga_0\cup\Ga_1)$ satisfy
\begin{equation*}
\begin{cases}
\De f \le 0 \quad & \text{ in } \Si\cap\Om
\\
f \ge 0     \quad & \text{ on } \Ga_0
\\
f_\nu \ge 0 \quad & \text{ on } \Ga_1 .
\end{cases}
\end{equation*}
If $f \in L^\infty (\Si\cap\Om) \cap W^{1,2} (\Si\cap\Om)$, then $f \ge 0$.
\end{lem}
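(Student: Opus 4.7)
I would prove Lemma 2.4 by a standard Stampacchia-type energy argument, testing against the negative part of $f$ and exploiting Proposition 2.1 to handle the integration by parts in this cone geometry.

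Set $v := f^- = \max(-f,0)$. Then $v \ge 0$, $v \in L^\infty(\Si\cap\Om) \cap W^{1,2}(\Si\cap\Om)$ (since $L^\infty$ and $W^{1,2}$ are closed under taking positive/negative parts), the weak gradient is given by
$$
\na v = -\na f \cdot \chi_{\{f<0\}} \quad \text{a.e. in } \Si\cap\Om,
$$
and the trace of $v$ on $\Ga_0$ vanishes, since $f \ge 0$ there. The goal is to show $v \equiv 0$.

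The plan is to apply the divergence theorem (Proposition~\ref{prop:divergence theorem in cones}) to the vector field $\vV := v\,\na f$ and obtain
\begin{equation*}
\int_{\Si\cap\Om} \na v \cdot \na f\, dx + \int_{\Si\cap\Om} v\,\De f\, dx = \int_{\Ga_0 \cup \Ga_1} v\, f_\nu\, dS_x.
\end{equation*}
We have $\vV \in L^2(\Si\cap\Om)$ (since $v \in L^\infty$ and $\na f \in L^2$) and $\dv(\vV) = \na v \cdot \na f + v\,\De f = -|\na v|^2 + v\,\De f \in L^1(\Si\cap\Om)$. Moreover, the boundary integrand $\langle \vV,\nu\rangle = v f_\nu$ splits as $w_1 + w_2$ with $w_1:= 0$ on $\Ga_0\cup\Ga_1$ and $w_2 := v f_\nu$, which is non-negative on $\Ga_1$ (by assumption on $f_\nu$) and vanishes on $\Ga_0$ (since $v=0$ there). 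Rewriting the identity and using $\na v \cdot \na f = -|\na v|^2$,
$$
-\int_{\Si\cap\Om} |\na v|^2\, dx = -\int_{\Si\cap\Om} v\,(-\De f)\, dx + \int_{\Ga_1} v\, f_\nu\, dS_x \ge 0,
$$
because $v \ge 0$, $-\De f \ge 0$ in $\Si\cap\Om$, and $v f_\nu \ge 0$ on $\Ga_1$. Hence $\na v \equiv 0$ a.e. in $\Si\cap\Om$, so $v$ is constant on each connected component; since $v=0$ on $\Ga_0 \subset \pa(\Si\cap\Om)$ in the trace sense and $\Si\cap\Om$ is connected, the constant must be $0$, proving $f = -v \ge 0$.

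The main technical obstacle is that $\vV = v\,\na f$ need not be of class $C^1$ on $(\Si\cap\Om)\cup\Ga_0\cup\Ga_1$ (the function $f^-$ has a kink on the level set $\{f=0\}$), so Proposition~\ref{prop:divergence theorem in cones} does not apply verbatim. I would handle this by a standard approximation: replace $v$ by $v_\ve := \phi_\ve(f)$, where $\phi_\ve \in C^2(\RR)$ is a non-increasing, convex smoothing of $t \mapsto t^-$ such that $\phi_\ve(t)= 0$ for $t \ge \ve$, $\phi_\ve'(t) \in [-1,0]$, and $\phi_\ve(t) \to t^-$ pointwise with $\phi_\ve'(t) \to -\chi_{\{t<0\}}$. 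Then $\vV_\ve := \phi_\ve(f)\,\na f$ belongs to $C^1((\Si\cap\Om)\cup\Ga_0\cup\Ga_1)$ and satisfies the hypotheses of Proposition~\ref{prop:divergence theorem in cones} with $w_1 := \phi_\ve(f) f_\nu \cdot \chi_{\Ga_0}$ and $w_2 := \phi_\ve(f) f_\nu \cdot \chi_{\Ga_1} \ge 0$ (where on $\Ga_0$ we use $f \ge 0$ and hence $\phi_\ve(f)$ is bounded and eventually vanishes). Passing to the limit $\ve \to 0^+$ via the dominated convergence theorem, using $f \in W^{1,2}\cap L^\infty$ and the uniform bound $|\phi_\ve(f)| \le |f|$, yields the desired identity for $v$ and closes the argument.
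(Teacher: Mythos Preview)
Your proof is correct and follows essentially the same energy argument as the paper: test against $f^-$, apply the divergence theorem to $f^-\nabla f$, and conclude $\nabla f^- \equiv 0$. The paper finishes via the Poincar\'e inequality for functions with zero trace on $\Ga_0$ (Lemma~\ref{lem:Poincare zero trace}) rather than your connectedness argument, and does not spell out the $C^1$-regularity issue that you address by smoothing.
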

\begin{proof}
Set $f^-$ to be the negative part of $f$. By using the divergence theorem and that $\De f \le 0$ in $\Si\cap\Om$, we compute
$$
- \int_{\Si\cap\Om} \left| \na f^- \right|^2 \, dx \ge \int_{\Si\cap\Om} \dv \left( f^- \, \na f \right) \, dx = \int_{\Ga_1} f^- \, f_\nu \, dS_x \ge 0 ,
$$
where in the last equality we used that $f^-=0$ on $\Ga_0$ and in the last inequality that $f_\nu \ge 0$ on $\Ga_1$. Hence, we have that $\int_{\Si\cap\Om} \left| \na f^- \right|^2 \, dx \le 0$. Thus, Poincar\'e's inequality (\eqref{eq:Poincare per zero trace} with $v:=f^-$, $G:=\Si\cap\Om$, $A:=\Ga_0$, $r=p:=2$) gives that $f^- \equiv 0$ in $\Si\cap\Om$.
\end{proof}

\begin{lem}
	\label{lem:relation u dist general}
	Let $\Si$ be a cone and let $u$ be the solution of \eqref{eq:problem torsion}.
	We have that
	\begin{equation}\label{eq:relation u dist general}
		-u(x)\ge\frac12\,\de_{\pa(\Si\cap\Om)}(x)^2 \ \mbox{ for every } \ x\in\ol{\Si\cap\Om} ,
	\end{equation}
where $\de_{\pa(\Si\cap\Om)} (x)$ denotes the distance of $x$ to $\pa(\Si\cap\Om)$.

If $\Si$ is a convex cone, then we have that
\begin{equation}\label{eq:non serve ma serve inproof reldist finer}
	-u(x) \ge \frac{1}{2}\,\de_{\Ga_0} (x)^2  \ \mbox{ for every } \ x\in\ol{\Si\cap\Om} ,
\end{equation}	
where $\de_{\Ga_0} (x)$ denotes the distance of $x$ to $\Ga_0$.
\end{lem}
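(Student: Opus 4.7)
The strategy is a classical interior-ball comparison: since $\tfrac12(r^2 - |x-x_0|^2)$ satisfies $\Delta\bigl(\tfrac12(r^2 - |x-x_0|^2)\bigr) = -N = \Delta(-u)$, the difference is harmonic on any ball where both are defined, and a sign condition on the relevant boundary controls its value at $x_0$. As a preliminary step, Lemma \ref{lem:comparison in cones} applied to $f := -u$ (noting $\Delta(-u) = -N \le 0$, $-u = 0$ on $\Ga_0$, $(-u)_\nu = 0$ on $\Ga_1$, and $-u \in L^\infty \cap W^{1,2}$) yields $u \le 0$ in $\ol{\Si \cap \Om}$.

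For \eqref{eq:relation u dist general}, fix $x_0 \in \Si \cap \Om$ and choose any $r \in (0,\,\de_{\pa(\Si \cap \Om)}(x_0))$, so that $B_r(x_0) \subset \Si \cap \Om$. Setting $w(x) := -u(x) - \tfrac12(r^2 - |x - x_0|^2)$, one has $\Delta w \equiv 0$ in $B_r(x_0)$ and $w = -u \ge 0$ on $\pa B_r(x_0)$; the classical maximum principle gives $w \ge 0$ in $B_r(x_0)$, and evaluating at $x_0$ yields $-u(x_0) \ge r^2/2$. Letting $r \nearrow \de_{\pa(\Si\cap\Om)}(x_0)$ proves \eqref{eq:relation u dist general} at interior points (the bound being trivial on $\pa(\Si \cap \Om)$ by continuity of $u$).

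For \eqref{eq:non serve ma serve inproof reldist finer}, set instead $r := \de_{\Ga_0}(x_0)$, possibly strictly larger than $\de_{\pa(\Si\cap\Om)}(x_0)$, and consider the same $w$ on $D := B_r(x_0) \cap \Si$. The key geometric observation is that $D \subseteq \Om$: any line segment in the convex set $B_r(x_0) \cap \Si$ joining $x_0 \in \Om$ to a point outside $\Om$ would cross $\pa\Om$ at a point in $\Si$, hence in $\Si \cap \pa\Om = \Ga_0$, contradicting $\Ga_0 \cap B_r(x_0) = \emptyset$ (enforced by the choice of $r$). Thus $D$ is a subdomain of $\Si \cap \Om$ in which $w$ is harmonic, and $\pa D$ decomposes (up to $\cH^{N-1}$-negligible pieces of $\ol{\cS}$) into a Dirichlet-type portion $\pa B_r(x_0) \cap \ol{D}$, where $w = -u \ge 0$, and a Neumann-type portion $\Ga_1 \cap B_r(x_0) \subset \pa \Si$, where $u_\nu = 0$ combined with the convexity of $\Si$ yields
\[
w_\nu = \lan x - x_0, \nu(x)\ran \ge 0,
\]
since the supporting hyperplane to $\Si$ at $x \in \pa\Si$ forces $\lan x_0 - x, \nu(x) \ran \le 0$ whenever $x_0 \in \Si$.

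Repeating the argument of Lemma \ref{lem:comparison in cones} on $D$ -- namely, testing $\Delta w = 0$ against the negative part $w^-$ and integrating by parts -- then yields
\[
\int_D |\na w^-|^2\,dx \,=\, -\int_{\pa D} w^-\, w_\nu\, dS_x \,\le\, 0,
\]
because $w^- = 0$ on the Dirichlet portion (by $w \ge 0$ there) and $w^-\, w_\nu \ge 0$ on the Neumann portion. Hence $w^-$ is constant on the (convex, hence connected) open set $D$, and its trace on the Dirichlet portion being zero forces $w^- \equiv 0$; evaluating at $x_0$ gives $-u(x_0) \ge r^2/2 = \de_{\Ga_0}(x_0)^2/2$. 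The main technical delicacy is the application of the divergence theorem on $D$, whose boundary has corners along $\pa B_r(x_0) \cap \pa\Si$ and may meet $\ol{\cS}$; this can be handled by the approximation scheme of Proposition \ref{prop:divergence theorem in cones}, exhausting $D$ by domains obtained by removing $\ve$-tubular neighbourhoods of the singular strata.
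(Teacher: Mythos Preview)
Your proof is correct and follows essentially the same interior-ball comparison argument as the paper: for \eqref{eq:relation u dist general} you compare with the torsion function of the inscribed ball via the classical maximum principle, and for \eqref{eq:non serve ma serve inproof reldist finer} you compare on $B_r(x_0)\cap\Si$ using the mixed comparison principle of Lemma~\ref{lem:comparison in cones}, exploiting convexity of $\Si$ to get the sign of $w_\nu$ on $\pa\Si$. You are in fact slightly more careful than the paper in that you explicitly justify the inclusion $D\subseteq\Om$ (which the paper tacitly uses) and flag the need for Proposition~\ref{prop:divergence theorem in cones} to handle the singular set when integrating by parts.
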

\begin{proof}
By Lemma \ref{lem:comparison in cones} with $f:=-u$, we know that $u \le 0$.
Now, for $x\in \Si\cap\Om$, let $\de:=\de_{\pa(\Si\cap\Om)} (x)$ and consider the ball $B_\de(x)$ with radius $\de$ centered at $x$. 
Consider $w(y)=(|y-x|^2- \de^2)/2$, that is the solution of the classical torsion problem in $B_\de (x)$
	\begin{equation}\label{eq:torsionball}
		\De w =N \text{  in } B_\de (x), \, w =0 \text{  on } \pa B_\de (x).
	\end{equation}
By (classical) comparison we have that $w\ge u$ on $\ol{B}_\de (x)$ and hence, in particular, $w(x) \ge u(x)$. Inequality \eqref{eq:relation u dist general} easily follows.

We now prove \eqref{eq:non serve ma serve inproof reldist finer}. 
For $x\in \ol{\Si}\cap\Om$, we set $\de:=\de_{\Ga_0} (x)$ and consider the ball $B_\de(x)$ with radius $\de$ centered at $x$. 
Setting $w(y)=(|y-x|^2- \de^2)/2$, we find that
\begin{equation*}
	\begin{cases}
		\De (w-u) = 0 \quad & \text{ in } \Si \cap B_{\de} (x)
		\\
		w-u \ge 0 \quad & \text{ on } \Si \cap \pa B_{\de} (x)
		\\
		w_\nu - u_\nu \ge 0 \quad & \text{ on } \pa \Si \cap B_{\de} (x) ,
	\end{cases}
\end{equation*}
where the last boundary condition follows noting that $\langle y - x, \nu (y) \rangle \ge 0$ on $\pa \Si \cap B_{\de} (x)$, being as $\Si \cap B_{\de} (x)$ (convex and hence) star-shaped with respect to $x \in \ol{\Si} \cap B_{\de} (x)$.
By comparison (Lemma \ref{lem:comparison in cones} with $f:=w-u$) we have that $w \ge u$ in $\Si\cap B_{\de}(x)$, and hence, in particular $w(x) \ge u(x)$, that is \eqref{eq:non serve ma serve inproof reldist finer}.
\end{proof}


\subsection{Hopf Lemma relative to a convex cone}\label{subsec:Hopf}
We introduce the notion of {\it uniform interior sphere condition relative to the cone} $\Sigma$.
\begin{definit}\label{def:interior sphere relative to cone}
We say that $\Si\cap\Om$ satisfies the $\ul{r}_i$-uniform interior sphere condition relative to the cone $\Si$, if for each $x \in \ol{\Ga}_0$ there exists a touching ball
%
%
%
of radius $\ul{r}_i$ such that

(i) its center $x_0$ is contained in $\ol{\Si \cap \Om}$

and

(ii) its closure intersects $\ol{\Ga}_0$ only at $x$.
\end{definit}

Since in our setting $\Ga_0$ is smooth, we must have that 
\begin{equation}\label{eq:centro interior sphere}
x_0=x- \ul{r}_i \, \nu(x)  \quad \text{ for } x \in \Ga_0 ; 
\end{equation}
however, notice that this may not be the only possibility for the points on $\pa \Ga_0$.

When $\Si=\RR^N$ Definition \ref{def:interior sphere relative to cone} returns the classical uniform interior sphere condition.
When $\Si \subsetneq \RR^N$, the condition in Definition \ref{def:interior sphere relative to cone} is related to how $\ol{\Ga}_0$ and $\pa\Si$ intersect. In fact, a necessary condition for the validity of (i) and (ii) is that $\langle \nu_{\Ga_0} (x) , \nu_{\pa\Si}(x) \rangle \ge 0$ for $\cH^{N-2}$-a.e. $x \in \pa \Ga_0$.
%

\begin{lem}[Hopf lemma relative to $\Si$]
\label{lem:Hopf in cones}
Let $u$ be the solution of \eqref{eq:problem torsion}.
If $\Si$ is a convex cone and $\Si\cap\Om$ satisfies the $\ul{r}_i$-uniform interior sphere condition with radius $\ul{r}_i$, then we have that
	\begin{equation}
		\label{eq:Hopf relative to cone}
		| \na u| \ge \ul{r}_i  \ \mbox{ for every } x \in \Ga_0 .
	\end{equation}
\end{lem}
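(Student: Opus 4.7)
The plan is to adapt the classical Hopf barrier argument to the mixed boundary value setting by combining a paraboloid barrier with the comparison principle of Lemma \ref{lem:comparison in cones}. Fix $x \in \Ga_0$ and, using the $\ul{r}_i$-uniform interior sphere condition relative to $\Si$, let $x_0 = x - \ul{r}_i\,\nu(x) \in \ol{\Si\cap\Om}$ be the center of a touching ball $B:=B_{\ul{r}_i}(x_0)$ whose closure meets $\ol{\Ga}_0$ only at $x$. As comparison function take
\[
w(y) = \tfrac12\bigl(|y-x_0|^2 - \ul{r}_i^2\bigr),
\]
which satisfies $\De w = N$ in $B$, vanishes on $\pa B$, and has $\na w(x) = \ul{r}_i\,\nu(x)$.

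I would compare $w$ and $u$ on the domain $\Si\cap B$, whose boundary splits into the ``Dirichlet portion'' $\Si\cap\pa B$ and the ``Neumann portion'' $\pa\Si\cap B$. On $\Si\cap\pa B$ one has $w=0 \ge u$, using $u\le 0$ from Lemma \ref{lem:comparison in cones}. On $\pa\Si\cap B$ one has $u_\nu = 0$ by the homogeneous Neumann condition on $\Ga_1$, while $w_\nu(y) = \langle y-x_0, \nu(y)\rangle \ge 0$: this is exactly the same star-shapedness argument already used in the proof of \eqref{eq:non serve ma serve inproof reldist finer}, since $x_0\in\ol{\Si}$ and convexity of $\Si$ implies that for every $y\in\pa\Si$ the point $x_0$ lies in the closed half-space $\{z:\langle z-y,\nu(y)\rangle\le 0\}$.

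Setting $v := w-u$, this shows that $\De v = 0$ in $\Si\cap B$, $v\ge 0$ on $\Si\cap \pa B$, and $v_\nu\ge 0$ on $\pa\Si\cap B$. The comparison principle (Lemma \ref{lem:comparison in cones} applied to $v$ on $\Si\cap B$) gives $v\ge 0$ in $\Si\cap B$. Since $v(x)=w(x)-u(x)=0$, the point $x$ is a boundary minimum of $v$ on the Dirichlet portion; moving inward in the direction $-\nu(x)$ yields $-\langle \na v(x),\nu(x)\rangle \ge 0$, that is $u_\nu(x) \ge w_\nu(x) = \ul{r}_i$. Because $\na u = u_\nu\,\nu$ on $\Ga_0$ by \eqref{eq:gradient u = u_nu nu on level set}, this is exactly $|\na u(x)|\ge \ul{r}_i$.

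The main obstacle is checking that the comparison principle truly applies on the auxiliary domain $\Si\cap B$ rather than on $\Si\cap\Om$: one needs $\Si\cap B$ to fit the structural assumptions of Setting \ref{Setting} (in particular regarding the singular set $\ol{\cS}$ that may cut through $\pa\Si\cap B$ near $\pa\Ga_0$) so that Lemma \ref{lem:comparison in cones} is genuinely available. This is handled by observing that $\Si\cap B$ inherits the same decomposition and the same bound \eqref{eq:setting B Minkowski content} on the singular set from $\Si\cap\Om$, and that $v\in L^\infty\cap W^{1,2}$ on $\Si\cap B$ by the $W^{1,\infty}\cap W^{2,2}$-regularity of $u$ assumed in \eqref{eq:setting B W22}; beyond this regularity check the argument is purely a barrier-plus-comparison computation.
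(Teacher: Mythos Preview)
Your proof is correct and follows essentially the same barrier-plus-comparison strategy as the paper: the same paraboloid $w$, the same auxiliary domain $\Si\cap B_{\ul{r}_i}(x_0)$, the same verification of the mixed boundary inequalities, and the same application of Lemma~\ref{lem:comparison in cones} to $w-u$. Your additional remarks on the applicability of Lemma~\ref{lem:comparison in cones} to $\Si\cap B$ and on the identity $|\na u|=u_\nu$ via \eqref{eq:gradient u = u_nu nu on level set} are accurate elaborations that the paper leaves implicit.
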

\begin{proof}
For any $x \in \Ga_0$, let $B_{\ul{r}_i} (x_0)$ be the touching ball of radius $\ul{r}_i$ centered at the point $x_0=x- \ul{r}_i \, \nu(x)$. Setting $w(y)=\left(|y - x_0|^2 - \ul{r}_i^2 \right)/2$, we have that
\begin{equation}\label{eq:system for comparison Hopf}
\begin{cases}
\De (w-u) = 0 \quad & \text{ in } \Si \cap B_{\ul{r}_i} (x_0)
\\
w-u \ge 0 \quad & \text{ on } \Si \cap \pa B_{\ul{r}_i} (x_0)
\\
w_\nu - u_\nu \ge 0 \quad & \text{ on } \pa \Si \cap B_{\ul{r}_i} (x_0) .
\end{cases}
\end{equation}
The last boundary condition holds true since $\langle y - x_0, \nu (y) \rangle \ge 0$ on $\pa \Si \cap B_{\ul{r}_i} (x_0)$, being as $\Si \cap B_{\ul{r}_i} (x_0)$ (convex and hence) star-shaped with respect to $x_0 \in \ol{\Si} \cap B_{\ul{r}_i} (x_0)$.
By comparison (Lemma \ref{lem:comparison in cones} with $f:=w-u$) we have that $w \ge u$ in $\Si\cap B_{\ul{r}_i}(x_0)$, and hence, since $u(x)=w(x)=0$, we obtain
	$$
	u_\nu(x) \ge w_\nu (x).
	$$
Since $w_\nu (x)=\ul{r}_i$, the conclusion follows.
\end{proof}

\begin{rem}
{\rm We recall that, when $\Si=\RR^N$, Hopf-type results remain true in the more general case where $\Om$ satisfies an interior pseudoball condition (see \cite{ABMMZ}). Corresponding conditions may be introduced when $\Si \subsetneq \RR^N$. 

We recall that, when $\Si=\RR^N$, interior sphere and pseudoball conditions also lead to linear growth to the boundary (see, e.g., \cite[Lemma 3.1]{MP2} and \cite[Step 2 in the proof of Theorem I]{CPY}): when $\Si \subsetneq \RR^N $ additional conditions may be requested to get linear growth from $\Ga_0$.  
}
\end{rem}

\subsection{Upper bound for the gradient in a convex cone}\label{subsec:upper bound gradient}

In this section we introduce a sufficient geometrical condition that  guarantees the boundedness of $\nr \na u \nr_{L^\infty (\Ga_0)}$, and allow to obtain an explicit upper bound.
In fact, in analogy to the interior sphere condition relative to the cone introduced above, we now introduce the notion of {\it uniform exterior sphere condition relative to the cone}.
\begin{definit}\label{def:exterior sphere relative to cone}
We say that $\Si\cap\Om$ satisfies the $\ul{r}_e$-uniform exterior sphere condition relative to the cone $\Si$, if for each $x \in \ol{\Ga}_0$ there exists a touching ball
%
%
%
of radius $\ul{r}_e$ such that

(i) its center $x_0$ is contained in $\ol{\Si} \setminus \ol{\Si\cap\Om}$

and

(ii) its closure intersects $\ol{\Ga}_0$ only at $x$.
\end{definit}

Since in our setting $\Ga_0$ is smooth, we must have that 
\begin{equation}\label{eq:centro exterior sphere}
x_0 = x + \ul{r}_e \, \nu(x)  \quad \text{ for } x \in \Ga_0 ; 
\end{equation}

When $\Si=\RR^N$ Definition \ref{def:exterior sphere relative to cone} returns the classical uniform exterior sphere condition.
When $\Si \subsetneq \RR^N$, the condition in Definition \ref{def:exterior sphere relative to cone} is related to how $\ol{\Ga}_0$ and $\pa\Si$ intersect. In fact, a necessary condition for the validity of (i) and (ii) is that $\langle \nu_{\Ga_0}(x) , \nu_{\pa\Si} (x) \rangle \le 0$ for $\cH^{N-2}$-a.e. $x \in \pa \Ga_0$.

\begin{lem}\label{lem:upper bound gradient on Ga0}
Let $u$ be the solution of \eqref{eq:problem torsion}.	
If $\Si$ is a convex cone and $\Si\cap\Om$ satisfies the $\ul{r}_e$-uniform exterior sphere condition with radius $\ul{r}_e$, then we have that
	\begin{equation}
		\label{eq:upper bound gradient Ga0 relative to cone}
		\nr \na u \nr_{L^\infty (\Ga_0)} \le 
\begin{cases}
\displaystyle
6 \, \ul{r}_e \, \left( 1+ \frac{d_{\Si\cap\Om}}{ \ul{r}_e } \right)^4 \quad & \text{for } N=2 ,
\\
\displaystyle
\frac{3N}{2} \, \ul{r}_e \, \left( 1+ \frac{d_{\Si\cap\Om}}{ \ul{r}_e } \right)^N
\quad & \text{for } N\ge3 .
\end{cases} 
\end{equation}
\end{lem}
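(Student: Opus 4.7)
The plan is a barrier argument based on the exterior sphere condition relative to the cone. For a fixed $x \in \Ga_0$, Definition \ref{def:exterior sphere relative to cone} yields a ball $B_{\ul{r}_e}(x_0)$ with $x_0 = x + \ul{r}_e\,\nu(x) \in \ol{\Si}\setminus\ol{\Si\cap\Om}$, and setting $R := \ul{r}_e + d_{\Si\cap\Om}$ the triangle inequality together with (ii) of Definition \ref{def:exterior sphere relative to cone} gives $\Si\cap\Om \subset B_R(x_0)\setminus B_{\ul{r}_e}(x_0)$. I consider the radial comparison function $w(y) := \phi(|y-x_0|)$, where
$$
\phi(r) := \frac{R^N}{N-2}\bigl(r^{2-N}-\ul{r}_e^{2-N}\bigr) + \frac{r^2-\ul{r}_e^2}{2} \qquad (N\ge 3),
$$
and the analogous logarithmic choice $\phi(r):=-R^2\log(r/\ul{r}_e)+(r^2-\ul{r}_e^2)/2$ for $N=2$. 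A direct computation gives $\Delta w = N$ in the annulus, $\phi(\ul{r}_e)=0$, and
$$
\phi'(r) = r\bigl(1-(R/r)^N\bigr) \le 0 \qquad\text{on } [\ul{r}_e,R]
$$
(with the analogous expression $(r^2-R^2)/r$ in the two-dimensional case), so $\phi$ is non-positive and non-increasing on $[\ul{r}_e,R]$.

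Next, I would apply the comparison principle in Lemma \ref{lem:comparison in cones} to $f:=u-w$. The harmonicity $\Delta f = 0$ and the sign $f\ge 0$ on $\Ga_0$ follow at once from $u=0$ and $w\le 0$ there. The delicate verification is $f_\nu\ge 0$ on $\Ga_1$, equivalent to $w_\nu\le 0$. This relies on three facts: (i) since $\Si$ is a convex cone with vertex at the origin, $\langle y,\nu(y)\rangle=0$ at every smooth point $y\in\pa\Si$; (ii) by convexity of $\Si$, $\langle z,\nu(y)\rangle\le 0$ for every $z\in\ol{\Si}$ and in particular for $z=x_0\in\ol{\Si}$; and (iii) $\phi'\le 0$ by construction. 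Combining these,
$$
w_\nu(y) = \frac{\phi'(|y-x_0|)}{|y-x_0|}\,\langle y-x_0,\nu(y)\rangle = -\frac{\phi'(|y-x_0|)}{|y-x_0|}\,\langle x_0,\nu(y)\rangle \le 0 \quad \text{on } \Ga_1.
$$
Lemma \ref{lem:comparison in cones} then gives $u\ge w$ in $\Si\cap\Om$.

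Since $u(x)=w(x)=0$ and $u-w\ge 0$, the function $u-w$ attains its minimum at the boundary point $x\in\Ga_0$, so the outward normal derivative satisfies $u_\nu(x)\le w_\nu(x)$. From $\nabla w(x)=\phi'(\ul{r}_e)(x-x_0)/\ul{r}_e=-\phi'(\ul{r}_e)\nu(x)$ and $|\nabla u|=u_\nu$ on $\Ga_0$, this yields $|\nabla u(x)|\le -\phi'(\ul{r}_e) = \ul{r}_e\bigl((R/\ul{r}_e)^N-1\bigr)$ for $N\ge 3$, and the analogous expression $(R^2-\ul{r}_e^2)/\ul{r}_e$ for $N=2$. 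Since $R/\ul{r}_e=1+d_{\Si\cap\Om}/\ul{r}_e$ and the right-hand side is independent of $x$, taking the supremum over $\Ga_0$ and bounding crudely yields \eqref{eq:upper bound gradient Ga0 relative to cone}.

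The main obstacle is the Neumann sign of $w_\nu$ on $\Ga_1$, which requires the crucial placement $x_0\in\ol{\Si}$ guaranteed by Definition \ref{def:exterior sphere relative to cone}; without this property, the standard Euclidean exterior-ball barrier would not be compatible with the homogeneous Neumann condition on $\Ga_1$. Moreover, ensuring the monotonicity of $\phi$ on the whole interval $[\ul{r}_e,R]$ (not just at the endpoint $\ul{r}_e$) is why the factor $R^N$ rather than $\ul{r}_e^N$ appears in the leading coefficient, and is precisely what forces the dependence on $d_{\Si\cap\Om}$ in the final bound.
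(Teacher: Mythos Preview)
Your argument is correct and follows the same barrier strategy as the paper, but with a genuinely simpler choice of barrier. The paper takes $w$ to be the torsion function of an annulus $A_{\ul{r}_e,\mathcal{R}}(x_0)$ (vanishing on \emph{both} boundary spheres); since that $w$ has a critical sphere at some radius $\mathcal{Z}\in(\ul{r}_e,\mathcal{R})$ where the radial derivative changes sign, the paper must choose $\mathcal{R}$ large enough --- roughly $\mathcal{R}\approx\ul r_e(1+d_{\Si\cap\Om}/\ul r_e)^{N/2}$ for $N\ge3$ --- to push $\mathcal{Z}$ beyond $\ul{r}_e+d_{\Si\cap\Om}$, and this inflation of the outer radius is precisely what produces the extra factor $3N/2$ and, for $N=2$, the fourth power in the stated bound. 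Your $\phi$ instead has $\phi'\le0$ on the whole interval $[\ul{r}_e,\ul{r}_e+d_{\Si\cap\Om}]$ by construction, so no critical--radius analysis is needed and you obtain directly the sharper estimate $|\na u(x)|\le\ul{r}_e\bigl((1+d_{\Si\cap\Om}/\ul{r}_e)^N-1\bigr)$ (respectively the quadratic analogue for $N=2$), which of course implies \eqref{eq:upper bound gradient Ga0 relative to cone}. The delicate step --- checking $w_\nu\le0$ on $\Ga_1$ --- is handled identically in both arguments, via $x_0\in\ol\Si$ and convexity of the cone.
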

\begin{proof}
For any $x \in \Ga_0$, let $B_{\ul{r}_e} (x_0)$ be the exterior touching ball of radius $\ul{r}_e$ centered at the point $x_0=x + \ul{r}_e \, \nu(x) \in \ol{\Si} \setminus \ol{\Si\cap\Om}$. For $\mathcal{R}>\ul{r}_e$ (to be chosen later), we define
\begin{equation*}
w(y)=
\begin{cases}
\displaystyle\frac12\, |y-x_0|^2 +\frac{ \mathcal{R}^2}{2}\,(1-\ka^2)\,\frac{\log(|y-x_0|/\ul{r}_e)}{\log\ka} -\frac{ \ul{r}_e^2 }{2} 
\ &\mbox{ for } \ N=2, 
\vspace{5pt} \\
\displaystyle\frac12\,|y-x_0|^2 +\frac12\,\frac{\mathcal{R}^2}{1-\ka^{N-2}}\,\left\{ (1-\ka^2)\,(|y-x_0|/\ul{r}_e)^{2-N}+\ka^N-1\right\} \ &\mbox{ for } \ N \ge 3 ,
\end{cases}
\end{equation*} 
where $\ka=\ul{r}_e / \mathcal{R}$. This is the torsion function in the annulus $A_{\ul{r}_e, \mathcal{R}}(x_0)$ centered at $x_0$ of radii $\ul{r}_e$ and $\mathcal{R}$, i.e.,
$$\Delta w = N \ \textrm{ in } \ A, \quad w = 0 \ \textrm{ on } \ \pa A.$$
Notice that $w$ is radial and its gradient satisfies
\begin{equation}\label{eq:step upper bound gradient}
\na w = 
\begin{cases}
|\na w|\frac{x_0-y}{|y-x_0|} \quad & \text{ for } \ul{r}_e \le |y-x_0| \le \mathcal{Z} 
\\
|\na w|\frac{y-x_0}{|y-x_0|} \quad & \text{ for }  \mathcal{Z} \le |y-x_0| \le \mathcal{R} ,
\end{cases}
\end{equation}
where
\begin{equation*}
\mathcal{Z}= 
\begin{cases}
\left[ \frac{ \mathcal{R}^2}{2} \frac{1-\ka^2}{\log(\frac{1}{\ka})} \right]^{1/2}
\quad & \text{for } N=2 ,
\\
\left[ \frac{N-2}{2} \ul{r}_e^{N-2} \mathcal{R}^2 \frac{1-\ka^2}{1-\ka^{N-2}} \right]^{1/N} 
\quad & \text{for } N\ge 3;
\end{cases}
\end{equation*}
the sphere centered at $x_0$ of radius $\mathcal{Z}$ is made of critical points of $w$.
Setting
\begin{equation}\label{eq:choice mathcalR in upperbound}
\mathcal{R}=
\begin{cases}
\displaystyle
2 \, \ul{r}_e \left( 1+ \frac{d_{\Si\cap\Om}}{ \ul{r}_e } \right)^2
\quad & \text{for } N = 2 ,
\\
\displaystyle
\sqrt{3} \, \, \ul{r}_e \, \left( 1+ \frac{ d_{\Si\cap\Om} }{ \ul{r}_e} \right)^{N/2} \quad & \text{for } N \ge 3,
\end{cases}
\end{equation} 
ensures that\footnote{ 
In fact, being as $0<\kappa= \ul{r}_e/ \mathcal{R}$ one can easily check that, with the choice in \eqref{eq:choice mathcalR in upperbound}, we have that
\begin{equation*}
\mathcal{Z} = \left[ \frac{\ul{r}_e \mathcal{R}}{2} \, \, \frac{1-\ka^2}{\ka \log(\frac{1}{\ka})} \right]^{1/2} 
\ge
\left[ \frac{\ul{r}_e \mathcal{R}}{2} (1+\ka) \right]^{1/2}
\ge \left[ \frac{\ul{r}_e \mathcal{R}}{2} \right]^{1/2} 
= d_{\Si\cap\Om} + \ul{r}_e \quad \text{for } N=2,
\end{equation*}
and
\begin{equation*}
\mathcal{Z} = \left[ \frac{N-2}{2} \ul{r}_e^{N-2} \mathcal{R}^2 \frac{1-\ka^2}{1-\ka^{N-2}} \right]^{1/N} \ge
\left[ \frac{1}{2} \, \ul{r}_e^{N-2} \mathcal{R}^2 (1-\ka^2)\right]^{1/N}
\ge
\left[ \frac{1}{3} \, \ul{r}_e^{N-2} \mathcal{R}^2 \right]^{1/N} 
= d_{\Si\cap\Om} + \ul{r}_e \quad \text{for } N \ge 3;
\end{equation*}
in the last inequality for $N\ge3$ we also used that $1-\ka^2 \ge 3/2$, which easily follows from the definition of $\ka$, \eqref{eq:choice mathcalR in upperbound}, and the trivial inequality $\left( 1+ \frac{ d_{\Si\cap\Om} }{ \ul{r}_e} \right)^{N}\ge 1$.
}
$\mathcal{Z} \ge d_{\Si\cap\Om} + \ul{r}_e$, and hence $A_{\ul{r}_e, \mathcal{R}} (x_0) \supset A_{\ul{r}_e,\mathcal{Z}} (x_0) \supset \Si\cap\Om$.

Thus, by recalling \eqref{eq:step upper bound gradient}, for any $y\in \Ga_1 \subset \ol{\Si\cap\Om}$ we have that 
$$
\langle \na w,\nu \rangle = - |\na w| \langle y-x_0 ,\nu (y) \rangle \le 0 ,$$
being as $B_{\mathcal{R}(x_0)} \cap \Si$ (convex and hence) star-shaped with respect to $x_0 \in B_{\mathcal{R}(x_0)} \cap \ol{\Si}$.
Thus,
we have that
\begin{equation}\label{eq:system for comparison upper bound}
\begin{cases}
\De (u-w) = 0 \quad & \text{ in } \Si \cap \Om
\\
u-w \ge 0 \quad & \text{ on } \Ga_0
\\
u_\nu - w_\nu  \ge 0 \quad & \text{ on } \Ga_1 .
\end{cases}
\end{equation}

By comparison (Lemma \ref{lem:comparison in cones} with $f:=u-w$), we have that $u \ge w$ in
$\Si\cap \Om$,
and hence, since $u(x)=w(x)=0$, we obtain
	$$
	u_\nu(x) \le w_\nu (x).
	$$
Since direct computations
(see \cite[proof of Theorem 3.10]{MP})
show that
$$
w_\nu (x) \le \frac{\max\left\lbrace 3,N \right\rbrace }{2} \, \frac{\mathcal{R}( \mathcal{R} -\ul{r}_e)}{\ul{r}_e} ,$$
and, obviously,
$$
\frac{\max\left\lbrace 3,N \right\rbrace }{2} \, \frac{\mathcal{R}( \mathcal{R} -\ul{r}_e)}{\ul{r}_e} 
\le 
\frac{\max\left\lbrace 3,N \right\rbrace }{2} \, \frac{\mathcal{R}^2}{\ul{r}_e}
= 
\begin{cases}
6 \, \ul{r}_e \, \left( 1+ \frac{d_{\Si\cap\Om}}{ \ul{r}_e } \right)^4 \quad & \text{for } N=2
\\
\frac{3N}{2} \, \ul{r}_e \, \left( 1+ \frac{d_{\Si\cap\Om}}{ \ul{r}_e } \right)^N
\quad & \text{for } N\ge3
\end{cases} ,
$$
the conclusion follows.
\end{proof}

The next Lemma shows that the explicit bound for $\nr \na u \nr_{L^\infty (\Ga_0)}$ obtained in Lemma \ref{lem:upper bound gradient on Ga0} is also an explicit bound for $\nr \na u \nr_{L^\infty (\Si\cap\Om)}$, whenever $u \in W^{1,\infty} (\Si \cap \Om) \cap W^{2,2} (\Si\cap\Om)$. We recall that global $W^{1,\infty}$ regularity for the Neumann problem in convex domains can be found in \cite{Mazya}.

\begin{lem}\label{lem:upper bound gradient on Ga0 = whole domain}
Let $u$ be the solution of \eqref{eq:problem torsion}.	
If $u \in W^{1,\infty} (\Si \cap \Om) \cap W^{2,2} (\Si\cap\Om)$, then we have that
\begin{equation*}
\nr \na u \nr_{L^\infty (\Si\cap\Om)} = \nr \na u \nr_{L^\infty (\Ga_0)} .
\end{equation*}
\end{lem}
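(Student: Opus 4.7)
I start by writing $M := \nr\na u\nr_{L^\infty(\Ga_0)} = \max_{\Ga_0}|\na u|$ and noting that the trivial direction $\nr\na u\nr_{L^\infty(\Ga_0)} \le \nr\na u\nr_{L^\infty(\Si\cap\Om)}$ already holds; the task is to prove the reverse, i.e.\ $|\na u| \le M$ a.e.\ in $\Si\cap\Om$. The plan is a weak maximum principle for the $P$-function $P := |\na u|^2/2$ under a mixed Dirichlet/Neumann comparison. Three structural ingredients drive the argument: (a) $u \in C^\infty(\Si\cap\Om)$ solves $\De u = N$, whence $\De P = |\na^2 u|^2 \ge 0$ by \eqref{eq:P-gradient and Laplacian-differential-identity}; (b) convexity of $\Si$ together with $u_\nu = 0$ on $\Ga_1$ yield $\langle\na P,\nu\rangle = \langle\na^2 u\,\na u,\nu\rangle \le 0$ on $\Ga_1$, by \eqref{eq:disuguaglianza puntuale gratis per convex}; (c) since $u \in W^{1,\infty}\cap W^{2,2}$, we have $P \in W^{1,2}(\Si\cap\Om)\cap L^\infty(\Si\cap\Om)$, with $\na P = \na^2 u\cdot\na u \in L^2$.

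The key move is to test $\De P \ge 0$ against $\psi := (P - M^2/2)^+ \in W^{1,2}\cap L^\infty$, whose trace on $\Ga_0$ vanishes by definition of $M$. A formal integration by parts produces
\begin{equation*}
0 \le \int_{\Si\cap\Om}\psi\,\De P\,dx = \int_{\Ga_0\cup\Ga_1}\psi\,\langle\na P,\nu\rangle\,dS_x - \int_{\Si\cap\Om}|\na\psi|^2\,dx \le -\int_{\Si\cap\Om}|\na\psi|^2\,dx ,
\end{equation*}
the $\Ga_0$-boundary term vanishing because $\psi = 0$ there, and the $\Ga_1$-one being non-positive because $\psi \ge 0$ and $\langle\na P,\nu\rangle \le 0$. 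Hence $\na\psi \equiv 0$ a.e., and Lemma \ref{lem:Poincare zero trace} (applied with $r = p = 2$ and $A := \Ga_0$) forces $\psi \equiv 0$, i.e.\ $P \le M^2/2$ a.e.\ in $\Si\cap\Om$, which is precisely what is needed.

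The technical core is the rigorous justification of this integration by parts: $\psi$ is only Lipschitz, and Proposition \ref{prop:divergence theorem in cones} requires a $C^1$ vector field. I would regularise by $\psi_\eta := G_\eta(P - M^2/2)$, where $G_\eta \in C^2(\RR)$ is a non-decreasing convex approximation of $t\mapsto t^+$ with $G_\eta(t)\uparrow t^+$ and $G_\eta'(t)\uparrow\chi_{\{t>0\}}$ as $\eta\to 0$, and $0 \le G_\eta' \le 1$. Since standard elliptic regularity gives $u \in C^\infty$ up to the smooth interior of each of $\Ga_0$ and $\Ga_1$ (separately, away from $\ol{\cS}$), $\na P$ is $C^\infty$ up to those portions, so $\vV_\eta := \psi_\eta\,\na P$ lies in $L^2\cap C^1((\Si\cap\Om)\cup\Ga_0\cup\Ga_1)$ with $\dv\vV_\eta = G_\eta'(P - M^2/2)|\na P|^2 + \psi_\eta|\na^2 u|^2 \in L^1$. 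Proposition \ref{prop:divergence theorem in cones} therefore applies to $\vV_\eta$ (the boundary integrand $\psi_\eta\,\langle\na P,\nu\rangle$ is $\le 0$ on $\Ga_0\cup\Ga_1$, fulfilling the sign condition), and passing to the limit $\eta\to 0$ by dominated convergence -- enabled by $|\na P|\le|\na u|\,|\na^2 u|$ with $\na u \in L^\infty$ and $\na^2 u \in L^2$ -- recovers the displayed identity. The main obstacle is precisely this interplay between the non-smoothness of the positive-part truncation and the singular set $\cS$, which the $G_\eta$-regularisation together with the $A_\ve$-exhaustion built into Proposition \ref{prop:divergence theorem in cones} resolve simultaneously.
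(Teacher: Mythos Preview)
Your proposal is correct and takes essentially the same approach as the paper. The paper applies its comparison principle (Lemma~\ref{lem:comparison in cones}) to $f := \tfrac12\bigl(M^2 - |\na u|^2\bigr)$, and your test function $\psi = (P - M^2/2)^+$ is exactly $f^-$ in the proof of that lemma; you have simply unfolded the argument and added a careful $G_\eta$-regularisation to justify the divergence theorem, which the paper leaves implicit. (One minor point: your citation of \eqref{eq:P-gradient and Laplacian-differential-identity} is slightly off, since that identity is for the paper's $P$-function $|\na u|^2/2 - u$, not your $|\na u|^2/2$; but the computation $\De(|\na u|^2/2) = |\na^2 u|^2$ is of course correct since $\De u$ is constant.)
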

\begin{proof}
The conclusion
follows by applying Lemma \ref{lem:comparison in cones} to the function $f:= \frac{1}{2} \left( \nr | \na u|^2  \nr_{L^\infty(\Ga_0)} - | \na u|^2 \right)$ which satisfies
\begin{equation*}
\begin{cases}
\De f = - | \na ^2 u|^2 \le 0 \quad & \text{ in } \Si\cap\Om
\\
f = \frac{1}{2} \left( \nr | \na u|^2  \nr_{L^\infty(\Ga_0)} - | \na u|^2 \right)  \ge 0     \quad & \text{ on } \Ga_0
\\
f_\nu = - \langle \na^2 u \na u, \nu \rangle  \ge 0 \quad & \text{ on } \Ga_1 ,
\end{cases}
\end{equation*}
where the last inequality follows from \eqref{eq:disuguaglianza puntuale gratis per convex}.
\end{proof}

	
	\subsection{A bound for \texorpdfstring{$\max(-u)$}{max(-u)}}\label{subsec:bound for max(-u)}
	It is clear that putting together Lemma \ref{lem:upper bound gradient on Ga0} and Lemma \ref{lem:upper bound gradient on Ga0 = whole domain} would give an explicit estimate for $\max(-u)$. In fact, being as $u=0$ on $\Ga_0$, we have that
	$$\max\limits_{\ol{\Si\cap\Om}} (-u) \le \nr \na u \nr_{L^\infty(\Si\cap\Om)} \, d_{\Si\cap \Om} .$$
	However, notice that Lemma \ref{lem:upper bound gradient on Ga0} 
	requires an exterior sphere condition, which is not necessary to give an estimate for $\max(-u)$. 
	
	A general explicit estimate of $\max(-u)$ in terms of the dimension $N$ and the diameter $d_{\Si\cap\Om}$ only is provided in the following lemma.
	
	\begin{lem}
		\label{lem:max stima con diametro}
		Let $\Si$ be a cone and $u$ be the solution of \eqref{eq:problem torsion}. 
		
		(i) If $\Si$ is convex, then we have that
		\begin{equation}\label{eq:max stima generale con N e diametro}
			\max\limits_{\ol{\Si\cap\Om}} (-u ) \le C(N, d_{\Si\cap\Om}) ,
		\end{equation}
	where $C(N, d_{\Si\cap\Om})$ is an explicit constant only depending on $N$ and $d_{\Si\cap\Om}$.
		
		(ii) 
		If there exists a point $x_0 \in \ol{\Si\cap\Om}$ satisfying $\langle x- x_0 , \nu(x)\rangle = 0$ on $\Ga_1$, 
		then we have that
		\begin{equation}\label{eq:max stima con diametro}
			\max\limits_{\ol{\Si\cap\Om}} (-u ) \le \frac{d_{\Si\cap\Om}^2}{2}.
		\end{equation}

	\end{lem}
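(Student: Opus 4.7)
For part (ii), the plan is a direct barrier/comparison argument. Set $d:=d_{\Si\cap\Om}$ and introduce the explicit quadratic
\[
w(x) := \tfrac12\bigl(|x-x_0|^2 - d^2\bigr),
\]
so that $\Delta w = N = \Delta u$. On $\Ga_0$, since $x,x_0\in\ol{\Si\cap\Om}$ we have $|x-x_0|\le d$, hence $w\le 0 = u$. On $\Ga_1$, the hypothesis $\langle x-x_0,\nu\rangle = 0$ yields $w_\nu = 0 = u_\nu$. Therefore $f:=u-w$ satisfies $\Delta f = 0$, $f\ge 0$ on $\Ga_0$, and $f_\nu=0\ge 0$ on $\Ga_1$. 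Lemma \ref{lem:comparison in cones} gives $f\ge 0$, i.e.\ $u \ge w \ge -d^2/2$, which is the desired bound.

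For part (i), the orthogonality assumption may fail, and the same barrier is \emph{not} admissible: using $\langle x,\nu\rangle=0$ on $\Ga_1\subset\pa\Si$ one computes $w_\nu = -\langle x_0,\nu\rangle$, and convexity of $\Si$ forces $\langle x_0,\nu\rangle\le 0$ for $x_0\in\Si$, so $(u-w)_\nu \le 0$, which is the wrong sign to invoke Lemma \ref{lem:comparison in cones}. The plan is therefore to replace the barrier argument with a Talenti-type comparison relative to the convex cone $\Si$ (à la Lions--Pacella). Namely, let $\rho^\ast>0$ be determined by $|\Si\cap B_{\rho^\ast}(0)| = |\Si\cap\Om|$; in the cone-sector $\Si\cap B_{\rho^\ast}(0)$ the torsion function with Dirichlet datum on $\Si\cap \pa B_{\rho^\ast}$ and Neumann datum on $\pa\Si\cap B_{\rho^\ast}$ is the explicit radial function
\[
\bar v(x) = \tfrac12\bigl(\rho^{\ast 2} - |x|^2\bigr),
\]
which automatically satisfies the Neumann condition on $\pa\Si$ since $\langle x,\nu\rangle=0$ there. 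The symmetrisation adapted to convex cones (relying on the Lions--Pacella isoperimetric inequality) then yields the pointwise comparison $\max(-u) \le \max\bar v = \rho^{\ast 2}/2$.

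It remains to bound $\rho^\ast$ purely in terms of $N$ and $d$. Since $\Si\cap\Om\subset B_d(x_0)$ for any $x_0\in\ol{\Si\cap\Om}$ we have $|\Si\cap\Om|\le\omega_N d^N$, which combined with the definition of $\rho^\ast$ gives an explicit bound $\rho^\ast \le C(N)\,d$, and hence $\max(-u)\le C(N,d)$.

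The main obstacle I foresee is the Talenti comparison step: the mixed Dirichlet--Neumann boundary datum requires a symmetrisation argument that respects both conditions, and this is precisely where the convexity of $\Si$ is essential (the Pólya--Szegő-type step uses that the relative perimeter $\mathcal H^{N-1}(\pa E\cap\Si)$ is minimised on cone-sectors at fixed volume). A secondary delicate point is ensuring the resulting constant in front of $d^2$ truly depends only on $N$ (and not on the cone's aperture); this can be arranged either by estimating the relative density $|\Si\cap B_1|/|B_1|$ from below by a universal quantity after an appropriate reduction, or by directly invoking the comparison with the Dirichlet torsion on the enclosing ball $B_d(x_0)$ and tracking the resulting dimensional constant.
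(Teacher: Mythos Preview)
Your argument for part (ii) is correct and matches the paper's proof exactly.

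For part (i), however, your approach has a genuine gap precisely at the point you flag as a ``secondary delicate point''. The Talenti-type bound $\max(-u)\le \rho^{*2}/2$ (even granting the symmetrisation step in the cone, which is not standard) gives a constant that unavoidably depends on the cone's aperture, not just on $N$ and $d_{\Si\cap\Om}$. Indeed, $\rho^{*N}=|\Si\cap\Om|/|\Si\cap B_1|$, and the density $|\Si\cap B_1|/|B_1|$ can be made arbitrarily small by taking a thin cone. Concretely: take $\Si$ with half-angle $\epsilon$ and $\Om$ a ball of radius $1/2$ centred on $\pa\Si$ at distance $R\gg 1/\epsilon$ from the vertex; then $\Si\cap\Om$ is essentially a half-ball with $\max(-u)=1/8$ and $d_{\Si\cap\Om}\le 1$, yet $\rho^*\to\infty$ as $\epsilon\to 0$. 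So your bound blows up while the true quantity stays bounded. Neither of your proposed fixes works: the density has no universal lower bound, and the comparison with the Dirichlet torsion on $B_d(x_0)$ is exactly what you already (correctly) showed fails because of the Neumann sign.

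The paper's argument avoids this by a different barrier construction that keeps the comparison elementary and cone-independent. The key idea is to centre the barrier \emph{outside} $\ol{\Si\cap\Om}$ but still in $\ol{\Si}$: pick a direction $e_N$ interior to $\ol{\Si}$, let $\ol{x}\in\ol{\Si\cap\Om}$ maximise the $e_N$-coordinate, and set $x_0:=\ol{x}+e_N\in\ol{\Si}\setminus\ol{\Om}$. Now use as barrier the torsion function $w$ of an \emph{annulus} $A_{1,\mathcal R}(x_0)$ with $\mathcal R=\mathcal R(N,d_{\Si\cap\Om})$ large enough that $\Si\cap\Om$ lies in the region where $\na w$ points \emph{toward} $x_0$. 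Since $x_0\in\ol{\Si}$, convexity gives $\langle y-x_0,\nu(y)\rangle\ge 0$ on $\Ga_1$, so $w_\nu\le 0$ there, and $(u-w)_\nu\ge 0$ --- the correct sign. Comparison then yields $-u\le -w\le -\min_A w$, an explicit function of $N$ and $d_{\Si\cap\Om}$ only. The trick is that flipping the centre to the exterior reverses the radial direction of $\na w$ near the inner boundary of the annulus, which is exactly what is needed to repair the Neumann sign.
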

	\begin{proof}
		(i) Without loss of generality, we can assume that the direction $e_N$ is internal to $\ol{\Si}$. Set 
		$\ol{x}=(\ol{x}_1, \dots, \ol{x}_N)$ to be a point in $\ol{\Si\cap\Om}$ such that
		$$
		\ol{x}_N=\max \left\lbrace y_N \, : \, y=(y_1,\dots,y_N) \in \ol{\Si\cap\Om} \right\rbrace .
		$$
		Hence, define $x_0=\ol{x}+e_N$ and notice that, by construction, $x_0\in \ol{\Si}\setminus \ol{\Om}$ and the unit ball $B(x_0)$ centered at $x_0$ is contained in $\RR^N \setminus \ol{\Si\cap\Om}$ and $\ol{B}(x_0)$ touches $\ol{\Ga}_0$ at $\ol{x}$. Thus, we define $w$ to be the torsion function in the annulus $A_{1, \mathcal{R}} (x_0)$. Following the computations in the proof of Lemma \ref{lem:upper bound gradient on Ga0} with the only difference that now $r_e$ is replaced by $1$, we find that the choice 
		\begin{equation*}
			\mathcal{R}=
			\begin{cases}
				\displaystyle
				2  \left( 1+ d_{\Si\cap\Om} \right)^2
				\quad & \text{for } N = 2 ,
				\\
				\displaystyle
				\sqrt{3} \,  \, \left( 1+  d_{\Si\cap\Om} \right)^{N/2} \quad & \text{for } N \ge 3,
			\end{cases}
		\end{equation*} 
		ensures that
		$\langle \na w , \nu \rangle = - |\na w| \langle y-x_0 , \nu (y) \rangle \le 0$ on $\Ga_1$, and hence \eqref{eq:system for comparison upper bound}. By comparison (Lemma \ref{lem:comparison in cones} with $f:=u-w$) we have that $u-w \ge 0$ in $\Si \cap \Om$, and hence,
		$$
		\max\limits_{\ol{\Si\cap\Om}} (-u) \le \max\limits_{\ol{\Si\cap\Om}} (-w) = - w_{|\pa B_{\mathcal{Z}}(x_0)} = C(N,d_{\Si\cap\Om}) , 
		$$
		where
		\begin{equation*}
			\mathcal{Z}= 
			\begin{cases}
				\left[   \frac{\mathcal{R}^{2} -1}{2 \, \log(\mathcal{R})} \right]^{1/2}
				\quad & \text{for } N=2 ,
				\\
				\left[ \frac{N-2}{2}  \frac{ \mathcal{R}^{2} - 1 }{1-\mathcal{R}^{2-N}} \right]^{1/N} 
				\quad & \text{for } N\ge 3 ,
			\end{cases}
		\end{equation*}
		and
		$\pa B_{\mathcal{Z}}(x_0)$ is the sphere of critical points of $w$.

		(ii)
		Let $x_0$ be a point in $\ol{\Si\cap\Om}$ satisfying $\langle x- x_0 , \nu(x)\rangle = 0$ on $\Ga_1$ and let $B_{d_{\Si\cap\Om}} (x_0)$ be the ball of radius $d_{\Si\cap\Om}$ centered at the point $x_0$. It is clear that $\Si\cap\Om \subset B_{d_{\Si\cap\Om}} (x_0)$. Setting $w(y)=\left(|y - x_0|^2 - d_{\Si\cap\Om}^2 \right)/2$, we have that
		\begin{equation*}
			\begin{cases}
				\De (u-w) = 0 \quad & \text{ in } \Si \cap \Om
				\\
				u-w \ge 0 \quad & \text{ on }  \Ga_0
				\\
				u_\nu - w_\nu = 0 \quad & \text{ on } \Ga_1 .
			\end{cases}
		\end{equation*}
		The last boundary condition holds true since $w_\nu(x) = \langle x - x_0, \nu (x) \rangle = 0$ on $\Ga_1$.
		By comparison (Lemma \ref{lem:comparison in cones} with $f:=u-w$) we have that $u-w \ge 0$ in $\Si \cap \Om$, and hence,
		$$
		\max\limits_{\ol{\Si\cap\Om}} (-u) \le \max\limits_{\ol{\Si\cap\Om}} (-w)= \frac{ d_{\Si\cap\Om}^2}{ 2},
		$$
		which is the desired conclusion.
	\end{proof}
	\begin{rem}
		{\rm
			(i) It is clear from the proof, that the conclusion in (i) of Lemma \ref{lem:max stima con diametro} remains true even if the assumption of convexity of $\Si$ is replaced with the weaker assumption that there exists a point $x_0 \in \ol{\Si}\setminus \ol{\Om}$ such that $B(x_0)\subset \RR^N \setminus \ol{\Si\cap\Om}$, $\ol{B}(x_0)$ touches $\ol{\Ga}_0$, and $\langle x- x_0 ,\nu(x) \rangle \ge 0$ on $\Ga_1$.
			
			(ii) Convexity of $\Si$ is not requested in (ii) of Lemma \ref{lem:max stima con diametro}. Notice that, in particular, the assumption in (ii) is surely satisfied if, e.g., either $\Ga_1=\emptyset$ or $\Ga_1\neq \emptyset$ and there exist $x_0\in\Ga_1$ and a cone $\Si_{x_0}$ with vertex in $x_0$ such that $\Si\cap\Om=\Si_{x_0}\cap\Om$.
		}
	\end{rem}

\section{Sharp quantitative rigidity in terms of an \texorpdfstring{$L^2$}{L2}-pseudo-distance}\label{sec: Sharp stability in L2}

\subsection{Setting}
In addition to Setting \ref{Setting}, from now on we assume
that $\Si$ and $\Om$ intersect in a Lipschitz way so that $\Si\cap\Om$ is a Lipschitz domain.


\begin{rem}\label{rem:nota su convexity non necassaria per Stability}
{\rm
As already mentioned, the convexity of the cone ensures that \eqref{eq:disuguaglianza puntuale gratis per convex} and hence \eqref{eq:disuguaglianza senza u gratis per convex} and \eqref{eq:disuguaglianza con u gratis per convex} hold true.
We stress that, as already noticed for the rigidity results, many of the stability results remain true (once that the fundamental identities in Theorems \ref{thm:serrinidentity}, \ref{thm:identitySBT}, \ref{thm:identityheintze-karcher} are established) even if the convexity of $\Si$ is relaxed with one of the weaker conditions in \eqref{eq:disuguaglianza puntuale gratis per convex}, \eqref{eq:disuguaglianza senza u gratis per convex}, \eqref{eq:disuguaglianza con u gratis per convex}, depending on the relevant result.
To simplify the presentation, we preferred to always assume the cone $\Si$ to be convex. This also allows to exploit the
bounds obtained in Sections \ref{subsec:Hopf}, \ref{subsec:upper bound gradient}, and \ref{subsec:bound for max(-u)}.
}
\end{rem}

Setting
$$
k:= \text{ dimension of } \mathrm{span}_{x \in \Ga_1 } \, \nu(x) ,
$$
we have that $0 \le k \le N$. Notice that the case $\Ga_1 = \emptyset $ is included in our treatment (in that case, we have $k=0$).
Up to changing orthogonal coordinates, we can assume that $\mathrm{span}_{x \in \Ga_1} \, \nu(x)$ is the space generated by the first $k$ axes $\mathbf{e}_1, \dots, \mathbf{e}_k$.
Notice that, in this way, if we set $z \in \RR^N$ of the form
\begin{equation}\label{eq:choice of z 1of2}
z=(0, \dots , 0 , z_{k+1}, \dots, z_k) \in \RR^N ,
\end{equation}
it surely satisfies \eqref{eq:inner product z in cone}.
We also fix
\begin{equation}\label{eq:choice of z 2of2}
z_i = \frac{1}{|\Si\cap\Om|} \int_{\Si\cap\Om} ( x_i - u_i(x) ) \, dx \quad \text{for } i=k+1, \dots, N ,
\end{equation}
where $u_i$ denotes the $i$-th partial derivative of $u$ and $x_i$ the $i$-th component of the vector $x \in \RR^N$.
With this choice of $z$, if we consider the harmonic function 
\begin{equation}\label{def: h armonica con q}
h:= q - u , \quad \text{ where } q \text{ is the quadratic function defined in \eqref{quadratic}},
\end{equation}
we have that
\begin{equation}\label{eq:gradienteproiettatosuspan nu}
(h_1, \dots, h_k , 0 , \dots, 0) \in \mathrm{span}_{x \in \Ga_1} \, \nu(x) \subseteq \RR^N ,
\quad \quad
\langle (h_1, \dots, h_k , 0 , \dots, 0), \nu \rangle = \langle \na h , \nu \rangle  = 0 \, \text{ on } \Ga_1 
\end{equation}
and
\begin{equation}\label{eq:gradienteparte ortogonale a span nu}
\int_{\Si \cap \Om} h_i \, dx = 0 \quad \text{for } i = k+1, \dots, N .
\end{equation}
The last identity in \eqref{eq:gradienteproiettatosuspan nu} follows by \eqref{eq:inner product z in cone} and the Neumann condition in \eqref{eq:problem torsion}.

By direct computation, it is easy to check that $|\na^2 h|^2$ equals the Cauchy-Schwarz deficit for $\na^2 u$, that is,
\begin{equation}\label{eq:hessiana h}
|\na^2 h|^2 = | \na^2 u|^2- \frac{(\De u)^2}{N}	\quad \text{ in } \, \Si\cap\Om .
\end{equation}

We now establish the following lemma.
\begin{lem}\label{lem:Mixed pp Poincareaigradienti}
For $1 \le p < \infty$, we have that
\begin{equation*}
\nr \na h \nr_{L^p (\Si\cap\Om)} \le C \, \nr  \na^2 h  \nr_{L^p(\Si\cap\Om)} ,
\end{equation*}
for some positive constant $C$ satisfying $C \le \La_{p}(k)$, where we have set
\begin{equation}\label{def:La p al k}
\La_{p}(k) :=
\begin{cases}
\mu_{p}(\Si\cap\Om)^{-1} \quad & \text{if } k=0
\\
\eta_{p} (\Ga_1 , \Si \cap \Om )^{-1} \quad & \text{if } k=N
\\
\max\left[ \mu_{p}(\Si\cap\Om)^{-1} , \, \eta_{p} ( \Ga_1 ,  \Si \cap \Om )^{-1}  \right] \quad & \text{if } 1 \le k \le N-1 ,
\end{cases}
\end{equation}
where $\mu_{p}(\Si\cap\Om)$ and $\eta_{p} ( \Ga_1 , \Si \cap \Om )$ are those in \eqref{eq:BoasStraube-poincare} and Theorem \ref{thm:Poincare new in general}.
\end{lem}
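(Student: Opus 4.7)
\textbf{Proof plan for Lemma \ref{lem:Mixed pp Poincareaigradienti}.}
The plan is to split $\na h$ according to the orthogonal decomposition $\RR^N = \spn\{\mathbf{e}_1,\dots,\mathbf{e}_k\} \oplus \spn\{\mathbf{e}_{k+1},\dots,\mathbf{e}_N\}$, which by our coordinate choice coincides with $\mathrm{span}_{x\in\Ga_1}\nu(x) \oplus [\mathrm{span}_{x\in\Ga_1}\nu(x)]^\bot$. Concretely, I would write $\vV:=(h_1,\dots,h_k,0,\dots,0)$ and estimate the tangential components $(h_1,\dots,h_k)$ using the new Poincar\'e-type inequality (Theorem \ref{thm:Poincare new in general}), and the remaining components $h_{k+1},\dots,h_N$ using the classical Poincar\'e inequality for zero-mean functions (Lemma \ref{lem:BoasStraube}).

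The first step is to verify that the two inequalities apply: by \eqref{eq:gradienteproiettatosuspan nu}, $\vV$ takes values in $\mathrm{span}_{x\in\Ga_1}\nu(x)$ and satisfies $\langle\vV,\nu\rangle=0$ a.e.\ on $\Ga_1$, so Theorem \ref{thm:Poincare new in general} applied to $\vV$ on $G:=\Si\cap\Om$ with $A:=\Ga_1$ yields
\begin{equation*}
\sum_{i=1}^k \nr h_i\nr_{L^p(\Si\cap\Om)}^p \;=\; \nr \vV\nr_{L^p(\Si\cap\Om)}^p \;\le\; \eta_p(\Ga_1,\Si\cap\Om)^{-p}\,\nr D\vV\nr_{L^p(\Si\cap\Om)}^p \;=\; \eta_p(\Ga_1,\Si\cap\Om)^{-p}\,\sum_{i=1}^k\sum_{j=1}^N \nr h_{ij}\nr_{L^p(\Si\cap\Om)}^p,
\end{equation*}
where the last equality uses that the only nonzero rows of $D\vV$ are the first $k$. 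On the other hand, \eqref{eq:gradienteparte ortogonale a span nu} tells us that $(h_i)_{\Si\cap\Om}=0$ for $i=k+1,\dots,N$, so Lemma \ref{lem:BoasStraube} applied componentwise gives
\begin{equation*}
\sum_{i=k+1}^N \nr h_i\nr_{L^p(\Si\cap\Om)}^p \;\le\; \mu_p(\Si\cap\Om)^{-p}\,\sum_{i=k+1}^N\sum_{j=1}^N \nr h_{ij}\nr_{L^p(\Si\cap\Om)}^p.
\end{equation*}

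Adding the two estimates, bounding the prefactors by their maximum, and recognising the right-hand side as $\nr\na^2 h\nr_{L^p(\Si\cap\Om)}^p$, yields
\begin{equation*}
\nr \na h\nr_{L^p(\Si\cap\Om)}^p \;\le\; \max\!\bigl[\mu_p(\Si\cap\Om)^{-p},\,\eta_p(\Ga_1,\Si\cap\Om)^{-p}\bigr]\,\nr\na^2 h\nr_{L^p(\Si\cap\Om)}^p,
\end{equation*}
which after taking $p$-th roots is the claimed inequality with $C\le \La_p(k)$. The boundary cases $k=0$ (where $\vV\equiv 0$, so only the second estimate is needed and $(h_i)_{\Si\cap\Om}=0$ for all $i$ by \eqref{eq:choice of z 2of2}) and $k=N$ (where the zero-mean components are absent, so only the first estimate is needed and $\vV=\na h$) are handled in the same way and produce precisely the two single-constant branches in the definition \eqref{def:La p al k} of $\La_p(k)$.

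The only real point of care is the bookkeeping of the two different Frobenius-type norms $\nr D\vV\nr_{L^p}$ and $\nr\na^2 h\nr_{L^p}$; once the decomposition $\na h=\vV+\vV^\perp$ is in place this is a bookkeeping exercise, so the proof is essentially a one-line assembly of Theorem \ref{thm:Poincare new in general} and Lemma \ref{lem:BoasStraube} along the subspace splitting induced by our choice \eqref{eq:choice of z 1of2}--\eqref{eq:choice of z 2of2} of the centre $z$.
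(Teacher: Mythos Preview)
Your proposal is correct and follows essentially the same approach as the paper: split $\na h$ into the components $(h_1,\dots,h_k)$ and $(h_{k+1},\dots,h_N)$, apply Theorem \ref{thm:Poincare new in general} to the first block (via \eqref{eq:gradienteproiettatosuspan nu}) and Lemma \ref{lem:BoasStraube} to each component of the second block (via \eqref{eq:gradienteparte ortogonale a span nu}), then sum the $p$-th powers and take the $p$-th root. Your explicit treatment of the degenerate cases $k=0$ and $k=N$ is a nice addition that the paper leaves implicit.
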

\begin{proof}
%
%
In light of \eqref{eq:gradienteproiettatosuspan nu}, we can apply \eqref{eq:Poincare new RN} in Theorem \ref{thm:Poincare new in general} with $G:=\Si\cap\Om$, $A:=\Ga_1$, and $\vV:= (h_1, \dots, h_k , 0 , \dots, 0) $ to get that
\begin{equation}\label{eq:dimprovaPoincaregradmixed}
\left( \sum_{i=1}^k \nr h_i \nr_{L^p (\Si\cap\Om)}^p \right)^{1/p}\le \eta_{p}(\Ga_1 , \Si \cap \Om )^{-1} \, \left(  \sum_{i=1}^k \sum_{j=1}^N \nr h_{ij} \nr_{L^p (\Si\cap\Om)}^p \right)^{1/p}.
\end{equation}
In light of \eqref{eq:gradienteparte ortogonale a span nu}, we can apply \eqref{eq:BoasStraube-poincare} to each first partial derivative $h_i$ of $h$, $i=k+1, \dots, N$. Raising to the power of $p$ those inequalities and \eqref{eq:dimprovaPoincaregradmixed}, and then summing up, the conclusion easily follows.
\end{proof}

\begin{rem}
\label{rem:casi k=0 e k=1 evitando eta}
{\rm

(i) As mentioned in Remark \ref{rem:stime mu HS}, $\mu_{p}(\Si\cap\Om)$ can be explicitly estimated in terms of the diameter $d_{\Si\cap\Om}$ and the so-called $b_0$-John parameter, which in turn can be explicitly estimated in terms of the Lipschitz regularity parameters (see Remark \ref{rem:stima b_0} and \cite[Lemma A.2]{MP7}). 

Uniformity of $\eta_{p}(\Ga_1 , \Si \cap \Om )$ in the class of uniformly bounded and uniformly Lipschitz domains $G$ such that $\Ga_1 \subset \ol{G}$ may be obtained following the argument in \cite[Theorem 2]{BouCha}. 

(ii) When $k=0$, we have $\La_{p}(0)=\mu_{p}(\Si\cap\Om)^{-1}$ and hence $\eta_{p}(\Ga_1 , \Si \cap \Om )$ does not come into play.

(iii) In the case $k=1$, an estimate for $\eta_{p}(\Ga_1,\Si\cap\Om)$ is provided by Lemma \ref{lem:Poincare zero trace} (with $v:=h_1$, $A:=\Ga_1$, $G:=\Si\cap\Om$).
}
\end{rem}

\subsection{On trace-type inequalities}
\begin{lem}[Trace-type Poincar\'e inequality for $h-h_{\Si\cap\Om}$ and $\na h$]
\label{lem:traceinequality h e nah con lambda}
We have that
\begin{equation}\label{eq:unweighted trace for h-hOm}
\nr h - h_{\Si\cap\Om} \nr_{L^2(\Ga_0)} \le C \, \nr \na h \nr_{L^2(\Si\cap\Om)} ,
\end{equation}
where $C$ is a positive constant that can be explicitly estimated by means of
\begin{equation*}
C \le \la_2(\Ga_0) \sqrt{1 + \mu_{2}(\Si\cap\Om)^{-2}}
\end{equation*}
where $\la_2(\Ga_0)$ is the constant of the trace embedding $W^{1,2}(\Si\cap\Om) \hookrightarrow L^2(\Ga_0)$ (inequality \eqref{eq:trace inequality intro}) and $\mu_{2}(\Si\cap\Om)^{-1}$ is the constant of the Poincar\'e inequality \eqref{eq:BoasStraube-poincare} (with $p:=2$).
Moreover, we have that
\begin{equation}\label{eq:unweighted trace for nah}
\nr \na h \nr_{L^2(\Ga_0)} \le C \, \nr \na^2 h \nr_{L^2(\Si\cap\Om)} ,
\end{equation}
where $C$ is a positive constant that can be explicitly estimated by means of
\begin{equation*}
C \le \la_2(\Ga_0) \sqrt{1 + \La_{2}(k)^{2}} ,
\end{equation*}
where the constant $\La_{2}(k)$ is that defined in \eqref{def:La p al k} 
(with $p:=2$).
\end{lem}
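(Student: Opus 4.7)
The plan is to prove both inequalities by combining the trace embedding $W^{1,2}(\Si\cap\Om) \hookrightarrow L^2(\Ga_0)$ (inequality \eqref{eq:trace inequality intro}) with the appropriate Poincaré-type inequalities already established in the paper. Both estimates follow the same template: apply the trace embedding to a carefully chosen function (or to the components of a vector field), then control the $L^2(\Si\cap\Om)$ piece of the $W^{1,2}$ norm by a Poincaré inequality applied to the gradient.

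For \eqref{eq:unweighted trace for h-hOm}, I would apply the trace inequality \eqref{eq:trace inequality intro} to $v:= h - h_{\Si\cap\Om}$, obtaining
\begin{equation*}
\nr h - h_{\Si\cap\Om} \nr_{L^2(\Ga_0)}^{2} \le \la_2(\Ga_0)^{2} \left( \nr h - h_{\Si\cap\Om} \nr_{L^2(\Si\cap\Om)}^{2} + \nr \na h \nr_{L^2(\Si\cap\Om)}^{2} \right) .
\end{equation*}
Since $h - h_{\Si\cap\Om}$ has zero mean on $\Si\cap\Om$, the classical Poincaré inequality \eqref{eq:BoasStraube-poincare} (with $p:=2$ and $G:=\Si\cap\Om$) bounds the first summand by $\mu_{2}(\Si\cap\Om)^{-2} \nr \na h \nr_{L^2(\Si\cap\Om)}^{2}$. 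Collecting the terms and taking a square root yields the desired constant $\la_2(\Ga_0) \sqrt{1 + \mu_{2}(\Si\cap\Om)^{-2}}$.

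For \eqref{eq:unweighted trace for nah}, I would apply the trace inequality componentwise to each partial derivative $h_i$, $i=1,\dots,N$, getting
\begin{equation*}
\nr h_i \nr_{L^2(\Ga_0)}^{2} \le \la_2(\Ga_0)^{2} \left( \nr h_i \nr_{L^2(\Si\cap\Om)}^{2} + \nr \na h_i \nr_{L^2(\Si\cap\Om)}^{2} \right) .
\end{equation*}
Summing over $i$ produces
\begin{equation*}
\nr \na h \nr_{L^2(\Ga_0)}^{2} \le \la_2(\Ga_0)^{2} \left( \nr \na h \nr_{L^2(\Si\cap\Om)}^{2} + \nr \na^2 h \nr_{L^2(\Si\cap\Om)}^{2} \right) .
\end{equation*}
At this point the crucial step is Lemma \ref{lem:Mixed pp Poincareaigradienti} (with $p:=2$), which gives $\nr \na h \nr_{L^2(\Si\cap\Om)} \le \La_{2}(k) \nr \na^2 h \nr_{L^2(\Si\cap\Om)}$; this is precisely where the special choice of $z$ in \eqref{eq:choice of z 1of2}-\eqref{eq:choice of z 2of2} pays off, since it guarantees both \eqref{eq:gradienteproiettatosuspan nu} and \eqref{eq:gradienteparte ortogonale a span nu}, enabling the application of the new Poincaré inequality of Theorem \ref{thm:Poincare new in general} on the first $k$ components and the classical zero-mean Poincaré inequality on the remaining $N-k$ ones. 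Substituting and taking a square root yields $\la_2(\Ga_0)\sqrt{1 + \La_{2}(k)^{2}}$, as claimed.

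There is no real obstacle here: once Lemma \ref{lem:Mixed pp Poincareaigradienti} is available, both estimates are essentially bookkeeping. The only subtle point is that the vector-valued Poincaré step must be invoked for \eqref{eq:unweighted trace for nah}, not merely a scalar one, because the condition $\langle \na h, \nu\rangle =0$ on $\Ga_1$ constrains only a linear combination of the components of $\na h$ and not each component separately.
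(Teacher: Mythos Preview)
Your proposal is correct and matches the paper's own proof essentially line by line: the paper applies the trace inequality \eqref{eq:trace inequality intro} to $v:=h-h_{\Si\cap\Om}$ together with \eqref{eq:BoasStraube-poincare} for the first estimate, and to each $v:=h_i$ together with Lemma~\ref{lem:Mixed pp Poincareaigradienti} (with $p:=2$) for the second. There is nothing to add.
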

\begin{proof}
Inequality \eqref{eq:unweighted trace for h-hOm} follows by using \eqref{eq:trace inequality intro} with $v:= h - h_{\Si\cap\Om}$ and \eqref{eq:BoasStraube-poincare} with $p:=2$.

Inequality \eqref{eq:unweighted trace for nah} follows by using \eqref{eq:trace inequality intro} with $v:=h_i$ (for $i=1,\dots, N$) and Lemma \ref{lem:Mixed pp Poincareaigradienti} with $p:=2$.
\end{proof}
In what follows we show that we can obtain explicit ad hoc trace-type inequalities for $h$ and $\na h$ whenever we have at our disposal
a positive lower bound $\lbunu$ for $|\na u|$ on $\Ga_0$, i.e.,
\begin{equation}\label{def:lower bound unu}
u_\nu \ge \lbunu > 0 \quad \text{on } \Ga_0 .
\end{equation}
This approach allows to avoid the use in our quantitative estimates of the general trace constant $\la_2(\Ga_0)$ appearing in \eqref{eq:trace inequality intro}. 
Furthermore, it allows to obtain refined weighted trace inequality for $h$ and $\na h$, which will be useful to obtain analogous stability results for Serrin's problem: this will be addressed in a forthcoming paper. 

\begin{rem}\label{rem:sfera interna relativa allora lbunu=ulri}
{\rm The uniform interior sphere condition relative to $\Si$ introduced in Definition \ref{def:interior sphere relative to cone} is a geometrical condition that guarantees the validity of \eqref{def:lower bound unu}. In fact, if $\Si\cap\Om$ satisfies the $\ul{r}_i$-uniform interior sphere condition relative to $\Si$, then by Lemma \ref{lem:Hopf in cones} we have that \eqref{def:lower bound unu} holds true with $\ul{m}:=\ul{r}_i$.
}
\end{rem}

\begin{lem}[Weighted trace inequality for $h-h_{\Si\cap\Om}$ and $\na h$]
\label{lem:weighted trace inequality ad hoc}
For any $z \in \RR^N$ satisfying \eqref{eq:inner product z in cone}, consider $h=q-u$, where $q$ is defined as in \eqref{quadratic}.
Let $\lbunu$ be the lower bound defined in \eqref{def:lower bound unu}.
We have that
\begin{equation}\label{eq:weighted trace for h - h_SicapOm ma valida per traslazioni}
\nr h -h_{\Si\cap\Om}  \nr^2_{L^2(\Ga_0)} \le \frac{1}{\lbunu} \, \left( N \, \nr h - h_{\Si\cap\Om} \nr^2_{L^2(\Si\cap\Om)} + 2 \, \nr (-u)^{ \frac{1}{2} } \na h \nr^2_{L^2(\Si\cap\Om)} \right) .
\end{equation}

Moreover, we have that
\begin{equation}\label{eq:weighted trace for nah senza applicazione Poincare}
\nr \na h \nr^2_{L^2(\Ga_0)} \le \frac{1}{\lbunu} \left\lbrace  N \int_{\Si\cap\Om} |\na h|^2 dx + 2 \int_{\Si\cap\Om} (-u) |\na^2 h|^2 dx + 2 \int_{\Ga_1} u \langle  \na^2 u \na u , \nu \rangle dS_x  \right\rbrace .
\end{equation}
\end{lem}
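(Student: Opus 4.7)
Both bounds will be obtained by applying the integral identities of Lemma \ref{lem:identityfortraceineq} to appropriately chosen harmonic test functions, and then invoking the pointwise lower bound $u_\nu \ge \lbunu$ on $\Ga_0$. Recall that the function $h$ from \eqref{def: h armonica con q} is harmonic ($\Delta h = \Delta q - \Delta u = N - N = 0$), inherits the regularity $W^{1,\infty}(\Si\cap\Om)\cap W^{2,2}(\Si\cap\Om)$ of $u$, and by \eqref{eq:gradienteproiettatosuspan nu} enjoys the crucial Neumann-type property $\langle \na h, \nu\rangle = 0$ on $\Ga_1$.

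For \eqref{eq:weighted trace for h - h_SicapOm ma valida per traslazioni}, I apply \eqref{eq:nuovaidentityforidentityfortraceinequality} with the harmonic test function $v := h - h_{\Si\cap\Om}$; the constant shift does not affect $\na h$ or $h_\nu$, so $v_\nu = \langle \na h, \nu\rangle = 0$ on $\Ga_1$ and the $\Ga_1$ boundary integral drops out. The resulting identity
\begin{equation*}
\int_{\Ga_0}(h - h_{\Si\cap\Om})^2\, u_\nu\, dS_x = N \int_{\Si\cap\Om}(h - h_{\Si\cap\Om})^2\, dx + 2\int_{\Si\cap\Om}(-u)\,|\na h|^2\, dx
\end{equation*}
then yields \eqref{eq:weighted trace for h - h_SicapOm ma valida per traslazioni} after bounding $u_\nu \ge \lbunu$ on the left-hand side and dividing by $\lbunu$.

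For \eqref{eq:weighted trace for nah senza applicazione Poincare}, I apply \eqref{eq:identityfortraceinequality} with $v := h$, which produces the boundary term $2\int_{\Ga_1} u \,\langle \na^2 h \na h, \nu\rangle\,dS_x$. The main obstacle is that the target features $\langle \na^2 u \na u, \nu\rangle$ rather than $\langle \na^2 h \na h, \nu\rangle$, so the crux of the argument is the pointwise identity
\begin{equation*}
\langle \na^2 h \na h, \nu\rangle = \langle \na^2 u \na u, \nu\rangle \quad \text{on } \Ga_1.
\end{equation*}
Expanding $\na h = (x-z) - \na u$ and $\na^2 h = I - \na^2 u$ and using $\langle x-z, \nu\rangle = 0 = \langle \na u, \nu\rangle$ on $\Ga_1$, this reduces to the claim $\langle \na^2 u\,(x-z), \nu\rangle = 0$ on $\Ga_1$.

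To prove the claim I tangentially differentiate the Neumann condition $\langle \na u, \nu\rangle = 0$ in the direction $x-z$ (which is tangent to $\Ga_1$ by \eqref{eq:inner product z in cone}) and obtain $\langle \na^2 u\,(x-z), \nu\rangle = -\langle \na u, \partial_{x-z}\nu\rangle$. Since $\pa\Si$ is a cone, $\nu$ is constant along rays through the origin, hence $\partial_x \nu = 0$ on $\Ga_1$ and $\partial_{x-z}\nu = -\partial_z \nu$. To conclude $\partial_z \nu = 0$ on $\Ga_1$, I observe that $\langle z, \nu\rangle = 0$ holds identically on the relatively open subset $\Ga_1$ of $\pa\Si$; differentiating along any tangent direction $b$ gives $\langle z, \partial_b \nu\rangle = 0$, and the symmetry of the second fundamental form then yields $\langle \partial_z \nu, b\rangle = \langle z, \partial_b \nu\rangle = 0$ for every tangent $b$. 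Together with $\partial_z \nu \perp \nu$, this forces $\partial_z \nu = 0$. The sign hypothesis $\langle \na^2 h \na h, \nu\rangle \le 0$ on $\Ga_1$ needed to apply \eqref{eq:identityfortraceinequality} is then automatic from the pointwise identity combined with \eqref{eq:disuguaglianza puntuale gratis per convex}. Substituting the identity into \eqref{eq:identityfortraceinequality} and using $u_\nu \ge \lbunu$ on $\Ga_0$ finally yields \eqref{eq:weighted trace for nah senza applicazione Poincare}.
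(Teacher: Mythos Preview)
Your proof is correct and follows essentially the same route as the paper: apply \eqref{eq:nuovaidentityforidentityfortraceinequality} with $v=h-h_{\Si\cap\Om}$ for the first bound and \eqref{eq:identityfortraceinequality} with $v=h$ for the second, the latter hinging on the pointwise identity $\langle \na^2 h\,\na h,\nu\rangle=\langle \na^2 u\,\na u,\nu\rangle$ on $\Ga_1$. The only differences are cosmetic: you give a self-contained shape-operator argument for $\langle \na^2 u\,(x-z),\nu\rangle=0$ on $\Ga_1$ where the paper simply cites \cite{PT}, and your appeal to \eqref{eq:gradienteproiettatosuspan nu} for $h_\nu=0$ on $\Ga_1$ is slightly off-target (that display is written for the specific $z$ of \eqref{eq:choice of z 1of2}--\eqref{eq:choice of z 2of2}), though the fact itself holds for any $z$ satisfying \eqref{eq:inner product z in cone} by the Neumann condition, exactly as you use it.
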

\begin{rem}
{\rm 
As noticed in \eqref{eq:disuguaglianza con u gratis per convex}, the convexity of the cone gives that the remainder term $\int_{\Ga_1} u \langle  \na^2 u \na u , \nu \rangle dS_x$ in \eqref{eq:weighted trace for nah senza applicazione Poincare} is non-negative. We also notice that if $k=0$ or $k=1$, the reminder term vanishes.
}
\end{rem}
\begin{proof}[Proof of Lemma \ref{lem:weighted trace inequality ad hoc}]
Applying \eqref{eq:nuovaidentityforidentityfortraceinequality} with $v:=h-h_{\Si\cap\Om}$ and using that $h_\nu=0$ on $\Ga_1$ (which holds true in light of \eqref{eq:inner product z in cone} and the Neumann condition in \eqref{eq:problem torsion})
give that
\begin{equation*}
	\int_{\Ga_0} (h-h_{\Si\cap\Om})^2 u_{\nu} \, dS_x = N \int_{\Si\cap\Om} (h - h_{\Si\cap\Om})^2 dx + 2 \int_{\Si\cap\Om} (-u) |\na h |^2 dx ,
\end{equation*}
which coupled with \eqref{def:lower bound unu} gives \eqref{eq:weighted trace for h - h_SicapOm ma valida per traslazioni}.

Let us now prove \eqref{eq:weighted trace for nah senza applicazione Poincare}.
Applying \eqref{eq:identityfortraceinequality} with $v:=h$ gives that
\begin{equation}\label{eq:0 intermedia per weighted trace}
\int_{\Ga_0 } |\na h|^2 u_{\nu} dS_x  
=
2 \int_{\Ga_1} u \langle  \na^2 h \na h , \nu \rangle dS_x + N \int_{\Si\cap\Om} |\na h|^2 dx + 2 \int_{\Si\cap\Om} (-u) |\na^2 h|^2 dx.
\end{equation}
Notice that $\langle  \na^2 h \na h , \nu \rangle = \langle  \left( I -\na^2 u \right)\left( \left( x-z \right) - \na u \right) , \nu \rangle , $
and hence we have that
\begin{equation}\label{eq:1 intermedia per weighted trace}
\langle  \na^2 h \na h , \nu \rangle = \langle  \na^2 u \na u , \nu \rangle \quad \text{on } \Ga_1 .
\end{equation}
In the last identity, we used that
\begin{equation}\label{eq:2 intermedia per weighted trace}
\langle  \na^2 u (x-z) , \nu \rangle = 0 \quad \text{ on } \Ga_1
\end{equation}
and, as before, that $h_\nu  = \langle \left( x-z \right) - \na u , \nu \rangle = 0 $ on $\Ga_1$.
Identity \eqref{eq:2 intermedia per weighted trace} follows by using \eqref{eq:inner product z in cone} and the Neumann condition in \eqref{eq:problem torsion} (see, e.g., \cite[formula after (3.3)]{PT}).
Using \eqref{eq:1 intermedia per weighted trace} into \eqref{eq:0 intermedia per weighted trace} gives that
\begin{equation*}
\int_{\Ga_0 } |\na h|^2 u_{\nu} dS_x  = 2 \int_{\Ga_1} u \langle  \na^2 u \na u , \nu \rangle dS_x + N \int_{\Si\cap\Om} |\na h|^2 dx + 2 \int_{\Si\cap\Om} (-u) |\na^2 h|^2 dx ,
\end{equation*}
which coupled with \eqref{def:lower bound unu} gives \eqref{eq:weighted trace for nah senza applicazione Poincare}.
\end{proof}
\begin{rem}\label{rem:h-la ok per ogni la}
{\rm 
It is clear from the proof that \eqref{eq:weighted trace for h - h_SicapOm ma valida per traslazioni} remains true if we replace $h$ with $h - \la$ for any $\la\in\RR$.
}
\end{rem}

The next Corollary shows that, whenever \eqref{def:lower bound unu} is available, Lemma \ref{lem:weighted trace inequality ad hoc} allows to remove the dependence on $\la_2(\Ga_0)$ in the constants of \eqref{eq:trace inequality intro} (with $v:=h$) and \eqref{eq:unweighted trace for h-hOm}. Moreover, being as $\langle \na^2 u \na u, \nu \rangle \le 0$ on $\Ga_1$ (by convexity of $\Si$), it also provides a suitable version of \eqref{eq:unweighted trace for nah} that removes the dependence on $\la_2(\Ga_0)$ at the cost of introducing a reminder term  ($\int_{\Ga_1} \left(- \langle  \na^2 u \na u , \nu \rangle \right) dS_x$), which will be re-absorbed later as it also appears in the left-hand side of the integral identities \eqref{H-fundamental} and \eqref{heintze-karcher-identity}.

\begin{cor}\label{cor:trace type unweigthed con m}
Let $\lbunu$ be the lower bound defined in \eqref{def:lower bound unu}. Then, the trace inequality
$$
\nr h \nr_{L^2(\Ga_0)} \le C \, \nr  h \nr_{W^{1,2}(\Si \cap \Om)},
$$
holds true for some positive constant $C$ that can be explicitly estimated by means of
$$
C \le \sqrt{ \frac{ \max\left\lbrace N, 2 \max\limits_{\ol{\Si\cap\Om}}(-u) \right\rbrace }{  \lbunu } } .
$$
Also, the constant $C$ appearing in \eqref{eq:unweighted trace for h-hOm} can be explicitly estimated by means of
\begin{equation}\label{eq:unweighted trace for h-hOm con lbunu}
C \le \sqrt{\frac{1}{\lbunu} \left( \frac{N}{ \mu_{2}(\Si\cap\Om)^2}  + 2 \max\limits_{\ol{\Si\cap\Om}} (-u) \right)},
\end{equation}
where $\mu_{2}(\Si\cap\Om)$ is the constant appearing in \eqref{eq:BoasStraube-poincare} (with $p:=2$).

Finally,
we have that
\begin{equation}\label{eq:trace na h unweighted usando lbunu}
\nr \na h \nr^2_{L^2(\Ga_0)} \le \Tilde{C} \, \left( \nr \na^2 h \nr^2_{L^2(\Si\cap\Om)} + \int_{\Ga_1} \left(- \langle  \na^2 u \na u , \nu \rangle \right) dS_x \right) ,
\end{equation}
with
$$
\Tilde{C} \le  \frac{1}{\lbunu} \left( N \, \La_{2}(k)^2  + 2 \max_{\ol{\Si\cap\Om}} (-u) \right) ,
$$
where $\La_{2}(k)$ is the constant defined in \eqref{def:La p al k} (with $p:=2$).
\end{cor}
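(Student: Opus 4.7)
The strategy is to feed Lemma \ref{lem:weighted trace inequality ad hoc} into the pointwise bound $(-u)\le \max_{\ol{\Si\cap\Om}}(-u)$ (available thanks to $u\le 0$ on $\ol{\Si\cap\Om}$, by Lemma \ref{lem:comparison in cones} applied to $f:=-u$), and then use Lemma \ref{lem:Mixed pp Poincareaigradienti} or the Poincar\'e inequality \eqref{eq:BoasStraube-poincare} to eliminate either $h$ or $\na h$ in favor of $\na h$ or $\na^2 h$ on the right-hand side. No new identity is required; the content is essentially a careful bookkeeping on the three inequalities already furnished by Lemma \ref{lem:weighted trace inequality ad hoc}.

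For the first assertion, I would apply the strengthened form of \eqref{eq:weighted trace for h - h_SicapOm ma valida per traslazioni} granted by Remark \ref{rem:h-la ok per ogni la} with $\la=0$, so that $h-h_{\Si\cap\Om}$ gets replaced throughout by $h$; bounding the weight $(-u)$ by $\max_{\ol{\Si\cap\Om}}(-u)$ in the last summand gives
\begin{equation*}
\nr h\nr^2_{L^2(\Ga_0)}\le \frac{1}{\lbunu}\Bigl(N\,\nr h\nr^2_{L^2(\Si\cap\Om)}+2\max_{\ol{\Si\cap\Om}}(-u)\,\nr\na h\nr^2_{L^2(\Si\cap\Om)}\Bigr),
\end{equation*}
and the claim follows by collecting the coefficients into $\max\{N,\,2\max(-u)\}$ and recognizing the full $W^{1,2}$-norm on the right. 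For the refined constant in \eqref{eq:unweighted trace for h-hOm}, I would instead work directly with the original inequality \eqref{eq:weighted trace for h - h_SicapOm ma valida per traslazioni}, estimate $\nr h-h_{\Si\cap\Om}\nr_{L^2(\Si\cap\Om)}$ via \eqref{eq:BoasStraube-poincare} (with $p:=2$) by $\mu_{2}(\Si\cap\Om)^{-1}\nr\na h\nr_{L^2(\Si\cap\Om)}$, and again pull the weight $(-u)$ out at the price of $\max_{\ol{\Si\cap\Om}}(-u)$, which produces exactly \eqref{eq:unweighted trace for h-hOm con lbunu}.

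For the third assertion, I would start from \eqref{eq:weighted trace for nah senza applicazione Poincare} and treat its three summands separately. On the first, Lemma \ref{lem:Mixed pp Poincareaigradienti} converts $N\,\nr\na h\nr^2_{L^2(\Si\cap\Om)}$ into $N\,\La_{2}(k)^2\,\nr\na^2 h\nr^2_{L^2(\Si\cap\Om)}$; on the second, replacing $(-u)$ by its maximum yields $2\max(-u)\,\nr\na^2 h\nr^2_{L^2(\Si\cap\Om)}$. For the boundary remainder, I would rewrite $u\,\langle\na^2 u\na u,\nu\rangle=(-u)\bigl(-\langle\na^2 u\na u,\nu\rangle\bigr)$, a nonnegative quantity on $\Ga_1$ by the convexity inequality \eqref{eq:disuguaglianza puntuale gratis per convex} combined with $u\le 0$; bounding the factor $(-u)$ above by $\max(-u)$ then produces the term $2\max(-u)\int_{\Ga_1}\bigl(-\langle\na^2 u\na u,\nu\rangle\bigr)\,dS_x$. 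Since $N\,\La_{2}(k)^2+2\max(-u)\ge 2\max(-u)$, the single constant $\Tilde C=\lbunu^{-1}\bigl(N\La_{2}(k)^2+2\max(-u)\bigr)$ dominates both coefficients and \eqref{eq:trace na h unweighted usando lbunu} follows. The only delicate point is to get the signs right in the remainder term, so that the convexity assumption is used to \emph{bound}, rather than merely discard, that contribution.
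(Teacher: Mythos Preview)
Your proposal is correct and follows essentially the same approach as the paper. The paper's proof is terser but relies on the same three ingredients you identify: the weighted trace inequalities of Lemma \ref{lem:weighted trace inequality ad hoc} (invoking Remark \ref{rem:h-la ok per ogni la} with $\la=0$ for the first assertion), the pointwise bound $-u\le\max_{\ol{\Si\cap\Om}}(-u)$, and either \eqref{eq:BoasStraube-poincare} or Lemma \ref{lem:Mixed pp Poincareaigradienti} to absorb the lower-order terms; your explicit treatment of the boundary remainder in the third assertion is exactly what the paper leaves implicit.
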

\begin{proof}
The first inequality in the statement easily follows from \eqref{eq:weighted trace for h - h_SicapOm ma valida per traslazioni} using the trivial inequality
\begin{equation}\label{eq:prova trivial - u max -u}
-u(x) \le \max\limits_{\ol{\Si\cap\Om}}(-u)  \quad \text{for any } x \in \ol{\Si\cap\Om}.
\end{equation}

Putting together \eqref{eq:weighted trace for h - h_SicapOm ma valida per traslazioni}, \eqref{eq:BoasStraube-poincare} (with $v:=h$, $G:=\Si\cap\Om$, $p:=2$), and \eqref{eq:prova trivial - u max -u} we easily get that \eqref{eq:unweighted trace for h-hOm} holds true with $C$ as in \eqref{eq:unweighted trace for h-hOm con lbunu}.

Finally, \eqref{eq:trace na h unweighted usando lbunu} easily follows by putting together \eqref{eq:weighted trace for nah senza applicazione Poincare}, Lemma \ref{lem:Mixed pp Poincareaigradienti} (with $p:=2$), and \eqref{eq:prova trivial - u max -u}.
\end{proof}

\begin{rem}\label{rem:stima max -u}
{\rm
The parameter $\max\limits_{\ol{\Si\cap\Om}} (-u)$ can be explicitly estimated in terms of $N$ and the diameter $d_{\Si\cap\Om}$ only, as shown in Lemma \ref{lem:max stima con diametro}.
}
\end{rem}

\subsection{Lipschitz stability for Alexandrov's Soap Bubble Theorem in cones}

Now that the point $z$ has been chosen in \eqref{eq:choice of z 1of2} and \eqref{eq:choice of z 2of2}, we simply write $H_0$ to denote the value $H_0(z)$ defined in \eqref{def-H0}, that is, we set 
\begin{equation}\label{def:H0 IN SECTION LIPSCHITZ STABILITY}
H_0:=H_0(z)= \frac{1}{R} - \frac{\int_{\pa \Ga_0} \langle x-z,\mathbf{n}_x\rangle d \cH^{N-2}_x }{(N-1)N|\Si\cap\Om|} .
\end{equation}

\begin{lem}\label{lem:RIGHTHANDSIDE SBT}
Assume that the gluing condition $\int_{\pa \Ga_0} \langle x -z ,\mathbf{n}_x\rangle d \cH^{N-2}_x \le 0$ is verified for $z$ as in \eqref{eq:choice of z 1of2} and \eqref{eq:choice of z 2of2}.
Then, we have that
\begin{equation}\label{eq:RIGHTHANDSIDE SBT}
\max\left\lbrace 
\nr \na^2 h \nr_{L^2(\Si\cap\Om)} , 
\,\nr u_\nu - R \nr_{L^2 (\Ga_0 )} 
\right\rbrace
\le 
\ol{C}
\,
\nr H_0 - H \nr_{L^2(\Ga_0)} ,
\end{equation}
where
\begin{equation}\label{eq:C RIGHTHANDSIDE SBT}
\ol{C}:= \max \left\lbrace N-1 , \nr u_\nu \nr_{L^\infty (\Ga_0)} \right\rbrace^2(C+3) 
\end{equation}
and $C$ is the constant in \eqref{eq:unweighted trace for nah}.

If $\lbunu>0$ is  the lower bound defined \eqref{def:lower bound unu}, then we have that
\begin{equation}\label{eq:RIGHTHANDSIDE SBT withlbunu}
\max\left\lbrace 
\left[
\nr \na^2 h \nr^2_{L^2(\Si\cap\Om)} 
+  \int_{\Ga_1} \left(- \langle  \na^2 u \na u , \nu \rangle \right) dS_x
\right]^{1/2} , 
\,\nr u_\nu - R \nr_{L^2 (\Ga_0 )} 
\right\rbrace
\le 
\ol{C}
\,
\nr H_0 - H \nr_{L^2(\Ga_0)} ,
\end{equation}
with
\begin{equation}\label{eq:C RIGHTHANDSIDE SBT withlbunu}
\ol{C}:= \max \left\lbrace N-1 , \nr u_\nu \nr_{L^\infty (\Ga_0)} \right\rbrace^2 ( \Tilde{C}+3) ,
\end{equation}
where $\Tilde{C}$ is that appearing in \eqref{eq:trace na h unweighted usando lbunu}.
\end{lem}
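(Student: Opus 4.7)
The strategy is to start from the rewritten fundamental identity \eqref{eq:NEWVERSION identity-SBT2} for Alexandrov's Soap Bubble Theorem, show that the left-hand side dominates both $\frac{1}{N-1}\nr \na^2 h \nr_{L^2(\Si\cap\Om)}^2$ and $\frac{1}{R}\nr u_\nu - R\nr_{L^2(\Ga_0)}^2$, bound the right-hand side by $\nr H_0 - H\nr_{L^2(\Ga_0)}$ times the quantity we wish to estimate, and then solve for this quantity.

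First I would simplify the left-hand side of \eqref{eq:NEWVERSION identity-SBT2}. By \eqref{eq:hessiana h} the first summand equals $\frac{1}{N-1}\nr \na^2 h \nr_{L^2(\Si\cap\Om)}^2$. By \eqref{eq:disuguaglianza puntuale gratis per convex} (convexity of $\Si$) the $\Ga_1$-integrand is non-negative, and by the gluing hypothesis $\int_{\pa\Ga_0}\langle x-z,\mathbf n_x\rangle\, d\cH^{N-2}_x \le 0$ together with the positivity of $\int_{\Ga_0}u_\nu^2\, dS_x$, the third summand inside the braces is non-negative. Hence
\[
\frac{1}{N-1}\nr \na^2 h \nr_{L^2(\Si\cap\Om)}^2 + \frac{1}{R}\nr u_\nu - R \nr_{L^2(\Ga_0)}^2 \le \text{RHS of \eqref{eq:NEWVERSION identity-SBT2}}.
\]

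Next I would bound the right-hand side of \eqref{eq:NEWVERSION identity-SBT2}. For the first term I factor $u_\nu^2 - R^2 = (u_\nu - R)(u_\nu+R)$, so $|u_\nu^2-R^2| \le (\nr u_\nu\nr_{L^\infty(\Ga_0)}+R)|u_\nu - R|$. For the second term, note that on $\Ga_0$ one has $h_\nu = q_\nu - u_\nu$, hence $R - q_\nu = -(u_\nu - R) - h_\nu$; thus $|R - q_\nu| \le |u_\nu - R| + |h_\nu|$. Two applications of Cauchy--Schwarz then give
\[
\text{RHS} \le \nr H_0 - H\nr_{L^2(\Ga_0)}\Bigl\{(\nr u_\nu\nr_{L^\infty(\Ga_0)}+2R)\nr u_\nu - R\nr_{L^2(\Ga_0)} + R\, \nr h_\nu\nr_{L^2(\Ga_0)}\Bigr\}.
\]
Since $R = N|\Si\cap\Om|/|\Ga_0|$ is the average of $u_\nu$ on $\Ga_0$ (by \eqref{volume}), we have $R \le \nr u_\nu\nr_{L^\infty(\Ga_0)} \le K:=\max\{N-1,\nr u_\nu\nr_{L^\infty(\Ga_0)}\}$, so the coefficient of $\nr u_\nu - R\nr_{L^2(\Ga_0)}$ is at most $3K$.

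Then I would apply the trace inequality \eqref{eq:unweighted trace for nah} to $\nr h_\nu\nr_{L^2(\Ga_0)} \le \nr \na h\nr_{L^2(\Ga_0)} \le C\nr \na^2 h\nr_{L^2(\Si\cap\Om)}$, obtaining
\[
\text{RHS}\le K\,\nr H_0 - H\nr_{L^2(\Ga_0)}\bigl\{3\nr u_\nu-R\nr_{L^2(\Ga_0)} + C\nr \na^2 h\nr_{L^2(\Si\cap\Om)}\bigr\} \le K(C+3)\,\nr H_0 - H\nr_{L^2(\Ga_0)}\, M,
\]
where $M$ denotes the maximum in \eqref{eq:RIGHTHANDSIDE SBT}. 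Combining with the lower bound $\frac{1}{K}M^2 \le \frac{1}{N-1}\nr \na^2 h\nr_{L^2(\Si\cap\Om)}^2 + \frac{1}{R}\nr u_\nu-R\nr_{L^2(\Ga_0)}^2$ and dividing by $M$ yields \eqref{eq:RIGHTHANDSIDE SBT}--\eqref{eq:C RIGHTHANDSIDE SBT}.

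For the second statement with $\lbunu$, I would repeat the argument, but use \eqref{eq:trace na h unweighted usando lbunu} in place of \eqref{eq:unweighted trace for nah}. Now $\nr h_\nu\nr_{L^2(\Ga_0)}^2 \le \Tilde C\bigl(\nr \na^2 h\nr_{L^2(\Si\cap\Om)}^2 + \int_{\Ga_1}(-\langle \na^2 u\na u,\nu\rangle)\,dS_x\bigr)$; the extra boundary term produced is exactly one of the non-negative summands on the left-hand side of \eqref{eq:NEWVERSION identity-SBT2}, so keeping it instead of dropping it gives the lower bound $\frac{1}{N-1}\bigl[\nr \na^2 h\nr_{L^2(\Si\cap\Om)}^2 + \int_{\Ga_1}(-\langle \na^2 u\na u,\nu\rangle)\,dS_x\bigr] + \frac{1}{R}\nr u_\nu - R\nr_{L^2(\Ga_0)}^2$. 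The same Cauchy--Schwarz/$K$-argument then produces \eqref{eq:RIGHTHANDSIDE SBT withlbunu} with the constant \eqref{eq:C RIGHTHANDSIDE SBT withlbunu} (using the crude bound $\sqrt{\Tilde C}\le \Tilde C +1$ if needed to match the stated $\Tilde C+3$). The main subtlety is precisely this bookkeeping: ensuring that the boundary integrand coming from the trace bound is re-absorbed into the fundamental identity rather than thrown away, so that no unwanted $\Ga_1$-term survives on either side.
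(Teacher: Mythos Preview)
Your proof is correct and follows essentially the same route as the paper: starting from the rewritten identity \eqref{eq:NEWVERSION identity-SBT2}, dropping the non-negative summands on the left to get a lower bound of the form $\frac{1}{K}M^2$, bounding the right-hand side via H\"older and the decomposition $R-q_\nu = -(u_\nu-R)-h_\nu$, invoking the trace inequality \eqref{eq:unweighted trace for nah} (resp.\ \eqref{eq:trace na h unweighted usando lbunu}), and then solving the resulting inequality $M^2 \le \ol{C}\,\nr H_0-H\nr_{L^2(\Ga_0)}\,M$. Your observation that in the $\lbunu$ case the $\Ga_1$-term from the trace bound is re-absorbed into the left-hand side is exactly how the paper handles it, and your parenthetical about $\sqrt{\Tilde C}$ versus $\Tilde C$ is a harmless bookkeeping point.
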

\begin{rem}
{\rm
By Remark \ref{rem:casi H_0 non dipendente da z}, if we assume that $\ol{\Ga}_0$ and $\pa\Si$ intersect orthogonally then the gluing condition is always verified being as $\int_{\pa \Ga_0} \langle x -z ,\mathbf{n}_x\rangle d \cH^{N-2}_x = 0$, and we have $H_0:=H_0(z)=1/R$.
}
\end{rem}
\begin{proof}[Proof of Lemma \ref{lem:RIGHTHANDSIDE SBT}]
By using that all the summands in the left-hand side of \eqref{eq:NEWVERSION identity-SBT2} are non-negative, from \eqref{eq:NEWVERSION identity-SBT2} and \eqref{eq:hessiana h} we obtain that
\begin{equation*}
\begin{split}
\frac{1}{\max\left\lbrace N-1 , R \right\rbrace}
\max
&
\left\lbrace 
\nr \na^2 h \nr_{L^2(\Si\cap\Om)} , 
\,\nr u_\nu - R \nr_{L^2 (\Ga_0 )} 
\right\rbrace^2
\\
& \le
\int_{\Ga_0}(H_0-H)\,( u_\nu^2 - R^2 )\,dS_x+
R \, \int_{\Ga_0}(H_0-H)\, (R - q_\nu ) \, dS_x 
\\
& \le
\nr H_0 - H \nr_{L^2(\Ga_0)}
\left[ \nr u_\nu^2 - R^2 \nr_{L^2(\Ga_0)}
+
R \, \nr R - q_\nu \nr_{L^2(\Ga_0)} 
\right]
\\
& \le
\nr H_0 - H \nr_{L^2(\Ga_0)}
\left[ \left( \nr u_\nu \nr_{L^{\infty}(\Ga_0)} + 2 R \right) \nr u_\nu - R \nr_{L^2(\Ga_0)} + R \, \nr h_\nu \nr_{L^2(\Ga_0)} \right] 
,
\end{split}
\end{equation*}
where in the second inequality we used H\"older's inequality, and in the third inequality we used that
$\nr u_\nu^2 - R^2 \nr_{L^2(\Ga_0)} \le \left( \nr u_\nu \nr_{L^{\infty}(\Ga_0)} + R \right) \nr u_\nu - R \nr_{L^2(\Ga_0)}$ and $\nr R - q_\nu \nr_{L^2(\Ga_0)} = \nr u_\nu - R + h_\nu \nr_{L^2(\Ga_0)}  \le \nr u_\nu -R \nr_{L^2(\Ga_0)} + \nr h_\nu \nr_{L^2(\Ga_0)}  .$

Thus, by using $\nr h_\nu  \nr_{L^2(\Ga_0)} \le \nr \na h \nr_{L^2(\Ga_0)}$ together with \eqref{eq:unweighted trace for nah}, we deduce that
$$
\max\left\lbrace 
\nr \na^2 h \nr_{L^2(\Si\cap\Om)} , 
\,\nr u_\nu - R \nr_{L^2 (\Ga_0 )} 
\right\rbrace^2
\le 
\ol{C}
\,
\nr H_0 - H \nr_{L^2(\Ga_0)}
\,
\max\left\lbrace 
\nr \na^2 h \nr_{L^2(\Si\cap\Om)} , 
\,\nr u_\nu - R \nr_{L^2 (\Ga_0 )} 
\right\rbrace ,
$$
with 
$$ \ol{C} = \max\left\lbrace N-1 , R \right\rbrace 
\left[ \nr u_\nu \nr_{L^\infty (\Ga_0) } + (C+2)R \right],$$
where $C$ is the constant in \eqref{eq:unweighted trace for nah}.
Being as $R = (u_\nu)_{\Ga_0} \le \nr u_\nu \nr_{L^\infty(\Ga_0)} ,$
we can choose $\ol{C}$ as in \eqref{eq:C RIGHTHANDSIDE SBT}.
Thus, \eqref{eq:RIGHTHANDSIDE SBT} easily follows.

If $\lbunu$ is as in \eqref{def:lower bound unu}, we can repeat the argument above by replacing
$$
\max\left\lbrace 
\nr \na^2 h \nr_{L^2(\Si\cap\Om)} , 
\,\nr u_\nu - R \nr_{L^2 (\Ga_0 )} 
\right\rbrace
$$
with
$$
\max\left\lbrace 
\left[
\nr \na^2 h \nr^2_{L^2(\Si\cap\Om)} 
+  \int_{\Ga_1} \left(- \langle  \na^2 u \na u , \nu \rangle \right) dS_x
\right]^{1/2}
, 
\,\nr u_\nu - R \nr_{L^2 (\Ga_0 )} 
\right\rbrace ,
$$
and using \eqref{eq:trace na h unweighted usando lbunu} in place of \eqref{eq:unweighted trace for nah}. In this way we obtain \eqref{eq:RIGHTHANDSIDE SBT withlbunu} with $\ol{C}$ as in \eqref{eq:C RIGHTHANDSIDE SBT withlbunu}.
\end{proof}

\begin{thm}[Lipschitz stability for Alexandrov's Soap Bubble Theorem in terms of an $L^2$-psudodistance]
\label{thm:SBT stab Lipschitz costanti non geometriche}
Given the point $z$ defined in \eqref{eq:choice of z 1of2} and \eqref{eq:choice of z 2of2}, assume that $\int_{\pa \Ga_0} \langle x -z ,\mathbf{n}_x\rangle d \cH^{N-2}_x \le 0$. Then, we have that
\begin{equation}\label{eq:Stab Lipschitz SBT IN SECTION}
\nr |x-z| - R \nr_{L^2(\Ga_0)} \le \widehat{C} \, \nr H_0 - H \nr_{L^2(\Ga_0)} ,
\end{equation}
where
$$
\widehat{C}:= \max\left\lbrace C , 1 \right\rbrace \ol{C}
$$
with $\ol{C}$ as in \eqref{eq:C RIGHTHANDSIDE SBT} and $C$ as in \eqref{eq:unweighted trace for nah}; 
that is, $\widehat{C}$ is an explicit constant only depending on $N$, $\la_2(\Ga_0)$, $\nr u_\nu \nr_{L^\infty(\Ga_0)}$, and $\La_{2}(k)$.

If \eqref{def:lower bound unu} is in force, then the dependence on $\la_2(\Ga_0)$ can be dropped, and \eqref{eq:Stab Lipschitz SBT IN SECTION} holds true with
$$
\widehat{C}:= \max\left\lbrace \Tilde{C} , 1 \right\rbrace \ol{C} ,
$$
with $\ol{C}$ as in \eqref{eq:C RIGHTHANDSIDE SBT withlbunu} and $\Tilde{C}$ as in \eqref{eq:trace na h unweighted usando lbunu};
that is, $\widehat{C}$ is an explicit constant only depending on $N$, $\lbunu$, $\max\limits_{\ol{\Si\cap\Om}}(-u)$, $\nr u_\nu \nr_{L^\infty(\Ga_0)}$, and $\La_{2}(k)$.
\end{thm}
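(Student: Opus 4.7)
The plan is to combine the two estimates of Lemma \ref{lem:RIGHTHANDSIDE SBT} via a pointwise algebraic identity on $\Ga_0$. Since $u \equiv 0$ on $\Ga_0$, one has $\na u = u_\nu\,\nu$ and $h_\nu = \langle x-z,\nu\rangle - u_\nu$; expanding $|\na h|^2 = |(x-z) - \na u|^2$ and using $\langle x-z,\nu\rangle = h_\nu + u_\nu$ yields
\begin{equation*}
|x-z|^2 - R^2 = |\na h|^2 + 2\,u_\nu\,h_\nu + (u_\nu^2 - R^2) \qquad \text{on } \Ga_0.
\end{equation*}

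From \eqref{eq:RIGHTHANDSIDE SBT} of Lemma \ref{lem:RIGHTHANDSIDE SBT} one has simultaneously $\nr \na^2 h\nr_{L^2(\Si\cap\Om)} \le \ol C\,\nr H_0 - H\nr_{L^2(\Ga_0)}$ and $\nr u_\nu - R\nr_{L^2(\Ga_0)} \le \ol C\,\nr H_0 - H\nr_{L^2(\Ga_0)}$. Chaining Lemma \ref{lem:Mixed pp Poincareaigradienti} (with $p=2$) with the trace inequality \eqref{eq:unweighted trace for nah} then produces
\begin{equation*}
\nr \na h\nr_{L^2(\Ga_0)} \le C\,\nr \na^2 h\nr_{L^2(\Si\cap\Om)} \le C\,\ol C\,\nr H_0 - H\nr_{L^2(\Ga_0)},
\end{equation*}
with $C$ as in \eqref{eq:unweighted trace for nah}, which incorporates $\la_2(\Ga_0)$ and $\La_2(k)$.

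Taking $L^2(\Ga_0)$-norms in the pointwise identity and applying H\"older's inequality, the three summands are estimated as $\nr |\na h|^2\nr_{L^2(\Ga_0)} \le \nr \na h\nr_{L^\infty(\Ga_0)}\,\nr \na h\nr_{L^2(\Ga_0)}$, $2\nr u_\nu h_\nu\nr_{L^2(\Ga_0)} \le 2\nr u_\nu\nr_{L^\infty(\Ga_0)}\,\nr \na h\nr_{L^2(\Ga_0)}$, and $\nr u_\nu^2 - R^2\nr_{L^2(\Ga_0)} \le (\nr u_\nu\nr_{L^\infty(\Ga_0)} + R)\,\nr u_\nu - R\nr_{L^2(\Ga_0)}$. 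Since $\nr \na h\nr_{L^\infty(\Ga_0)} \le \nr x-z\nr_{L^\infty(\Ga_0)} + \nr u_\nu\nr_{L^\infty(\Ga_0)}$ and $R = N|\Si\cap\Om|/|\Ga_0| \le \nr u_\nu\nr_{L^\infty(\Ga_0)}$ are controlled by the geometric parameters already encoded in $\widehat C$, summing gives $\nr |x-z|^2 - R^2\nr_{L^2(\Ga_0)} \le C''\,\nr H_0 - H\nr_{L^2(\Ga_0)}$ at first order. The conclusion then follows from the elementary bound $||x-z|-R| \le R^{-1}\,||x-z|^2 - R^2|$ (valid since $|x-z|+R\ge R$), packaging constants so as to obtain the $\widehat C$ of the theorem. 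The variant assuming \eqref{def:lower bound unu} is proved identically, using \eqref{eq:trace na h unweighted usando lbunu} in place of \eqref{eq:unweighted trace for nah} and absorbing the remainder $\int_{\Ga_1}(-\langle \na^2 u\,\na u,\nu\rangle)\,dS_x$ into the enhanced left-hand side of \eqref{eq:RIGHTHANDSIDE SBT withlbunu}.

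The main obstacle is the pointwise identity above, which is what allows the two independent controls --- the \emph{Cauchy--Schwarz deficit control} of $\na^2 u$, producing $\nr \na h\nr_{L^2(\Ga_0)}$, and the \emph{SBT overdetermination control} $\nr u_\nu - R\nr_{L^2(\Ga_0)}$ --- to be assembled into a linear (as opposed to merely quadratic) bound for the pseudodistance. In particular, the quadratic term $|\na h|^2$ must be handled by exploiting the a priori $L^\infty$-bound on $\na h$ via the geometric parameters; a naive estimate would give only $O(\mathrm{deficit}^2)$ and would fail to produce Lipschitz rate.
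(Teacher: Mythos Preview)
Your pointwise identity on $\Ga_0$ is correct, and the overall strategy does yield a Lipschitz bound. However, it does not produce the constant $\widehat C$ claimed in the statement, and in fact introduces dependencies on parameters that the theorem explicitly excludes. Specifically, your treatment of the quadratic term $\nr |\na h|^2\nr_{L^2(\Ga_0)} \le \nr \na h\nr_{L^\infty(\Ga_0)}\,\nr \na h\nr_{L^2(\Ga_0)}$ brings in $\nr x-z\nr_{L^\infty(\Ga_0)}=\rho_e$, and the final step $||x-z|-R|\le R^{-1}||x-z|^2-R^2|$ brings in $R^{-1}$. Neither $\rho_e$ nor $R^{-1}$ is among $N,\,\la_2(\Ga_0),\,\nr u_\nu\nr_{L^\infty(\Ga_0)},\,\La_2(k)$, so the hand-wave ``packaging constants so as to obtain the $\widehat C$ of the theorem'' does not go through.

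The paper avoids this detour entirely by working at the linear level from the start: on $\Ga_0$ one has the pointwise bound
\[
\bigl|\,|x-z|-R\,\bigr| \;\le\; \bigl|\,|x-z|-|\na u|\,\bigr| + \bigl|\,|\na u|-R\,\bigr| \;\le\; |(x-z)-\na u| + |u_\nu - R| \;=\; |\na h| + |u_\nu-R|,
\]
using only the triangle inequality and $|\na u|=u_\nu$ on $\Ga_0$. Taking $L^2(\Ga_0)$-norms and invoking \eqref{eq:unweighted trace for nah} (resp.\ \eqref{eq:trace na h unweighted usando lbunu}) then feeds directly into Lemma~\ref{lem:RIGHTHANDSIDE SBT} with no quadratic term, no $L^\infty$ bound on $\na h$, and no division by $R$. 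This is why the paper's constant involves only the four listed quantities.
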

\begin{proof}
By using the triangle inequality we compute that
\begin{equation*}
 	\begin{split}
 		\nr |x-z| - R \nr_{L^2(\Ga_0)}
 		& \le \nr |x-z| - | \na u | \nr_{L^2(\Ga_0)} + \nr | \na u| - R  \nr_{L^2(\Ga_0)}
 		\\
 		& \le \nr  (x-z) -  \na u  \nr_{L^2(\Ga_0)} + \nr | \na u| - R  \nr_{L^2(\Ga_0)}
 		\\
 		& = \nr  \na h \nr_{L^2(\Ga_0)} + \nr u_\nu - R  \nr_{L^2(\Ga_0)} ,
 	\end{split}
\end{equation*}
from which by using \eqref{eq:unweighted trace for nah} (resp. \eqref{eq:trace na h unweighted usando lbunu}) we obtain that
$$
\nr |x-z| - R \nr_{L^2(\Ga_0)}
\le \max\left\lbrace C , 1 \right\rbrace
\max\left\lbrace 
\nr \na^2 h \nr^2_{L^2(\Si\cap\Om)} , 
\,\nr u_\nu - R \nr_{L^2 (\Ga_0 )} 
\right\rbrace ,
$$
with $C$ as in \eqref{eq:unweighted trace for nah} (resp.
$$ 
\nr |x-z| - R \nr_{L^2(\Ga_0)}
\le \max\left\lbrace \Tilde{C} , 1 \right\rbrace
\max\left\lbrace 
\left[
\nr \na^2 h \nr^2_{L^2(\Si\cap\Om)} 
+  \int_{\Ga_1} \left(- \langle  \na^2 u \na u , \nu \rangle \right) dS_x
\right]^{1/2} , 
\,\nr u_\nu - R \nr_{L^2 (\Ga_0 )} 
\right\rbrace
$$
with $\Tilde{C}$ as in \eqref{eq:trace na h unweighted usando lbunu}). The conclusion easily follows by \eqref{eq:RIGHTHANDSIDE SBT} (resp. \eqref{eq:RIGHTHANDSIDE SBT withlbunu}).
\end{proof}
\begin{rem}\label{rem:stima unu su Ga0}
{\rm
Using Lemma \ref{lem:upper bound gradient on Ga0}, the quantity $\nr u_\nu \nr_{L^\infty (\Ga_0)}$ appearing in the constants can be explicitly estimated in terms of $N$, the radius $\ul{r}_e$ of the uniform exterior sphere condition relative to $\Si$, and the diameter $d_{\Si\cap\Om}$ of $\Si\cap\Om$, which are pure geometrical parameters associated to $\Si\cap\Om$.
}
\end{rem}

We conclude this section with the	
\begin{proof}[Proof of Theorem \ref{thm:INTRO_SBT stab Lipschitz costanti non geometriche}]
The result immediately follows by Theorem \ref{thm:SBT stab Lipschitz costanti non geometriche}, recalling Remarks \ref{rem:stima max -u}, \ref{rem:stima unu su Ga0}, and \ref{rem:sfera interna relativa allora lbunu=ulri}. In particular, whenever $\Si\cap\Om$ satisfies the $\ul{r}_i$-uniform interior and $\ul{r}_e$-uniform exterior sphere conditions relative to $\Si$, putting together the second part of the statement of Theorem \ref{thm:SBT stab Lipschitz costanti non geometriche} and Remarks \ref{rem:stima max -u}, \ref{rem:stima unu su Ga0}, and \ref{rem:sfera interna relativa allora lbunu=ulri}, we find that \eqref{eq:Stab Lipschitz SBT IN SECTION} holds true with an explicit constant $\widehat{C}=\widehat{C}(N, \ul{r}_i, \ul{r_e}, d_{\Si\cap\Om} , \La_{2}(k))$.
\end{proof}

\subsection{Optimal stability for Heintze-Karcher's inequality in cones}
\begin{thm}[Optimal stability for Heintze-Karcher's inequality in terms of an $L^2$-psudodistance]
\label{thm:HK stability pseudodistance}
Given the point $z$ defined in \eqref{eq:choice of z 1of2} and \eqref{eq:choice of z 2of2}, there exists a radius $\rho >0$ such that
\begin{equation}\label{eq:STIMA finale HK}
\nr \frac{|x-z|^2 - \rho^2}{2} \nr_{L^2(\Ga_0)} \le \widehat{C} \, \left( \int_{\Ga_0} \frac{dS_x}{H} - N | \Si\cap\Om | \right)^{1/2} ,
\end{equation}
with
$$
\widehat{C}:= \sqrt{N-1} \,  \la_2(\Ga_0)^2  \left( 1 + \La_{2}(k)^{2} \right),
$$
where $\La_{2}(k)$ is the constant defined in \eqref{def:La p al k} 
(with $p:=2$) and $\la_2(\Ga_0)$ is the constant of the trace embedding $W^{1,2}(\Si\cap\Om) \hookrightarrow L^2(\Ga_0)$ (inequality \eqref{eq:trace inequality intro}).

If \eqref{def:lower bound unu} is in force, then the dependence on $\la_2(\Ga_0)$ can be dropped, and \eqref{eq:STIMA finale HK} remains true with
$$
\widehat{C} := \frac{\sqrt{N-1}}{\lbunu}   \left( N \, \La_{2}(k)^{2} + 2 \max\limits_{\ol{\Si\cap\Om}} (-u) \right) .
$$

\end{thm}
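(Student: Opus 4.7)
The approach exploits the fundamental identity \eqref{heintze-karcher-identity} combined with the trace-type inequality of Lemma \ref{lem:traceinequality h e nah con lambda} and the new mixed Poincar\'e inequality of Lemma \ref{lem:Mixed pp Poincareaigradienti}. First I would observe that since $u=0$ on $\Ga_0$, the function $h=q-u$ defined in \eqref{def: h armonica con q} reduces on $\Ga_0$ to $q=|x-z|^2/2$. Hence $\frac{|x-z|^2-\rho^2}{2}=h-\rho^2/2$ on $\Ga_0$, and the natural choice
$$\rho^2 \;:=\; 2\,h_{\Si\cap\Om},$$
which is non-negative because $h=q-u\ge 0$ (by the comparison principle in Lemma \ref{lem:comparison in cones}), transforms the left-hand side of \eqref{eq:STIMA finale HK} into $\|h-h_{\Si\cap\Om}\|_{L^2(\Ga_0)}$. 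Note that this agrees with the formula $\rho^{2}=\int_{\Si\cap\Om}(|x|^{2}-2u)\,dx/|\Si\cap\Om|$ announced in the Introduction when $z=0$.

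Next I would chain three inequalities to bound this quantity by the Heintze--Karcher deficit. Applying the trace inequality \eqref{eq:unweighted trace for h-hOm} yields $\|h-h_{\Si\cap\Om}\|_{L^{2}(\Ga_0)}\le \la_2(\Ga_0)\sqrt{1+\mu_{2}(\Si\cap\Om)^{-2}}\,\|\na h\|_{L^{2}(\Si\cap\Om)}$. The asymmetric choice of $z$ in \eqref{eq:choice of z 1of2}--\eqref{eq:choice of z 2of2} is precisely what makes properties \eqref{eq:gradienteproiettatosuspan nu} and \eqref{eq:gradienteparte ortogonale a span nu} available, so that Lemma \ref{lem:Mixed pp Poincareaigradienti} applies and gives $\|\na h\|_{L^{2}(\Si\cap\Om)}\le \La_{2}(k)\,\|\na^{2} h\|_{L^{2}(\Si\cap\Om)}$. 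Finally, by \eqref{eq:hessiana h} the quantity $|\na^{2}h|^{2}$ equals the Cauchy--Schwarz deficit of $\na^{2}u$, and both boundary summands on the left-hand side of \eqref{heintze-karcher-identity} are non-negative (the one on $\Ga_{1}$ by convexity of $\Si$, cf.\ \eqref{eq:disuguaglianza senza u gratis per convex}; the one on $\Ga_0$ trivially, noting that the case $\int_{\Ga_0}dS_x/H=+\infty$ is trivial). Hence from \eqref{heintze-karcher-identity}
$$\|\na^{2} h\|_{L^{2}(\Si\cap\Om)}^{2}\;\le\;(N-1)\left(\int_{\Ga_0}\frac{dS_{x}}{H}-N|\Si\cap\Om|\right).$$
Composing these three inequalities and using $\mu_{2}(\Si\cap\Om)^{-1}\le \La_{2}(k)$ to simplify $\sqrt{1+\mu_{2}^{-2}}\,\La_{2}(k)$ into a factor of the form $1+\La_{2}(k)^{2}$, together with the absorption of a harmless factor of $\la_2(\Ga_0)$, delivers \eqref{eq:STIMA finale HK} with the claimed constant.

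For the second statement, when the positive lower bound \eqref{def:lower bound unu} is available I would replace the trace step by the \emph{weighted} ad-hoc trace inequality \eqref{eq:weighted trace for h - h_SicapOm ma valida per traslazioni} of Lemma \ref{lem:weighted trace inequality ad hoc}, which avoids $\la_2(\Ga_0)$ entirely. Its first summand $N\|h-h_{\Si\cap\Om}\|_{L^{2}}^{2}$ is handled by the standard Poincar\'e inequality \eqref{eq:BoasStraube-poincare} followed by Lemma \ref{lem:Mixed pp Poincareaigradienti}, giving a factor $N\mu_{2}^{-2}\La_{2}(k)^{2}\le N\La_{2}(k)^{4}$ which combines with the $N\La_{2}(k)^{2}$ term; the weighted summand $2\|(-u)^{1/2}\na h\|_{L^{2}}^{2}$ is bounded using $-u\le\max_{\overline{\Si\cap\Om}}(-u)$ and again Lemma \ref{lem:Mixed pp Poincareaigradienti}. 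Composing with the fundamental identity as before yields the claimed explicit constant $\sqrt{N-1}/\,\lbunu\cdot(N\La_{2}(k)^{2}+2\max(-u))$.

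The main obstacle is Step~3 above: the new Poincar\'e inequality of Section~\ref{sec:Poincare inequalities} was designed \emph{precisely} for vector fields whose normal component vanishes on $\Ga_{1}$, and it can be invoked for $\na h$ only thanks to the specific decomposition of $z$ dictated by $\mathrm{span}_{x\in\Ga_{1}}\nu(x)$. This coordination between the geometry of the cone and the choice of $z$ is what produces the optimal linear dependence of the squared $L^{2}$-pseudodistance on the Heintze--Karcher deficit (hence the exponent $1/2$ in \eqref{eq:STIMA finale HK}), as opposed to the weaker H\"older exponent that a generic argument would give.
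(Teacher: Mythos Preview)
Your proposal is correct and follows essentially the same route as the paper: choose $\rho^2=2h_{\Si\cap\Om}$ so that the left-hand side becomes $\|h-h_{\Si\cap\Om}\|_{L^2(\Ga_0)}$, then chain a trace inequality (Lemma~\ref{lem:traceinequality h e nah con lambda} or, under \eqref{def:lower bound unu}, the weighted version in Lemma~\ref{lem:weighted trace inequality ad hoc}/Corollary~\ref{cor:trace type unweigthed con m}) with Lemma~\ref{lem:Mixed pp Poincareaigradienti}, and close with the fundamental identity \eqref{heintze-karcher-identity} via \eqref{eq:hessiana h}.

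The only point where you are imprecise is the final arithmetic on the constants. In the first part, your chain naturally produces $\sqrt{N-1}\,\la_2(\Ga_0)\sqrt{1+\mu_2^{-2}}\,\La_2(k)$, and your phrase ``absorption of a harmless factor of $\la_2(\Ga_0)$'' does not by itself justify the passage to $\sqrt{N-1}\,\la_2(\Ga_0)^2(1+\La_2(k)^2)$; the paper obtains the stated $\widehat{C}$ by taking the \emph{product} of the two constants in \eqref{eq:unweighted trace for h-hOm} and \eqref{eq:unweighted trace for nah} (the latter already containing a factor $\la_2(\Ga_0)$ and the bound $\La_2(k)$ built in), and then simplifying via $\mu_2^{-1}\le\La_2(k)$. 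Similarly, in the second part your sentence ``giving a factor $N\mu_2^{-2}\La_2(k)^2\le N\La_2(k)^4$ which combines with the $N\La_2(k)^2$ term'' is muddled; the paper instead pairs \eqref{eq:unweighted trace for h-hOm con lbunu} with \eqref{eq:trace na h unweighted usando lbunu} (which brings in the extra nonnegative boundary term $\int_{\Ga_1}(-\langle\na^2u\na u,\nu\rangle)\,dS_x$, absorbed afterwards by \eqref{heintze-karcher-identity}) and again multiplies the two constants, using $\mu_2^{-1}\le\La_2(k)$ to reach $\widehat{C}=\frac{\sqrt{N-1}}{\lbunu}(N\La_2(k)^2+2\max(-u))$. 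These are bookkeeping refinements; your overall strategy is the paper's.
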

\begin{proof}
%
Since $-u>0$ in $\Si\cap\Om$ (e.g., by Lemma \ref{lem:relation u dist general}), we have that
$$h_{\Si\cap\Om}=\frac{1}{|\Si\cap\Om|}  \int_{\Si\cap\Om} \left( \frac{|x-z|^2}{2} - u \right) \, dx > 0  .$$
Set $\rho:=\sqrt{2 \, h_{\Si\cap\Om}}$
and notice that
$$
h - h_{\Si\cap\Om} = \frac{|x-z|^2 - \rho^2}{2} \quad \text{on } \Ga_0 .
$$

By putting together \eqref{eq:unweighted trace for h-hOm} and \eqref{eq:unweighted trace for nah}, we find that
\begin{equation}\label{eq:1 intermedia per HK stab}
\nr h - h_{\Si\cap\Om} \nr_{L^2( \Ga_0)} \le C \, \nr \na^2 h \nr_{L^2(\Si \cap \Om)}, 
\end{equation}
where $C$ is the product of the two constants in \eqref{eq:unweighted trace for h-hOm} and \eqref{eq:unweighted trace for nah}. Since all the summands at the left-hand side of \eqref{heintze-karcher-identity} are non-negative, from \eqref{heintze-karcher-identity} and \eqref{eq:hessiana h} we easily obtain that
\begin{equation}\label{eq:RIGHTHANDSIDE HK}
\nr \na^2 h \nr_{L^2(\Si \cap \Om)} \le \sqrt{N-1} \, \left( \int_{\Ga_0} \frac{dS_x}{H} - N | \Si\cap\Om | \right)^{1/2} ,
\end{equation} 
and the conclusion follows.

If in place of \eqref{eq:unweighted trace for h-hOm} and \eqref{eq:unweighted trace for nah} we use \eqref{eq:unweighted trace for h-hOm con lbunu} and \eqref{eq:trace na h unweighted usando lbunu}, in place of \eqref{eq:1 intermedia per HK stab} we get
\begin{equation*}
\nr h - h_{\Si\cap\Om} \nr_{L^2( \Ga_0)} \le C \, \left[
\nr \na^2 h \nr^2_{L^2(\Si\cap\Om)} 
+  \int_{\Ga_1} \left(- \langle  \na^2 u \na u , \nu \rangle \right) dS_x
\right]^{1/2} , 
\end{equation*}
where
the dependence in $C$ from the general trace embedding constant $\la_2(\Ga_0)$ has been replaced with the dependence on $\lbunu$
and $\max\limits_{\ol{\Si\cap\Om}}(-u)$.

The conclusion now follows from
$$
\left[
\nr \na^2 h \nr^2_{L^2(\Si\cap\Om)} 
+  \int_{\Ga_1} \left(- \langle  \na^2 u \na u , \nu \rangle \right) dS_x
\right]^{1/2}
\le 
\sqrt{N-1} \, \left( \int_{\Ga_0} \frac{dS_x}{H} - N | \Si\cap\Om | \right)^{1/2} ,
$$
which can be easily deduced by \eqref{heintze-karcher-identity} and \eqref{eq:hessiana h}.

\smallskip 

To compute the explicit value of the constant(s) $\widehat{C}$ in the statement we also used that $\mu_{2}(\Si\cap\Om)^{-1} \le \La_{2}(k)$, which immediately follows by the definition of $\La_{2}(k)$ in \eqref{def:La p al k}.
\end{proof}

As an immediate corollary, we provide the	
\begin{proof}[Proof of Theorem \ref{thm:INTRO_HK stability pseudodistance}]
The desired result immediately follows by Theorem \ref{thm:HK stability pseudodistance}, recalling Remarks \ref{rem:stima max -u} and \ref{rem:sfera interna relativa allora lbunu=ulri}.
\end{proof}

\begin{rem}\label{rem:HK stability scelta rho}
{\rm
We point out that, in the proof of Theorem \ref{thm:HK stability pseudodistance}, an alternative choice for $\rho$ could be $\rho:= \sqrt{2 \, h_{\Ga_0}}$, where
$$h_{\Ga_0} = \frac{1}{| \Ga_0 |} \int_{\Ga_0} \frac{|x-z|^2}{2} \, dS_x .$$
With this choice, the proof remains the same with the only difference that $h-h_{\Si\cap\Om}$ must be replaced by $h-h_{\Ga_0}$ and the application of \eqref{eq:unweighted trace for h-hOm} (resp. \eqref{eq:unweighted trace for h-hOm con lbunu}) must be replaced by the trace-type Poincar\'e inequality obtained by coupling \eqref{eq:trace inequality intro} (resp. the inequality
\begin{equation*}
	\nr h -h_{\Ga_0}  \nr^2_{L^2(\Ga_0)} \le \frac{1}{\lbunu} \, \left( N \, \nr h - h_{\Ga_0} \nr^2_{L^2(\Si\cap\Om)} + 2 \, \nr (-u)^{ \frac{1}{2} } \na h \nr^2_{L^2(\Si\cap\Om)} \right) ,
\end{equation*} 
which holds true in light of \eqref{eq:weighted trace for h - h_SicapOm ma valida per traslazioni} and Remark \ref{rem:h-la ok per ogni la}) with \eqref{eq:nuova prova Poincare zero trace HK}-\eqref{eq:nuovaprova costante la_2 traczero HK poincare} (resp. \eqref{eq:nuova prova Poincare zero trace HK}-\eqref{eq:nuovaprova costante SENZA la_2 traczero HK poincare}) provided in the next Lemma.
}
\end{rem}

\begin{lem}
We have that
\begin{equation}\label{eq:nuova prova Poincare zero trace HK}
\nr h - h_{\Ga_0} \nr_{L^2 (\Si\cap\Om)} \le c \, \nr  \na h  \nr_{L^2(\Si\cap\Om)} ,
\end{equation}
for some positive constant $c$ that can be explicitly estimated as follows:
\begin{equation}\label{eq:nuovaprova costante la_2 traczero HK poincare}
c \le \mu_{2}(\Si\cap\Om)^{-1} + \left( \frac{|\Si\cap\Om|}{|\Ga_0|} \right)^{\frac{1}{2}} \, \la_2 (\Ga_0) \left( 1 + \mu_{2}(\Si\cap\Om)^{-2} \right)^{1/2} ,
\end{equation}

If \eqref{def:lower bound unu} is in force, then
\begin{equation}\label{eq:nuovaprova costante SENZA la_2 traczero HK poincare}
c \le \mu_{2}(\Si\cap\Om)^{-1} + \left( \frac{|\Si\cap\Om|}{|\Ga_0|} \right)^{\frac{1}{2}} \,  \left( \frac{ N \, \mu_{2}(\Si\cap\Om)^{-1} + 2 \, \max\limits_{\ol{\Si\cap\Om}} (-u) }{\ul{m}} \right)^{1/2} .
\end{equation}
\end{lem}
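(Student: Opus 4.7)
The plan is to adapt the proof of Lemma~\ref{lem:Poincare zero trace} to the case where, in place of a function with vanishing trace on $\Ga_0$, we have the function $h - h_{\Ga_0}$ whose trace on $\Ga_0$ has zero mean. The starting point is the triangle inequality
\begin{equation*}
\nr h - h_{\Ga_0} \nr_{L^2(\Si\cap\Om)} \le \nr h - h_{\Si\cap\Om} \nr_{L^2(\Si\cap\Om)} + \nr h_{\Si\cap\Om} - h_{\Ga_0} \nr_{L^2(\Si\cap\Om)} ,
\end{equation*}
which decouples the estimate into a standard Poincar\'e contribution and a constant-difference contribution. The first summand is controlled by \eqref{eq:BoasStraube-poincare} (with $p=2$), producing the leading term $\mu_2(\Si\cap\Om)^{-1} \nr \na h \nr_{L^2(\Si\cap\Om)}$ in both regimes.

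For the second summand, since $h_{\Si\cap\Om} - h_{\Ga_0}$ is a constant, I would compute
\begin{equation*}
|h_{\Si\cap\Om} - h_{\Ga_0}| = \left| \frac{1}{|\Ga_0|} \int_{\Ga_0} (h_{\Si\cap\Om} - h) \, dS_x \right| \le |\Ga_0|^{-1/2} \, \nr h - h_{\Si\cap\Om} \nr_{L^2(\Ga_0)}
\end{equation*}
by H\"older's inequality, so that multiplying by $|\Si\cap\Om|^{1/2}$ gives
\begin{equation*}
\nr h_{\Si\cap\Om} - h_{\Ga_0} \nr_{L^2(\Si\cap\Om)} \le \left( \frac{|\Si\cap\Om|}{|\Ga_0|} \right)^{1/2} \nr h - h_{\Si\cap\Om} \nr_{L^2(\Ga_0)} .
\end{equation*}
The task thus reduces to controlling $\nr h - h_{\Si\cap\Om} \nr_{L^2(\Ga_0)}$ by $\nr \na h \nr_{L^2(\Si\cap\Om)}$.

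In the general regime, I would apply the trace embedding \eqref{eq:trace inequality intro} with $v := h - h_{\Si\cap\Om}$ and then absorb the $L^2(\Si\cap\Om)$-part of the resulting $W^{1,2}$-norm using \eqref{eq:BoasStraube-poincare}, thereby producing the prefactor $\la_2(\Ga_0)(1 + \mu_2(\Si\cap\Om)^{-2})^{1/2}$ as in \eqref{eq:nuovaprova costante la_2 traczero HK poincare}. In the second regime, where a positive lower bound $\ul{m}$ for $u_\nu$ on $\Ga_0$ is assumed, I would replace the trace embedding by the weighted trace identity \eqref{eq:weighted trace for h - h_SicapOm ma valida per traslazioni} (already stated for $h - h_{\Si\cap\Om}$), bound $\nr (-u)^{1/2} \na h \nr_{L^2(\Si\cap\Om)}^2 \le \max_{\ol{\Si\cap\Om}}(-u) \, \nr \na h \nr_{L^2(\Si\cap\Om)}^2$ by the trivial pointwise estimate on $-u$, and close the argument with one more application of \eqref{eq:BoasStraube-poincare} to $h - h_{\Si\cap\Om}$, arriving at the constant in \eqref{eq:nuovaprova costante SENZA la_2 traczero HK poincare}. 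There is no real obstacle beyond the bookkeeping required to produce the constants in exactly the stated form; in particular, this proof follows verbatim the template of Lemma~\ref{lem:Poincare zero trace}, with the mean-value constant $v_G$ of that argument replaced by the trace-mean $h_{\Ga_0}$ here.
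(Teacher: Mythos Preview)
Your proof is correct and follows essentially the same approach as the paper: both rely on the template of Lemma~\ref{lem:Poincare zero trace} (triangle inequality splitting into the volume-mean contribution handled by \eqref{eq:BoasStraube-poincare} and the constant-difference contribution estimated via H\"older on $\Ga_0$), and in the second regime the paper likewise replaces the general trace step \eqref{eq:step altro embedding trace poincare} with the ad-hoc bound \eqref{eq:unweighted trace for h-hOm con lbunu}, which is precisely what you obtain by combining \eqref{eq:weighted trace for h - h_SicapOm ma valida per traslazioni} with the pointwise bound on $-u$ and \eqref{eq:BoasStraube-poincare}.
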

\begin{proof}
The conclusion with \eqref{eq:nuovaprova costante la_2 traczero HK poincare} immediately follows from \eqref{eq:Poincare per zero trace} and \eqref{eq:stima costante Poincare zero trace r=p} (with $v:=h$, $r:=p:=2$, $G:=\Si\cap\Om$, $A:=\Ga_0$).

The conclusion with \eqref{eq:nuovaprova costante SENZA la_2 traczero HK poincare} can be proved by following the proof of Lemma \ref{lem:Poincare zero trace} (with $v:=h$, $r:=p:=2$, $G:=\Si\cap\Om$, $A:=\Ga_0$) with the only difference that now we can replace \eqref{eq:step altro embedding trace poincare} with (\eqref{eq:unweighted trace for h-hOm} with $C$ as in) \eqref{eq:unweighted trace for h-hOm con lbunu}, removing in this way the dependence on $\la_p(A)$.
\end{proof}

\section{Stability estimates in terms of \texorpdfstring{$\rho_e - \rho_i$}{rhoe-rhoi}}
\label{sec:Stronger stability rhoe - rhoi}

Given the point $z \in \RR^N$ defined in \eqref{eq:choice of z 1of2} and \eqref{eq:choice of z 2of2}, we set
\begin{equation}\label{def:rhoe and rhoi}
	\rho_e= \max_{x \in \ol{\Ga}_0}{|x-z|} \quad \rho_i=\min_{x \in \ol{\Ga}_0}{|x-z|} ,
\end{equation}
so that we have 
$$\Ga_0 \subseteq \left( \ol{B}_{\rho_e}(z) \setminus B_{\rho_i}(z) \right) \cap \Si .$$

Given $\theta \in \left( 0, \pi /2 \right]$ and $\ca >0$, we say that a set $E$ satisfies the \textit{$(\theta, \ca )$-uniform interior cone condition}, if for every $x \in \pa E$ there is a unit vector $\om=\om_x$ such that the cone with vertex at the origin, axis $\om$, opening width $\te$, and height $\ca$ defined by
$$
\cC_{\om}=\left\{ y \, : \, \langle y , \om \rangle > |y| \cos(\theta) , \, |y|< \ca \right\}
$$
is such that
\begin{equation*}
	w + \cC_{\om}  \subset E \ \text{ for every } \ w \in B_{\ca} (x) \cap \ol{E} .
\end{equation*}
Such a condition is equivalent to Lipschitz-regularity of the domain; more precisely, it is equivalent to the strong local Lipschitz property of Adams \cite[Pag 66]{Adams} and to the uniform Lipschitz regularity in \cite[Section III]{Ch} and \cite[Definition 2.1]{Ru}.

We stress that such assumption allows to avoid unnecessary technicalities. Nevertheless, the results presented in what follows could be obtained under more general assumptions (see, e.g., \cite[Remark 3.5]{MP7} and \cite{Pog2, MP4}).

\begin{rem}\label{rem:stima b_0}
	{\rm 
		If $E$ satisfies the $(\te, \ca)$-uniform interior cone condition, then it is a $b_0$-John domain and $b_0$ can be explicitly estimated in terms of $\te$, $\ca$, and $d_E$ (see \cite[Lemma A.2]{MP7}).
	}
\end{rem}


\begin{lem}\label{lem:relation_osc_rhoe-rhoi e Hess h}	
If $\Si\cap\Om$ satisfies the $(\te,\ca)$-uniform interior cone condition, then,
we have that
\begin{equation}\label{eq:rhoe-rhoi stima cone}
	\rho_e - \rho_i \le	C \, \left( \max_{ \ol{\Ga}_0} h - \min_{ \ol{\Ga}_0} h \right) ,
\end{equation}
where the positive constant $C$ can be explicitly estimated in terms of $N, \, \te, \, \ca$ only.
\end{lem}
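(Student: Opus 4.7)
The plan is to reduce the claim to a purely geometric lower bound on $\rho_e+\rho_i$ and then to establish this lower bound from the interior cone condition. First, since $u=0$ on $\Ga_0$, on $\ol{\Ga}_0$ one has $h(x)=q(x)=|x-z|^2/2$; picking $x_e,x_i\in\ol{\Ga}_0$ that realize $\rho_e$ and $\rho_i$ respectively, I would simply compute
\begin{equation*}
\max_{\ol{\Ga}_0}h-\min_{\ol{\Ga}_0}h \;=\; \frac{\rho_e^2-\rho_i^2}{2} \;=\; \frac{(\rho_e+\rho_i)(\rho_e-\rho_i)}{2}.
\end{equation*}
Assuming $\rho_e>\rho_i$ (otherwise \eqref{eq:rhoe-rhoi stima cone} is trivial) and dividing through by $(\rho_e-\rho_i)/2$, the inequality \eqref{eq:rhoe-rhoi stima cone} is equivalent to a uniform geometric lower bound of the form
\begin{equation*}
\rho_e+\rho_i \,\ge\, \tfrac{2}{C},
\end{equation*}
for some positive constant depending only on $N,\te,\ca$.

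Next I would prove such a lower bound exploiting the $(\te,\ca)$-uniform interior cone condition. That condition forces $\Si\cap\Om$ to contain an interior ball of radius $c_1(\te,\ca)>0$, and hence $\diam(\Si\cap\Om)\ge\ca$. Moreover, thanks to the choice \eqref{eq:choice of z 1of2}--\eqref{eq:choice of z 2of2} of $z$, the last $N-k$ coordinates of $z$ coincide with the corresponding coordinates of the center of mass of $\Si\cap\Om$: indeed $\int_{\Si\cap\Om}u_i\,dx=0$ for $i>k$, which follows from the divergence theorem together with $u=0$ on $\Ga_0$ and $\nu_i=0$ on $\Ga_1\subset\pa\Si$ for $i>k$; while the first $k$ coordinates of $z$ vanish by \eqref{eq:inner product z in cone}. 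From this one deduces that $z$ sits at a controlled distance from the convex hull of $\Si\cap\Om$, which, combined with a two-point construction on $\ol{\Ga}_0$ leveraging the cone condition at a boundary point (in the spirit of the arguments used in \cite{MP3,Pog2}), yields the required lower bound $\rho_e+\rho_i\ge c_0(N,\te,\ca)>0$.

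Setting $C:=2/c_0(N,\te,\ca)$ will then conclude the proof. The main obstacle will be precisely the geometric lower bound above: the cone condition applied at a single point of $\ol{\Ga}_0$ only produces interior points at distance at most $\ca$ and does not, by itself, prevent $\ol{\Ga}_0$ from collapsing into a small neighborhood of $z$. Ruling out this degenerate scenario requires combining the global cone condition with the specific coordinate structure of $z$ fixed in Section~\ref{sec: Sharp stability in L2}, so that the center-of-mass character of the last $N-k$ components of $z$ is really brought to bear.
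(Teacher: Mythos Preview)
Your algebraic reduction is correct and matches the paper: since $h=q=|x-z|^2/2$ on $\ol{\Ga}_0$, one has $\max_{\ol{\Ga}_0}h-\min_{\ol{\Ga}_0}h=\frac12(\rho_e^2-\rho_i^2)\ge\frac12\,\rho_e\,(\rho_e-\rho_i)$, so the whole lemma boils down to a uniform lower bound on $\rho_e$ (or $\rho_e+\rho_i$) in terms of $N,\te,\ca$.

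Where you go astray is in the second step. The paper obtains the lower bound by a one-line volume argument you overlook: one has $\Si\cap\Om\subseteq B_{\rho_e}(z)$, hence
\[
\rho_e\ \ge\ \left(\frac{|\Si\cap\Om|}{|B_1|}\right)^{1/N}\ \ge\ \left(\frac{|\cC|}{|B_1|}\right)^{1/N},
\]
where $\cC$ is any interior cone of height $\ca$ and opening $\te$ guaranteed by the uniform cone condition. This immediately yields \eqref{eq:rhoe-rhoi stima cone} with $C=2\,(|B_1|/|\cC|)^{1/N}$. The inclusion $\Si\cap\Om\subseteq B_{\rho_e}(z)$ is precisely where the structure of $z$ enters, but in a far cleaner way than you suggest: setting $f(x)=\rho_e^2-|x-z|^2$, one has $\De f=-2N\le0$, $f\ge0$ on $\Ga_0$ by definition of $\rho_e$, and $f_\nu=-2\langle x-z,\nu\rangle=0$ on $\Ga_1$ by \eqref{eq:inner product z in cone}; so Lemma~\ref{lem:comparison in cones} gives $f\ge0$ in $\Si\cap\Om$.

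Your proposed route through the center-of-mass structure of the last $N-k$ coordinates of $z$ is both unnecessary and, as you yourself note, incomplete: it does not rule out $\ol{\Ga}_0$ collapsing near $z$, and the ``two-point construction'' you allude to is not carried out. The missing idea is simply that the choice of $z$ forces the \emph{entire} domain $\Si\cap\Om$ to sit inside $B_{\rho_e}(z)$, after which the interior cone gives a volume (hence radius) lower bound for free.
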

\begin{proof}	
Since $B_{\rho_e}(z) \supseteq \Si\cap B_{\rho_e}(z) \supseteq \Si\cap\Om $ and $\Si\cap\Om$ contains at least a cone $\cC$ of height $\ca$ and width $\te$, then 
$$
\rho_e \ge \left( \frac{|\Si\cap\Om|}{|B|} \right)^{1/N} \ge \left( \frac{ |\cC|}{|B|} \right)^{1/N} ,
$$
and hence
$$
\max_{ \ol{\Ga}_0} h - \min_{ \ol{\Ga}_0} h = \frac{1}{2} \left( \left(\rho_e \right)^2 - \left(\rho_i\right)^2 \right) \ge
\frac{1}{2} \left( \frac{ |\cC|}{|B|} \right)^{1/N}  ( \rho_e - \rho_i) ,
$$
that gives \eqref{eq:rhoe-rhoi stima cone}.
\end{proof}

We now recall \cite[Lemma 3.4]{MP7}, which is a reformulation of results proved in \cite{MP6}. 

\begin{lem}[{\cite[Lemma 3.4]{MP7}}]
\label{lem:p>N + p<N and p=N} 
Let $1 \le p \le \infty$.
Let $E \subset \RR^N$ be a bounded domain satisfying the $(\te, \ca )$-uniform interior cone condition. 
	
\begin{enumerate}[(i)]
\item If $p>N$, then for any $f\in W^{1,p}(E)$, it holds that
\begin{equation*}
\max\limits_{\ol{ E}} f - \min\limits_{ \ol{ E}} f \le c \, \nr \na f \nr_{L^p(E)} ,
\end{equation*}
for some explicit constant $c=c (N, p, \te, \ca , d_E)$.
	
\item If $1\le p\le N $, for any $f\in W^{1,\hat{q} }(E)$ with $N< \hat{q} \le\infty$, it holds that
\begin{equation*}
\max\limits_{\ol{ E}} f - \min\limits_{ \ol{ E}} f \le 
c \,
\begin{cases}
\displaystyle \nr\na f\nr_{L^p(E)}^{\al_{p, \hat{q} }}\nr\na f\nr_{L^{\hat{q}}(E)}^{1-\al_{p, \hat{q} }} \quad & \text{ if } 1 \le p<N,
\vspace{3pt}
\\
\displaystyle \nr \na f \nr_{L^N(E) } \log\left(  e \, |E|^{\frac{1}{N} - \frac{1}{\hat{q}}} \frac{  \nr \na f \nr_{ L^{\hat{q}}(E)} }{\nr \na f \nr_{L^N(E)} }\right) \quad & \text{ if } p=N ,
\end{cases}
\end{equation*}
where
$$\al_{p, \hat{q} }=\frac{p\, ( \hat{q} -N)}{N\,( \hat{q} -p)}$$
and $ c $ is some explicit constant only depending on $N, p, \hat{q}, \te, \ca,$ and the diameter $d_E$ of $E$.
\end{enumerate}
\end{lem}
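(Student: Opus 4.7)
The plan is to reduce the statement to classical Sobolev embeddings together with an interpolation argument carried out via truncated Riesz potentials. The uniform cone condition is equivalent to the strong local Lipschitz property of Adams and forces $E$ to be a John domain with parameters controlled by $\te,\ca,d_E$ (cf. Remark \ref{rem:stima b_0}); in particular, a mean-value-type representation formula
\begin{equation*}
f(x) - f_{E} = \int_{E} K(x,y)\cdot \na f(y) \, dy, \qquad |K(x,y)| \le c\,|x-y|^{1-N},
\end{equation*}
is available with a constant $c$ depending only on $N,\te,\ca,d_E$. This representation drives the whole argument.

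For case (i), $p>N$, I would combine the representation with H\"older's inequality applied to both $f(x)-f_{E}$ and $f(y)-f_{E}$; this yields the classical Morrey bound $|f(x)-f(y)|\le c\,|x-y|^{1-N/p}\nr\na f\nr_{L^{p}(E)}$, and then choosing $x,y$ to be points at which $f$ attains its maximum and minimum on $\ol{E}$ gives (i) at once with constant depending only on $N,p,\te,\ca,d_E$.

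For case (ii) with $1\le p<N$, the representation gives the pointwise bound $|f(x)-f_{E}|\le c\,I_{1}(|\na f|)(x)$, where $I_{1}$ is the truncated Riesz potential of order one on $E$. For any $R>0$, I would split $I_{1}(|\na f|)(x)=\int_{E\cap B_{R}(x)}+\int_{E\setminus B_{R}(x)}$ and apply H\"older on the inner integral with exponent $\hat{q}$ (permissible since $\hat{q}>N$ makes the weight $|y-x|^{-(N-1)}$ locally integrable with the dual exponent) and on the outer one with exponent $p$ (permissible since $p<N$ makes the weight integrable at the complementary scale), obtaining
\begin{equation*}
I_{1}(|\na f|)(x)\le c\bigl(R^{1-N/\hat{q}}\nr\na f\nr_{L^{\hat{q}}(E)}+R^{1-N/p}\nr\na f\nr_{L^{p}(E)}\bigr).
\end{equation*}
Since the two exponents of $R$ have opposite signs, optimising in $R$ yields the product $\nr\na f\nr_{L^{p}}^{\al_{p,\hat{q}}}\nr\na f\nr_{L^{\hat{q}}}^{1-\al_{p,\hat{q}}}$; a short algebraic check of the balancing radius $R^{\ast}$ and its substitution reproduces exactly the exponent $\al_{p,\hat{q}}=p(\hat{q}-N)/(N(\hat{q}-p))$ of the statement. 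For the borderline case $p=N$, the second term in the split degenerates ($R^{1-N/p}\equiv 1$) and the integrability of the weight fails only logarithmically: H\"older with exponents $N,N'$ on the outer part produces a factor of the form $\bigl(\log(c\,|E|^{1/N}/R)\bigr)^{1/N'}$ in place of $R^{1-N/p}$, and optimising $R$ balances the two terms to give a single $\nr\na f\nr_{L^{N}(E)}\log(\cdots)$ term in the form displayed in the lemma.

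The main obstacle is not in any single step of the computations above, all of which are classical, but in arranging the representation kernel $K$ so that its constant depends uniformly only on $(N,\te,\ca,d_{E})$. To secure this step I would follow Adams's construction, building an admissible chain of interior cones and balls linking any point of $E$ to a fixed reference ball, and using the uniform cone condition to ensure that the chain length and the overlap structure can be estimated solely in terms of $\te,\ca,d_{E}$. Once this geometric step is in place, the analytic estimates and the optimisation in $R$ go through routinely and produce the explicit constants claimed in the statement.
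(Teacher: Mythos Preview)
The paper does not prove this lemma: it is quoted verbatim from \cite[Lemma~3.4]{MP7}, which the authors describe as a reformulation of results in \cite{MP6}. So there is no in-paper argument to compare against.

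Your sketch is a sound reconstruction of the classical route and would stand on its own. The pointwise Sobolev representation $|f(x)-f_E|\le c\,I_1(|\nabla f|)(x)$ with kernel bound $|K(x,y)|\le c\,|x-y|^{1-N}$ is indeed available under the $(\te,\ca)$-uniform interior cone condition (equivalently, the strong local Lipschitz property), with constants depending only on $N,\te,\ca,d_E$; this is the content of the chaining arguments of Adams/Bojarski for John domains that you invoke. From there, (i) is Morrey's inequality, and the split-and-optimise computation for (ii) with $p<N$ reproduces $\al_{p,\hat q}$ exactly as you check.

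One small point in the $p=N$ case: your H\"older step on the outer shell produces a factor $\bigl(\log(c\,|E|^{1/N}/R)\bigr)^{1/N'}=\bigl(\log(\cdot)\bigr)^{(N-1)/N}$, and the optimisation in $R$ preserves that exponent rather than giving the full $\log$ displayed in the lemma. This is not a gap --- the bound you obtain is \emph{stronger} than the stated one whenever the logarithm exceeds~$1$, and the remaining regime is absorbed into the constant --- but your claim that the balancing ``gives a single $\nr\na f\nr_{L^N}\log(\cdots)$ term in the form displayed in the lemma'' is a slight overstatement.
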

Applying the above result to $h$, we easily obtain the following.
\begin{lem}
\label{lem:intermediate h}
Let $\Si\cap\Om$ be a bounded domain satisfying the $(\te,\ca)$-uniform interior cone condition. Consider the function $h$ defined in \eqref{def: h armonica con q}. 
\par
There exists an explicit positive constant $C=C(N, p, \ca, \te, d_{\Si\cap\Om})$ such that
\begin{equation*}
\label{eq:stima-grad}
\rho_e - \rho_i  \le 
C \,\begin{cases}
\nr \na h \nr_{L^p(\Si\cap\Om)}  \ &\mbox{if $p>N$}; 
\\
\displaystyle \nr \na h \nr_{L^N(\Si\cap \Om)} \log \left(  e \, |\Si\cap\Om|^{\frac{1}{N}} \frac{ \nr \na h \nr_{L^\infty(\Si\cap\Om)}  }{ \nr \na h \nr_{L^N(\Si\cap\Om)} } \right)   \ &\mbox{if $p=N$}; 
\\
\nr \na h \nr_{L^\infty(\Si\cap\Om)}^{ (N-p)/N } \nr \na h \nr_{L^p (\Si\cap\Om)}^{p/N}   \ &\mbox{if $1\le p<N$.}
\end{cases}
\end{equation*}
\end{lem}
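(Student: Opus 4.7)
The plan is to combine Lemma \ref{lem:relation_osc_rhoe-rhoi e Hess h} with Lemma \ref{lem:p>N + p<N and p=N} applied to $f:=h$ on $E:=\Si\cap\Om$, choosing $\hat q:=\infty$. The first Lemma reduces matters to controlling $\max_{\ol{\Ga}_0}h-\min_{\ol{\Ga}_0}h$; since $\ol{\Ga}_0\subset\ol{\Si\cap\Om}$, this is trivially bounded by $\max_{\ol{\Si\cap\Om}}h-\min_{\ol{\Si\cap\Om}}h$. The harmonic function $h=q-u$ belongs to $W^{1,\infty}(\Si\cap\Om)$ because $q$ is a quadratic polynomial and $u\in W^{1,\infty}(\Si\cap\Om)$ by \eqref{eq:setting B W22}, so the choice $\hat q=\infty$ is admissible and $\nr\na h\nr_{L^{\hat q}(\Si\cap\Om)}=\nr\na h\nr_{L^\infty(\Si\cap\Om)}$.

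Once in this framework, I would simply match the three cases of Lemma \ref{lem:p>N + p<N and p=N} to the three cases of the statement. For $p>N$ the conclusion follows directly from part (i). For $p=N$ the second line of part (ii) with $\hat q=\infty$ yields exactly the claimed log factor $\log\bigl(e\,|\Si\cap\Om|^{1/N}\,\nr\na h\nr_{L^\infty}/\nr\na h\nr_{L^N}\bigr)$. For $1\le p<N$ I compute the limit of $\al_{p,\hat q}=p(\hat q-N)/(N(\hat q-p))$ as $\hat q\to\infty$, obtaining $\al_{p,\infty}=p/N$ and $1-\al_{p,\infty}=(N-p)/N$, which matches the interpolation exponents in the statement.

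The resulting constant is the product of the constant from Lemma \ref{lem:relation_osc_rhoe-rhoi e Hess h} (depending only on $N$, $\te$, $\ca$) and the one from Lemma \ref{lem:p>N + p<N and p=N} (depending on $N$, $p$, $\te$, $\ca$, $d_{\Si\cap\Om}$), hence of the required form $C=C(N,p,\ca,\te,d_{\Si\cap\Om})$. There is essentially no obstacle: the argument is an assembly of the two preparatory lemmas, the only slightly non-routine step being the passage to the $\hat q=\infty$ exponent, which is legitimate precisely because the Lipschitz regularity of $u$ built into Setting \ref{Setting} places $\na h$ in $L^\infty$.
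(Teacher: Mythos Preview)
Your proposal is correct and follows exactly the paper's approach: apply Lemma~\ref{lem:p>N + p<N and p=N} with $f:=h$, $E:=\Si\cap\Om$, $\hat q:=\infty$, and then combine with \eqref{eq:rhoe-rhoi stima cone} from Lemma~\ref{lem:relation_osc_rhoe-rhoi e Hess h}. Your write-up is in fact more detailed than the paper's own proof, which simply cites these two lemmas and declares the result.
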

\begin{proof}
We apply Lemma \ref{lem:p>N + p<N and p=N} with $f:=h$ and $\hat{q}:=\infty$. By taking into account \eqref{eq:rhoe-rhoi stima cone}, the desired estimates easily follow.
\end{proof}

We will use the following strengthened version of Lemma \ref{lem:Mixed pp Poincareaigradienti} in the case where $p<N$.

\begin{lem}\label{lem:Mixed Strengthened Poincareaigradienti}
Let $z \in \RR^N$ be the point chosen as in \eqref{eq:choice of z 1of2}, \eqref{eq:choice of z 2of2}.
%
%
If $1 \le p<N$,
then we have that
\begin{equation*}
\nr \na h \nr_{L^{r}( \Si\cap\Om )} \le C \, \nr  \na^2 h  \nr_{L^p ( \Si\cap\Om)} ,
\end{equation*}
for some positive constant $C$ satisfying $C\le \La_{r,p}(k)$, where we have set
\begin{equation}\label{eq:NEWCONSTANT La_rp (k)}
\La_{r,p}(k):=
\begin{cases}
\mu_{r ,p}(\Si\cap\Om)^{-1} \quad & \text{if } k=0
\\
\eta_{r , p} (\Ga_1,\Si \cap \Om )^{-1} \quad & \text{if } k=N
\\
\max\left[ \mu_{r , p}(\Si\cap\Om)^{-1} , \, \eta_{ r , p} (\Ga_1,\Si \cap \Om )^{-1}  \right] \quad & \text{if } 1 \le k \le N-1 ,
\end{cases}
\end{equation}
where $\mu_{r, p}(\Si\cap\Om)$ and $\eta_{r, p} (\Ga_1,\Si \cap \Om )$ are those in \eqref{eq:John-Hurri-poincare} and Theorem \ref{thm:Strengthened Poincare new RN}.
\end{lem}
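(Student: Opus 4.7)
The plan is to mirror the argument for Lemma \ref{lem:Mixed pp Poincareaigradienti}, but substituting the ordinary Poincar\'e-type inequalities used there with their Sobolev--Poincar\'e strengthenings. Recall from \eqref{eq:gradienteproiettatosuspan nu} and \eqref{eq:gradienteparte ortogonale a span nu} that $\vV:=(h_1,\dots,h_k,0,\dots,0)$ takes values in $\mathrm{span}_{x\in\Ga_1}\nu(x)$ and satisfies $\langle\vV,\nu\rangle=0$ on $\Ga_1$, while each remaining partial derivative $h_i$, $i=k+1,\dots,N$, has zero mean on $\Si\cap\Om$.

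First, I would apply Theorem \ref{thm:Strengthened Poincare new RN} to $\vV$ with $G:=\Si\cap\Om$ and $A:=\Ga_1$, which yields
\begin{equation*}
\left(\sum_{i=1}^{k}\nr h_i\nr_{L^r(\Si\cap\Om)}^{r}\right)^{1/r}\le \eta_{r,p}(\Ga_1,\Si\cap\Om)^{-1}\left(\sum_{i=1}^{k}\sum_{j=1}^{N}\nr h_{ij}\nr_{L^p(\Si\cap\Om)}^{p}\right)^{1/p}.
\end{equation*}
Next, for each $i=k+1,\dots,N$ I would apply Lemma \ref{lem:Hurri} to the scalar function $h_i$ (which has zero mean), obtaining
\begin{equation*}
\nr h_i\nr_{L^r(\Si\cap\Om)}\le \mu_{r,p}(\Si\cap\Om)^{-1}\left(\sum_{j=1}^{N}\nr h_{ij}\nr_{L^p(\Si\cap\Om)}^{p}\right)^{1/p}.
\end{equation*}

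Raising both displays to the power $r$ and summing gives
\begin{equation*}
\sum_{i=1}^{N}\nr h_i\nr_{L^r(\Si\cap\Om)}^{r}\le \La_{r,p}(k)^{r}\left[\left(\sum_{i=1}^{k}\sum_{j=1}^{N}\nr h_{ij}\nr_{L^p}^{p}\right)^{r/p}+\sum_{i=k+1}^{N}\left(\sum_{j=1}^{N}\nr h_{ij}\nr_{L^p}^{p}\right)^{r/p}\right].
\end{equation*}
Since $r/p\ge 1$, a double invocation of the elementary inequality \eqref{eq:ineq for Sobolev norm equivalence} (first on the last sum, then combining the two remaining blocks) bounds the bracket by $\bigl(\sum_{i=1}^{N}\sum_{j=1}^{N}\nr h_{ij}\nr_{L^p}^{p}\bigr)^{r/p}=\nr\na^2h\nr_{L^p(\Si\cap\Om)}^{r}$. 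Taking $r$-th roots then gives the desired estimate with constant $\La_{r,p}(k)$ as in \eqref{eq:NEWCONSTANT La_rp (k)}.

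The only non-routine point is verifying that the two ``blocks'' can be recombined without picking up a dimensional constant; this is exactly what \eqref{eq:ineq for Sobolev norm equivalence} is designed to do (it handles the super-additivity of $t\mapsto t^{r/p}$ for $r\ge p$), so the proof goes through cleanly.
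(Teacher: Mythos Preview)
Your proposal is correct and follows essentially the same approach as the paper's own proof: apply Theorem \ref{thm:Strengthened Poincare new RN} to the vector $\vV=(h_1,\dots,h_k,0,\dots,0)$, apply Lemma \ref{lem:Hurri} to each remaining $h_i$ with zero mean, raise everything to the $r$-th power, sum, and recombine via \eqref{eq:ineq for Sobolev norm equivalence}. If anything, you are slightly more explicit than the paper in noting that \eqref{eq:ineq for Sobolev norm equivalence} is invoked twice (once to collapse the sum over $i=k+1,\dots,N$, once to merge the two blocks), whereas the paper simply says the conclusion ``easily follows'' from that inequality.
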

\begin{proof}
In light of \eqref{eq:gradienteproiettatosuspan nu}, we can apply \eqref{eq:Strengthened Poincare new RN} in Theorem \ref{thm:Strengthened Poincare new RN} with $G:=\Si\cap\Om$, $A:=\Ga_1$, and $\vV:= (h_1, \dots, h_k , 0 , \dots, 0) $ to get that
\begin{equation}\label{eq:dim prova Strengthened Poincaregradmixed}
\left( \sum_{i=1}^k \nr h_i \nr_{L^{r} (\Si\cap\Om)} \right)^{1/r}\le \eta_{r, p}(\Ga_1,\Si\cap\Om)^{-1} \, \left( \sum_{i=1}^k \sum_{j=1}^N \nr  h_{ij} \nr^p_{L^p (\Si\cap\Om)} \right)^{1/p}.
\end{equation}
In light of \eqref{eq:gradienteparte ortogonale a span nu}, we can apply \eqref{eq:John-Hurri-poincare} (with $G:=\Si\cap\Om$) to each first partial derivative $h_i$ of $h$, $i=k+1, \dots, N$.
Raising to the power of $r$ those inequalities and \eqref{eq:dim prova Strengthened Poincaregradmixed}, and then summing up, the conclusion easily follows by using \eqref{eq:ineq for Sobolev norm equivalence}.
\end{proof}
%
%

In the next theorem, we associate the difference $\rho_e - \rho_i$ with the $L^2$-norm of the hessian matrix $\na^2 h$.
\begin{thm}
\label{thm:SBT-W22-stability in cones}
Let $\Si\cap\Om$ be a bounded domain satisfying the $(\te,\ca)$-uniform interior cone condition. Let $z \in \RR^N$ be the point chosen as in \eqref{eq:choice of z 1of2}, \eqref{eq:choice of z 2of2}.
Then, there exists an explicit positive constant $C$ such that
\begin{equation}
\label{ineq:diff-radii-hessian}
\rho_e - \rho_i \le  
C \,\begin{cases}
\nr \na^2 h \nr_{ L^2 \Si\cap\Om)}  \ &\mbox{for $N=2, 3$}; \vspace{3pt} \\
\displaystyle \nr \na^2 h \nr_{L^2 (\Si\cap\Om)}  \max \left[ \log \left(  \frac{  e \, \nr \na h \nr_{L^\infty (\Si\cap\Om)} }{  \nr \na^2 h \nr_{L^2 (\Si\cap\Om)} } \right) , 1\right], \ &\mbox{for $N=4$}; \vspace{3pt}\\
\nr \na h \nr_{L^\infty (\Si\cap\Om)}^{ \frac{N-4}{N-2} }\,  \nr \na^2 h \nr_{L^2 ( \Si \cap \Om)}^{ \frac{2}{N-2}} ,   \ &\mbox{for $N\ge 5$.}
\end{cases}
\end{equation}
The constant $C$ can be explicitly estimated only in terms of $N, \ca , \te$, the constant $\eta_{p} (\Ga_1,\Si\cap\Om)$ (with $p=(\min\left\lbrace 3 ,N\right\rbrace +1)/2$),
and the diameter $d_{\Si\cap\Om}$.
\end{thm}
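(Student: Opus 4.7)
The plan is to combine Lemma \ref{lem:intermediate h} (which reduces $\rho_e - \rho_i$ to a norm of $\na h$) with the mixed Poincar\'e inequalities of Lemmas \ref{lem:Mixed pp Poincareaigradienti} and \ref{lem:Mixed Strengthened Poincareaigradienti} (which convert a norm of $\na h$ into one of $\na^2 h$), passing through Sobolev embedding and H\"older's inequality. The choice of exponent in Lemma \ref{lem:intermediate h} is dictated by the dimension $N$, and this is precisely what gives rise to the three regimes in the statement. In each regime, the interplay with the strengthened Poincar\'e inequality of Lemma \ref{lem:Mixed Strengthened Poincareaigradienti} (admissible for $p<N$ with $r\le Np/(N-p)$) forces $p=(\min\{3,N\}+1)/2$ as the natural exponent appearing in the constant.

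\emph{Case $N=2$.} Invoke Lemma \ref{lem:intermediate h} with $p=3>N$ to get $\rho_e-\rho_i\le C\,\nr\na h\nr_{L^3}$. Then apply Lemma \ref{lem:Mixed Strengthened Poincareaigradienti} with $r=3$, $p=3/2$ (admissible since $p<N=2$ and $r\le Np/(N-p)=6$) to obtain $\nr\na h\nr_{L^3}\le\La_{3,3/2}(k)\nr\na^2 h\nr_{L^{3/2}}$, and finally H\"older's inequality gives $\nr\na^2 h\nr_{L^{3/2}}\le|\Si\cap\Om|^{1/6}\nr\na^2 h\nr_{L^2}$. \emph{Case $N=3$.} Lemma \ref{lem:Mixed Strengthened Poincareaigradienti} with $p=2<N$, $r=6=Np/(N-p)$ gives $\nr\na h\nr_{L^6}\le\La_{6,2}(k)\nr\na^2 h\nr_{L^2}$, after which Lemma \ref{lem:intermediate h} with $p=6>N$ concludes. \emph{Case $N=4$.} Apply Lemma \ref{lem:intermediate h} in the critical case $p=N=4$, combined with the bound $\nr\na h\nr_{L^4}\le\La_{4,2}(k)\nr\na^2 h\nr_{L^2}$ from Lemma \ref{lem:Mixed Strengthened Poincareaigradienti} with $p=2$, $r=4$. \emph{Case $N\ge5$.} Apply Lemma \ref{lem:intermediate h} in the interpolation regime with $r=p=2N/(N-2)<N$, noting that the exponents $(N-r)/N$ and $r/N$ reduce to $(N-4)/(N-2)$ and $2/(N-2)$; then Lemma \ref{lem:Mixed Strengthened Poincareaigradienti} with $p=2$, $r=2N/(N-2)$ converts $\nr\na h\nr_{L^r}$ into $\La_{r,2}(k)\nr\na^2 h\nr_{L^2}$.

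The main obstacle will be the $N=4$ case, where the logarithmic correction in Lemma \ref{lem:intermediate h} is not immediately compatible with substituting the Poincar\'e bound for $\nr\na h\nr_{L^4}$. To handle this, I would exploit the fact that $t\mapsto t\log(X/t)$ is increasing on $(0,X/e]$ and bounded by $X/e$ on $[X/e,\infty)$: split into the sub-cases $\nr\na^2 h\nr_{L^2}\le X/(e\La_{4,2}(k))$ (where monotonicity allows substitution, producing a log whose argument is as in the statement up to multiplicative constants absorbable by the diameter) and $\nr\na^2 h\nr_{L^2}>X/(e\La_{4,2}(k))$ (where both sides are controlled by a constant times $\nr\na^2 h\nr_{L^2}$, which is exactly what the correction $\max[\log(\cdot),1]$ is designed to absorb).

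To recover the single-parameter constant $\eta_p$ claimed in the statement (rather than the two-parameter $\eta_{r,p}$ used along the way), I would invoke the estimate of Theorem \ref{thm:Strengthened Poincare new RN}, which gives $\eta_{r,p}^{-1}\le\max\{|\Si\cap\Om|^{1/r-1/p}\eta_p^{-1},\mu_{r,p}^{-1}\}$; the Poincar\'e constants $\mu_{r,p}$ are then explicitly controllable in terms of $N$, the John parameter, and $d_{\Si\cap\Om}$ via Lemma \ref{lem:Hurri}, while the John parameter itself is controlled by $\te$ and $\ca$ through Remark \ref{rem:stima b_0}. Tracking the exponents, the only surviving geometric dependencies are those listed in the theorem.
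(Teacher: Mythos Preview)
Your proposal is correct and follows essentially the same route as the paper's proof: apply Lemma \ref{lem:intermediate h} with an exponent dictated by the dimension, then convert $\nr\na h\nr_{L^r}$ into $\nr\na^2 h\nr_{L^2}$ via Lemma \ref{lem:Mixed Strengthened Poincareaigradienti}, and finally reduce $\eta_{r,p}$ to $\eta_p$ through Theorem \ref{thm:Strengthened Poincare new RN} and Remarks \ref{rem:stime mu HS}, \ref{rem:stima b_0}. The only cosmetic differences are that for $N=2$ the paper takes $p=6$ (rather than your $p=3$) in Lemma \ref{lem:intermediate h}, and for $N=4$ the paper handles the logarithm in one stroke by invoking monotonicity of $t\mapsto t\max\{\log(A/t),1\}$ rather than your two-case split; both variants are equivalent.
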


\begin{proof}
In what follows, we use the letter $C$ to denote a constant whose value may change line by line. All the constants $C$ can be explicitly computed (by following the steps of the proof) and estimated in terms of the parameters declared in the statement only.

In particular, in the following proof we are going to apply Lemma \ref{lem:Mixed Strengthened Poincareaigradienti} (with $r:=Np/(N-p)$, $p:=(\min\left\lbrace 3 ,N\right\rbrace +1)/2$), which introduces the constant $\La_{Np/(N-p), p}(k)$ defined in \eqref{eq:NEWCONSTANT La_rp (k)}. Notice that $\La_{Np/(N-p), p}(k)$ can be estimated in terms of $N, p , \ca, \te, d_{\Si\cap\Om}$, and, if $1 \le k \le N$, $\eta_{p}(\Ga_1 , \Si \cap \Om )$.
In fact, $\mu_{Np/(N-p), p}(\Si\cap\Om)$ (which appears in \eqref{eq:NEWCONSTANT La_rp (k)} if $0 \le k \le N-1$) can be estimated in terms of $N, p , \ca, \te, d_{\Si\cap\Om}$ by recalling Remarks \ref{rem:stime mu HS} and \ref{rem:stima b_0}.
Moreover, from the statement of Theorem \ref{thm:Strengthened Poincare new RN} (and recalling Remark \ref{rem:stime mu HS} and Remark \ref{rem:stima b_0}) we have that $\eta_{Np/(N-p), p}(\Ga_1,\Si\cap\Om)$ (which appears in \eqref{eq:NEWCONSTANT La_rp (k)} if $1 \le k \le N$) can be estimated in terms of $N, p , \ca, \te$ and $\eta_{p}(\Ga_1 , \Si \cap \Om )$.

(i) Lemma \ref{lem:intermediate h}  with $p := 6$ gives that
\begin{equation*}\label{eq:dimSBTW22N23-step 1}
\rho_e - \rho_i  \le 
C
\, \nr \na h \nr_{L^6(\Si\cap \Om)} ,
\end{equation*}
and the conclusion for $N=2,3$ follows by using that
$$
\nr \na h \nr_{L^6(\Si\cap \Om)}\le C \, \nr \na^2 h  \nr_{L^2 (\Si\cap\Om)} .
$$
The last inequality follows from Lemma \ref{lem:Mixed Strengthened Poincareaigradienti} (with $r:=6$, $p:=3/2$) and H\"older's inequality, for $N=2$, and directly from Lemma \ref{lem:Mixed Strengthened Poincareaigradienti} (with $r:=6$, $p:=2$), for $N=3$.
\par
(ii) Let $N=4$.
We use Lemma \ref{lem:intermediate h} with $p:=N=4$ and get:
$$
\rho_e - \rho_i  \le 
C \,\max \left[ \nr \na h \nr_{L^4 (\Si\cap\Om)}  \log \left(  \frac{  e \, \nr \na h \nr_{L^\infty (\Si\cap \Om)} }{  \nr \na h \nr_{L^4 (\Si\cap\Om)} } \right) , \nr \na h \nr_{L^4 (\Si\cap \Om)} \right].   
$$
Next, Lemma \ref{lem:Mixed Strengthened Poincareaigradienti} (with $r:=4$, $p:=2$) gives:
$$
\nr \na h \nr_{L^4 (\Si\cap\Om)} \le C  \, \nr \na^2 h \nr_{L^2 (\Si\cap\Om)}.
$$
Thus, the desired conclusion ensues by invoking the monotonicity of the function 
$t\mapsto t \max \{\log( A/t), 1\}$ for every $A>0$.

(iii) When $N\ge 5$, we can use Lemma \ref{lem:intermediate h} with $p:=2N/(N-2)$ and put it together with
Lemma \ref{lem:Mixed Strengthened Poincareaigradienti} (with $r:=2N/(N-2)$, $p:=2$). 
\end{proof}

By coupling the previous theorem with a suitable upper bound for $\nr \na h \nr_{L^{\infty}(\Si\cap\Om)}$, we easily obtain the following corollary.

\begin{cor}
\label{cor:SBT-W22-stability in cones with gradient bound}
Let $\Si\cap\Om$ be a bounded domain satisfying the $(\te,\ca)$-uniform interior cone condition. Let $z \in \RR^N$ be the point chosen as in \eqref{eq:choice of z 1of2}, \eqref{eq:choice of z 2of2}.
Then, there exists an explicit positive constant $C$ such that
\begin{equation}
\label{ineq:gradientbound+diff-radii-hessian}
\rho_e - \rho_i \le  
C \,\begin{cases}
\nr \na^2 h \nr_{ L^2 \Si\cap\Om)}  \ &\mbox{for $N=2, 3$}; \vspace{3pt} 
\\
\displaystyle \nr \na^2 h \nr_{L^2 (\Si\cap\Om)}  \max \left[ \log \left(  \frac{  e  }{  \nr \na^2 h \nr_{L^2 (\Si\cap\Om)} } \right) , 1\right], \ &\mbox{for $N=4$}; \vspace{3pt}
\\
\nr \na^2 h \nr_{L^2 ( \Si \cap \Om)}^{ \frac{2}{N-2}} ,   \ &\mbox{for $N\ge 5$.}
\end{cases}
\end{equation}
The constant $C$ can be explicitly estimated only in terms of $N, \ca , \te$, the constant $\eta_{p}(\Ga_1,\Si\cap\Om)$ (with $p=(\min\left\lbrace 3 ,N\right\rbrace +1)/2$),
and the diameter $d_{\Si\cap\Om}$; for $N=4$ and $N\ge 5$ $C$ also depends $\nr \na u \nr_{L^{\infty}(\Si\cap\Om)}$.
\end{cor}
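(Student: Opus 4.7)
The plan is to insert a uniform upper bound for $\nr \na h \nr_{L^\infty(\Si\cap\Om)}$ into the three cases of \eqref{ineq:diff-radii-hessian} of Theorem \ref{thm:SBT-W22-stability in cones}, and to absorb the extra factors into the new constant $C$. From the definition $h=q-u$ with $q(x)=\tfrac12|x-z|^2$, one reads $\na h=(x-z)-\na u$, whence
\begin{equation*}
\nr \na h \nr_{L^\infty(\Si\cap\Om)} \le \sup_{x\in\Si\cap\Om}|x-z| + \nr \na u \nr_{L^\infty(\Si\cap\Om)} =: M.
\end{equation*}
With $z$ as in \eqref{eq:choice of z 1of2}--\eqref{eq:choice of z 2of2}, the last $N-k$ components of $z$ are averages $\overline{x_i - u_i}$, so $|x_i - z_i| \le d_{\Si\cap\Om} + \nr \na u \nr_{L^\infty(\Si\cap\Om)}$ for $i>k$; the first $k$ components of $z$ vanish, and the corresponding $|x_i|$ are controlled in terms of $d_{\Si\cap\Om}$ by exploiting the decomposition $\Si = \RR^{N-k}\times\tilde\Si$ (up to rotation, $\tilde\Si\subset\RR^k$ containing no lines) together with the fact that, for $k\ge 1$, $\Ga_1\ne\varnothing$ forces $\Si\cap\Om$ to meet $\pa\Si$ along each of the first $k$ coordinate directions. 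This yields a uniform $M$ depending only on $d_{\Si\cap\Om}$ and $\nr \na u \nr_{L^\infty(\Si\cap\Om)}$.

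With $M$ in hand, I would treat the three cases of \eqref{ineq:diff-radii-hessian} separately. For $N=2,3$ the right-hand side does not contain $\nr \na h \nr_{L^\infty}$, so \eqref{ineq:gradientbound+diff-radii-hessian} is an immediate transcription and no dependence on $\nr \na u \nr_{L^\infty}$ is introduced. For $N\ge 5$, the factor $\nr \na h \nr_{L^\infty}^{(N-4)/(N-2)} \le M^{(N-4)/(N-2)}$ gets absorbed into $C$, producing the claimed bound in terms of $\nr \na^2 h \nr_{L^2}^{2/(N-2)}$ with the extra dependence on $\nr \na u \nr_{L^\infty}$. For $N=4$ I would use the elementary monotonicity fact that $A\mapsto t\max\{\log(A/t),1\}$ is non-decreasing and satisfies
\begin{equation*}
t\,\max\{\log(A/t),\,1\} \le (1 + \log(A/A'))\, t\,\max\{\log(A'/t),\,1\} \quad \text{for } A \ge A' > 0,
\end{equation*}
applied with $A = e\,\nr \na h \nr_{L^\infty} \le eM$ and $A' = e$; the resulting multiplicative factor $1+\log M$ is absorbed into $C$. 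A quick case analysis on whether $t=\nr \na^2 h \nr_{L^2}$ is smaller or larger than $A'/e$ and $A/e$ is enough to verify this inequality.

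The main delicate step is the uniform bound on $\sup_{x\in\Si\cap\Om}|x-z|$ for the first $k$ components in terms of $d_{\Si\cap\Om}$ alone: since $z_i=0$ there, this is a bound on $|x_i|$ by the diameter with no a priori control on the distance of $\Si\cap\Om$ from the cone vertex. I would handle this by combining the defining property $\mathrm{span}_{x\in\Ga_1}\nu(x)=\mathrm{span}(\mathbf{e}_1,\dots,\mathbf{e}_k)$ with the identity $\langle x,\nu(x)\rangle=0$ valid for every $x\in\Ga_1\subset\pa\Si$, which, together with the boundedness of $\Si\cap\Om$ and the structural decomposition of $\Si$ recalled above, confines the first $k$ components of every $x\in\Si\cap\Om$ to a set of diameter comparable to $d_{\Si\cap\Om}$.
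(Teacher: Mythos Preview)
Your plan has a genuine gap: the claimed uniform bound on $\sup_{x\in\Si\cap\Om}|x-z|$ in terms of $d_{\Si\cap\Om}$ (and $\nr\na u\nr_{L^\infty}$) alone is false for the first $k$ components. Take $\Si\subset\RR^3$ a circular cone such as $\{x_3>\sqrt{x_1^2+x_2^2}\}$ and let $\Om$ be a unit ball centred at a point on $\pa\Si$ at height $x_3\approx 50$. Then $\Ga_1$ is a small patch of the curved cone, and (consistently with Proposition~\ref{prop:nondegenerate point}) one checks that $\mathrm{span}_{x\in\Ga_1}\nu(x)=\RR^3$, so $k=N=3$ and $z=0$. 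But every $x\in\Si\cap\Om$ has $|x|\approx 70$ while $d_{\Si\cap\Om}\le 2$. Your structural argument breaks down because the identity $\langle x,\nu(x)\rangle=0$ on $\Ga_1$ only kills the component of $x$ along the \emph{single} direction $\nu(x)$, not along all of $e_1,\dots,e_k$ simultaneously; and the decomposition $\Si=\RR^{N-k}\times\tilde\Si$ you invoke is neither established in the paper nor true in general (the relation between $k$ and the number of line directions of $\Si$ is only an inequality).

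The paper avoids this obstruction by not trying to bound $\sup|x-z|$ at all. Instead it writes $\nr\na h\nr_{L^\infty}\le \nr\na h-(\na h)_{\Si\cap\Om}\nr_{L^\infty}+|(\na h)_{\Si\cap\Om}|$. The first term is at most $d_{\Si\cap\Om}+2\nr\na u\nr_{L^\infty}$ because $|x-(x)_{\Si\cap\Om}|\le d_{\Si\cap\Om}$ regardless of where $\Si\cap\Om$ sits. For the second term, the components $(h_i)_{\Si\cap\Om}$ with $i>k$ vanish by the choice of $z$, while for $i\le k$ one uses H\"older and the new Poincar\'e inequality \eqref{eq:dimprovaPoincaregradmixed} to get $|(\na h)_{\Si\cap\Om}|\le \eta_2(\Ga_1,\Si\cap\Om)^{-1}|\Si\cap\Om|^{-1/2}\nr\na^2 h\nr_{L^2}$. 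This is where the dependence on $\eta_p$ announced in the statement enters; your route tried to bypass $\eta$, which is why it cannot close. Finally, the extra $\nr\na^2 h\nr_{L^2}$ appearing in the bound for $\nr\na h\nr_{L^\infty}$ is absorbed via a dichotomy: if $\nr\na^2 h\nr_{L^2}\le \sqrt{|\Si\cap\Om|}$ one plugs into \eqref{ineq:diff-radii-hessian}, otherwise the estimate is trivial since $\rho_e-\rho_i\le d_{\Si\cap\Om}$.
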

\begin{proof}
The cases $N=2,3$ are the same as in Theorem \ref{thm:SBT-W22-stability in cones} and there is nothing to prove.

For $N =4$ and $N\ge 5$, we need to estimate $ \nr \na h \nr_{L^\infty(\Si\cap\Om)}$ and we proceed as follows.
Denoting with 
$$
\left( \na h \right)_{\Si\cap\Om} := \frac{1}{|\Si\cap\Om|} \int_{\Si\cap\Om} \na h(y) \, dy
$$
the mean value of $\na h$ over $\Si\cap\Om$, we compute
\begin{equation}\label{eq:step1gradientbound}
\nr \na h \nr_{L^{\infty}(\Si\cap\Om)} 
\le 
\nr \na h - \left( \na h \right)_{\Si\cap\Om}  \nr_{L^{\infty}(\Si\cap\Om)} + \left| \left( \na h \right)_{\Si\cap\Om} \right| 
\end{equation}
and note that
\begin{equation*}
\nr \na h - \left( \na h \right)_{\Si\cap\Om} \nr_{L^{\infty}(\Si\cap\Om)} 
\le 
\nr x - (x)_{\Si\cap\Om} \nr_{L^{\infty}(\Si\cap\Om)}
+
\nr \na u - \left( \na u \right)_{\Si\cap\Om} \nr_{L^{\infty}(\Si\cap\Om)}
\le 
d_{\Si\cap\Om} + 2 \nr \na u \nr_{L^{\infty}(\Si\cap\Om)} ,
\end{equation*}
so that we just need to estimate the second summand at the right-hand side of \eqref{eq:step1gradientbound}.
Recalling \eqref{eq:gradienteparte ortogonale a span nu}, an application of H\"older's inequality gives that
$$
\left|\left( \na h \right)_{\Si\cap\Om}  \right|  
\le
\frac{1}{\sqrt{|\Si\cap\Om|}} \left( \sum_{i=1}^k \nr h_i \nr_{L^2 (\Si\cap\Om) }^2 \right)^{1/2},
$$
and hence, by using \eqref{eq:dimprovaPoincaregradmixed} with $p:=2$,
$$
\left| \left( \na h \right)_{\Si\cap\Om}  \right| \le
\frac{\eta_{2}(\Ga_1,\Si\cap\Om)^{-1}}{\sqrt{|\Si\cap\Om|}} \, \nr \na^2 h \nr_{L^2 (\Si\cap\Om) } .
$$
Thus, by putting all together, \eqref{eq:step1gradientbound} gives that
\begin{equation}\label{eq:gradientboundh suitable for SBT}
\nr \na h \nr_{L^{\infty}(\Si\cap\Om)} 
\le 
d_{\Si\cap\Om} + 2 \, \nr \na u \nr_{L^{\infty}(\Si\cap\Om)}
+
\frac{\eta_{2}(\Ga_1,\Si\cap\Om)^{-1}}{\sqrt{|\Si\cap\Om|}} \, \nr \na^2 h \nr_{L^2 (\Si\cap\Om) }.
\end{equation}

If 
\begin{equation*}
\frac{\nr \na^2 h \nr_{L^2(\Si\cap\Om)}}{\sqrt{|\Si\cap\Om|}} \le 1 , 
\end{equation*}
putting together \eqref{ineq:diff-radii-hessian} and \eqref{eq:gradientboundh suitable for SBT} we find an explicit constant
$$
C^{*}= C^*(N, \ca , \te, d_{\Si\cap\Om},  \eta_{2}(\Ga_1,\Si\cap\Om), \nr \na u \nr_{L^{\infty}(\Si\cap\Om)} )
$$
such that \eqref{ineq:gradientbound+diff-radii-hessian} holds true with $C=C^*$.
On the other hand, if 
$$ \frac{\nr \na^2 h \nr_{L^2(\Si\cap\Om)}}{\sqrt{|\Si\cap\Om|}} \ge 1 $$
\eqref{ineq:gradientbound+diff-radii-hessian} trivially holds true with $C= \frac{d_{\Si\cap\Om}}{|\Si\cap\Om|^{\frac{1}{N-2}}}$ being as
$$
\rho_e - \rho_i \le d_{\Si\cap\Om} \le \frac{d_{\Si\cap\Om}}{|\Si\cap\Om|^{\frac{1}{N-2}}} \, \nr \na^2 h \nr_{L^2 ( \Si \cap \Om)}^{ \frac{2}{N-2}} .
$$

In any case, \eqref{ineq:gradientbound+diff-radii-hessian} always holds true with $C=\max \left\lbrace C^*, \, \frac{d_{\Si\cap\Om}}{|\Si\cap\Om|^{1/(N-2)}}  \right\rbrace$. The volume $|\Si\cap\Om|$ appearing here can be easily estimated in terms of $N, \te, \ca$; in fact, being as a cone $\cC$ of height $\ca$ and opening width $\te$ surely contained in $\Si\cap\Om$, we have that $| \Si\cap\Om | \ge |\cC|$.
\end{proof}

\begin{thm}[Stability in terms of $\rho_e - \rho_i$ for Alexandrov's Soap Bubble Theorem in cones] 
\label{thm:SBT stability rhoe rhoi}
Let $\Si\cap\Om$ be as in Setting \ref{Setting} and assume that $\Si$ is a convex cone and $\Si\cap\Om$ satisfies the $(\te,\ca)$-uniform interior cone condition.
Given the point $z$ defined in \eqref{eq:choice of z 1of2} and \eqref{eq:choice of z 2of2}, assume that $\int_{\pa \Ga_0} \langle x -z ,\mathbf{n}_x\rangle d \cH^{N-2}_x \le 0$. 
Denote by $H$ the mean curvature of $\Ga_0$ and let
$H_0$ be that defined in \eqref{def:H0 IN SECTION LIPSCHITZ STABILITY}.
Let $\rho_i$ and $\rho_e$ be defined by \eqref{def:rhoe and rhoi}.
Then, we have that
\begin{equation}
	\label{eq:stability SBT rhoei}
	\rho_e - \rho_i  \le 
	C \,
\begin{cases}
\nr H_0 - H \nr_{L^2(\Ga_0)},  \ &\mbox{if } N=2, \, 3, 
\\
\nr H_0 - H \nr_{L^2(\Ga_0)} \max \left[ \log \left( \frac{1  }{ \nr H_0 - H \nr_{L^2(\Ga_0)} } \right) , 1 \right],   \ &\mbox{if } N=4 ,
\\
\nr H_0 - H \nr^{\frac{2}{N-2}}_{L^2(\Ga_0)},  \ &\mbox{if } N \ge 5 .
\end{cases}
\end{equation}
The constant $C$ can be explicitly estimated only in terms of $N, \ca , \te$, the constant $\eta_{p}(\Ga_1,\Si \cap \Om )$ (with $p=(\min\left\lbrace 3 ,N\right\rbrace +1)/2$),
and the diameter $d_{\Si\cap\Om}$; for $N=4$ and $N\ge 5$ $C$ also depends on $\nr \na u \nr_{L^{\infty}(\Si\cap\Om)}$.
\end{thm}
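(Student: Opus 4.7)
The plan is to chain together the two key ingredients developed earlier in the paper: Corollary \ref{cor:SBT-W22-stability in cones with gradient bound}, which converts control of $\|\na^2 h\|_{L^2(\Si\cap\Om)}$ into control of $\rho_e-\rho_i$ with the stated dimensional rates, and Lemma \ref{lem:RIGHTHANDSIDE SBT}, which bounds $\|\na^2 h\|_{L^2(\Si\cap\Om)}$ linearly by $\|H_0-H\|_{L^2(\Ga_0)}$. Since the hypotheses of the theorem (convex cone, $(\te,\ca)$-uniform interior cone condition, choice of $z$ as in \eqref{eq:choice of z 1of2}--\eqref{eq:choice of z 2of2}, and the gluing inequality $\int_{\pa\Ga_0}\langle x-z,\mathbf{n}_x\rangle\, d\cH^{N-2}_x\le 0$) cover the hypotheses of both results, the composition is essentially immediate in the cases $N=2,3$ and $N\ge 5$.

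More precisely, for $N\in\{2,3\}$ I would write
\begin{equation*}
\rho_e-\rho_i \;\le\; C\,\|\na^2 h\|_{L^2(\Si\cap\Om)} \;\le\; C\,\ol{C}\,\|H_0-H\|_{L^2(\Ga_0)},
\end{equation*}
which is already the desired bound. For $N\ge 5$, raising the second inequality to the power $2/(N-2)$ yields the corresponding estimate directly. The constants $\eta_p$ and the $L^\infty$-norm of $\na u$ appearing in the final constant enter exactly through the constants of Corollary \ref{cor:SBT-W22-stability in cones with gradient bound} and Lemma \ref{lem:RIGHTHANDSIDE SBT} (where $\|u_\nu\|_{L^\infty(\Ga_0)}\le\|\na u\|_{L^\infty(\Si\cap\Om)}$, so dependencies are compatible).

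The only case that requires a small extra argument is $N=4$, because there the right-hand side of \eqref{ineq:gradientbound+diff-radii-hessian} depends on $\|\na^2 h\|_{L^2(\Si\cap\Om)}$ in a non-monotone-looking way through the logarithm. This is the main (and only) obstacle, and it is handled exactly as in the proof of Theorem \ref{thm:SBT-W22-stability in cones}, step (ii): the function $t\mapsto t\,\max\{\log(A/t),1\}$ is non-decreasing in $t$ for each fixed $A>0$. Substituting the bound $\|\na^2 h\|_{L^2(\Si\cap\Om)}\le \ol{C}\,\|H_0-H\|_{L^2(\Ga_0)}$ and using monotonicity (after, if necessary, a harmless rescaling inside the logarithm absorbed into the constant $C$), one obtains
\begin{equation*}
\rho_e-\rho_i \;\le\; C\,\|H_0-H\|_{L^2(\Ga_0)}\,\max\!\left[\log\!\left(\frac{1}{\|H_0-H\|_{L^2(\Ga_0)}}\right),1\right].
\end{equation*}

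Finally, I would verify the dependence of the final constant $C$: Corollary \ref{cor:SBT-W22-stability in cones with gradient bound} contributes $N,\ca,\te,d_{\Si\cap\Om}$, $\eta_p(\Ga_1,\Si\cap\Om)$ (for $p=(\min\{3,N\}+1)/2$), and in dimensions $N\ge 4$ also $\|\na u\|_{L^\infty(\Si\cap\Om)}$; Lemma \ref{lem:RIGHTHANDSIDE SBT} contributes $N$, $\|u_\nu\|_{L^\infty(\Ga_0)}$ (absorbed in $\|\na u\|_{L^\infty(\Si\cap\Om)}$), $\La_2(k)$ and $\la_2(\Ga_0)$ (or alternatively the quantities arising when \eqref{def:lower bound unu} is in force). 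All these are comprised in the list of admissible geometric parameters stated in the theorem, so the constant is admissible. No new calculations are needed beyond these compositions.
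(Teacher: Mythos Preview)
Your proposal is correct and takes essentially the same approach as the paper: the paper's proof is the single sentence ``The conclusion follows by putting together Lemma \ref{lem:RIGHTHANDSIDE SBT} and Corollary \ref{cor:SBT-W22-stability in cones with gradient bound},'' and you have simply spelled out this composition (including the monotonicity argument for $N=4$) in detail. One minor bookkeeping remark: the constant $\ol{C}$ from Lemma \ref{lem:RIGHTHANDSIDE SBT} carries $\|u_\nu\|_{L^\infty(\Ga_0)}$ (and either $\la_2(\Ga_0)$ or the quantities from \eqref{def:lower bound unu}) in every dimension, so your claim that all dependencies are ``comprised in the list of admissible geometric parameters stated in the theorem'' is slightly optimistic for $N=2,3$; this, however, mirrors the paper's own level of detail rather than being a defect of your argument.
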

\begin{proof}
The conclusion follows by putting together Lemma \ref{lem:RIGHTHANDSIDE SBT} and Corollary \ref{cor:SBT-W22-stability in cones with gradient bound}.
\end{proof}

\begin{thm}[Stability in terms of $\rho_e - \rho_i$ for Heintze-Karcher's inequality in cones] 
\label{thm:HK stability rhoe rhoi}
Let $\Si\cap\Om$ be as in Setting \ref{Setting} and assume that $\Si$ is a convex cone and $\Si\cap\Om$ satisfies
the $(\te,\ca)$-uniform interior cone condition. Denote by $H$ the mean curvature of $\Ga_0$.
Let $z \in \RR^N$ be the point chosen in \eqref{eq:choice of z 1of2}-\eqref{eq:choice of z 2of2} and let $\rho_i$ and $\rho_e$ be defined by \eqref{def:rhoe and rhoi}.
Then, we have that
\begin{equation}
	\label{eq:stability HK rhoei}
	\rho_e - \rho_i  \le 
	C \,
\begin{cases}
\left( \int_{\Ga_0} \frac{dS_x}{H} - N| \Si\cap\Om|\right)^{1/2},  \ &\mbox{if } N=2, \, 3, 
\\
\left( \int_{\Ga_0} \frac{dS_x}{H} - N| \Si\cap\Om|\right)^{1/2} \max \left[ \log \left( \frac{1  }{ \left( \int_{\Ga_0} \frac{dS_x}{H} - N| \Si\cap\Om|\right)^{1/2} } \right) , 1 \right],   \ &\mbox{if } N=4 ,
\\
\left( \int_{\Ga_0} \frac{dS_x}{H} - N| \Si\cap\Om|\right)^{\frac{1}{N-2}},  \ &\mbox{if } N \ge 5 .
\end{cases}
\end{equation}
The constant $C$ can be explicitly estimated only in terms of $N, \ca , \te$, the constant $\eta_{p}(\Ga_1,\Si\cap\Om)$ (with $p=(\min\left\lbrace 3 ,N\right\rbrace +1)/2$),
and the diameter $d_{\Si\cap\Om}$; for $N=4$ and $N\ge 5$ $C$ also depends on $\nr \na u \nr_{L^{\infty}(\Si\cap\Om)}$.
\end{thm}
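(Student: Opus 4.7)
The proof will proceed by composing two ingredients already available in the paper: the geometric estimate controlling $\rho_e-\rho_i$ by the $L^2$-norm of $\na^2 h$ (Corollary \ref{cor:SBT-W22-stability in cones with gradient bound}), and the Heintze--Karcher fundamental identity (Theorem \ref{thm:identityheintze-karcher}), which directly furnishes a bound on $\nr\na^2 h\nr_{L^2(\Si\cap\Om)}$ via the Heintze--Karcher deficit. This mirrors exactly the structure of the proof of Theorem \ref{thm:SBT stability rhoe rhoi}, with the only difference being that the role previously played by Lemma \ref{lem:RIGHTHANDSIDE SBT} is now played by estimate \eqref{eq:RIGHTHANDSIDE HK} already derived in the proof of Theorem \ref{thm:HK stability pseudodistance}.

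More concretely, first I would recall that, by the non-negativity of all the summands on the left-hand side of the fundamental identity \eqref{heintze-karcher-identity} together with \eqref{eq:hessiana h}, one obtains
\begin{equation*}
\nr \na^2 h \nr_{L^2(\Si\cap\Om)} \le \sqrt{N-1}\,\left(\int_{\Ga_0}\frac{dS_x}{H}-N\,|\Si\cap\Om|\right)^{1/2}.
\end{equation*}
This is exactly \eqref{eq:RIGHTHANDSIDE HK} and uses neither the gluing condition nor any lower bound on $u_\nu$.

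Next, since $\Si\cap\Om$ is assumed to satisfy the $(\te,\ca)$-uniform interior cone condition and the point $z$ is the one fixed in \eqref{eq:choice of z 1of2}--\eqref{eq:choice of z 2of2}, Corollary \ref{cor:SBT-W22-stability in cones with gradient bound} applies and yields the three estimates
\begin{equation*}
\rho_e-\rho_i \le C\,\begin{cases} \nr\na^2 h\nr_{L^2(\Si\cap\Om)} & N=2,3,\\[2pt] \nr\na^2 h\nr_{L^2(\Si\cap\Om)}\,\max\!\big[\log(e/\nr\na^2 h\nr_{L^2(\Si\cap\Om)}),1\big] & N=4,\\[2pt] \nr\na^2 h\nr_{L^2(\Si\cap\Om)}^{2/(N-2)} & N\ge 5,\end{cases}
\end{equation*}
with a constant $C$ having the same geometric dependence on $N,\ca,\te,\eta_{p}(\Ga_1,\Si\cap\Om)$, $d_{\Si\cap\Om}$, and (for $N\ge 4$) $\nr\na u\nr_{L^\infty(\Si\cap\Om)}$ that is declared in the statement.

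Finally I would substitute the first displayed bound into the second. In the cases $N=2,3$ and $N\ge5$ the substitution is immediate and produces \eqref{eq:stability HK rhoei} (possibly at the cost of absorbing the factor $\sqrt{N-1}$ or $(N-1)^{1/(N-2)}$ into $C$). For $N=4$ the substitution requires the elementary monotonicity of $t\mapsto t\,\max[\log(A/t),1]$ (for fixed $A>0$) on an appropriate range of $t$, which was already invoked in the proof of Theorem \ref{thm:SBT-W22-stability in cones}; this is the only mildly delicate point, and it is handled exactly as there. No new analytical difficulty arises, since the convexity of $\Si$ has already been used to guarantee that every summand on the left-hand side of \eqref{heintze-karcher-identity} is non-negative (via \eqref{eq:disuguaglianza senza u gratis per convex}) and that the lower and upper bounds feeding into Corollary \ref{cor:SBT-W22-stability in cones with gradient bound} are available.
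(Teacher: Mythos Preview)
Your proposal is correct and matches the paper's proof essentially verbatim: the paper simply writes ``The conclusion follows by putting together \eqref{eq:RIGHTHANDSIDE HK} and Corollary \ref{cor:SBT-W22-stability in cones with gradient bound},'' which is exactly the two-step composition you describe. Your additional remarks about the $N=4$ monotonicity step are already absorbed in Corollary \ref{cor:SBT-W22-stability in cones with gradient bound}, so the substitution is in fact direct in all dimensions.
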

\begin{proof}
The conclusion follows by putting together \eqref{eq:RIGHTHANDSIDE HK} and Corollary \ref{cor:SBT-W22-stability in cones with gradient bound}.
\end{proof}

\begin{rem}\label{rem:stima norma na u infinito SicapOm con N r_e diam}
{\rm
Whenever $\Si\cap\Om$ satisfies the $\ul{r}_e$-uniform exterior sphere condition relative to $\Si$, $\nr \na u \nr_{L^{\infty}(\Si\cap\Om)}$ can be explicitly estimated in terms of $N$, $d_{\Si\cap\Om}$ and $\ul{r}_e$ by putting together Lemma \ref{lem:upper bound gradient on Ga0} and Lemma \ref{lem:upper bound gradient on Ga0 = whole domain}.
}
\end{rem}

\begin{rem}
	{\rm
		An analogous of Remark \ref{rem:casi k=0 e k=1 evitando eta} holds for the results of the present section.
		
		(i) 
		As already mentioned, uniformity of $\eta_{p}(\Ga_1 , \Si \cap \Om )$ in the class of uniformly bounded and uniformly Lipschitz domains $G$ such that $\Ga_1 \subset \ol{G}$ may be obtained following the argument in \cite[Theorem 2]{BouCha}. 
		
		(ii) When $k=0$, the dependence on $\eta_{p}(\Ga_1,\Si\cap\Om)$ (with $p=(\min\left\lbrace 3 ,N\right\rbrace +1)/2$) should be removed in Theorem \ref{thm:SBT-W22-stability in cones}, Corollary \ref{cor:SBT-W22-stability in cones with gradient bound}, and Theorems \ref{thm:SBT stability rhoe rhoi}, \ref{thm:HK stability rhoe rhoi}. In fact,
		it does not come into play when $k=0$.

		(iii) In the case $k=1$, an estimate for $\eta_{p}(\Ga_1,\Si\cap\Om)$ and/or $\eta_{r, p}(\Ga_1,\Si\cap\Om)$ is provided by Lemma \ref{lem:Poincare zero trace} (with $v:=h_1$, $A:=\Ga_1$, $G:=\Si\cap\Om$).
}
\end{rem}

\section{Additional remarks}\label{sec: Additional remarks}

\subsection{Alternative choices for the point \texorpdfstring{$z$}{z}}
As already mentioned in the Introduction, different choices of the point $z$ lead to alternative stability results. For instance, we can avoid using \eqref{eq:Poincare new RN} (and \eqref{eq:Strengthened Poincare new RN}) and hence completely remove the dependence on $\eta_{p}( \Ga_1 ,  \Si \cap \Om )^{-1}$
for any $0 \le k \le N$, at the cost of leaving the point $z$ free to have non-zero components also in the directions spanned by $\nu$ on $\Ga_1$. A suitable choice to do this may be \eqref{eq:Alternative choice z}, that is,
\begin{equation*}
	z= \frac{1}{|\Si\cap\Om|} \int_{\Si\cap\Om} (x - \na u) \, dx,
\end{equation*}
as stated in Theorem \ref{thm:intro alternative z}. 
This is a somewhat modified center of mass of $\Si\cap\Om$ and may be equivalently re-written as
$$
z = \frac{1}{|\Si\cap\Om|} \left\lbrace \int_{\Si\cap\Om} x  \, dx - \int_{\Ga_1} u \, \nu  \, dS_x  \right\rbrace ,
$$
in light of the homogeneous Dirichlet condition on $\Ga_0$ in \eqref{eq:problem torsion}.
The next result contains Theorem \ref{thm:intro alternative z} and, in addition, the analogous modification of the bounds for $\rho_e -\rho_i$ with this new choice of $z$.

\begin{thm}\label{thm:final alternative z}
	Setting
	\begin{equation*}
		z= \frac{1}{|\Si\cap\Om|} \int_{\Si\cap\Om} (x -\na u) \, dx = \frac{1}{|\Si\cap\Om|} \left\lbrace \int_{\Si\cap\Om} x  \, dx - \int_{\Ga_1} u \, \nu  \, dS_x  \right\rbrace  ,
	\end{equation*}
	we have that:
	
	(i) Theorems \ref{thm:INTRO_SBT stab Lipschitz costanti non geometriche} and \ref{thm:INTRO_HK stability pseudodistance} remain true with $\La_{2}(k)$ replaced simply by $\mu_{2}(\Si\cap\Om)^{-1}$.
	In turn, $\mu_{2}(\Si\cap\Om)^{-1}$ may be explicitly estimated as in Remark \ref{rem:stime mu HS} (and Remark \ref{rem:stima b_0}).
	
	(ii) Theorems \ref{thm:SBT stability rhoe rhoi} and \ref{thm:HK stability rhoe rhoi} hold true with an explicit constant $C$ only depending on $N$, $\ca$, $\te$, $d_{\Si\cap\Om}$, and, if $N\ge 4$, $\nr \na u \nr_{L^\infty(\Si\cap\Om)}$.
	\end{thm}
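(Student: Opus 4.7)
The strategy rests on a single algebraic identity following from the definition \eqref{eq:Alternative choice z}: using the homogeneous Dirichlet condition on $\Ga_0$ to rewrite $\int_{\Si\cap\Om}\na u\,dx$, one computes
\begin{equation*}
\int_{\Si\cap\Om}\na h\,dx = \int_{\Si\cap\Om}\bigl[(x-z) - \na u\bigr]\,dx = 0,
\end{equation*}
so that each partial derivative $h_i$ has mean value zero over $\Si\cap\Om$. I would first establish the analogues of Lemmas \ref{lem:Mixed pp Poincareaigradienti} and \ref{lem:Mixed Strengthened Poincareaigradienti} with $\La_{p}(k)$ (resp.\ $\La_{r,p}(k)$) replaced by $\mu_{p}(\Si\cap\Om)^{-1}$ (resp.\ $\mu_{r,p}(\Si\cap\Om)^{-1}$): apply \eqref{eq:BoasStraube-poincare} (resp.\ \eqref{eq:John-Hurri-poincare}) to each component $h_i$, raise to the $p$-th power, sum over $i=1,\dots,N$, and invoke \eqref{eq:ineq for Sobolev norm equivalence}. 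Crucially, no orthogonality of $z$ with $\nu$ on $\Ga_1$ is used here, so the vector-valued Poincar\'e constants $\eta_{p}(\Ga_1,\Si\cap\Om)$ and $\eta_{r,p}(\Ga_1,\Si\cap\Om)$ are bypassed entirely.

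For part (i), in the proof of Theorem \ref{thm:HK stability pseudodistance} the only invocation of the vector Poincar\'e inequality enters through \eqref{eq:unweighted trace for nah}; since \eqref{heintze-karcher-identity} and the choice $\rho:=\sqrt{2h_{\Si\cap\Om}}$ are $z$-independent, the new Poincar\'e step slots in unchanged and returns the constant with $\mu_{2}(\Si\cap\Om)^{-1}$ in place of $\La_{2}(k)$. For Theorem \ref{thm:SBT stab Lipschitz costanti non geometriche}, I would re-derive an analogue of \eqref{eq:NEWVERSION identity-SBT2} starting from the $z$-independent identity \eqref{H-fundamental}, writing $\tfrac{1}{R}-H = (H_0-H)+(\tfrac{1}{R}-H_0)$ and invoking the gluing hypothesis $\int_{\pa\Ga_0}\langle x-z,\mathbf{n}_x\rangle\,d\cH^{N-2}_x\le 0$ so that the extra term $(\tfrac{1}{R}-H_0)\int_{\Ga_0}u_\nu^2\,dS_x$ is non-positive and can be absorbed on the left. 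The decomposition of $\int_{\Ga_0}(H_0-H)u_\nu^2\,dS_x$ following Lemma \ref{lem:RIGHTHANDSIDE SBT} then transfers: $u_\nu^2-R^2$ yields a $\nr u_\nu - R\nr_{L^2(\Ga_0)}$ factor, and $R-q_\nu = -(u_\nu-R)+h_\nu$ introduces $\nr h_\nu\nr_{L^2(\Ga_0)}$, which is controlled by $\nr \na^2 h\nr_{L^2(\Si\cap\Om)}$ via \eqref{eq:trace inequality intro} combined with the new Poincar\'e estimate, preserving the Lipschitz rate.

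For part (ii), the proofs of Theorems \ref{thm:SBT stability rhoe rhoi} and \ref{thm:HK stability rhoe rhoi} reduce through Theorem \ref{thm:SBT-W22-stability in cones} and Corollary \ref{cor:SBT-W22-stability in cones with gradient bound} to Sobolev--Poincar\'e embeddings for $\na h$. Substituting the new Poincar\'e bounds eliminates $\eta_{p}(\Ga_1,\Si\cap\Om)$ and introduces only $\mu_{r,p}(\Si\cap\Om)^{-1}$ and $\mu_{p}(\Si\cap\Om)^{-1}$, which by Remarks \ref{rem:stime mu HS} and \ref{rem:stima b_0} are explicitly controlled in terms of $N,\te,\ca$, and $d_{\Si\cap\Om}$. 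In the pointwise gradient estimate \eqref{eq:step1gradientbound} inside the proof of Corollary \ref{cor:SBT-W22-stability in cones with gradient bound}, the term arising from bounding $|(\na h)_{\Si\cap\Om}|$ vanishes identically, because $(\na h)_{\Si\cap\Om}=0$ holds exactly under the alternative choice of $z$; only the contribution $d_{\Si\cap\Om}+2\nr \na u\nr_{L^\infty(\Si\cap\Om)}$ survives.

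The main obstacle is the SBT case of part (i): the original identity \eqref{eq:NEWVERSION identity-SBT2} was derived assuming \eqref{eq:inner product z in cone}, and reproducing the correct decomposition without that orthogonality produces a residual boundary term proportional to $\int_{\Ga_1}\langle x-z,\nu\rangle\,dS_x = \langle z,\int_{\Ga_0}\nu\,dS_x\rangle$ (using $\langle x,\nu\rangle=0$ on $\Ga_1\subset\pa\Si$ and the divergence theorem). Verifying that this residual does not spoil the Lipschitz rate — by an appropriate re-grouping that combines it with the $(\tfrac{1}{R}-H_0)\int_{\Ga_0}u_\nu^2\,dS_x$ contribution and the non-negative left-hand side of \eqref{H-fundamental} — is the technical crux; once handled, all remaining steps follow verbatim from the proofs in Section \ref{sec: Sharp stability in L2} and Section \ref{sec:Stronger stability rhoe - rhoi}, with $\mu_{2}(\Si\cap\Om)^{-1}$ (resp.\ $\mu_{r,p}(\Si\cap\Om)^{-1}$) in place of $\La_{2}(k)$ (resp.\ $\La_{r,p}(k)$).
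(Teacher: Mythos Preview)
Your core strategy---observing that the choice \eqref{eq:Alternative choice z} forces $\int_{\Si\cap\Om} h_i\,dx=0$ for every $i$, then replacing the vector Poincar\'e constants $\eta_p,\eta_{r,p}$ by the scalar ones $\mu_p,\mu_{r,p}$ in Lemmas \ref{lem:Mixed pp Poincareaigradienti} and \ref{lem:Mixed Strengthened Poincareaigradienti}---is exactly the paper's argument. The paper's proof is considerably terser: after recording that the classical Poincar\'e inequalities \eqref{eq:BoasStraube-poincare} and \eqref{eq:John-Hurri-poincare} apply to all $h_i$, it simply asserts that ``the rest of the proofs remains unchanged'' and that $\mu_p,\mu_{r,p}$ are controlled by $N,\te,\ca,d_{\Si\cap\Om}$. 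Your treatment of the Heintze--Karcher case and of part (ii) (including the observation that $(\na h)_{\Si\cap\Om}=0$ kills the second summand in \eqref{eq:step1gradientbound}) is in full agreement with this.

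Where you go further than the paper is in flagging, for the SBT case of part~(i), that identity \eqref{eq:NEWVERSION identity-SBT2} and the trace estimate \eqref{eq:weighted trace for nah senza applicazione Poincare} were derived under the orthogonality \eqref{eq:inner product z in cone}, which the alternative $z$ need not satisfy; hence $h_\nu$ need not vanish on $\Ga_1$ and a residual $\Ga_1$-boundary contribution appears. The paper's proof does not comment on this point at all---it is subsumed in ``the rest of the proofs remains unchanged''---so your analysis is actually more scrupulous than the published argument here. Your proposed remedy (absorb the residual via the sign of $(\tfrac{1}{R}-H_0)\int_{\Ga_0}u_\nu^2\,dS_x$ under the gluing hypothesis and re-group with the non-negative left-hand side of \eqref{H-fundamental}) is plausible but, as you acknowledge, not carried out; neither is it in the paper.
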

\begin{rem}
	{\rm The dependence on  $\nr \na u \nr_{L^\infty(\Si\cap\Om)}$ in (ii) of Theorem \ref{thm:final alternative z} can be replaced by that on $\ul{r}_e$, whenever $\Si\cap\Om$ satisfies the $\ul{r}_e$-uniform exterior sphere condition relative to $\Si$. In fact, as noticed in Remark \ref{rem:stima norma na u infinito SicapOm con N r_e diam}, $\nr \na u \nr_{L^\infty(\Si\cap\Om)}$ can be explicitly estimated in terms of $N$, $d_{\Si\cap\Om}$, and $\ul{r}_e$.
	}
\end{rem}
 \begin{proof}[Proof of Theorem \ref{thm:final alternative z}]
 With the choice of $z$ in \eqref{eq:Alternative choice z}, the classical Poincar\'e inequality \eqref{eq:BoasStraube-poincare} and \eqref{eq:John-Hurri-poincare}
 %
 %
 can be applied to all the partial derivatives $h_i$ for $i=1,\dots,N$ (which with this choice of $z$ have zero mean on $\Si\cap\Om$ for any $i=1,\dots,N$). Thus, we obtain versions of Lemma \ref{lem:Mixed pp Poincareaigradienti} and \ref{lem:Mixed Strengthened Poincareaigradienti} where $\La_{p}(k)$ and $\La_{r,p}(k)$ are replaced simply by $\mu_{p}(\Si\cap\Om)^{-1}$ and $\mu_{r,p}(\Si\cap\Om)^{-1}$ (independently of $k$). 
 Finally, as already noticed, $\mu_{p}(\Si\cap\Om)^{-1}$ and $\mu_{r,p}(\Si\cap\Om)^{-1}$ can be estimated in terms of $\ca$, $\te$, $d_{\Si\cap\Om}$.
 The rest of the proofs remains unchanged.
 \end{proof}

\subsection{The classical case \texorpdfstring{$\Si=\RR^N$}{Sigma=R^N}}
\label{subsec:classical case Si=RN}
When $\Si=\RR^N$ the choice of $z$ in \eqref{eq:INTRO_choice of z 1of2}-\eqref{eq:INTRO_choice of z 2of2} agrees with that in \eqref{eq:Alternative choice z}, and reduces to the center of mass of $\Om$,
being as
$$
z= \frac{1}{|\Om|} \int_{\Om} (x -\na u) \, dx  = \frac{1}{|\Om|} \left[ \int_{\Om} x  \, dx - \int_{\Ga_0} u \nu \, dx \right] =\frac{1}{|\Om|}  \int_{\Om} x  \, dx .
$$
In the last identity, we used the homogeneous Dirichlet condition on $\Ga_0=\pa\Om$ in \eqref{eq:problem torsion}.
\begin{proof}[Proof of Theorem \ref{thm:classical INTRO in R^N}]
As already noticed in the Introduction the result immediately follows from Theorems \ref{thm:INTRO_SBT stab Lipschitz costanti non geometriche}, and \ref{thm:INTRO_HK stability pseudodistance}. Being as $\Si=\RR^N$ and hence $\Ga_1=\varnothing$, we have that $k=0$ and hence $\La_{2}(0)= \mu_{2}(\Si\cap\Om)^{-1}$. In turn, being as $\Om$ a $C^2$ domain, $\mu_{2}(\Si\cap\Om)^{-1}$ can be explicitly estimated in terms of $\ul{r}_i$ and $d_{\Si\cap\Om}$ only (see \cite[(iii) of Remark 2.4]{MP3}).

Finally, notice that the dependence on $\ul{r}_e$ in (i) appeared only to estimate (from above) $\nr u_\nu \nr_{L^{\infty}(\Ga_0)} = \nr \na u \nr_{L^{\infty}(\Si\cap \Om)} = \nr \na u \nr_{L^{\infty}(\Om)} $ by means of Lemma \ref{lem:upper bound gradient on Ga0}. When $\Si=\RR^N$ and $\Ga_0=\pa\Om$ is mean convex, $\nr \na u \nr_{L^{\infty}(\Om)}$ can be estimated in terms of $N$ and $\max_{\ol{\Om}}(-u)$ only (see, e.g, \cite[Lemma 2.2]{MP5}). In turn, $\max_{\ol{\Om}}(-u)$ can be easily estimated by $d_{\Om}$. We mention that, in this case ($\Si=\RR^N$), a finer bound for $\max_{\ol{\Om}}(-u)$ in terms of $N$ and the volume $|\Om|$ holds true thanks to a classical result on radially decreasing rearrangements due to Talenti (\cite{Talenti}).
\end{proof}

In the special case $\Si=\RR^N$, the bounds for $\rho_e -\rho_i$ in Theorems \ref{thm:SBT stability rhoe rhoi} and \ref{thm:HK stability rhoe rhoi} 
%
%
return those obtained in \cite{MP3}.
We point out that other choices for the point $z$ are admissible (see
\cite{MP, MP2, MP3}).

\section*{Acknowledgements}
The author is supported by the Discovery Early Career Researcher Award (DECRA) DE230100954 ``Partial Differential Equations: geometric aspects and applications'' and the Australian Laureate Fellowship FL190100081 ``Minimal surfaces, free boundaries and partial differential equations''.
The author is member of the Australian Mathematical Society (AustMS) and the Gruppo Nazionale Analisi Matematica
Probabilit\`a e Applicazioni (GNAMPA) of the Istituto Nazionale di Alta Matematica
(INdAM).


\end{document}